\DeclareFontFamily{U}{russian}{}
\DeclareFontShape{U}{russian}{m}{n}
        { <5><6> wncyr5
        <7><8><9> wncyr7
        <10><10.95><12><14.4><17.28><20.74><24.88> wncyr10 }{}
\DeclareSymbolFont{Russian}{U}{russian}{m}{n}
\DeclareSymbolFontAlphabet{\mathcyr}{Russian}
\let\@math@cyr\mathcyr
\renewcommand{\mathcyr}[1]{\@math@cyr{\cyracc #1}}
\newtheorem{thm}{Theorem}[subsection]
\newtheorem{thmi}{Theorem}
\newtheorem*{pthm}{Theorem}
\newtheorem{prop}[thm]{Proposition}
\newtheorem{cor}[thm]{Corollary}
\newtheorem{lm}[thm]{Lemma}
\theoremstyle{definition}
\newtheorem{df}[thm]{Definition}
\newtheorem{ex}[thm]{Example}
\newtheorem{num}[thm]{}
\theoremstyle{remark}
\newtheorem{rem}[thm]{Remark}
\numberwithin{equation}{thm}
\title{Bivariant theories in motivic stable homotopy}
\author{Fr\'ed\'eric D\'eglise}
\thanks{Partially supported by the ANR (grant No. ANR-07-BLAN-042)}
\date{June 2018.}
\newcommand{\ZZ} {\mathbb Z}
\newcommand{\QQ} {\mathbb Q}
\newcommand{\CC} {\mathbb C}
\renewcommand{\AA} {\mathbb A}
\newcommand{\GG} {\mathbb G_m}
\newcommand{\GGx}[1] {\mathbb G_{m,#1}}
\newcommand{\PP} {\mathbb P}
\newcommand{\T}{\mathscr T} %catégorie triangulée motivique
\newcommand{\A}{\mathscr A}
\newcommand{\C}{\mathscr C}
\newcommand{\cL}{\mathcal L}
\newcommand{\cO}{\mathcal O}
\newcommand{\E}{\mathbb E} %spectres
\newcommand{\F}{\mathbb F} %spectres
\newcommand{\HH}{\mathbb H} %spectres
\newcommand{\un}{\mathbbm 1} %unité de \T
\newcommand{\base}{\mathscr S} %schémas de base
\newcommand{\sm}{\mathscr Sm}
\newcommand{\can}{\tau} %flèche de SH vers T
\DeclareMathOperator{\rk}{rk}
\newcommand{\SH}{S\mathscr{H}}
\newcommand{\DM}{\mathrm{DM}}
\newcommand{\DMB}{\mathrm{DM}_\mathcyr B}
\newcommand{\smod}[1] {#1\mbox{-}\operatorname{mod}}
\newcommand{\ilim}[1] { \underset{#1}{\varinjlim} \ }
\DeclareMathOperator{\Th}{Th}
\DeclareMathOperator{\MTh}{\mathit M Th}
\DeclareMathOperator{\thom}{\mathfrak t}
\DeclareMathOperator{\rthom}{\bar{\mathfrak t}}
\DeclareMathOperator{\fdl}{\bar{\eta}}
\DeclareMathOperator{\tfdl}{\tilde{\eta}}
\DeclareMathOperator{\cfdl}{\eta}
\DeclareMathOperator{\td}{Td}
\newcommand{\biv}[2]{#1^{BM}_{#2}}
\newcommand{\cupp}{{\scriptstyle \cup}}
\newcommand{\refp}{\cdot}
\newcommand{\Sus}{\Sigma^\infty}
\newcommand{\MGL}{\mathbf{MGL}}
\newcommand{\KGL}{\mathbf{KGL}}
\newcommand{\HB}{\mathbf H_{\mathcyr{B}}}
\DeclareMathOperator\Pic{Pic}
\newcommand{\spec}[1] {\operatorname{\mathrm{Spec}}(#1)}
\DeclareMathOperator{\Hom}{Hom}
\DeclareMathOperator{\uHom}{\underline{Hom}}
\DeclareMathOperator{\End}{End}
\newcommand{\pur} { \mathfrak p } %isomorphisme de pureté
\DeclareMathOperator{\ch} {ch}
\DeclareMathOperator{\reg} {\mathscr Reg}
\newcommand{\et}{\text{\'et}}
\newcommand{\h}{\mathrm{h}}
\newcommand{\cdh}{{\mathrm{cdh}}}
\begin{document}

\begin{abstract}
The purpose of this work is to
 study the notion of bivariant theory introduced by
 Fulton and MacPherson
 in the context of motivic stable homotopy theory,
 and more generally in the broader framework
 of the Grothendieck six functors formalism.
 We introduce several kinds of bivariant theories associated
 with a suitable ring spectrum
 and we construct 
 a system of orientations (called fundamental classes)
 for global complete intersection morphisms between arbitrary
 schemes. These fundamental classes satisfy all the expected properties
 from classical intersection theory
 and lead to Gysin morphisms, Riemann-Roch formulas as well as
 duality statements, valid for general schemes,
 including singular ones and without need of a base field.
 Applications are numerous, ranging from classical theories
 (Betti homology)
 to generalized theories (algebraic K-theory, algebraic cobordism)
 and more abstractly to \'etale sheaves (torsion and $\ell$-adic)
 and mixed motives.
\end{abstract}

\setcounter{tocdepth} 2

\maketitle

\tableofcontents

\section*{Introduction}

\noindent\textbf{\textit{Genealogy} of ideas.}
As this work comes after a long line of ideas about cohomology,
 may be it is worthwhile to draw the following genealogical tree
 of mathematicians and concepts.
$$\label{tree:history}
\hspace{-10pt}\xymatrix@C=-50pt{
&& *+[F]\txt{Riemann, \\ {\it genus, Riemann's inequality} (1857)}
 \ar^/-20pt/{\txt{\it Riemann-Roch formula (1865)}}[dd]
  \ar_/28pt/{\text{\it Betti numbers (1871)}\quad}[ld] & \\
& *+[F]\txt{Poincar\'e, {\it duality} (1895)}
 \ar^/-26pt/{\!\!\!\txt{{\it homology groups}, Noether (1925)}}|/-4pt/{\qquad}[ld]
 \ar|/-4pt/{\qquad}[rd] & &  \\
*+[F]\txt{Borel-Moore homology (1960)} & &
 *+[F]\txt{Grothendieck (ICM 1957) \\ K-theory, GRR formula, duality}\ar[d]
 & \\
&&  *+[F]\txt{Grothendieck, 6 functors formalism}\ar[ld]\ar[rd] & \\
& *+[F]\txt{Bloch-Ogus axioms (1974)}
  && *+[F]\txt{Fulton-MacPherson, \\ bivariant theories (1981)}
}
$$
So our starting point is the bivariant formalism of Fulton
 and MacPherson (\cite{FMP}), which appears
 in one of the ending point of the above tree.
 The ambition of this formalism is to unify homology
 and cohomology into a single theory. More than that:
 it was slowly observed that Poincar\'e duality gives a
 wrong-way functoriality of singular cohomology. The study
 of this phenomena, which was later discovered to occur also in
 homology, was developed from many perspectives all along the
 century. The striking success of the bivariant formalism is to explain
 all these works by the construction of an element,
 generically called \emph{orientation} by Fulton and MacPherson,
 of a suitable bivariant group. 
 
In fact, this theory can be seen as a
 culminating point of the classical theory of characteristic classes.
 A brilliant illustration is given by Fulton and MacPherson's
 interpretation of each of the known Riemann-Roch formulas as a
 comparison between two given orientations.
 Let us now detail these general principles from our point of view,
 based on motivic homotopy theory.

\bigskip

\noindent\textbf{Motivic homotopy theory.}
Our previous works on Gysin morphisms (\cite{Deg8,Deg12})
 naturally lead to the bivariant language. As we will see,
 it allows to treat both the fundamental class of an algebraic cycle
 and the cobordism class of a projective morphism within a single 
 framework. But it was only when the six functors formalism
 was fully available in motivic homotopy theory, after the work
 of Ayoub (\cite{Ayo1}), that we became aware of a plain incorporation
 of bivariant theories into motivic homotopy theory.

One can already trace back this fact in the work of Fulton and
 MacPherson as one of their examples of a bivariant theory,
 in the \'etale setting, already uses the six functors formalism.
 In this work we go further, showing that any representable cohomology
 in $\AA^1$-homotopy admits a canonical extension to a bivariant
 theory. Basically, it applies to any known cohomology theory (in
 algebraic geometry) which is homotopy invariant.

This result is obtained as a by-product of Morel-Voevodsky's motivic
 homotopy theory, but more generally, we use the axioms of
 Ayoub-Voevodsky's cross functors, fundamentally developed by
 Ayoub in \cite{Ayo1}. This theory was amplified later by Cisinski 
 and the author in \cite{CD3} as a general axiomatic of
 Grothendieck' six functors formalism. Such an axiomatic theory,
 called a \emph{triangulated motivic category}, is in the first
 place a triangulated category $\T$ fibered over a 
 fixed suitable category of schemes $\base$ and equipped with
 the classical six operations, $f^*, f_*, f_!, f^!, \otimes, \uHom$.
 There are many concrete realizations of this formalism in the literature
 so we will only recall here the specific axioms added by Ayoub 
 and Voevodsky:
\begin{itemize}
\item \textit{$\AA^1$-homotopy}.-- for any scheme $X$ in $\base$,
 $p:\AA^1_X \rightarrow X$ being the projection of the affine line,
 the adjunction map $1 \rightarrow p_*p^*$ is an isomorphism;
\item \textit{$\PP^1$-stability}.-- for any scheme $X$ in $\base$,
 $p:\PP^1_X \rightarrow X$ being the projection of the projective line
 and
 $s:X \rightarrow \PP^1_X$ the infinite-section,
 the functor $s^!p^*$ is an equivalence of categories.
\end{itemize}
Recall the first property corresponds to the contractibility
 of the affine line and the second one to the invertibility of
 the Tate twist.\footnote{While the first property is often remembered,
 the second one was slightly overlooked at the beginning of the theory
 but appears to be fundamental in the establishment of the six functors
 formalism.} Note already that this axiomatic is satisfied
 in the \'etale setting (torsion and $\ell$-adic coefficients). Besides,
 thanks to
 the work of the motivic homotopy community, there are now many examples
 of such triangulated categories.\footnote{Stable homotopy, mixed motives,
 modules over ring spectra such as K-theory, algebraic cobordism.
 These examples will appear naturally in the course of the text; see
 Example \ref{ex:spectra} in the first place.}
 
\bigskip
 
\noindent\textbf{Absolute ring spectra and bivariant theories.}
From classical and motivic homotopy theories,
 we retain the notion of a ring spectrum but use
 a version adapted to our theoretical context.
 An \emph{absolute ring $\T$-spectrum} will be a cartesian
 section of the category of commutative monoids in $\T$,
 seen as a fibered category over $\base$; concretely,
 the data of a commutative monoid $\E_S$ of the monoidal
 category $\T(S)$ for any scheme $S$ in $\base$,
 with suitable base change isomorphisms $f^*(\E_S) \xrightarrow \sim \E_T$
 associated with any morphism $f:T \rightarrow S$ in $\base$
 (see Definition \ref{df:absolute_sp}).\footnote{The terminology
 is inspired by the terminology used by Beilinson in his formulation
 of the Beilinson's conjectures. In particular, the various absolute 
 cohomologies
 considered by Beilinson are representable by absolute spectra in our sense,
 with $\base$ being the category of all (noetherian, finite dimensional)
 schemes, or that of schemes over a field (for example in the case of Deligne
 cohomology).}
 Our main observation is that to such an object is associated
 not only the classical cohomology theory but also a bigraded 
 bivariant theory: to a separated morphism
 $f:X \rightarrow S$ of finite type and integers $(n,m)  \in \ZZ^2$,
 one associates the abelian group:
% \begin{align}
% \tag{$*$}\label{intro:defBM1}
% \biv \E {n,m}\big(X \xrightarrow f S\big)
%  & =\Hom_{\T(S)}(f_!(\un_X)(n)[m],\E_S) \\
% \tag{$**$}\label{intro:defBM2}
%  & \simeq \Hom_{\T(S)}(\un_X(n)[m],f^!\E_S)
% \end{align}
$$
\biv \E {n,m}\big(X \xrightarrow f S\big)
  =\underset{(*)}{\underbrace{\Hom_{\T(S)}(f_!(\un_X)(n)[m],\E_S)}}
  \simeq \underset{(**)}{\underbrace{\Hom_{\T(S)}(\un_X(n)[m],f^!\E_S)}}
$$
It usually does not lead to confusions to denote these groups
 $\biv \E {n,m}(X/S)$. Now the word bivariant roughly means
 the following two fundamental properties:
\begin{itemize}
\item \textit{Functoriality}.-- the group $\biv \E {n,m}(X/S)$
 is covariant in $X$ with respect to proper $S$-morphisms
 and covariant in $S$ with respect to any morphism;
\item \textit{Product}.-- given $Y/X$ and $X/S$, there exists
 a product:
$$
\biv \E {n,m}(Y/X) \otimes \biv \E {s,t}(X/S)
 \rightarrow \biv \E {n+s,m+t}(Y/S).
$$
\end{itemize}
Of course, these two structures satisfy various properties;
 we refer the reader to Section \ref{sec:ass_biv} for a complete
 description.\footnote{Note also that we get a particular instance of
 what Fulton and MacPherson call a bivariant theory. However,
 in algebraic geometry, this instance is the most common one.}

Several paths lead naturally to our definition.
 First, our initial motivation comes from the case
 where $f=i:Z \rightarrow S$ is a closed immersion.
 In this case, from the localization triangle attached to $i$ by
 the six functors formalism,
 one realizes that the abelian group $\biv \E {**}(Z/X)$
 is nothing else than the $\E$-cohomology group of $X$ with support
 in $Z$, which naturally receives the refined fundamental classes
 we had built earlier (\cite[2.3.1]{Deg10}).
 Secondly, when $\T(X)=D^b_c(X_\et,\Lambda)$, 
 $\Lambda=\ZZ/\ell^n\ZZ, \ZZ_\ell$ or $\QQ_\ell$, for $\ell$ a prime 
 invertible on $X$, and $\E_X=\Lambda_X$ is the constant sheaf,
 formulas $(*)$ and $(**)$ agree with that considered by Fulton and
 MacPherson in \cite[7.4.1]{FMP} for defining a bivariant \'etale theory.
 Finally, when $S=\spec k$ is the spectrum of a field
 and $\E_X=\un_X$, formula $(**)$ gives an interpretation of
 $\biv \E {**}(X/k)$ as the cohomology with coefficients in the object
 $f^!(\un_k)$, which is Grothendieck's formula for the dualizing
 complex.\footnote{Recall the object $f^!(\un_k)$ is indeed a dualizing
 object in $\T(X)$ if $\T$ is $\QQ$-linear (see \cite{CD3})
 or under suitable assumptions of resolution of singularities
 (see \cite{Ayo1}).} In other words, $\biv \E {**}(X/k)$ is
 the analogue of Borel-Moore homology defined in \cite{BM}.\footnote{In
 fact we prove in Example \ref{ex:coh_support}(3)
 that when $\T$ is Morel-Voevodsky's stable homotopy
 category and $\E$ is the spectrum representing Betti cohomology,
 this abelian group is precisely Borel-Moore homology -- the twists
 in that case do not change the group up to isomorphism.}
 This last example justifies our notation: the letters
 ``BM'' stands for Borel-Moore and we use the terminology
 \emph{Borel-Moore homology} for the bivariant theory $\biv \E {**}$.

Besides, we can define other bivariant theories from 
 the absolute ring spectrum $\E$.
 In fact, we remark that one can attach to $\E$
 four theories: cohomology as usual, Borel-Moore homology
 defined by the above formula $(**)$, but also
 cohomology with compact support and (plain) homology.
 The two later theories are in fact bivariant theories:
 they can be defined not only for $k$-schemes but for
 any morphism of schemes (in $\base$). We refer the reader
 to Definition \ref{df:compact_support_theories} but
 one can also guess the formulas: they
 are the variants of formula $(**)$ obtained from the various
 possibility of combining the functors $f_*,f_!, f^*, f^!$.
 Note our main result, which will be stated just below, will give
 new structures for each of the four theories.

\bigskip
 
\noindent\textbf{Orientations and fundamental classes.}
The key idea of this work is that one can recast previously
 known constructions of Gysin morphisms, based on the orientation
 theory of motivic ring spectra, in the bivariant framework
 provided by formula $(*)$.
 Recall for the sake of notations that an orientation $c$
 of an absolute ring $\T$-spectrum $\E$ is roughly a family of classes
 $c_S$ in $\E^{2,1}(\PP^\infty_S)$, cohomology of the infinite
 projective space, for schemes $S$ in $\base$,
 satisfying suitable conditions (see Definition
 \ref{df:orientation} for the precise formulation). 
 
One can attach to the orientation
 $c$ a complete formalism of Chern classes, and even Chern classes
 with supports (see Proposition \ref{prop:chern_classes}
 and \ref{df:Chern_support}). Our main result is the following
 construction of characteristic classes of morphisms, whose
 main advantage against previous constructions is that it works
 for singular schemes and without a base field.
\begin{thmi}[see Theorem \ref{thm:BM_fundamental_classes}]
\label{thmi:BM_fundamental_classes}
Consider an absolute ring $\T$-spectrum $\E$
 with a given orientation $c$.

Then for any global complete intersection\footnote{We say $f$ is a global
 complete intersection if it admits a factorization $f=p \circ i$
 where $p$ is smooth separated of finite type and $i$ is a regular
 closed immersion;} morphism $f:X \rightarrow S$ of relative dimension $d$,
 there exists a class $\fdl_f$ in $\E_{2d,d}^{BM}(X/S)$,
 called the \emph{fundamental class} of $f$ associated with $(\E,c)$,
 with the following properties:
\begin{enumerate}
\item \emph{Normalization}.-- If $f=i:D \rightarrow X$
 is the immersion of a regular divisor then $\fdl_i=c_1^D(\mathcal O(-D))$
 (where $\mathcal O(-D)$ is the dual of the invertible sheaf parametrizing
  $i$; see Example \ref{ex:Navarro}).
\item \emph{Associativity}.-- For any composable morphisms 
 $Y \xrightarrow g X \xrightarrow f S$,
 $\fdl_{f \circ g}=\fdl_g.\fdl_f$.
\item \emph{Base change}.-- Given a morphism $p:T \rightarrow S$
 which is transversal to the map $f$, then $p^*(\fdl_f)=\fdl_{f \times_S T}$
 (see Example \ref{ex:transversal}).
\item \emph{Excess intersection}.-- Given a morphism $p:T \rightarrow S$
 such that the base change $f \times_S T$ is a local complete
 intersection, $p^*(\fdl_f)=e(\xi).\fdl_{f \times_S T}$
 (the class $e(\xi)$ stands for the Euler class of the excess intersection
 bundle, as in Fulton's classical formula for Chow groups;
 see Proposition \ref{prop:excess_generalized}).
\item \emph{Ramification formula}.-- Let $p:Y \rightarrow X$ be a dominant
 morphism of normal schemes, $i:D \rightarrow X$ be the immersion
 of a regular divisor, $(E_j)_{j=1,\hdots,r}$ the family of irreducible
 components of $p^{-1}(D)$ and $m_j$ the ramification index\footnote{or,
 in other words, the intersection multiplicity of $E_j$ in the pullback
 of $D$ along $p$, see \cite[4.3.7]{Ful}; note the integer
 $m_j$ is nothing else than the ramification index in the classical sense
 of the extension $\mathcal O_{Y,E_j}/\mathcal O_{X,D}$
 of discrete valuation rings;}
 of $f$ along $E_j$. Then one has:
$$
p^*(\fdl_X(D))=[m_1]_F.\fdl_Y(E_1)+_F \hdots +_F [m_r]_F.\fdl_Y(E_r);
$$
here $([m]_F.-)$ stands for the power series corresponding to the $m$-th self
 addition in the sense of the formal group law $F$ attached with the 
 orientation $c$ (see Corollary \ref{cor:ramification}).
\item \emph{Duality}.-- When $f:X \rightarrow S$ is smooth
 or is the section of a smooth morphism, the multiplication map:
$$
\biv \E {**}(Y/X) \rightarrow \biv \E {**}(Y/S), y \mapsto y.\fdl_f
$$
is an isomorphism (see Remark \ref{rem:formal_duality}).
\end{enumerate}
\end{thmi}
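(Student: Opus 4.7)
The plan is to construct $\fdl_f$ by reducing, through a chosen factorization $f = p \circ i$ with $p : P \to S$ smooth separated of relative dimension $n$ and $i : X \hookrightarrow P$ a regular closed immersion of codimension $c = n-d$, to two elementary pieces combined by the bivariant product: a smooth fundamental class $\fdl_p \in \biv{\E}{2n,n}(P/S)$ and a regular-immersion fundamental class $\fdl_i \in \biv{\E}{2c,c}(X/P)$, setting $\fdl_f := \fdl_i \refp \fdl_p$. Each factor is built from a purity isomorphism in the six functors formalism composed with a Thom class supplied by the orientation $c$. For smooth $p$, relative purity yields $\Th(T_p) \xrightarrow{\sim} p^!(\un_S)$ in $\T(P)$, and the orientation gives a Thom isomorphism $\Th(T_p) \otimes \E_P \simeq \E_P(n)[2n]$; adjointly, this produces a map $\un_P(n)[2n] \to p^!\E_S$ which via formula $(**)$ defines $\fdl_p$. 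For $i$ with normal bundle $N = N_{X/P}$, deformation to the normal cone provides $\Th(N) \xrightarrow{\sim} i^!(\un_P)$, and the Thom class again produces $\fdl_i$ analogously.

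The main technical obstacle is independence of the factorization; with this in hand, associativity is essentially forced. Two factorizations of $f$ are compared through a common refinement obtained by product and diagonal, reducing the problem to two universal compatibilities: that $\fdl_p$ respects composition of smooth morphisms (via naturality of smooth purity and multiplicativity $\Th(V \oplus W) \simeq \Th(V) \otimes \Th(W)$ of Thom classes, applied to the relative tangent exact sequence), and similarly for $\fdl_i$ under composition of regular closed immersions (applied to the conormal exact sequence). The mixed compatibility of interchanging a smooth factor across the decomposition uses base change of purity along smooth morphisms, which is part of the six functors package in $\T$. The same machinery then yields property (2) for composable global complete intersection morphisms.

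The remaining properties are derived rather than foundational. Normalization (1) is the tautological identification, for a line bundle $L = \cO(-D)$, of its Thom class with the first Chern class of its dual, yielding $\fdl_i = c_1^D(\cO(-D))$ with the intended sign convention. Base change (3) is immediate since a transversal pullback of $p \circ i$ remains of the same form and purity commutes with such pullbacks. Excess intersection (4) follows by comparing $\Th(N_{X/P})$ with $\Th(\xi) \otimes \Th(N_{X \times_S T / P \times_S T})$ via the excess short exact sequence of vector bundles and multiplicativity of Thom classes, in direct parallel with Fulton's argument for Chow groups. The ramification formula (5) reduces via (1) to pulling back $c_1(\cO(D))$: since $p^*\cO(D) = \cO(\sum_j m_j E_j)$, expansion through the formal group law $F$ associated with $c$ gives $c_1(p^*\cO(D)) = [m_1]_F \refp c_1(\cO(E_1)) +_F \cdots +_F [m_r]_F \refp c_1(\cO(E_r))$, which after re-applying (1) at each $E_j \hookrightarrow Y$ yields the stated formula. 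Finally, for smooth $f$ the duality (6) holds because $\fdl_f$ is then constructed out of isomorphisms, so the bivariant product with it is an invertible shift-twist at the level of morphism groups; the section-of-smooth case reduces to this via the associativity (2) applied to the structural smooth morphism.
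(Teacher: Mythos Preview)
Your overall architecture matches the paper's: factor $f=p\circ i$, build $\fdl_p$ for smooth $p$ from relative purity plus the Thom isomorphism, build $\fdl_i$ for the regular closed immersion, set $\fdl_f=\fdl_i.\fdl_p$, and reduce well-definedness to compatibilities under composition. The derived properties (1), (3)--(5) are handled as in the paper.

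There is, however, a genuine gap in your construction of $\fdl_i$. You write that ``deformation to the normal cone provides $\Th(N)\xrightarrow{\sim} i^!(\un_P)$''. This is exactly what is \emph{not} available in general: the Morel--Voevodsky purity theorem (Theorem~\ref{thm:purity_MV}) applies to closed pairs $(P,X)$ with \emph{both} $P$ and $X$ smooth over the base $S$, but here $X$ is an arbitrary scheme---it is only the source of a gci morphism, with no smoothness hypothesis over $S$. Without an absolute purity assumption on $\E$ (Definition~\ref{df:abs_pur}), the deformation maps $d_0^*,d_1^*$ need not be isomorphisms, and your proposed $\fdl_i$ is simply undefined. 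The paper circumvents this by using Navarro's construction (Paragraph~\ref{num:fdl_regular}): one passes to the blow-up $B\to X$ along $Z$, where the exceptional divisor is a $\PP^{n-1}$-bundle, builds an explicit class from Chern classes and the first Chern class with support $c_1^P(\cO_B(-1))$, and descends via the split exact sequence of Proposition preceding~\ref{num:fdl_regular}. This requires only the projective bundle formula, not purity.

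A second, lesser issue: your ``mixed compatibility'' step is where the real work lies. The paper isolates this as the key Lemma~\ref{lm:key}, proving $\fdl_s.\fdl_f=1$ for a section $s$ of a smooth $f$; the argument is a nontrivial diagram chase through the deformation space $D_S(X)$ and the explicit shape of Ayoub's purity isomorphism, not a formal consequence of ``base change of purity''. Relatedly, your argument for (6) in the section case is not quite right: associativity gives $\fdl_s.\fdl_p=1$, which shows multiplication by $\fdl_s$ has a one-sided inverse, but the paper instead identifies $\fdl_s$ with the class of Proposition~\ref{prop:fdl_retraction} (built directly from the Morel--Voevodsky isomorphism, which \emph{does} apply since $s$ has a smooth retraction) and reads off that it is universally strong.
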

Before discussing uniqueness statements of our construction,
 let us explain the existence part. As mentioned before the statement
 of the theorem, we use previous methods of construction of Gysin
 maps. The first one, in the case where $f$ is a regular closed immersion,
 is a construction whose motivic homotopy formulation is due to
 Navarro (cf. \cite{Nav}), based on a method of Gabber 
 (cf. \cite[chap. 7]{Gabber}) who treated the case of \'etale
 cohomology.\footnote{The generalization of the method of Gabber to motivic 
 homotopy is non trivial because the Chern classes associated to
 an oriented ring spectrum are in general non additive:
 $c_1(L \otimes L') \neq c_1(L)+c_1(L')$. See Prop.
 \ref{prop:chern_classes}.}
 The second method, for a smooth quasi-projective morphism $f$,
 is given as a corollary of Ayoub's fundamental work on the six
 functors formalism in motivic homotopy (cf. \cite{Ayo1}).
 Actually, the fundamental class $\fdl_f$ is essentially induced
 by the purity isomorphism associated with $f$ by the six functors
 formalism (see Paragraph \ref{num:fdl_smooth} for the actual construction).

After realizing that these two methods can actually be formulated
 in the bivariant language, the main point in the proof of the preceding
 theorem is to show that ``they glue''. This can be expressed
 in a simple equation that we let the reader discover in 
 the key lemma \ref{lm:key}.

From the description of our construction, the uniqueness of the
 family of classes $\fdl_f$ is clear: they are the unique
 formalism that extends the fundamental classes obtained 
 respectively from Navarro's and Ayoub's methods. A more satisfactory
 statement is obtained if one restricts to quasi-projective
 local complete intersection morphisms $f$: then the family of fundamental
 classes $\fdl_f$ is uniquely characterized by properties
 (1), (2), (3), (4) and (6). Actually, we can even replace (4)
 by the particular case where $p$ is a blow-up: see Theorem
 \ref{thm:uniqueness}.

\bigskip

\noindent\textbf{Riemann-Roch formulas.}
According to Fulton and MacPherson,
 one of the motivations for developing bivariant theory was the aim
 to
 synthesize several Riemann-Roch formulas (respectively
 by Grothendieck, by Baum, Fulton and MacPherson,
 and by Verdier; see \cite[section 0.1]{FMP}).
 The underlying principle is that the classical Chern character
 corresponds in fact to a natural transformation of bivariant
 theories, suitably compatible with the structure of a bivariant
 theory (called a \emph{Grothendieck transformation} in \cite{FMP}).
 Then the general Riemann-Roch formula essentially comes from the 
 effect of a Grothendieck transformation to a given theory
 of fundamental classes, as the one given in the above theorem.

The same goal has mainly contributed to our choice of framework.
 Indeed we show how to produce Grothendieck transformations 
 by considering
 a suitable functor $\varphi^*:\T \rightarrow \T'$
 --- more precisely a premotivic adjunction of motivic
 triangulated categories in the sense of \cite{CD3} ---
 an absolute ring $\T$-spectrum $\E$ (resp. $\T'$-spectrum $\F$)
 and a morphism of absolute ring $\T'$-spectra:
$$
\phi:\varphi^*(\E) \rightarrow \F;
$$
see Definition \ref{df:morph_abs_sp} for details.
 It is clear from our choice of definition that there
 is an induced Grothendieck transformation:
$$
\phi_*:\biv \E {**}(X/S) \rightarrow \biv \F {**}(X/S).
$$
Then we can prove the following general Riemann-Roch theorem.
\begin{thmi}[see Theorem \ref{thm:RR}]
Consider a morphism $(\varphi^*,\phi)$ as above
 and respective orientations $c$ of $\E$ and $d$ of $\F$.

Then there exists a canonical \emph{Todd class} morphism:
$$
\td_\phi:K_0(X) \rightarrow \F^{00}(X)^\times
$$
from the Grothendieck group of vector bundles on $X$
 to the group of units of $\F$-cohomology classes on $X$
 with degree $(0,0)$, natural in $X$ and such that for
 any global complete intersection morphism $f:X \rightarrow S$
 with virtual tangent bundle $\tau_f$, the following relation
 holds:
$$
\phi_*(\fdl_f^c)=\td_\phi(\tau_f).\fdl_f^d
$$
where $\fdl_f^c$ (resp. $\fdl_f^d$) is the fundamental
 class associated with $f$ and $(\E,c)$ (resp. $(F,d)$)
 in the above theorem.
\end{thmi}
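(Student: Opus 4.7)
The plan is to follow the classical Grothendieck--Riemann--Roch pattern in the present bivariant setup: construct $\td_\phi$ by comparing two orientations of $\F$, then reduce the identity to the two elementary cases of smooth morphisms and regular closed immersions via the associativity of fundamental classes stated as property (2) of Theorem \ref{thmi:BM_fundamental_classes}. In each elementary case the fundamental class is essentially a Thom class, and the identity becomes the defining identity of $\td_\phi$.

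\emph{Construction of $\td_\phi$.} The composite $c':=\phi\circ \varphi^*(c)$ is a second orientation of $\F$. For any line bundle $L/X$ one has two first Chern classes $c_1^{c'}(L),\,c_1^d(L) \in \F^{2,1}(X)$, and the projective bundle formula furnishes a unique power series $h_\phi(t)\in \F^{00}(X)[[t]]$ with $h_\phi(0)=0$ such that $c_1^{c'}(L)=h_\phi(c_1^d(L))$; the derivative $h_\phi'(0)$ is a unit in $\F^{00}(X)$ by the standard comparison of first Chern classes on $\PP^1$. I set $\td_\phi(L):=h_\phi(c_1^d(L))/c_1^d(L)\in \F^{00}(X)^\times$, and extend to vector bundles via the splitting principle by the formula $\td_\phi(V)=\prod_i \td_\phi(L_i)$ for any virtual decomposition $V=\sum_i L_i$. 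The usual splitting-principle argument shows that $\td_\phi$ is well-defined and multiplicative on short exact sequences, hence factors through $K_0(X)$.

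\emph{Reduction and elementary cases.} Factor the global complete intersection $f$ as $p\circ i$ with $p$ smooth separated and $i$ a regular closed immersion. By associativity of fundamental classes, and the fact that $\phi_*$ is a Grothendieck transformation of bivariant theories (so preserves the bivariant product), one has $\phi_*(\fdl_f^c)=\phi_*(\fdl_i^c)\cdot \phi_*(\fdl_p^c)$; additivity of the virtual tangent bundle, $\tau_f=i^*\tau_p+\tau_i$ in $K_0(X)$, together with multiplicativity of $\td_\phi$, yields $\td_\phi(\tau_f)=\td_\phi(\tau_i)\cdot i^*\td_\phi(\tau_p)$. The statement therefore reduces to the cases $f=p$ smooth and $f=i$ a regular closed immersion. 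In both cases, the fundamental class is built from the Thom class of $\tau_f$ (Ayoub purity in Paragraph \ref{num:fdl_smooth} for $p$; the Navarro--Gabber deformation to the normal cone for $i$), so the desired formula reduces to the Thom-class identity
$$
\phi_*\bigl(\thom^c(V)\bigr)=\td_\phi(V)\cdot \thom^d(V)
$$
for the relevant vector bundle $V$. By the splitting principle, this in turn reduces to the line bundle case, where it is exactly the definition of $\td_\phi(L)$.

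\emph{Main obstacle.} The most delicate point is the bookkeeping of twists and the compatibility of the orientation change $c\mapsto \phi_*(c)$ with the Navarro--Gabber construction of $\fdl_i$ for a regular closed immersion: one must verify that the gluing between the smooth and closed-immersion cases (key lemma \ref{lm:key}) remains compatible after applying $\phi_*$, and that all twists land exactly in bidegree $(0,0)$ so that $\td_\phi$ takes values in $\F^{00}(X)^\times$. Since oriented Chern classes are in general non-additive, the passage from line bundles to vector bundles further requires the Cartan formula for Thom classes expressed through the formal group laws $F_c$ and $F_d$, combined with the comparison power series $h_\phi$. Once these functorial compatibilities are established, the proof is essentially formal.
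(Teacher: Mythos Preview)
Your strategy is exactly the paper's: build $\td_\phi$ from the comparison power series, use associativity of fundamental classes together with the fact that $\phi_*$ is a Grothendieck transformation to reduce to the smooth and regular-closed-immersion cases, then reduce each to a Thom-class identity handled by the splitting principle. Two corrections are needed.

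First, there is a sign error in your Todd class. In both elementary cases the fundamental class corresponds to the refined Thom class of $-\tau_f$, not of $\tau_f$: for smooth $p$ one has $\fdl_p=\pur_p^{\prime*}\big(\rthom(-T_p)\big)$ (Remark \ref{rem:fdl_smooth}(3)), and after deformation to the normal cone a regular closed immersion reduces to a zero section $s$ with $\fdl_s=\rthom(N_s)$ while $\tau_s=-[N_s]$. Your Thom-class identity $\phi_*\big(\rthom^c(V)\big)=\td_\phi(V)\cdot\rthom^d(V)$ is correct with your definition, but plugging in $V=-\tau_f$ it yields $\phi_*(\fdl_f^c)=\td_\phi(-\tau_f)\cdot\fdl_f^d$, not $\td_\phi(\tau_f)$. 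The fix is to invert your definition: set $\td_\phi(L)=\big[t/h_\phi(t)\big]\big(c_1^d(L)\big)$, which is precisely the paper's normalisation (Proposition \ref{prop:todd}), and then the key lemma reads $\phi_*\big(\rthom^\E(v)\big)=\td_\phi(-v)\cdot\rthom^\F(v)$.

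Second, your ``Main obstacle'' paragraph misidentifies the difficulties. The key Lemma \ref{lm:key} is used once, to \emph{establish} associativity of the system $(\fdl_f)$; once associativity holds (for both $(\E,c)$ and $(\F,d)$) and $\phi_*$ preserves bivariant products, the reduction to the two elementary cases is purely formal and requires no further compatibility with the gluing. Likewise, the passage from line bundles to vector bundles does not involve the formal group laws at all: it uses only the multiplicativity of Thom classes under short exact sequences (Lemma \ref{lm:product&thom}) together with the splitting principle. The genuine compatibilities you need are that $\phi_*$ commutes with the purity isomorphism $\pur'_p$ for smooth $p$ (automatic, since $\pur'_p$ is built from exchange transformations) and with the pullbacks $d_0^*,d_1^*$ in the deformation diagram for $i$ (this uses transversal base-change stability of fundamental classes, Proposition \ref{prop:excess_generalized}); both are immediate.
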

At this point of the theory, the proof is straightforward
 and actually essentially works as the original proof of
 Grothendieck. But the beauty of our theorem is that it
 contains all previously known Riemann-Roch formulas
 as a particular case. We will illustrate this by 
 the concrete applications to come.
 
\bigskip

\noindent\textbf{Gysin morphisms.}
Let us quit the realm of general principles now
 and show the new results that our theory brings.
 As devised by Fulton and MacPherson, the interest
 of fundamental classes\footnote{Recall they call them
 ``orientations''.} is that they induce wrong-way morphisms
 in cohomology, and in fact as we remarked in this paper,
 in the four theories associated to $\E$ above.
 We generically call
 these morphisms \emph{Gysin morphisms}.
 Here are our main examples of applications.
\begin{itemize}
\item \emph{\'Etale cohomology} (resp. \emph{Borel-Moore
\'etale homology})
 with coefficients in
 a ring $\Lambda$ is \emph{covariant with respect
 to proper global complete intersection morphisms}
 (resp. \emph{contravariant with respect
 to global complete intersection morphisms})
 $f:X \rightarrow S$
 provided $\Lambda$ is a torsion ring with exponent invertible
 on $S$ or $\Lambda=R_\ell, Q_\ell$
 where $R_\ell$ is a complete discrete valuation ring
 over $\ZZ_\ell$, $Q_\ell=\mathrm{Frac}(R_\ell)$ and
 we assume $\ell$ invertible on $S$
 (see respectively Examples \ref{ex:cov_Gysin_etale&motivic} and
  \ref{ex:Gysin_BM_hlg}).
 This was previously only known for regular closed immersions
 by \cite[chap. 7]{Gabber} or for flat proper morphisms
 by \cite[XVIII, 2.9]{SGA4}.
\item \emph{Higher Chow groups} are \emph{contravariant 
 with respect to global complete intersection morphisms}
 $f:X \rightarrow S$ provided the residue fields of $S$ have
 all the same characteristic exponent, say $p$, and we invert
 $p$ (see Example \ref{ex:Gysin_BM_hlg}).
  Besides the trivial case of flat morphisms,
 only the case where $X$ and $S$ are smooth was known.
\item \emph{Integral motivic cohomology} in the sense of Spitzweck
 (cf. \cite{Spi}) --- which implies the case of rational
 motivic cohomology as defined in \cite{CD3} ---
 is \emph{covariant with respect to proper global complete
 intersection morphisms}
 (see Example \ref{ex:cov_Gysin_etale&motivic}). The case of 
 projective morphisms between regular schemes (resp. any scheme)
 was obtained in \cite{Deg10} (resp. \cite{Nav}).
\item \emph{Betti homology} of complex schemes
 and its analogue \emph{\'etale homology} with coefficients in a ring
 $\Lambda$
 as above (not to be mistaken with Borel-Moore \'etale homology)
 are \emph{contravariant with respect to proper global complete 
 intersection morphisms}. This example uses a construction due 
 to A. Khan (see Paragraph \ref{num:Gysin_support}). 
\end{itemize}
The general constructions of these Gysin type morphisms
 are given in Definition \ref{df:Gysin} and
 Paragraph \ref{num:Gysin_support}.
Note that more than a mere existence theorem we also obtain,
 as a consequence of the properties of fundamental classes
 stated in the preceding theorem, all of the expected properties
 of these Gysin morphisms (see Section \ref{sec:Gysin}).
 This also includes Grothendieck-Riemann-Roch formulas for
 Gysin morphisms (see in particular Proposition \ref{prop:GRR}).
 May be it is worth to formulate in this introduction
 the following new  formula,
 analogue to Verdier's Riemann-Roch formula for homology.
\begin{thmi}[See Example \ref{eq:GRR_gysin}]
Let $k$ be a field.

Let $f:Y \rightarrow X$ be a global complete intersection
 morphisms of $k$-schemes of finite type.
 Then we get the following commutative diagram:
$$
\xymatrix@=24pt@C=54pt{
G_n(X)\ar^{f^*}[r]\ar_{\ch_X}[d]
 & G_n(Y)\ar^{\ch_Y}[d] \\
\bigoplus_{i \in \ZZ} CH_i(X,n)_\QQ\ar^{\td(\tau_f).f^*}[r]
 & \bigoplus_{i \in \ZZ} CH_i(Y,n)_\QQ
}
$$
where $\ch$ is a Chern character isomorphism,
 $\td$ is the Todd class in rational motivic cohomology
 (which is acting on higher Chow groups),
 the upper (resp. lower) map $f^*$ is the Gysin morphism associated
 with $f$ on Thomason's $G$-theory, or equivalently
 Quillen's K'-theory (resp. higher Chow groups).
\end{thmi}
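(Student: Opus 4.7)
The plan is to recognise the diagram as a direct instance of the general Riemann--Roch theorem (the preceding boxed statement) applied to the Chern character morphism between the algebraic K-theory spectrum and the rational motivic cohomology spectrum.

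\smallskip

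\emph{Step 1 (identification of the columns as Borel--Moore theories).} First, I would express both $G_n(X)$ and $\bigoplus_i CH_i(X,n)_\QQ$ as Borel--Moore homology groups of the form $\biv{\E}{**}(X/k)$ for two specific absolute ring spectra. On the K-theoretic side, by Thomason's comparison $G_n(X)=K'_n(X)$ combined with Cisinski's representability theorem, one has $G_n(X)\simeq \biv{\KGL}{n,0}(X/k)$ using Bott periodicity of $\KGL$. On the motivic side, by Bloch's theorem and its motivic-spectrum reformulation (through $\HM{\QQ}$, or equivalently Spitzweck's $\HB$ tensored with $\QQ$), the higher Chow groups assemble into $\bigoplus_i CH_i(X,n)_\QQ\simeq \bigoplus_i \biv{\HM{\QQ}}{2i+n,i}(X/k)$. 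Under these identifications I would verify that the classical Gysin maps $f^*$ on $G$-theory (i.e.\ $\Lf^*$, well-defined because $f$ is lci, hence of finite Tor-dimension) and on higher Chow groups (the refined pullback after moving to a smooth factorisation) coincide with multiplication by the respective fundamental classes $\fdl_f^{\KGL}$ and $\fdl_f^{\HM{\QQ}}$, using the product structure of the bivariant formalism.

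\smallskip

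\emph{Step 2 (Chern character as a morphism of absolute ring spectra).} Next I would invoke the construction (due to Riou, reformulated in the bivariant language in this paper) of the rational Chern character as a morphism of absolute ring spectra
\[
\phi: \KGL_\QQ \longrightarrow \bigoplus_{i\in\ZZ} \HM{\QQ}(i)[2i],
\]
and equip $\KGL$ with its canonical multiplicative orientation $c$ and $\HM{\QQ}$ with its canonical additive orientation $d$. Since $\phi$ is induced from the logarithm comparing the multiplicative and additive formal group laws, the associated Todd class morphism $\td_\phi$ produced by the second boxed theorem agrees with the classical Todd class of a virtual vector bundle. Applying that theorem to the global complete intersection morphism $f$ with virtual tangent bundle $\tau_f$, I obtain the fundamental equation
\[
\phi_*(\fdl_f^c)=\td_\phi(\tau_f)\,.\,\fdl_f^d.
\]

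\smallskip

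\emph{Step 3 (combining).} For $\alpha\in G_n(X)$, using functoriality of $\phi_*$ in the bivariant product and the identification of step 1, the two ways of traversing the diagram become
\[
\ch_Y(f^*\alpha)=\phi_*(\fdl_f^c\,.\,\alpha)=\phi_*(\fdl_f^c)\,.\,\phi_*(\alpha)=\td(\tau_f)\,.\,\fdl_f^d\,.\,\ch_X(\alpha),
\]
which is precisely the bottom-horizontal composite applied to $\ch_X(\alpha)$; this gives the commutativity of the square.

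\smallskip

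\emph{Main obstacle.} The technical heart is Step 1, specifically the claim that the classical $K$-theoretic Gysin pullback $\Lf^*$ on coherent sheaves coincides with multiplication by the bivariant fundamental class $\fdl_f^{\KGL}$ produced by our motivic construction (which is built, via the key lemma, out of Navarro's construction for regular closed immersions and Ayoub's purity isomorphism for the smooth part). Matching these two constructions requires comparing the Koszul-type resolution that governs the derived pullback of a perfect complex along an lci morphism with the purity / deformation-to-the-normal-cone recipe yielding $\fdl_f^{\KGL}$; the verification reduces, by the associativity property (2), to the two extreme cases of a smooth morphism and a regular closed immersion, where the identification is classical.
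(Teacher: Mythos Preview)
Your approach is essentially the same as the paper's: identify the columns as Borel--Moore theories over $\spec k$, recognise the Chern character as a morphism of absolute ring spectra, and then invoke the general Riemann--Roch formula (more precisely, its corollary for Gysin maps, Proposition~\ref{prop:GRR}) together with the computation of $\td_\phi$ from Example~\ref{ex:generalizedRR}(3).

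Two points of difference are worth flagging. First, for the identification $G_n(X)\simeq \biv{\KGL}{n}(X/k)$ the paper invokes Jin's result (Example~\ref{ex:biv_theories}(3)), not Cisinski's. Second, and more importantly, what you call the ``main obstacle'' --- matching the bivariant Gysin map $x\mapsto \fdl_f^{\KGL}.x$ with the classical derived pullback $\Lf^*$ on $G$-theory --- is \emph{not} carried out in the paper. The paper explicitly says in Example~\ref{ex:Gysin_BM_hlg}: ``This contravariance coincides with the classical one but we will not check that here.'' In other words, in the paper's reading of the theorem, the upper $f^*$ is by definition the Gysin morphism of Definition~\ref{df:Gysin} transported through Jin's isomorphism, and the commutativity is then immediate from Proposition~\ref{prop:GRR}. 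Your Step~3 computation is exactly this argument. So your proof is complete for the statement as the paper understands it; the extra verification you sketch would be needed only if one insists that $f^*$ on the top row is the classical $\Lf^*$, and that comparison lies outside the scope of the paper.
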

Note this theorem makes use of a recent result of Jin (cf. \cite{Jin2})
 about the representability of $G$-theory.

\bigskip

\noindent\textbf{Comparison with previous works in motivic homotopy theory.}

Orientation theory in motivic homotopy theory has been developed
 from two different points of view.

In the first point of view, one works with suitable axioms on certain
 functors, contravariant (cohomology theories)
 or covariant (homology theories). Among the axioms are the homotopy
 invariance property and a suitable orientation property (analogue to
 the one considered here). Then one deduces properties and constructions
 from the axioms, among which the construction of Gysin morphisms.
 This is the approach of Panin (\cite{Panin1, PaninRR, Panin2})
 on the one hand and Morel and Levine (\cite{ML}) on the other hand.
 Note the axiomatic of Panin is cohomological while that of Morel and
 Levine is homological. Besides, Morel and Levine construct the universal
 oriented (Borel-Moore) homology theory, the \emph{algebraic cobordism}.
 Both axiomatic are considered for schemes over a given base
 field.\footnote{More recently, this kind of axiomatic as well as the
 universal property of algebraic cobordism, has been extended to the framework
 of derived algebraic geometry by Lowrey and Sch\"urg in \cite{LS1}.}

In the second point of view, one studies oriented ring spectra
 in the motivic stable homotopy category,
 following the classical approach of Adams in algebraic topology.
 This was suggested by Voevodsky, and first worked out by Vezzosi (\cite{Vez})
 and independently Borghesi (\cite{Bor}). The construction
 of Gysin morphisms in this context was done in \cite{Deg8}
 (see also \cite[8.3, 8.4]{Deg1}). 
 The construction of \emph{loc. cit.} works other an arbitrary base
 and is internal: for example, it gives Gysin morphisms on the level
 of Voevodsky motives and allows one to get duality for motives of smooth
 projective schemes over an arbitrary base (\cite[5.23]{Deg8}),
 a result that previously uses resolution of singularity
 (see \cite[chap. 5, 4.3.2]{FSV}). It was later generalized
 in the works of the author (\cite{Deg12}) and then 
 in the work of Navarro (\cite{Nav}).

The two approaches are closely related. On the one hand,
 the results of Panin, and Levine-Morel, can be applied to the
 corresponding functor associated with a ring spectrum, giving back
 Gysin morphisms obtained from the second point of view. They are
 therefore more general, though most of the known homotopy invariant
 cohomology (homology) theories in algebraic geometry are known
 to be representable in the motivic stable homotopy category.
 A notable exception is the algebraic cobordism of Lowrey and Sch\"urg,
 a version of Levine-Morel's algebraic cobordism
 defined even over a field of positive characteristic. In this later case,
 the corresponding cohomology theory is not known to be representable.
 On the other hand, the methods used for oriented ring spectra 
 can be adapted to the (cohomological) functorial point of view:
 see \cite[\textsection 6]{Deg12}. Besides the methods are stronger
 as they produce Gysin morphisms internally, for example on motives
 or on modules over an oriented ring spectrum.

Compared to these previous works, the contribution of this work is two-fold.
 Firstly, we associate to any ring spectrum a canonical bivariant theory,
 called here the Borel-Moore homology --- as well as variants with compact support.
 Secondly, when the ring spectrum is oriented, we associate a system of 
 fundamental classes in the corresponding bivariant theory
 to a suitable class of local complete intersection
 morphisms and we show this construction recovers all the previously
 known Gysin morphisms, both in cohomology and in Borel-Moore homology
 --- it also gives Gysin morphisms for the compactly supported versions.
 These new Gysin maps
 are defined for more theories and/or for a larger class
 of morphisms than before.\footnote{In Panin's work,
 it was defined in cohomology for projective morphisms between smooth schemes
 over a field. In Levine's work, it was defined in Borel-Moore homology
 for smooth morphisms between quasi-projective schemes over a field.
 In Navarro's work, it was defined in cohomology for projective local
 complete intersection morphisms.}

Note finally that the consideration of bivariant theories in motivic homotopy theory,
 from the functorial point of view, was also considered by Yokura in \cite{Yok}.
 We note here that
 the bivariant theories associated with an oriented ring spectrum
 defined in this work
 (Definition \ref{df:bivariant_no_prod}) do satisfies the axioms
 of \cite{Yok}. Several definitions of a bivariant algebraic cobordism theory,
 corresponding to Levine and Morel algebraic cobordism, have been introduced
 in the literature (see \cite{GK, Sar, Yok2}). A comparison of our definition
 in the case of the ring spectrum $\MGL$ (see Example \ref{ex:biv_theories})
 with the definitions of these authors,
 when restricting to fields of characteristic $0$
 and to the graded parts $(2n,n)$, is an interesting problem
 (based on the representability of algebraic cobordism in characteristic $0$
  proved by Levine in \cite{Lev}).
	
\bigskip

\noindent\textbf{Further applications and future works.} 
Almost by definition of the bivariant theory $\biv \E {**}$,
 there is a categorical incarnation of the fundamental
 class associated with a global complete intersection morphism
 $f:X \rightarrow S$
 in the above Theorem \ref{thmi:BM_fundamental_classes},
 closer to the spirit of Grothendieck six functors formalism; it
 corresponds to a map:
$$
\tfdl_f:\E_X(d)[2d] \rightarrow f^!(\E_S),
$$
where $d$ is the relative dimension of $f$. In the end of this paper,
 we study conditions under which this map is an isomorphism.\footnote{Note
 this condition is stronger than the property of being strong, as defined
 in \cite{FMP}; see Definition \ref{df:strong_orientation}.}
 In brief, this will always be the case when $f$ is an $S$-morphism
 between smooth $S$-schemes, and in general, it is related to the
 absolute purity property. Let us also indicate that this property
 implies several duality statements, in the style of Bloch-Ogus duality
 giving us finally the link of our work with the last ending point
 of the historical tree on page \pageref{tree:history}. We refer the reader
 to Section \ref{sec:duality} for details.

\bigskip

Our main motivation for developing a general theory of fundamental classes,
 without base field, is the following theorem 
 which we state here using the definitions of \cite{BD1} and \cite{Ros}:
\begin{pthm}
Let $S$ be a base scheme with a dimension function $\delta$
 and consider further the following assumptions:
\begin{itemize}
\item $S$ is any noetherian finite dimensional scheme
 and $\Lambda$ is a $\QQ$-algebra;
\item $S$ is a scheme defined over a field of characteristic
 exponent $p$ and $\Lambda$ is a $\ZZ[1/p]$-algebra.
\end{itemize}
Then for any motive $M$ in the heart of the $\delta$-homotopy
 $t$-structure of $\DM(S,\Lambda)$, the functor $\hat H_0^\delta(M)$
 defined in \cite{BD1} admits a canonical structure, functorial in $M$,
 of a cycle module over $S$ in the sense of Rost \cite{Ros}.
\end{pthm}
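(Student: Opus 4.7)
The plan is to identify $\hat H_0^\delta(M)$, for $M$ in the heart of the $\delta$-homotopy $t$-structure, as a functor packaging together bivariant groups attached to $M$ at the various points of $S$, and to equip this package with the four structure maps of a Rost cycle module using the fundamental classes and the bivariant pairings developed in the paper. Concretely, for a scheme point $x \in S$ and a finitely generated field extension $E/\kappa(x)$, one sets $\hat H_0^\delta(M)(E)$ to be the filtered colimit of $\hat H_0^\delta$ applied to the pullbacks of $M$ along smooth affine models $U \to S$ whose generic points dominate $\spec E$. The hypothesis that $M$ lies in the heart ensures that the resulting (graded) abelian groups are discrete and behave well under this limiting procedure.

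The four pieces of data of a cycle module are then produced as follows. Restriction along a field extension $E'/E$ comes from the contravariant pullback functoriality in $\DM(-,\Lambda)$. Corestriction along a finite extension comes from the proper covariance of Borel-Moore homology applied to the fundamental class, in degree $(0,0)$, of the finite global complete intersection morphism $\spec E' \to \spec E$. The action of Milnor K-theory comes from the identification of $\hat H_0^\delta(\un_S)$ with Milnor K-theory of residue fields (after Morel in the rational case and after Voevodsky in the $\ZZ[1/p]$-setting in positive characteristic), combined with the multiplicative bivariant pairing attached to the ring structure on $\un_S$. Finally, the residue along a discrete valuation $\cO$ with fraction field $K$ and residue field $k$ is defined as the boundary of the localization triangle attached to the closed immersion $i:\spec k\to \spec \cO$; by Theorem \ref{thmi:BM_fundamental_classes}(1) this boundary is precisely multiplication by the fundamental class $\fdl_i$.

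The bulk of the work is the verification of Rost's axioms. The axioms coupling restriction, corestriction and Milnor symbol multiplication (R1a--e, R2) follow from base change, the projection formula and the multiplicativity of the bivariant pairing recorded in Section \ref{sec:ass_biv}. The axioms involving residues (R3a--e) are precisely where the ramification formula, property (5) of Theorem \ref{thmi:BM_fundamental_classes}, is decisive: on a normal curve dominating the spectrum of a DVR the sum of residues at the closed points is computed as the base change of a single fundamental class, and the ramification formula expresses this base change as a sum $\sum_j [m_j]_F . \fdl_{i_j}$ over the irreducible components of the fibre. For the universal case $\E=\un_S$ the formal group law $F$ is additive, so the computation recovers Rost's R3 in its classical form. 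The finite support and continuity axioms (FD, C) follow from the constructibility of the heart of the $\delta$-homotopy $t$-structure established in \cite{BD1}.

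The hardest step will be Weil reciprocity on a complete (possibly singular) curve, namely axiom R3c: one needs to reduce a global identity of residues at all closed points to an equality of fundamental classes on a $1$-dimensional proper scheme. The ramification formula combined with the uniqueness statement of Theorem \ref{thm:uniqueness} gives this equality on the smooth locus, and the singular points are controlled via normalization and blow-ups using excess intersection (property (4)). A secondary difficulty lies in the integral identification of $\hat H_0^\delta(\un_S)$ with Milnor K-theory in the $\ZZ[1/p]$-setting over a field of characteristic exponent $p$, which requires piecing together several motivic-cohomology computations; this is the reason for the hypothesis on $\Lambda$ appearing in the second bullet of the statement.
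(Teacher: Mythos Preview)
The paper does not contain a proof of this statement. It appears in the introduction under ``Further applications and future works'' and is explicitly flagged as \emph{work in progress}: the author says that the Gysin morphisms and fundamental classes constructed in the body of the paper ``are the essential tool to obtain the corestriction and residue operations of the structure of a cycle module,'' but the actual construction and the verification of Rost's axioms are deferred to separate work. There is therefore no proof in the paper against which to compare your proposal.

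Your sketch is nonetheless in the spirit of what the paper announces: proper pushforward for corestriction and the localization boundary for residues are exactly the operations the author points to. One concrete correction, however: you claim that ``by Theorem~\ref{thmi:BM_fundamental_classes}(1) this boundary is precisely multiplication by the fundamental class $\fdl_i$.'' This is not right. The residue is the connecting map $\partial_i$ of the localization long exact sequence (Proposition~\ref{prop:basic_bivariant}(3)), which goes from $\biv \E {n,m}(U/S)$ to $\biv \E {n-1,m}(Z/S)$; multiplication by $\fdl_i$ is a different map, with a different source and target. Theorem~\ref{thmi:BM_fundamental_classes}(1) is a normalization statement identifying $\fdl_i$ with a refined first Chern class for a regular divisor, not a computation of $\partial_i$. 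The fundamental classes and the ramification formula enter rather in controlling how residues interact with corestriction and base change (the R3 axioms), which is indeed where you invoke them later in your outline.
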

Indeed, the Gysin morphisms (resp. fundamental classes)
 constructed here are the essential tool to obtain
 the corestriction and residue operations
 of the structure of a cycle module. This work is in progress,
 and is part of a general strategy to prove an original conjecture of
 Ayoub (see \cite{Deg17}). More generally, this theorem will help us
 to analyse the Leray-type spectral sequence derived from the
 $\delta$-homotopy $t$-structure of \cite{BD1}.

Let us finally mention that the techniques of this paper
 will be exploited in a future work (\cite{DJK}) whose aim is to
 define Gysin morphisms in motivic stable homotopy, without requiring
 the orientation used throughout the present paper. This result
 is motivated and supported by the fundamental work of Morel which
 analyses the structure of $\AA^1$-homotopy groups (\cite{MorLNM}).

\subsection*{Outline of the paper}

As a general guideline for the reader, let us mention that Section 1
 contains the main definitions and notations used in the paper
 as well as the examples.
 We advise the reader to use this section as a reference part.
 The main part of this work is Section 2 which contains
 our main results while Section 3 deals with applications.
 
Let us review the content in more detail.
 In section 1.1, we settle our framework by introducing
 absolute ring $\T$-spectra as explained in the above introduction,
 as well as morphisms between them.
 In the style of algebraic topology, we also consider
 modules over ring spectra.
 Then (section 1.2), we associate to an absolute ring $\T$-spectrum
 its canonical bivariant theory, called {\it Borel-Moore homology}
 as explained in the introduction and show various of its
 basic properties --- note that a variant of the theory is explained
 for modules over ring spectra. Finally (section 1.3),
 we define the other bivariant theories associated to absolute
 ring spectra, cohomology with compact support and homology.
 Examples are given throughout Section 1.

Our main theorem is developed in Section 2.
 In sections 2.1 and 2.2, we recall the basic theories
 of \emph{orientations} for bivariant theories, called here 
 \emph{fundamental classes},
 and that of \emph{orientation} for ring spectra, modelled on algebraic
 topology and giving the first characteristic classes
 (Chern class and Thom classes).
 In sections 2.3 and 2.4, we settled the particular cases of fundamental
 classes that will be used in our main theorem, respectively
 the case of smooth morphisms and that of regular closed
 immersions. Our main result (Theorem \ref{thmi:BM_fundamental_classes}
 above) is proved in section 2.5. Section 2.6 deals with finer uniqueness
 results when restricted to quasi-projective morphisms.

Then Sections 3 and 4 gives applications and properties of the
 fundamental classes obtained in Section 2.
 Sections 3.1 and 3.2 are concerned with the main properties of
 fundamental classes such as behaviour with respects to pullbacks
 and compatibility with morphisms of ring spectra.
 Sections 3.3 applies the theory to the construction of Gysin
 morphisms with an emphasis on concrete examples.
 Finally in Section 4, we treat the questions of purity
 (or equivalently \emph{absolute purity} following the classical
 terminology  of the \'etale formalism) and its relation with duality
 statements. Again, many examples where purity, and therefore duality,
 holds are given all along.

\section*{Notations and conventions}

All schemes in this paper are assumed to be noetherian
 of finite dimension.\footnote{Note however that one could work with
 arbitrary quasi-compact and quasi-separated schemes.
 Indeed, the stable homotopy category $\SH(S)$ and its six operations,
 which provides many of our examples,
 has been recently extended in \cite[Appendix C]{HoyoisLefschetz} or
 \cite{KhanThesis} to the case $S$ quasi-compact and quasi-separated.} 

We will say that an $S$-scheme $X$, or equivalently its structure morphism,
 is \emph{quasi-projective} (resp. \emph{projective})
 if it admits an $S$-embedding (resp. a closed $S$-embedding)
 into $\PP^n_S$  for a suitable integer $n$;\footnote{For example,
 if one works with
 quasi-projective schemes over a noetherian affine scheme
 (or more generally a noetherian scheme which admits an ample line bundle), 
 then a morphism is proper if and only if it is projective with our
 convention -- use \cite[Cor. 5.3.3]{EGA2}.}

We will also fix a class of morphisms,
 called \emph{gci}\footnote{A short for \emph{global complete intersection}.}
 such that the following properties hold:
\begin{itemize}
\item any gci morphism admits
 a factorization into a regular closed immersion followed by
 a smooth morphism.
\item gci morphisms are stable under base change.
\item For any composable gci morphisms $f$ and $g$,
 one can find a commutative diagram:
$$
\xymatrix@R=6pt@C=30pt{
Z\ar^g[rr]\ar_k[rd] & & Y\ar^g[rr]\ar_i[rd] & & X \\
 & Q\ar_l[rd]\ar_q[ru] && P\ar_p[ru] & \\
 & & R\ar_r[ru] & & 
}
$$
such that $i$, $k$, $l$ are regular closed immersions
 and $p$, $q$, $r$ are smooth morphisms.
\end{itemize}
The main example of such a class is provided by
 quasi-projective local complete intersection morphisms.
 One can also fix some base scheme $S$,
 restrict to $S$-schemes which admits an embedding into
 a smooth $S$-scheme and work with local complete intersection $S$-morphisms
 of such $S$-schemes.

For short, we will use the term {\it s-morphism} for separated morphism.

Given a closed (resp. regular closed) subscheme $Z$ of a scheme $X$,
 we will denote by $B_ZX$ (resp. $N_ZX$) the blow-up (resp. normal bundle)
 of $Z$ in $X$.

In the whole text, 
 $\base$ stands for a sub-category of the category of
 (noetherian finite dimensional) schemes
 such that:
\begin{itemize}
\item $\base$ is closed under finite sums and pullback along
 morphisms of finite type.
\item For any scheme $S$ in $\base$, any quasi-projective
 (resp. smooth) $S$-scheme belongs to $\base$.
\end{itemize}
The main examples we have in mind are either the category of all
 schemes or the category of $F$-schemes for a prime field $F$.

\bigskip

We will use the axiomatic of Grothendieck six functors
 formalism and more specifically the richer axioms of 
 motivic triangulated categories introduced in \cite{CD3}.\footnote{Recall
 this axiomatic amounts, up to some minor changes, to the axioms
 of crossed functors of Ayoub-Voevodsky, \cite{Ayo1}.}
 All motivic triangulated categories introduced here will be assumed
 to be defined over the above fixed category $\base$.

From Section \ref{sec:orientation} to the end of the paper,
 we will make for simplification 
 the assumption that all motivic triangulated 
 categories $\T$ are equipped with a premotivic adjunction:
$$
\can^*:\SH \leftrightarrows \T:\can_*
$$
where $\SH$ is Morel-Voevodsky's stable homotopy category.
 In fact, this assumption occurs in satisfied in all our main examples
 and is justified by a suitable universal property
 (see Remark \ref{rem:SH_initial&ring_sp}).

\bigskip

When we will consider the codimension of a regular closed immersion $Z \rightarrow X$,
 the rank of a virtual vector bundle over $X$
 or the relative dimension of a gci morphism $X \rightarrow Y$,
 we will understand it as a Zariski locally constant function $d:X \rightarrow \ZZ$.
 In other words, $d$ is a function which to a connected component $X_i$ of $X$
 associates an integer $d_i \in \ZZ$.
 To such a function $d$ and to any motivic triangulated category $\T$,
 we can associate a twist $-(d)$ (resp. shift $-[d]$) on the triangulated category $\T(X)$
 by taking twist $-(d_i)$ (resp. shift $_[d_i]$) over the component $\T(X_i)$.
 In that way, we avoid to artificially assume codimensions, ranks or relative dimensions
 are constant.

\section*{Thanks}

I wish to warmly thank Denis-Charles Cisinski, Ofer Gabber,
 Adeel Khan, Alberto Navarro and 
 Fangzhou Jin for discussions, help and interest
 that greatly helped me in writing this paper.
 The referee of the present version has made many helpful
 comments and suggestions which have helped me to correct mistakes,
 to better account for references around the subject dealt with here
 and to improve the writing of this article.
 This work received support from the French "Investissements d'Avenir"
 program, project ISITE-BFC (contract ANR-lS-IDEX-OOOB).

\section{Absolute spectra and associated bivariant theories}

\subsection{Definition of absolute spectra}

The following notion is a simple extension of \cite[1.2.1]{Deg12}.
\begin{df}\label{df:absolute_sp}
An \emph{absolute spectrum} over $\base$ is a pair
 $(\T,\E)$ where $\T$ is a triangulated motivic category over $\base$
 and $\E$ is a cartesian section of the fibered category $\T$
 \emph{i.e.} the data:
\begin{itemize}
\item for any scheme $S$ in $\base$, of an object $\E_S$ of $\T(S)$,
\item for any morphism $f:T \rightarrow S$, of an isomorphism
 $\tau_f:f^*(\E_S) \rightarrow \E_T$, called \emph{base change
 isomorphism},
\end{itemize}
and we require that base change isomorphisms are compatible with
 composition in $f$ as usual. We will also say that $\E$ is an \emph{absolute
 $\T$-spectrum} and sometimes just absolute spectrum when this
 does not lead to confusion.

A \emph{ring structure} on $(\T,\E)$ will be 
on each $\E_S$ such that the base change isomorphisms
 are isomorphisms of monoids. We will say $\E$ is an
 \emph{absolute ring $\T$-spectrum}.

Given a triangulated motivic category $\T$,
 the pair $(\T,\un)$ where $\un$ is the cartesian section
 corresponding to the unit $\un_S$ for all schemes $S$
 will be called the \emph{canonical absolute $\T$-spectrum}.
 It obviously admits a ring structure. We will sometime denote it by $\HH_\T$.
\end{df}
When $\T$ is a $\Lambda$-linear category, we will also say
 \emph{absolute $\Lambda$-spectrum}.

\begin{ex}\label{ex:spectra}
\begin{enumerate}
\item Let $\base$ be the category of $\ZZ[P^{-1}]$-schemes
 for a set of primes $P$. Assume $\Lambda=\ZZ/n\ZZ$ for $n$ a product
 of primes in $P$,
 or $\Lambda=\ZZ_\ell, \QQ_\ell$ and $P=\{\ell\}$.
 Then we get the \emph{\'etale absolute $\Lambda$-spectrum} as the
 canonical absolute spectrum associated with the motivic triangulated
 category $S \mapsto D^b_c(S_\et,\Lambda)$ ---
 the bounded derived category of $\Lambda$-sheaves on the small \'etale
 site of $S$ (cf. \cite{SGA4}, \cite{Eke}).
\end{enumerate}
Let now $\base$ be the category of all schemes.
\begin{enumerate}\setcounter{enumi}{1}
\item Let $\Lambda$ be a $\QQ$-algebra. The \emph{motivic
 (Eilenberg-MacLane) absolute $\Lambda$-spectrum} $\HH\Lambda$
 can be defined as
 the canonical absolute spectrum associated with one
 of the equivalent version of the triangulated
 category of rational motives (see \cite{CD3}).
 Following a notation of Riou, when $\Lambda=\QQ$,
 this ring spectrum is sometimes denoted by $\HB$
 and called the Beilinson motivic ring spectrum.\footnote{According
 to \cite[14.2.14]{CD3},
 it represents, over regular schemes, rational motivic cohomology
 as first defined by Beilinson in terms of rational Quillen K-theory.}
\item Let $\Lambda$ be any ring. The \emph{\'etale motivic
 absolute $\Lambda$-spectrum} $\HH_\et\Lambda$ can be defined as the
 canonical absolute
 spectrum associated with the triangulated motivic category
 of $\h$-motives of Voevodsky (see \cite{CD5}). When $2$ is invertible
 in $\Lambda$, one can also use the \'etale-local $\AA^1$-derived category
 as defined in \cite{Ayo2}.

When $\Lambda=\ZZ_\ell$ (resp. $\QQ_\ell$) the ring of $\ell$-adic
  integers
 (resp. rational integers), we will adopt the usual abuse of notations
 and denote by $\HH_\et\Lambda$ the canonical absolute
 spectrum associated with the homotopy $\ell$-completion (resp. rational part
 of the homotopy $l$-completion) of the triangulated motivic category
 of $\h$-motives of Voevodsky (see \cite[7.2.1]{CD5}).
\item Let $\SH$ be Morel-Voevodsky's stable homotopy category.
 Then an absolute spectrum $\E$ in the sense of \cite{Deg12}
 is an absolute spectrum of the form $(\SH,\E)$.
 This includes in particular the following absolute spectra:
\begin{itemize}
\item \emph{algebraic cobordism} $\MGL$,
\item \emph{Weibel K-theory} $\KGL$,
\item when $\Lambda$ is a localization of $\ZZ$,
 \emph{motivic cohomology $\HH\Lambda$ with $\Lambda$-coefficients}
 as defined by Spitzweck (cf. \cite{Spi}).
\end{itemize}
We refer the reader to \cite[Ex. 1.2.3 (4,5)]{Deg12} for more details.
\end {enumerate}

When $\base$ is the category of $S$-schemes for a given scheme $S$,
 and $\T=\SH$, any spectrum (resp. ring spectrum) $\E_S$ of $\SH(S)$ gives rise to
 an absolute spectrum (resp. ring spectrum) $(\T,\E)$ by putting
 for any $f:T \rightarrow S$, $\E_T=f^*(\E_S)$.
 This gives the following classical examples of absolute ring spectra
 over $S$-schemes:
\begin{enumerate}\setcounter{enumi}{4}
\item $S=\spec k$ for a field $k$,
 any mixed Weil cohomology $E$ over $k$, in the sense of
 \cite{CD2};
\item $S=\spec K$ for a $p$-adic field $K$, the syntomic cohomology
 with coefficients in $K$ (cf. \cite{DN1});
\item $S=\spec V$ for a complete discrete valuation ring $V$,
 the rigid syntomic cohomology, with coefficients in $K$ (cf. \cite{DM1}).
\end{enumerate}
\end{ex}

\begin{rem}
It is not absolutely clear from the literature that the bounded
 derived category of mixed Hodge modules as defined by Saito
 satisfies the complete set of axioms of a motivic triangulated
 category.
However, the faithful reader can then consider the associated
 absolute ring spectrum, an object that should be called the 
 \emph{Deligne absolute ring spectrum}.\footnote{A short for: the
 \emph{absolute ring spectrum representing Deligne cohomology}.}
\end{rem}

Recall that an \emph{adjunction} 
 of  motivic triangulated categories 
 (or equivalently a \emph{premotivic adjunction})
 is a functor
$$
\varphi^*:\T \rightarrow \T'
$$
of triangulated categories such that $\varphi^*_S:\T(S) \rightarrow \T'(S)$
 is monoidal and 
 commutes with pullback functors (see \cite[1.4.2]{CD3}).
 In particular, given a cartesian section $\E$ of $\T$,
 we get a cartesian section $\F:=\varphi^*(\E)$ of $\T'$
 by putting $\F_S=\varphi_S^*(\E_S)$.
\begin{df}\label{df:morph_abs_sp}
A morphism of absolute spectra
$$
(\varphi,\phi):(\T,\E) \rightarrow (\T',\F)
$$
is a premotivic adjunction $\varphi^*:\T \rightarrow \T'$
 together with a morphism of cartesian sections
 $\phi:\varphi^*(\E) \rightarrow \F$,
 \emph{i.e.} a family of morphisms $\phi_S:\varphi^*(\E_S) \rightarrow \F_S$
 compatible with the base change isomorphisms.
 
A \emph{morphism of absolute ring spectra} is a pair $(\varphi,\phi)$ as
 above such that for any scheme $S$ in $\base$, $\phi_S$ is
 a morphism of commutative monoids. In that case,
 we also say that $\F$ is an \emph{$\E$-algebra}.

Obviously, these morphisms can be composed. Moreover we will say that
 $(\varphi,\phi)$ is an \emph{isomorphism} if $\varphi^*$ is fully faithful
 and for all schemes $S$ in $\base$, $\phi_S$ is an isomorphism.
 Finally we will say that the isomorphism $(\varphi,\phi)$ is strong
 if the functor $\varphi^*$ commutes with $f^!$ for any s-morphism
 $f$ between excellent schemes.
\end{df}

\begin{rem}
Note that it usually happens that a motivic triangulated category $\T$
 admits a distinguished motivic triangulated subcategory $\T_c$
 of constructible objects (see \cite[Def. 4.2.1]{CD3}).
 According to our definitions, the absolute spectrum associated
 with $\T$ is then canonically isomorphic to that associated with $\T_c$
 because for any scheme $S$, $\un_S$ is constructible, 
 thus belongs to $\T_c$. It frequently happens that the corresponding
 isomorphism is strong (see for example \cite[4.2.28]{CD3},
 \cite[6.2.14]{CD4}, \cite[6.4]{CD5}).
\end{rem}

\begin{ex}\label{ex:morph_sp}
\begin{enumerate}
\item It follows from the previous remark and the rigidity theorems
 of \cite{Ayo3, CD5} that when $\base$ is the category of $\ZZ[P^{-1}]$-schemes
 for a set of primes $P$  and $\Lambda=\ZZ/n\ZZ$ where $n$ is a product of
 primes in $P$,
 the \'etale absolute $\Lambda$-spectrum and the \'etale motivic 
 absolute $\Lambda$-spectrum are isomorphic.
\item Any adjunction of motivic triangulated categories
 $\varphi^*:\T \rightarrow \T'$ trivially induces a morphism of
 absolute ring spectra
$$
(\varphi,Id):\HH_\T \rightarrow \HH_{\T'}
$$
because by definition, $\varphi^*_S$ is monoidal.

This immediately gives several examples of morphisms of
 absolute spectra:
\begin{itemize}
\item when $\base$ is the category of all schemes,
 for a prime $\ell$, we get 
$$
\HH \QQ \rightarrow \HH_\et \QQ_\ell
$$
associated to the \'etale realization functor
 $\rho_\ell:\DM_\QQ \rightarrow \DM_\h(-,\QQ_\ell)$ defined in \cite[7.2.24]{CD4}.
\item when $\base$ is the category of $k$-schemes for a field $k$
 of characteristic $p$, we get
 $$
\HH \ZZ[1/p] \rightarrow \HH_\et \ZZ_\ell
$$
associated to the integral \'etale realization functors
$$
\rho_\ell:\DM_\cdh(-,\ZZ[1/p]) \rightarrow D(-_\et,\ZZ_\ell)
$$
defined in \cite[Rem. 9.6]{CD5}.
\end{itemize}
\item Following Riou, we have the Chern character:
$$
\ch:\KGL_\QQ \xrightarrow \simeq \bigoplus_{i \in \ZZ} \HH \QQ(i)[2i], 
$$
which is an isomorphism of absolute ring $\SH$-spectra
 (cf. \cite[5.3.3]{Deg12}).
\end{enumerate}
\end {ex}

\begin{rem}\label{rem:SH_initial&ring_sp}
The triangulated motivic category $\SH$ is almost initial.
 In fact, as soon as a triangulated motivic category is
 the homotopy category of a combinatorial model stable category,
 there exists an essentially unique premotivic adjunction\footnote{The
 same result holds in the framework of $\infty$-category 
 according to \cite[1.2]{Rob}.}:
$$
\nu^*:\SH \leftrightarrows \T:\nu_*
$$
In this case, for any scheme $S$, we get a ring spectrum\footnote{Indeed,
 recall that $\nu_*$, as the right adjoint of a monoidal functor,
 is weakly monoidal.}
 $\HH_S^\T:=\nu_*(\un_S)$ in $\SH(S)$
 which represents the cohomology associated with the canonical absolute
 $\T$-spectrum.
 The collection $\HH_S^\T$ indexed by schemes $S$ in $\base$
 define a section of the fibered category $\SH$ as
 for any morphism $f:T \rightarrow S$, we have natural maps:
$$
\tau_f^\T:f^*(\HH_S^\T)=f^*\nu_*(\un_S)
 \rightarrow \nu_*(f^*\un_S)\simeq \nu_*(\un_S)=\HH_S^\T
$$
compatible with the monoid structure.
 In general, these maps are not isomorphisms \emph{i.e.}
 $\HH^\T$ does not form an absolute $\SH$-spectrum.

Note however that $\tau_f^\T$ is an isomorphism when the functor $\varphi_*$
 commutes with pullback functors $f^*$.
 Most of the examples given in \ref{ex:spectra} 
 will go into this case except for one example, that
 of the (motivic)  \'etale $\Lambda$-spectrum for
 $\Lambda=\ZZ_\ell,\QQ_\ell$. In fact, in this case
 we do not know whether the right adjoint of the
 $\ell$-adic realization functor commutes with
 $f^*$. This latter case justifies the generality chosen in this paper.
\end{rem}

\begin{df}\label{df:modules}
Let $(\E,\T)$ be an absolute ring spectrum.

A \emph{module over $(\E,\T)$} will be an absolute spectrum
 $(\F,\T)$  and for any scheme $S$ in $\base$ an associative and unital
 action
$$
\phi_S:\E_S \otimes \F_S \rightarrow \F_S
$$
which is compatible with the structural base change isomorphisms.

Given a premotivic adjunction $\varphi^*:\T \rightarrow \T'$,
 the cartesian section $\varphi^*(\E)$ is again an absolute ring $\T'$-spectrum.
 A $\varphi$-module over $(\E,\T)$ will be a module over
 $(\varphi^*(\E),\T)$.
\end{df}
In both cases, when the context is clear,
 we will simply say $\F$ is an $\E$-module.
 
\begin{rem}
Obvious examples of modules over an absolute ring spectrum $\E$
 are given by $\E$-algebras, defined in \ref{df:absolute_sp}.
 We will see many examples of that kind in Paragraph \ref{num:univ_MGL}.
\end{rem}

\subsection{Associated bivariant theory} \label{sec:ass_biv}

\begin{num}\label{num:prebivariant}
We now give the the basic definitions of bivariant theories
 suitable to our needs taken with some small variants from \cite{FMP}.

Let us fix $\mathcal F\base$ the subcategory of the category
 of arrows in $\base$ whose objects are the s-morphisms and
 maps are cartesian squares.
 Usually, an object $f:X \rightarrow S$ of $\mathcal F\base$ 
 will be denoted by $X/S$ when no confusion can arise.
 Similarly, a morphism $\Delta$:
$$
\xymatrix@=10pt{
Y\ar[r]\ar[d]\ar@{}|{\stackrel \Delta \Rightarrow}[rd] & X\ar[d] \\
T\ar_f[r] & S
}
$$
will be indicated by the map $f:T \rightarrow S$.
 Let $\A$ be the category of bigraded abelian groups with morphisms the
 homogeneous ones of degree $(0,0)$.

A \emph{bivariant theory without products}
\footnote{Products will be introduced in the second part of this introduction.
 Apart from the absence of products, this notion corresponds
 to a bivariant theory as in \cite{FMP} where independent squares are
 cartesian squares and confined maps are proper morphisms.}
 will be a contravariant functor
$$
\E:\mathcal F\base \rightarrow \A, X/S \mapsto \E_{**}(X/S)
$$
which is also a covariant functor in $X$
 with respect to proper morphisms of $S$-schemes,
 and satisfies the following projection formula:
 for any cartesian squares
$$
\xymatrix@R=10pt@C=14pt{
Y'\ar[r]\ar_{p'}[d] & Y\ar^p[d] \\
X'\ar[r]\ar[d] & X\ar[d] \\
S'\ar^f[r] & S,
}
$$ 
and any element $x \in \E_{**}(X/S)$ one has: $f^*p_*(x)=p'_*f^*(x)$,
 as soon as all the maps exist.
 The structural map $f^*:\E_{**}(X/S)\rightarrow \E_{**}(X'/S')$ will be
 referred to as the \emph{base change map}
 associated with $f$.

Given any absolute spectrum $(\T,\E)$, any s-morphism $p:X \rightarrow S$
 and any pair $(n,m) \in \ZZ^2$, we put:
$$
\biv \E {n,m}(X/S):=\Hom_{\T(X)}(\un_X(m)[n],p^!(\E_S)).
$$
The functoriality of $p^!$ then allows us to defines
 a contravariant functor from $\mathcal F\base$
 to bigraded abelian groups and the base change map is given
 by the pullback functor $f^*$ for a given morphism $f:T \rightarrow S$.
 The covariance with respect to a proper $S$-morphism $f:Y \rightarrow X$
 can be defined using the map:
$$
\E_X \xrightarrow{ad(f^*,f_*)} f_*f^*(\E_X) \simeq f_*(\E_Y) \simeq f_!(\E_Y).
$$
where $ad(f^*,f_*)$ is the unit of the relevant adjunction,
 the first isomorphism uses the structural isomorphism
 of the absolute spectrum $\E$ and the last isomorphism follows
 from the fact $f$ is proper (see \cite[2.4.50(2)]{CD2}).

It is now a formal exercise to check the axioms of a bivariant theory without
 products are fulfilled for the bifunctor $\E_{**}^{BM}$.
\end{num}
\begin{df}\label{df:bivariant_no_prod}
Under the assumptions above, the bifunctor $\biv \E {**}$ will be
 called the \emph{Borel-Moore homology}\footnote{This terminology
 extends the classical terminology in motivic homotopy theory, usually
 applied in the case where $S$ is the spectrum of a field. Note that we will
 see other bivariant theories associated with $\E$ so we have chosen to
 use that terminology following the tradition of our field.}
 associated with the absolute spectrum $\E$.

When $\E=\HH^\T$ is the canonical absolute ring spectrum
 associated with $\T$, we will denote the corresponding
 Borel-Moore homology by $\biv H {n,m}(X/S,\T)$.
\end{df}
Recall that one associates to a bivariant theory a cohomology theory; in our case, we have:
$$
\E^{n,m}(X)=\biv \E {-n,-m}\left(X \xrightarrow{1_X} X\right)
 =\Hom_{\T(X)}\big(\un_X,\E_X(m)[n]\big)
$$
which is the usual formula for the cohomology represented
 by the spectrum $\E_X$. 

\begin{num}\label{num:duality_etale}
According to the six functors
 formalism, for any \'etale s-morphism $f:X \rightarrow S$, we
 have a canonical isomorphism of functors $\pur_f:f^! \simeq f^*$
 (cf. for example \cite[2.4.50(3)]{CD3}).
 Therefore, we also get a canonical isomorphism:
$$
\biv \E {n,m}(X/S) \simeq \E^{-n,-m}(X).
$$
As the isomorphism $\pur_f$ is compatible with composition
 (see Proposition \ref{thm:ayoub}),
 we obtain that $\biv \E {n,m}(X/S)$ is functorial
 in $X$ with respect to \'etale morphisms.

Recall now a classical terminology in motivic homotopy theory.
 We say a cartesian square:
\begin{equation}\label{eq:distinguished}
\xymatrix@=10pt{
Y'\ar^k[r]\ar_v[d] & X'\ar^u[d] \\
Y\ar^i[r] & X,
}
\end{equation}
is Nisnevich (resp. cdh) distinguished if
 $i$ is an open (resp. closed) immersion, $u$ is an \'etale
 (resp. proper) morphism and the induced map
 $(X'-Y') \rightarrow (X-Y)$ on the underlying reduced
 subschemes is an isomorphism.

Now, the following properties are direct consequences
 of the Grothendieck six functors formalism.
\end{num}
\begin{prop}\label{prop:basic_bivariant}
Let $\E$ be an absolute spectrum. The following properties hold:
\begin{enumerate}
\item \emph{Homotopy invariance}.-- For any vector bundle
 $p:E \rightarrow S$, and any s-scheme $X/S$, the base change map:
$$
p^*:\biv \E {n,m}(X/S) \rightarrow \biv \E {n,m}(X \times_S E/E)
$$
is an isomorphism.
\item \emph{\'Etale invariance}.-- Given any $s$-schemes
 $X/T/S$ such that $u:T \rightarrow S$ is \'etale,
 there exists a canonical isomorphism:
$$
u^*:\biv \E {**}(X/S) \xrightarrow{\sim} \biv \E {**}(X/T)
$$
which is natural with respect to base change in $S$
 and the covariance in $X/T$ for proper morphisms.
\item \emph{Localization}.--  For and s-scheme $X/S$ and
 any closed immersion $i:Z \rightarrow X$
 with complementary open immersion $j:U \rightarrow X$,
 there exists a canonical \emph{localization long exact sequence} of
 the form:
\begin{align*}
\biv \E {n,m}(Z/S) & \xrightarrow{i_*}
 \biv \E {n,m}(X/S) \xrightarrow{j^*}
 \biv \E {n,m}(U/S) \xrightarrow{\partial_i}
 \biv \E {n-1,m}(Z/S)
\end{align*}
which is natural with respect to the contravariance in $S$,
 the contravariance in $X/S$ for \'etale morphisms
 and the covariance in $X/S$ for proper morphisms.
\item \emph{Descent property}.-- for any square \eqref{eq:distinguished}
 of s-schemes over $S$
 which is either Nisnevich or cdh distinguished, there exists
 a canonical long exact sequence:
$$
\biv \E {n,m}(X/S)
 \xrightarrow{i^*+u^*} \biv \E {n,m}(Y/S) \oplus \biv \E {n,m}(X'/S)
 \xrightarrow{v^*-k^*} \biv \E {n,m}(Y'/S)
 \rightarrow \biv \E {n-1,m}(X/S)
$$
natural with respect to the contravariance in $S$,
 the contravariance in $X/S$ for \'etale morphisms
 and the covariance in $X/S$ for proper morphisms.
\end{enumerate}
\end{prop}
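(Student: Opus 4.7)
The plan is to unfold the definition $\biv \E {n,m}(X/S) = \Hom_{\T(X)}(\un_X(m)[n], f^!\E_S)$, where $f:X\to S$, and translate each of the four properties into a corresponding identity of the six-functor formalism combined with the absolute-spectrum datum $g^*\E_S \simeq \E_T$ from Definition \ref{df:absolute_sp}. In every case the argument is then a routine chase of adjunctions.

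For (1), let $p:E\to S$ be a vector bundle and let $p':X_E\to X$ be its pullback along $f$, which is again a vector bundle. Smooth base change gives $p'^*f^! \simeq f_E^!p^*$, and combined with $p^*\E_S \simeq \E_E$ this yields $p'^*f^!\E_S \simeq f_E^!\E_E$. The base change map in the bivariant theory is then precisely the effect of $p'^*$ on the relevant Hom-sets, which is a bijection because $p'^*$ is fully faithful by $\AA^1$-homotopy invariance (applied iteratively, reducing to trivial bundles by Zariski descent). For (2), with $u:T\to S$ étale, the chain $(uf)^!\E_S = f^!u^!\E_S \simeq f^!u^*\E_S \simeq f^!\E_T$ uses the canonical iso $u^!\simeq u^*$ recalled in \ref{num:duality_etale} together with $u^*\E_S\simeq\E_T$; applying $\Hom_{\T(X)}(\un_X(m)[n],-)$ gives the stated iso. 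Naturality in base change and in proper pushforward is a routine check that these identifications commute with composition.

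For (3), apply the localization distinguished triangle $i_*i^! \to \id \to j_*j^* \to [+1]$ in $\T(X)$ to the object $f^!\E_S$, then apply $\Hom_{\T(X)}(\un_X(m)[n],-)$. Using $i^!f^! = (fi)^!$ and $j^*f^! = j^!f^! = (fj)^!$ (the latter because $j$ is an open immersion, hence étale), together with the adjunctions $(i^*,i_*)$ and $(j^*,j_*)$, the three Hom groups become $\biv\E{n,m}(Z/S)$, $\biv\E{n,m}(X/S)$ and $\biv\E{n,m}(U/S)$. A direct check identifies the first map as the proper pushforward $i_*$ defined in \ref{num:prebivariant} (arising from the counit $i_*i^!\to \id$) and the second as the étale base change $j^*$ of (2) (arising from the unit $\id \to j_*j^*$). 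The naturality of the localization triangle in all relevant operations yields the three stated functorialities.

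For (4), both the Nisnevich and the cdh descent properties of $\T$ provide a Mayer--Vietoris distinguished triangle
$$f^!\E_S \to i_*(fi)^!\E_S \oplus u_*(fu)^!\E_S \to (iv)_*(fiv)^!\E_S \to [+1]$$
in $\T(X)$. In the Nisnevich case one combines the localization triangle of (3) for $i$ with étale base change along $u$; in the cdh case one uses localization for $i$ together with proper base change along $u$, exploiting the assumed iso $X'\setminus Y' \xrightarrow\sim X\setminus Y$ to match up the two cofibers on the open complements. Once this triangle is in hand, applying $\Hom_{\T(X)}(\un_X(m)[n],-)$ and the same adjunction identifications as in (3) produces the long exact sequence in the statement, the signs $i^*+u^*$ and $v^*-k^*$ in the middle map coming from the usual Mayer--Vietoris convention. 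I expect the main obstacle to be the construction of this Mayer--Vietoris triangle in the cdh case, where proper base change has to be invoked carefully and where one must verify that the adjoint identification of the pushforward through the (possibly non-étale) morphism $u$ really does recover $\biv\E{n,m}(X'/S)$; once this is checked, the stated functorialities again fall out of the naturality of all the operations used.
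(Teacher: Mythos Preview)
Your proof is correct and follows essentially the same approach as the paper, which merely records that (1)--(4) follow respectively from the homotopy property, the isomorphism $\pur_u:u^!\simeq u^*$ for \'etale $u$, the localization triangle, and the Nisnevich/cdh descent properties of $\T$ (citing \cite[2.3.3, 3.3.4, 3.3.10]{CD3}); you have simply unpacked these references. One small imprecision: in your sketch for (4) in the Nisnevich case you invoke ``the localization triangle of (3) for $i$'', but there $i$ is an \emph{open} immersion, so the localization triangle must be taken with respect to its closed complement, over which the \'etale map $u$ becomes an isomorphism---this is exactly what the Nisnevich descent axiom encodes.
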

The proof is again an exercise using  the properties of the motivic triangulated
 category $\T$. More precisely: (1) follows from the homotopy property,
 (2) from the isomorphism $\pur_f:f^! \simeq f^*$ for an \'etale
 s-morphism $f:T \rightarrow S$, 
 (3) from the localization property (and for the functoriality,
 from the uniqueness of the boundary operator at the triangulated level,
 see \cite[2.3.3]{CD3}), (4) from the Nisnevich and cdh descent properties
 of $\T$ (see \cite[3.3.4 and 3.3.10]{CD3}).

\begin{rem}\label{rem:biv&support}
An important remark for this work  is the fact that the Borel-Moore
 homology associated with an absolute spectrum $\E$, restricted
 to the subcategory of $\mathcal F\base$ whose objects are
 {\it closed immersions},
 coincides with the cohomology with support and
 coefficients in $\E$.
 Indeed, this can be seen from the localization property
 in the case of a closed immersion $i:Z \rightarrow S$.
 And in fact, using  the definition of cohomology
 with support introduced in \cite[1.2.5]{Deg12}, we get an equality:
$$
\biv \E {n,m}(i:Z \rightarrow S)=\E^{-n,-m}_Z(X).
$$
This explains why the properties used in \emph{op. cit.}
 are exactly the same than the ones of bivariant theories
 (a fact the author became aware after writing
 \emph{op. cit.}).
\end{rem}

\begin{num}\label{num:functoriality1}
The Borel-Moore homology associated with an absolute spectrum is functorial:
 given a morphism $(\varphi,\phi):(\T,\E) \rightarrow (\T',F)$
 of absolute spectra, and an s-morphism $p:X \rightarrow S$,
 we define $(\varphi,\phi)_*$ --- often simply denoted by $\phi_*$:
\begin{align*}
\biv \E {n,m}(X/S)=\Hom_{\T(X)}(f_!(\un_X)(m)[n],\E_S)
 \xrightarrow{\varphi^*}
 & \Hom_{\T'(X)}(\varphi^*f_!(\un_X)(m)[n],\varphi^*\E_S) \\
 \simeq & \Hom_{\T'(X)}(f_!(\un'_X)(m)[n],\varphi^*\E_S) \\
 \xrightarrow{\ \phi_*\ } &\Hom_{\T'(X)}(f_!(\un'_X)(m)[n],\F_S)
 =\biv \F {n,m}(X/S)
\end{align*}
where the isomorphism comes from the exchange isomorphism 
$$
\varphi^*f_! \xrightarrow \sim f_!\varphi^*
$$
associated with the premotivic adjunction $(\varphi^*,\varphi_*)$
 (cf. \cite[2.4.53]{CD3}).

It is not difficult (using the compatibility of the various 
 exchange transformations involved) to prove $(\varphi,\phi)_*$
 is compatible with the base change maps, the covariant functoriality
 in $X/S$ with respect to proper maps and the contravariant functoriality
 in $X/S$ with respect to \'etale maps. In a word,
 $(\varphi,\phi)_*$ is a natural transformation of bivariant theories
 without products.

Note moreover that $(\varphi,\phi)_*$ is compatible with composition of
 morphisms of absolute spectra and an isomorphism of absolute spectra 
 induces an isomorphism of bivariant theories.

Interesting examples will be given later, after the consideration
 of products.
\end{num}

\begin{rem}\label{rem:SH_initial&ring_sp_bis}
Consider the setting of Remark \ref{rem:SH_initial&ring_sp}.
 Assume in addition that the section $\HH^\T$ is cartesian.
 Then one gets a morphism of absolute ring spectra (cf. Definition
 \ref{df:morph_abs_sp}) 
 $(\HH^\T,\SH) \rightarrow (\un,\T)$ as for any scheme $S$,
 we have morphisms of monoids:
$$
\can^*(\HH^\T_S)=\can^*(\can_*(\un_S)) \xrightarrow{ad'(\can^*,\can_*)}
 \un_S.
$$
Note this morphism is not an isomorphism in the sense of
 \ref{df:morph_abs_sp}.
 However, the map induced on Borel-Moore homologies of
 an s-morphism $f:X \rightarrow S$
$$
\biv{(\HH^\T)}{**}(X/S) \rightarrow \biv H {**}(X/S,\T)
$$
is an isomorphism as the functor $\varphi^*$ commutes with
 direct images $f_!$ (see \cite[2.4.53]{CD3}).
\end{rem}

\begin{num}\label{num:product}
Recall now that a \emph{bivariant theory} $\E$ in the sense
 of Fulton and MacPherson\footnote{More precisely,
 when independent squares are cartesian squares, confined map are proper
 morphisms and the category of values is that of bigraded abelian groups;}
 is a bivariant
 theory without products as introduced in \ref{num:prebivariant}
 such that for any s-schemes $Y \rightarrow X \rightarrow S$,
 there is given a product:
$$
\E_{n,m}(Y/X)\otimes \E_{s,t}(X/S) \rightarrow \E_{n+s,m+t}(Y/S),
 (y,x) \mapsto y.x
$$
satisfying the following axioms:
\begin{itemize}
\item \emph{Associativity}.-- given s-morphisms $Z/Y/X/S$,
 for any triple $(z,y,x)$, we have: $$(z.y).x=z.(y.x).$$
\item \emph{Compatibility with pullbacks}.-- Given s-morphisms $Y/X/S$ and a morphism $f:S' \rightarrow S$
 inducing $g:X' \rightarrow X$ after pullback along $X/S$,
 for any pair $(y,x)$, we have: $f^*(y.x)=g^*(y).f^*(x).$
\item \emph{Compatibility with pushforwards}.-- Given s-morphisms  $Z \xrightarrow f Y \rightarrow X \rightarrow S$
 such that $f$ is proper, for any pair $(z,y)$, one has:
 $f_*(z.y)=f_*(z).y.$
\item \emph{Projection formula (second)}.-- Given a cartesian square of
 s-schemes over $S$:
$$
\xymatrix@=10pt{
Y'\ar^g[r]\ar[d] & Y\ar[d] & \\
X'\ar^f[r] & X\ar[r] & S,
}
$$
such that $f$ is proper, for any pair $(y,x)$, one has: 
 $g_*(f^*(y).x')=y.f_*(x')$. 
\end{itemize}
Consider now an absolute ring spectrum $(\T,\E)$. Then one can define
 a product of the above form on the associated Borel-Moore homology
 defined in \ref{df:bivariant_no_prod}. Consider indeed
 s-morphisms $Y \xrightarrow f X \xrightarrow p S$ and classes
$$
y:\un_Y(m)[n] \rightarrow f^!(\E_X),
 x:\un_X(s)[t] \rightarrow p^!(\E_S).
$$
Let us first recall that one gets a canonical pairing\footnote{This is
 classical: see also \cite[IV, 1.2.3]{SGA45}.}
\begin{equation}\label{eq:pairing!*}
Ex^{!*}_\otimes:p^*(M) \otimes p^!(N) \rightarrow p^!(M \otimes N)
\end{equation}
obtained by adjunction from the following map:
$$
p_!(p^*(M) \otimes p^!(N))
 \xrightarrow{\sim} M \otimes p_!p^!(N)
 \xrightarrow{1 \otimes ad'(p_!,p^!)} M \otimes N.
$$
Then one associates to $x$:
$$
\tilde x:\E_X(s)[t] \xrightarrow{1_{\E_X} \otimes x} \E_X \otimes p^!(\E_S)
 \simeq p^*(\E_S) \otimes p^!(\E_S)
 \xrightarrow{Ex^{*!}_\otimes} p^!(\E_S \otimes \E_S)
 \xrightarrow{p^!(\mu)} p^!(\E_S)
$$
where the map $\mu$ is the multiplication map of the ring spectrum $\E_S$.
Then, one defines the product as the following composite map:
$$
y.x:\un_Y(m+s)[n+t] \xrightarrow{y(s)[t]} f^!(\E_X)(s)(t]
 \xrightarrow{f^!(\tilde x)} f^!p^!(\E_S)=(pf)^!(\E_S).
$$
This is now a lengthy exercise to prove that the axioms stated
 previously are satisfied for the product just defined and
 the bifunctor $\E_{**}^{BM}$. We refer the reader to
 \cite[proof of 1.2.10]{Deg12} for details\footnote{the corresponding
 axioms are proved in \emph{loc. cit.} when confined maps are closed immersions (see Remark \ref{rem:biv&support}).
 In fact the proof does not change
 when confined maps are only assumed to be proper rather than being
 closed immersions.}.
\end{num}

\begin{rem}\label{rem:cup_biv&act_coh&supp_prod}
\begin{enumerate}
\item Products on bivariant theories obviously
 induce products on the associated cohomology theory.
 In the case of the bivariant theory of the above definition
 these induced products are nothing else than the usual cup-products.
\item If one considers the case $Y=X$, the product of the preceding
 paragraph gives an action of the cohomology of $X$ on the Borel-Moore
 homology of $X/S$.
\item We have seen in Remark \ref{rem:biv&support} that
 cohomology with support is a particular instance of Borel-Moore homology.
 In fact,
 the product introduced in \cite[1.2.8]{Deg12} for cohomology with supports
 coincides with the one defined here restricted to Borel-Moore homology
 of closed immersions.
 This is obvious as the formulas in each case are exactly the same.
\end{enumerate}
\end{rem}

\begin{ex}\label{ex:biv_theories}
From the examples of \ref{ex:spectra}, we
 get respectively the Borel-Moore motivic $\Lambda$-homology,
 the Borel-Moore \'etale motivic $\Lambda$-homology,
 the \emph{Borel-Moore homotopy invariant K-theory} and
 the \emph{Borel-Moore algebraic cobordism}:
$$
\HH^{BM}_{**}(X/S,\Lambda),
 \HH_{**}^{BM,\et}(X/S,\Lambda),
 \KGL^{BM}_{**}(X/S),
 \MGL^{BM}_{**}(X/S).
$$
\begin{enumerate}
\item The Borel-Moore motivic homology $\HH^{BM}_{**}(X/S,\Lambda)$
 could also be called \emph{bivariant higher Chow groups}. In fact, it
 follows from \cite[8.13]{CD5} that for any s-scheme $X/k$
 where $k$ is a field of characteristic exponent $p$, one
 has a canonical isomorphism:
$$
\varphi:\HH^{BM}_{n,m}(X/k,\ZZ[1/p])
 \xrightarrow \sim CH_m(X,n-2m)[1/p]
$$
where the right hand side is the relevant Bloch's higher Chow
 group.\footnote{In fact, the assumption that $X$ is equidimensional
 in \emph{loc. cit.} can be avoided as follows: coming back to the
 proof of Voevodsky in \cite[chap. 5, 4.2.9]{FSV}, one sees that in
 any case the map $\varphi$ exists --- it is induced by and inclusion
 of groups of cycles. Therefore, to prove it is an isomorphism,
 we reduce to the equidimensional case by noetherian induction,
 as the map $\varphi$ is compatible with the localization sequence
 (see the proof of \emph{loc. cit.}).}
\item It follows from \ref{ex:morph_sp}(1) that when $\base$ is the category
 of $\ZZ[P^{-1}]$-schemes and $\Lambda=\ZZ/n\ZZ$ with $n$ a product of
 primes in $P$, the Borel-Moore \'etale motivic $\Lambda$-homology
 $\HH_{**}^{\et}(X/S,\Lambda)$ coincides with the bivariant \'etale
 theory with $\Lambda$-coefficients as considered in \cite[7.4]{FMP}.
\item Recall that the absolute ring spectrum $\KGL$ satisfies
 Bott periodicity. In particular we get a canonical
 isomorphism:
$$
\biv \KGL {n,m}(X/S) \simeq \biv \KGL {n+2,m+1}(X/S).
$$
In particular, the double indexing is superfluous and
 we sometime consider bivariant K-theory as $\ZZ$-graded according
 to the formula:
$$
\biv \KGL {n,m}(X/S) \simeq \biv \KGL {n-2m}(X/S).
$$

In general, this bivariant K-theory does not coincide with
 the bivariant K-theory $\mathrm K_{\mathrm{alg}}$ of \cite[1.1]{FMP}. 
 Indeed the theory of \emph{loc. cit.} does not satisfies
 the homotopy property (Prop. \ref{prop:basic_bivariant})
 when considering non regular schemes (as algebraic K-theory).

Note however that according to \cite{Jin2},
 one gets a canonical isomorphism:
$$
\biv \KGL n(X/S) \simeq G_n(X)
$$
for a quasi-projective morphism $f:X \rightarrow S$ with $S$ regular,
 where $G_*$ is Thomason's G-theory, or equivalently
 Quillen's K'-theory as we work with noetherian schemes
 (see \cite[3.13]{TT}). The isomorphism of Jin is functorial
 with respect to proper covariance and \'etale contravariance.
\end{enumerate}
\end{ex}

\begin{rem}\label{rem:G-theory}
In general, there should exist a natural transformation
 of bivariant theories:
$$
K_{\mathrm{alg},n}(X \rightarrow S) \rightarrow \biv \KGL n(X/S)
$$
which extends the known natural transformations on
 associated cohomologies and which is compatible with the Chern
 character with values in the motivic bivariant
 rational theory (see below).
\end{rem}

\begin{num}\label{num:functoriality2}
Let $(\varphi,\phi):(\T,\E) \rightarrow (\T',\F)$ be a morphism
 of ring spectra (Definition \ref{df:morph_abs_sp}).
 Then one checks that the associated natural transformation
 of bivariant theories defined in Paragraph \ref{num:functoriality1}
$$
\phi_*:\biv \E {n,m}(X/S) \rightarrow \biv \F {n,m}(X/S)
$$
is compatible with the product structures on each Borel-Moore homology
 (Paragraph \ref{num:product}).\footnote{This comes again
 as the exchange transformations involved in the functoriality and
 in the products are compatible.} So in fact, $\phi_*$ is a
 \emph{Grothendieck transformation} in the sense of \cite[I. 1.2]{FMP}.

Note that this natural transformation then formally induces a natural
 transformation on cohomology theories, compatible with cup-products:
$$
\phi_*:\E^{n,m}(X) \rightarrow \F^{n,m}(X),
$$
as usual. This construction gives many interesting examples.
\end{num}

\begin{ex}
\begin{enumerate}
\item Let $\ell$ be a prime number.
 Assume one of the following settings:
\begin{itemize} 
\item $\base$ is the category of all schemes, $\Lambda=\QQ$,
  $\Lambda_\ell=\QQ_\ell$;
\item  $\base$ is the category of schemes over a prime field $F$ with
 characteristic exponent $p$ such that $\ell \neq p$, $\Lambda=\ZZ[1/p]$
 and $\Lambda_\ell=\ZZ_\ell$.
\end{itemize}
Then, one gets from Example \ref{ex:morph_sp}(2) 
 a natural transformation of bivariant theories:
$$
\HH_{**}^{BM}(X/S,\Lambda) \xrightarrow{\ \sim\ } \HH_{**}^{BM,\et}(X/S,\Lambda_\ell)
$$
whose associated natural transformation on cohomology is the
 (higher) cycle class map in \'etale $\ell$-adic cohomology.
\item Assume $\base$ is the category of all schemes.
 Then one gets from \ref{ex:morph_sp}(3) a higher bivariant Chern
 character:
$$
ch_n:\KGL_n^{BM}(X/S)_\QQ \xrightarrow{\ \sim\ }
 \bigoplus_{i \in \ZZ} \HH^{BM}_{2i+n,i}(X/S,\QQ)
$$
which is in fact a Grothendieck transformation in the
 sense of \cite[I, 1.2]{FMP}.
 From \cite[5.3.3]{Deg12}, it coincides with Gillet's higher Chern
 character on the associated cohomology theories.
 Therefore, it extends Fulton and MacPherson Chern character
 \cite[II. 1.5]{FMP}, denoted in \emph{loc. cit.} by $\tau$.

Suppose $S$ is a regular scheme and $X/S$ is an s-scheme.
 Given the result of Jin (Remark \ref{rem:G-theory})
 and Riou (\cite{Riou}), one gets Adams operations $\psi^i$ on
 Thomason's G-theory $G_n(X)$ and the above isomorphism
 identifies $\HH^{BM}_{2i+n,i}(X/S,\QQ)$ with the eigenvector space
 of $G_n(X)_\QQ$ for the eigenvalue $r^i$ of $\psi^i$,
 $r\neq 0$ being a fixed integer.

Finally, when $S=\spec k$ is the spectrum of a field, 
 from Example \ref{ex:biv_theories}(2) the above higher Chern character
 can be written as an isomorphism:
$$
ch_n:G_n(X)_\QQ \xrightarrow{\sim} \bigoplus_{i \in \ZZ} CH_i(X,n) \otimes \QQ
$$
for any s-scheme $X/k$.
\end{enumerate}
More examples will be given in the section dealing with orientations.
\end{ex}

\begin{rem}\label{rem:modules}
Let us observe finally that the product structure on the Borel-Moore
 homology associated with an absolute ring spectrum $(\E,\T)$
 can be extended to the setting of modules over ring spectra.
 Indeed, given a premotivic adjunction
 $\varphi^*:\T \rightarrow \T'$ and a $\varphi$-module
 $\F$ over $(\E,\T)$ with structural maps $\phi_S$ as in Definition
 \ref{df:modules}, we get a product:
$$
\biv \E {n,m}(Y/X) \otimes \biv \F {s,t}(X/S)
 \rightarrow \biv \F {n+s,m+t}(Y/S), (y,x) \mapsto y.x,
$$
using the construction of Paragraph \ref{num:product}.
 Let us be more explicit. First we remark
 that we have a Grothendieck transformation from the Borel-Moore homology
 represented by $(\E,\T)$ to that represented by $(\varphi^*(\E),\T')$,
 according to \ref{num:functoriality2}. Thus, we can replace
 $\E$ by $\varphi^*(\E)$ to describe the above product.
 In other words, we can assume $\T=\T'$, $\varphi^*=Id$.
 Then, given classes:
$$
y:\un_Y(m)[n] \rightarrow f^!(\E_X),
 x:\un_X(t)[s] \rightarrow p^!(\F_S)
$$
one associates to $x$ the following map:
$$
\tilde x:\E_X(t)[s] \xrightarrow{1_{\E_X} \otimes x}
 \E_X \otimes p^!(\F_S)
 \simeq p^*(\E_S) \otimes p^!(\F_S)
 \xrightarrow{Ex^{*!}_\otimes} p^!(\E_S \otimes \F_S)
 \xrightarrow{p^!(\nu_S)} p^!(\F_S)
$$
using the pairing \eqref{eq:pairing!*} and the structural
 map $\nu_S$ of the module $\F_S$ over $\E_S$.
Then one defines the product as the following composite map:
$$
y.x:\un_Y(m+t)[n+s] \xrightarrow{y(t)[s]} f^!(\E_X)(t)[s]
 \simeq f^!(\E_X(t)[s])
 \xrightarrow{f^!(\tilde x)} f^!p^!(\E_S)=(pf)^!(\F_S).
$$
Similarly, we also define a right action:
$$
\biv \F {n,m}(Y/X) \otimes \biv \E {s,t}(X/S)
 \rightarrow \biv \F {n+s,m+t}(Y/S).
$$
It is straightforward to check these two products
 satisfy the associativity, compatibility with pullbacks and pushforwards,
 and projection formula like the products in bivariant theories
 (cf. Paragraph \ref{num:product}).
\end{rem}

\subsection{Proper support}

\begin{num}\label{num:supp_bivariant}
Let $(\E,\T)$ be an absolute spectrum and $p:X \rightarrow S$ be an s-morphism.
 The six functors formalism gives us two other theories which depend on $X/S$
 as follows:
\begin{align*}
\E_c^{n,m}(X/S)&=\Hom_{\T(S)}\big(\un_S,p_!(\E_X)(m)[n]\big), \\
\E_{n,m}(X/S)&=\Hom_{\T(S)}\big(\un_S(m)[n],p_!p^!(\E_X)\big).
\end{align*}
Using the same techniques as in Paragraph \ref{num:prebivariant},
 one gets the following functoriality:
\begin{itemize}
\item $\E_c^{n,m}(X/S)$ is contravariant in $X/S$
 with respect to cartesian squares, contravariant in $X$
 with respect to proper $S$-morphisms and covariant in $X$
 with respect to \'etale $S$-morphisms;
\item $\E_{n,m}(X/S)$ is contravariant in $X/S$
 with respect to cartesian squares, covariant in $X$
 with respect to all $S$-morphisms
 and contravariant in $X$ with respect to finite $S$-morphisms.
\end{itemize}
So in each cases, $\E_c^{**}$ and $E_{**}$ are contravariant functors
 from the category $\mathcal F\base$ (see \ref{num:prebivariant}) to
 the category of bigraded abelian groups. In fact, they are bivariant
 theories without products where independent squares are the cartesian
 squares and confined maps are respectively the proper morphisms
 and the \'etale morphisms.
\end{num}
\begin{df}\label{df:compact_support_theories}
Given the notations above, the functor $\E^{**}_c$ (resp. $\E_{**}$)
 will be called the \emph{cohomology with compact support}
  (resp. \emph{homology}) associated with $\E$.
\end{df}

\begin{ex}\label{ex:coh_support}
 These notions were classically considered when the base $S$
 is the spectrum of a field $k$. Let $p$ be the characteristic exponent of
 $k$.
\begin{enumerate}
\item When $\E$ is the absolute $\Lambda$-spectrum of \'etale cohomology
 as in \ref{ex:spectra}(1), our formulas for $X/k$ gives the classical
 \'etale cohomology with support.
\item More generally, when $\E$ is the spectrum associated with
 a mixed Weil theory over $k$ as in \ref{ex:spectra}(5), one recovers
 the classical notion of the corresponding cohomology with compact support
 (eg. Betti, De Rham, rigid). See also Corollary \ref{cor:caract_c&h-theory}.
\item When $k=\CC$ (or more generally, one has a given
 embedding of $k$ into $\CC$), and $\E=\HH_B$ is the spectrum
 representing Betti cohomology with integral coefficients,\footnote{Either
 as the one obtained through the corresponding Mixed Weil theory
 or as the canonical spectrum associated with the motivic triangulated
 category $X \mapsto D(X(\CC),\ZZ)$ of \cite[section 1]{Ayo4}.
 For the fact these two versions give the same answer, see \cite[17.1.7]{CD2}.}
 one gets an isomorphism:
$$
(\HH_B)_{n,m}(X/k)=H_n^{sing}(X(\CC),\ZZ)
$$
which is canonical if $m=0$ and only depends on the choice of
 a trivialization of $\HH_B^{1,1}(\GG)$ if $m>0$.

Indeed, one obtains using Grothendieck-Verdier duality for the triangulated
 motivic category  $D(-,\ZZ)$ that when $p:X \rightarrow k$ is the canonical
 projection, the complex
$$
p_!p^!(\un_k)
$$
is the dual of the complex $p_*p^*(\un_k)$ which
 is quasi-isomorphic to $C_*^{sing}(X(\CC))$ by definition.
 So the result follows from the classical definition of singular
 homology.\footnote{One could also use Corollary
 \ref{cor:caract_c&h-theory} to conclude here.}
\item When $\E$ is the absolute motivic $\ZZ[1/p]$-spectrum,
 it follows from \cite[8.7]{CD5} that for any s-scheme $X/k$
  and any integer $n \in \ZZ$, one gets a canonical isomorphism:
$$
\HH_{n,0}(X/k,\ZZ[1/p]) \simeq H_n^{Sus}(X)[1/p]
$$
where the left hand side is the homology in the above sense
 associated with the motivic absolute spectrum $\HH \Lambda$
 and the right hand side is Suslin
 homology (cf. \cite{SV}).\footnote{Note that
 though Suslin homology is defined for s-morphisms $X/S$, it does
 not seem that the above identification extends to cases where
 $S$ is of positive dimension.}
\end{enumerate}
\end{ex}

Let us collect some properties of these two new types of bivariant
 theories.
\begin{prop}\label{prop:basic_bivariant_proper}
Let $\E$ be an absolute spectrum. The following properties hold:
\begin{enumerate}
\item \emph{Homotopy invariance}.-- For any s-scheme $X/S$
 and any vector bundle $p:E \rightarrow X$, the push-forward map
 in bivariant homology:
$$
p_*:\E_{**}(E/S) \rightarrow \E_{**}(X/S)
$$
is an isomorphism.
\item \emph{Proper invariance}.-- Given any $s$-schemes
 $X/T/S$ such that $T/S$ is proper, there exists a canonical
 isomorphism:
$$
\E_c^{**}(X/S) \xrightarrow{\sim} \E_c^{**}(X/T)
$$
which is natural with respect to the functorialities of
 compactly supported cohomology (cf. \ref{num:supp_bivariant}).
\item \emph{Comparison}.-- For any s-scheme $X/S$
 one has natural transformations:
\begin{align*}
\E_c^{n,m}(X/S) & \rightarrow \E^{n,m}(X), \\
\E_{n,m}(X/S) & \rightarrow \biv \E {n,m}(X/S)
\end{align*}
which are isomorphisms when $X/S$ is proper.
\item \emph{Localisation}.--  For any s-scheme $X/S$
 and any closed immersion $i:Z \rightarrow X$
 with complementary open immersion $j:U \rightarrow X$,
 there exists a canonical \emph{localization long exact sequence} of
 the form:
\begin{align*}
\E_c^{n,m}(U/S) & \xrightarrow{j_*}
 \E_c^{n,m}(X/S) \xrightarrow{i^*}
 \E_c^{n,m}(Z/S) \xrightarrow{}
 \E_c^{n+1,m}(U/S)
\end{align*}
which is natural with respect to the functorialities of
 compactly supported cohomology (cf. \ref{num:supp_bivariant}).
\end{enumerate}
\end{prop}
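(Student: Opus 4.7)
The general strategy for verifying all four properties is to translate the defining formula of each bivariant group into an assertion about the six operations applied to the spectrum $\E$, and then invoke the relevant axioms of a motivic triangulated category together with the structural isomorphisms $\can_f : f^*(\E_S) \simeq \E_T$, in exactly the spirit of the proof of Proposition~\ref{prop:basic_bivariant}. Among the four items, (2), (3) and (4) are formal consequences of this dictionary, while (1) requires a genuine input from $\AA^1$-homotopy and $\PP^1$-stability (the Thom isomorphism), which I save for last.

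For proper invariance (2), write the structure map of $X/S$ as $p = qp'$ with $q:T\to S$ proper. The natural transformation $q_! \to q_*$ is then invertible, and $(q^*,q_*)$-adjunction combined with $q^*(\un_S) \simeq \un_T$ yields
\begin{align*}
\E_c^{n,m}(X/S) &= \Hom_{\T(S)}\bigl(\un_S, q_!p'_!(\E_X)(m)[n]\bigr) \\
 &\simeq \Hom_{\T(T)}\bigl(\un_T, p'_!(\E_X)(m)[n]\bigr) = \E_c^{n,m}(X/T).
\end{align*}
Naturality in the functorialities of Paragraph~\ref{num:supp_bivariant} is automatic from the naturality of the adjunction and of the structural isomorphism.

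For the comparison (3), post-composing with the canonical transformation $\alpha_p : p_! \to p_*$ and applying $(p^*,p_*)$-adjunction together with $p^*(\un_S)\simeq\un_X$ produces forget-the-support maps
\[
\E_c^{n,m}(X/S) \to \Hom_{\T(S)}\bigl(\un_S, p_*(\E_X)(m)[n]\bigr) \simeq \E^{n,m}(X),
\]
and analogously $\E_{n,m}(X/S) \to \biv{\E}{n,m}(X/S)$; both become invertible when $p$ is proper, since then $\alpha_p$ is an isomorphism. For the localization (4), one applies $p_!$ to the triangle $j_!j^*(\E_X)\to\E_X\to i_*i^*(\E_X)\xrightarrow{+1}$ in $\T(X)$ given by the localization axiom of $\T$; using $i_! = i_*$, $p_!j_! = (pj)_!$, $p_!i_! = (pi)_!$, and the structural isomorphisms $j^*(\E_X)\simeq\E_U$, $i^*(\E_X)\simeq\E_Z$, one obtains a distinguished triangle
\[
(pj)_!(\E_U)\to p_!(\E_X)\to (pi)_!(\E_Z)\xrightarrow{+1}
\]
in $\T(S)$, whose long exact $\Hom_{\T(S)}(\un_S,-(m)[n])$-sequence is the one asserted. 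Naturality of the connecting map follows from its uniqueness at the triangulated level \cite[2.3.3]{CD3}.

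Homotopy invariance (1) is the main obstacle. For a vector bundle $p : E \to X$ of rank $n$ with structure map $q : X \to S$, the pushforward on bivariant homology is by construction induced by the counit $p_!p^! \to \mathrm{id}$ of the $(p_!,p^!)$-adjunction, evaluated on $q^!(\E_S)$. Relative purity (a consequence of $\PP^1$-stability) gives $p^! \simeq p^*(n)[2n]$, so via the projection formula the counit $p_!p^!(M) \to M$ reduces to $M$ tensored with the Thom isomorphism $p_!(\un_E)(n)[2n] \simeq \un_X$; the latter is deduced from $\AA^1$-homotopy applied to a local trivialization of $E$ and glued by Nisnevich descent of $\T$ (cf.\ \cite[3.3.4]{CD3}). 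The delicate point is to verify that the resulting isomorphism agrees on the nose with the covariant functoriality $p_*$ of Paragraph~\ref{num:supp_bivariant}; but once the purity isomorphism and the projection formula are properly aligned, this is a formal verification within the six functors formalism.
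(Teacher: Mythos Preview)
Your proof is correct and follows the same route as the paper for items (2), (3), and (4): all three are formal consequences of the natural transformation $\alpha_p:p_!\to p_*$ (an isomorphism for proper $p$) and of the localization triangle $j_!j^*\to 1\to i_!i^*\xrightarrow{+1}$, exactly as you say.

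For item (1) you arrive at the right conclusion but take a slightly longer path than necessary. The paper simply asserts that the counit $p_!p^!\to 1$ is an isomorphism for a vector bundle projection $p$, as a direct consequence of the homotopy axiom of $\T$. The clean way to see this, without invoking Thom isomorphisms or Nisnevich gluing, is: homotopy invariance says the unit $1\to p_*p^*$ is an isomorphism, so $p^*$ is fully faithful; since $p_\sharp$ is left adjoint to $p^*$, this forces the counit $p_\sharp p^*\to 1$ to be an isomorphism; finally relative purity identifies $p_!p^!\simeq p_\sharp p^*$ compatibly with the counits. Your reduction via the projection formula to the Thom isomorphism $p_!(\un_E)(n)[2n]\simeq\un_X$ is also correct, but it rebuilds by hand a fact that is already packaged in the axiomatics of a motivic triangulated category. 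In particular, your worry about the ``delicate point'' (matching the resulting isomorphism with the covariant functoriality of Paragraph~\ref{num:supp_bivariant}) dissolves: that functoriality \emph{is} by definition post-composition with the counit $p_!p^!\to 1$, so there is nothing further to check.
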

Property (1) follows from the homotopy invariance of the category $\T$,
 which implies that the adjunction map $p_!p^! \rightarrow 1$ is an isomorphism.
 Property (2) and (3) follows from the existence, for $p:X \rightarrow S$,
 of the natural transformation of functors
$$
\alpha_p:p_! \rightarrow p_*
$$
which is an isomorphism when $p$ is proper.
Property (4) is a direct translation of the existence of the localization
 triangle $j_!j^* \rightarrow 1 \rightarrow i_!i^* \xrightarrow{+1}$.

The following corollary justifies our terminology.
\begin{cor}\label{cor:caract_c&h-theory}
Consider an $S$-scheme $X$ with an open $S$-immersion
 $j:X \rightarrow \bar X$ such that $\bar X$ is proper over $S$.
 Let $X_\infty$ be the reduced complement of $j$, $i:X_\infty \rightarrow \bar X$ the corresponding
 immersion.

Then one has canonical long exact sequences:
\begin{align*}
\E^{n-1,m}(\bar X) & \xrightarrow{i^*} \E^{n-1,m}(X_\infty) \rightarrow
 \E_c^{n,m}(X/S) \rightarrow
 \E^{n,m}(\bar X) \xrightarrow{i^*}
 \E^{n,m}(X_\infty), \\
\E_{n,m}(X_\infty/S) & \xrightarrow{i_*}
 \E_{n,m}(\bar X/S) \rightarrow
 \biv \E {n,m}(X/S) \rightarrow
 \E_{n-1,m}(X_\infty/S) \xrightarrow{i_*}
 \E_{n-1,m}(\bar X/S).
\end{align*}
\end{cor}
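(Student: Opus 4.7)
The two sequences are both derived from localization triangles on the proper compactification $\bar X$, followed by applying the comparison isomorphisms of Proposition \ref{prop:basic_bivariant_proper}(3) to replace $\E_c$ and $\E_{**}$ by their cohomological, respectively Borel--Moore, counterparts on the proper pieces $\bar X$ and $X_\infty$.

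\emph{First sequence.} I would apply the localization long exact sequence of Proposition \ref{prop:basic_bivariant_proper}(4) to the s-scheme $\bar X/S$ with closed immersion $i:X_\infty \rightarrow \bar X$ and complementary open immersion $j:X \rightarrow \bar X$. This yields
$$
\cdots \rightarrow \E_c^{n-1,m}(X_\infty/S) \rightarrow \E_c^{n,m}(X/S) \xrightarrow{j_*} \E_c^{n,m}(\bar X/S) \xrightarrow{i^*} \E_c^{n,m}(X_\infty/S) \rightarrow \cdots
$$
Since $\bar X$ and $X_\infty$ are both proper over $S$, Proposition \ref{prop:basic_bivariant_proper}(3) provides canonical isomorphisms $\E_c^{n,m}(\bar X/S) \simeq \E^{n,m}(\bar X)$ and $\E_c^{n,m}(X_\infty/S) \simeq \E^{n,m}(X_\infty)$ natural with respect to $i^*$. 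Substituting them in and extracting a five-term segment gives the first sequence.

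\emph{Second sequence.} No localization result has been stated for homology, so I would first produce such a triangle by hand. Write $\bar p:\bar X \rightarrow S$, $p = \bar p\circ j$, $p_\infty = \bar p\circ i$, and start from the standard localization triangle $i_*i^! \rightarrow \mathrm{Id} \rightarrow j_*j^* \xrightarrow{+1}$ on $\bar X$. Applying it to $\bar p^!(\E_S)$ and using $i^!\bar p^! = p_\infty^!$ together with $j^*\bar p^! = j^!\bar p^! = p^!$ (the last equality because $j$ is \'etale) gives
$$
i_*\,p_\infty^!(\E_S) \rightarrow \bar p^!(\E_S) \rightarrow j_*\,p^!(\E_S) \xrightarrow{+1}.
$$
Pushing forward by $\bar p_* = \bar p_!$ (using properness of $\bar p$) and applying the composition identities $\bar p_* i_* = (p_\infty)_*$ and $\bar p_* j_* = p_*$ produces
$$
(p_\infty)_*\,p_\infty^!(\E_S) \rightarrow \bar p_*\bar p^!(\E_S) \rightarrow p_* p^!(\E_S) \xrightarrow{+1}
$$
in $\T(S)$. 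Applying $\Hom_{\T(S)}(\un_S(m)[n],-)$ and the adjunctions $(p^*,p_*)$ and $(\bar p^*,\bar p_*)$ identifies the three terms with $\E_{n,m}(X_\infty/S)$, $\E_{n,m}(\bar X/S) \simeq \biv \E {n,m}(\bar X/S)$ and $\biv \E {n,m}(X/S)$ respectively (the middle identification using properness of $\bar p$ once more, via Proposition \ref{prop:basic_bivariant_proper}(3)). Extracting a five-term segment yields the second sequence.

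\emph{Where the work is.} The proof is essentially formal; the only slightly delicate bookkeeping is matching the arrows produced above with the named maps in the statement. For the compact-support sequence this amounts to the compatibility of the comparison isomorphism of Proposition \ref{prop:basic_bivariant_proper}(3) with $i^*$. For the homology sequence I must check that the arrow $\E_{n,m}(X_\infty/S) \rightarrow \E_{n,m}(\bar X/S)$ derived from the triangle is indeed the pushforward $i_*$ of Paragraph \ref{num:supp_bivariant}; this is immediate since $i_*$ on homology is, by definition, induced by the counit $i_!i^! \rightarrow \mathrm{Id}$ of the adjunction, which is precisely the map used in the triangle.
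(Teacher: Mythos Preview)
Your argument is correct and matches the paper's. For the first sequence both you and the paper invoke Proposition \ref{prop:basic_bivariant_proper}(3) and (4). For the second sequence the paper simply cites the Borel--Moore localization sequence (Proposition \ref{prop:basic_bivariant}) together with the comparison isomorphism of Proposition \ref{prop:basic_bivariant_proper}(3); your ``by hand'' construction of the triangle $(p_\infty)_* p_\infty^!(\E_S) \to \bar p_*\bar p^!(\E_S) \to p_* p^!(\E_S)$ is precisely a rederivation of that Borel--Moore localization sequence (after the adjunction $(p^*,p_*)$), so the two routes coincide.
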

Indeed, the first long exact sequence is obtained from
 points (3) and (4) of the preceding proposition
 and the second one from Proposition \ref{prop:basic_bivariant}(4)
 and point (3) of the previous proposition.

\begin{rem}
\begin{enumerate}
\item The first long exact sequence gives us the usual way
 to get compactly supported cohomology out of a compactification,
 which can also be interpreted as a canonical isomorphism:
$$
\E_c^{n,m}(X/S) \simeq \E^{n,m}(\bar X,X_\infty)
$$
where the right hand side is the cohomology
 of the pair $(\bar X,X_\infty)$ (as classically considered
 in algebraic topology).
 The second long exact sequence is less usual and gives a way
 to get back Borel-Moore homology from homology.
 In fact, it gives us an interpretation of Borel-Moore homology
 as the compactly supported theory associated with homology.
\item Homology and cohomology with compact support also
 admit \emph{descent long exact sequences} with respect
 to Nisnevich and cdh distinguished squares as in
 \ref{prop:basic_bivariant}(4). We leave the formulation
 to the reader.
\end{enumerate}
\end{rem}

\begin{num}
Assume finally that $(\E,\T)$ has a ring structure.

Then one can define a product, for s-morphisms
 $Y \xrightarrow f X \xrightarrow p S$,
$$
\E_c^{n,m}(Y/X) \otimes \E_c^{s,t}(X/S) \rightarrow \E_c^{n+s,m+t}(Y/S),
 (y,x) \mapsto y.x,
$$
so that the functor $\E_c^{**}$ becomes a bivariant theory in the sense
 of Fulton and MacPherson (cf. \ref{num:product}).
 Indeed, given classes
$$
y:\un_X(m)[n] \rightarrow f_!(\E_Y),
 x:\un_S(t)[s] \rightarrow p_!(\E_S)
$$
we define
$$
y':\E_X(t)[s] \xrightarrow{1_{\E_X} \otimes y}
 \E_X \otimes f_!(\E_Y) \xrightarrow{PF} f_!(f^*\E_X \otimes \E_Y)
 \simeq f_!(\E_Y \otimes \E_Y) \xrightarrow{\mu} f_!(\E_Y)
$$
(where PF stands for projection formula) and then
$$
y.x:\un_S(m+t)[n+s] \xrightarrow x p_!(\E_S)(t)[s]
 \simeq p_!(\E_S(t)[s])
 \xrightarrow{p_!(y')} p_!f_!(\E_Y)=(pf)_!(\E_Y).
$$
Again the formulas required for the product
 of a bivariant theory (cf. \ref{num:product}) follow
 from the six functors formalism.

Such a product does not exist on the bivariant theory
 without products $\E_{**}$ defined above. Instead
 one can define an exterior product:
$$
\E_{**}(X/S) \otimes \E_{**}(Y/S)
 \rightarrow \E_{**}(X \times_S Y/S).
$$
using the following pairing:
\begin{align*}
p_!p^!(\E_S) \otimes q_!q^!(\E_S)
 & \xrightarrow{PF} p_!\big(p^!(\E_S) \otimes p^*q_!q^!(\E_S)\big)
 \xrightarrow{BC} p_!\big(p^!(\E_S) \otimes q'_!p^{\prime*}q^!(\E_S)\big) \\
 & \xrightarrow{PF}
  p_!q'_!\big(q^{\prime*}p^!(\E_S) \otimes p^{\prime*}q^!(\E_S)\big)
	\simeq a_!\big(q^{\prime*}p^!(\E_S) \otimes p^{\prime*}q^!(\E_S)\big) \\
& \xrightarrow{Ex^{*!}}
  a_!\big(q^{\prime*}p^!(\E_S) \otimes q^{\prime!}p^*(\E_S)\big)
  \xrightarrow{\eqref{eq:pairing!*}}
  a!a^!(\E_S \otimes \E_S) \xrightarrow{\mu} a!a^!(\E_S)
\end{align*}
(where BC stands for base change formula) for a cartesian square of s-morphisms
$$\xymatrix@R=16pt@C=30pt{
X \times_S Y\ar|-{p'}[r]\ar_{q'}[d]\ar|{a}[rd] & Y\ar^q[d] \\
X\ar|p[r] & S.}$$
One can check this product is associative, compatible with pushforwards
 and base changes (we will not use these properties).

In a more original way, one can define the following \emph{cap-product},
 pairing of bivariant theories:
\begin{equation}\label{eq:cap}
\E^{**}_c(X/S) \otimes \E_{**}^{BM}(X/S)
 \rightarrow \E_{**}(X/S), (a,b) \mapsto a \cap b
\end{equation}
which, using the preceding notations, is induced by the following
 pairing of functors:
\begin{align*}
p_*p^!(\E_S) \otimes p_!p^*(\E_S)
 &\xrightarrow{PF} p_!\big(p^*p_*p^*(\E_S) \otimes p^*(\E_S)\big)
 \xrightarrow{ad'(p^*,p_*)} p_!\big(p^*(\E_S) \otimes p^*(\E_S)\big) \\
 &\xrightarrow{\eqref{eq:pairing!*}} p_!p^!(\E_S \otimes \E_S)
 \xrightarrow{\mu} p_!p^!(\E_S).
\end{align*}
\end{num}

\begin{rem}
Our definition of cap-product is an extension of the classical definition
 of cap-product, between cohomology and homology, as defined
 for classical spectra (\cite[III, \textsection 9]{Ada}).
 In fact, when $S$ is the spectrum of a field $k$
 and $X/k$ is projective, it also coincides with the cap-product
 appearing in Bloch-Ogus axioms \cite{BO}.
\end{rem}

\section{Fundamental classes}

\subsection{Abstract fundamental classes}

Let us recall the following basic definitions from \cite{FMP}.
\begin{df}\label{df:fdl_orientation}
Let $\E$ be an absolute ring spectrum
 and $f:X \rightarrow S$ be an s-morphism.

An \emph{orientation} for the morphism $f$ with coefficients in $\biv \E {**}$
 will be the choice of an element $\eta_f \in \biv \E {**}(X/S)$
 in the bivariant theory associated with $\E$
 (Def. \ref{df:bivariant_no_prod}).

Given a locally constant function $d:X \rightarrow \ZZ$,
 with values $d(i)$ on the connected components $X_i$ of $X$,
 for $i \in I$, we will say that $\eta_f$ has dimension $d$ if
 it belongs to the group:
$$
\bigoplus_{i \in I} \biv \E {2d(i),d(i)}(X_i/S).
$$
\end{df}
Accordingly, we introduce the following notation for any $\T$-spectrum
 $\F$ over $X$:
$$
\F(d)[2d]=\bigoplus_{i \in I} \F|_{X_i}(d(i))[2d(i)].
$$
Then one defines an \emph{orientation of degree $d$} as a map:
$$
\eta_f:\E_X(d)[2d] \rightarrow f^!\E_S.
$$

\begin{rem}
The word orientation in the context of the previous definition
 has been chosen by Fulton and MacPherson in \cite{FMP}.
 We will also use the
 terminology \emph{fundamental class} for such an orientation when
 it is part
 of a coherent system of orientations: see Definition
 \ref{df:fdl_classes_abstract}. In our main example, the choice
 of an orientation in the sense of $\AA^1$-homotopy theory
 (see Definition \ref{df:orientation}) will indeed canonically
 determine such a coherent system.
\end{rem}

\begin{ex}\label{ex:fdl_et}
Consider any absolute ring spectrum $\E$.
 Then any \'etale s-morphism $f:X \rightarrow S$
 admits a canonical orientation $\fdl_f$ of degree $0$.
 Take:
\begin{equation}\label{eq:fdl_etale}
\fdl_f:\un_X \xrightarrow{\eta_X} \E_X \xrightarrow{\tau^{-1}_f} f^*(\E_S)
 \xrightarrow{\pur_f^{-1}} f^!(\E_S)
\end{equation}
where $\eta_X$ is the unit of the ring spectrum $\E_S$,
 $\tau_f$ is the base change isomorphism (Def. \ref{df:absolute_sp})
 and $\pur_f$ is the purity isomorphism of the six functors formalism
 (see \ref{num:duality_etale}).
\end{ex}

\begin{rem}\label{num:basic_fdl_etale}
As the previous example is a basic piece of our main result,
 Theorem \ref{thm:BM_fundamental_classes},
 we recall the definition of the isomorphism $\pur_f$ of the
 above example. We consider the pullback square:
$$
\xymatrix@=18pt{
X\ar^-\delta[r]\ar@{=}[rd] & X \times_S X\ar^-{f''}[r]\ar^{f'}[d]
 & X\ar^f[d] \\
 & X\ar|f[r] & S
}
$$
where $\delta$ is the diagonal immersion, which is both open
 and closed according to our assumptions on $f$
 (\'etale and separated).
Then we define $\pur_f$ as follows:
$$
f^* \simeq \delta^!f^{\prime\prime!}f^*
 \xrightarrow{Ex^{!*}} \delta^!f^{\prime*}f^!
 \xrightarrow{(1)} \delta^*f^{\prime*}f^! \simeq f^!.
$$
To get the isomorphism (1), we come back to the construction
 of exceptional functors following Deligne (see \cite[2.2]{CD3}).
 Indeed,
 as $\delta$ is an open immersion, we get a canonical identification
 $\delta_! \simeq \delta_\sharp$ of functors so that we get a canonical
 isomorphism $\delta^! \simeq \delta^*$ of their respective right adjoints 
 as required.
\end{rem}

\begin{num}\label{num:tilde_fdl}
Consider an absolute ring spectrum $\E$,
 and an orientation $\eta_f$ of an s-morphism $f:X \rightarrow S$.

Given an s-scheme $Y/X$ and using the product of the Borel-Moore
 $\E$-homology,
 one can associate to $\eta_f$ a map:
$$
\delta(Y/X,\eta_f):\biv \E {**}(Y/X) \rightarrow \biv \E {**}(Y/S), y \mapsto y.\eta_f.
$$
Note that going back to the definition of this product (Par. \ref{num:product}),
 this map can be described up to shift and twist as the composition on the left
 with the following morphism of $\T(X)$:
\begin{equation}\label{eq:associated_pur_iso}
\tfdl_f:\E_X(*)[*] \xrightarrow{1_{\E_X} \otimes \eta_f} \E_X \otimes f^!(\E_S)
 \simeq f^*(\E_S) \otimes f^!(\E_S)
 \xrightarrow{Ex^{*!}_\otimes} f^!(\E_S \otimes \E_S)
 \xrightarrow \mu f^!(\E_S).
\end{equation}
\end{num}
\begin{df}\label{df:strong_orientation}
Consider the above assumptions.
One says that the orientation $\eta_f$ is
\begin{itemize}
\item \emph{strong} if for any s-scheme $Y/X$, the map $\delta(Y/X,\eta_f)$ is
 an isomorphism.
\item \emph{universally strong} if the morphism
 $\tfdl_f$ is an isomorphism in $\T(X)$.
\end{itemize}
\end{df}
As remarked in \cite{FMP}, a strong orientation of $X/S$ is unique
 up to multiplication by an invertible element in $\E^{0,0}(X)$.
 The notion of universally strong is new, as it makes sense only in
 our context. Obviously, universally strong implies strong
 according to Paragraph \ref{num:tilde_fdl}.

\begin{rem}\label{rem:strong_orientation}
Consider the notations of the above definition.
\begin{enumerate}
\item The property of being universally strong
 for an orientation $\eta_f$  as above implies
 that for any smooth morphism $p:T \rightarrow S$,
 the orientation $p^*(\eta_f)$ of $f \times_S T$ is strong
  --- this motivates the name. We will see more implications
 of this property in Section \ref{sec:duality}.
\item The data of the orientation $\fdl_f$ is equivalent to the data of the map
 $\tfdl_f$ as the map $\fdl_f$ is equal to the following composite:
$$
\un_X(*)[*] \xrightarrow{\eta_X} \E_X(*)[*] \xrightarrow{\tfdl_f} f^!\E_S
$$
where $\eta_X$ is the unit of the ring spectrum $\E_X$.
\end{enumerate}
\end{rem}

\begin{ex}\label{ex:fdl_etale}
Consider the notations of the previous definition 
 and assume that $f$ is \'etale as in Example \ref{ex:fdl_et}.

It follows from point (2) of the preceding remark that the map $\tfdl_f$
 associated with the orientation $\fdl_f$ of the latter example
 is equal to the following composite morphism:
$$
\tfdl_f:\E_X
 \xrightarrow{\tau^{-1}_f} f^*(\E_S)
 \xrightarrow{\pur_f^{-1}} f^!(\E_S),
$$
using the notations of the example. Thus, $\tfdl_f$ is an isomorphism:
 the canonical orientation $\eta_f$ of an \'etale morphism $f$ is 
 universally strong.
\end{ex}

\begin{df}\label{df:fdl_classes_abstract}
Given a class $\C$ of morphisms of schemes closed under composition,
 a \emph{system of fundamental classes} for $\C$ with coefficients in $\E$ 
 will be the datum for
 any $f \in \C$ of an orientation $\eta^\C_f$ such that for
 any composable maps $Y \xrightarrow g X \xrightarrow f S$ in $\C$
 one has the relation:
$$
\eta^\C_g.\eta^\C_f=\eta^\C_{f \circ g}
$$
using the product of the bivariant theory $\biv \E {**}$. 
 This relation will be referred to as the \emph{associativity formula}.
\end{df}
Recall the aim of this paper is to construct
 a system of fundamental classes for a class of morphisms
 as large as achievable under the minimal possible choices.
 
\subsection{Global orientations}\label{sec:orientation}

\begin{num}\label{num:orientation}
Recall from our convention that
 we assume from now on that any motivic triangulated category $\T$
 is equipped with a premotivic adjunction:
$$
\can^*:\SH \rightarrow \T.
$$

Consider an absolute ring $\T$-spectrum $\E$.
 Let us fix a scheme $S$ in $\base$. Then $\can_*(\E_S)$
 is a motivic ring spectrum and for any smooth scheme $X/S$,
 for any pair $(n,m) \in \ZZ^2$,
 one gets an isomorphism:
\begin{align*}
\Hom_{\SH(S)}(\Sus X_+,\can_*(\E_S)(m)[n])
 & \xrightarrow{\sim} \Hom_{\T(S)}(\can^*(\Sus X_+),\E_S(m)[n]) \\
 & \simeq \Hom_{\T(S)}(f_\sharp(\un_X),\E_S(m)[n])
 \simeq \E^{n,m}(X)
\end{align*}
In other words, the ring spectrum $\can_*(\E_S)$ in $\SH(S)$
 represents the cohomology $\E^{**}$ restricted to smooth
 $S$-schemes and the above isomorphism is also compatible 
 with cup-products.\footnote{Beware however that
 $\can_*(\E_S)$ for various schemes $S$ only gives a section
 of $\SH$, not necessarily a \emph{cartesian} one.}

Therefore, one can apply all the definitions and results of
 orientation theory of motivic homotopy theory for
 which we refer to \cite{Deg12}. In the remainder of this section,
  we recall these results, applied more specifically to our situation.

As usual $\tilde \E^{**}$ denotes the reduced cohomology
 with coefficients in $\E$.
 As, by definition, $M_S(\PP^1_S)=\un_S \oplus \un_S(1)[2]$
 and because $\un_S(1)$ is $\otimes$-invertible, we
 get a canonical isomorphism:
$$
\tilde \E^{2,1}(\PP^1_S) \xrightarrow \psi \E^{0,0}(S)
$$
where $\PP^1_S$ is pointed by $\infty$. Therefore, the unit $\eta_S$
 of the ring spectrum $\E$ induces a canonical cohomology class
 $\sigma^\E_S=\psi^{-1}(\eta_S) \in \tilde \E^{2,1}(\PP^1_S)$ --- classically
 called the \emph{stability class}.

As in \cite[Def. 2.1.2]{Deg12},
 we let $\PP^\infty_S$ be the colimit, in the category
 of Nisnevich sheaves of sets over the category of smooth $S$-schemes,
 of the inclusions
 $\PP^n_S \rightarrow \PP^{n+1}_S$ by means of the first coordinates.
\end{num}
\begin{df}\label{df:orientation}
Consider the above notations.
An \emph{orientation} of the absolute ring $\T$-spectrum $\E$
 will be the datum, for any scheme $S$ in $\base$, of a class 
 $c_S \in \tilde \E^{2,1}(\PP^\infty_S)$ such that:
\begin{itemize}
\item the restriction of $c_S$ to $\PP^1_S$ equals the stability class
  $\sigma^\E_S$ defined above;
\item for any morphism $f:T \rightarrow S$, one has: $f^*(c_S)=c_T$.
\end{itemize}
For short, we will say that $(\E,c)$ is an absolute oriented
 ring $\T$-spectrum (or simply spectrum).

A morphism of absolute oriented ring spectra
 $(\T,\E,c) \rightarrow (\T',\F,d)$
 will be a morphism of absolute ring spectra (Def. \ref{df:morph_abs_sp})
 $(\varphi,\psi)$ such that for any scheme $S$,
 the map induced on cohomology 
$$
\psi_*:\tilde \E^{2,1}(\PP^\infty_S) \rightarrow \tilde \F^{2,1}(\PP^\infty_S)
$$
sends $c_S$ to $d_S$.
\end{df}

\begin{rem}
We will show later (Example \ref{ex:orientations&orientations})
 that an orientation of the ring spectrum $\E_S$
 does correspond to a family of orientations of the associated
 Borel-Moore homology in the sense of Definition \ref{df:fdl_orientation}.
\end{rem}

\begin{ex}\label{ex:orient_spectra}
Each of the absolute ring spectra of Example \ref{ex:spectra} admits a canonical
 orientation; see \cite[2.1.4]{Deg12}.
\end{ex}

\begin{num}
Consider the previous
  assumptions and notations.
 Recall one can build out of the orientation $c$ a complete theory of
 characteristic classes.
 The first building block is the first Chern class which follows rightly
 from the class  $c$, seen as a morphism. Indeed,
 from \cite[2.1.8]{Deg12}:
\begin{align*}
c_1:\Pic(S) & \rightarrow  \Hom_{\mathscr H(S)}(S_+,\PP^\infty_S)
 \xrightarrow{\Sus} \Hom_{\SH(S)}(\Sus S_+,\Sus \PP^\infty_S) \\
 & \xrightarrow{\tau^*} \Hom_{\T(S)}(\un_S,M_s(\PP^\infty_S)) 
 \xrightarrow{(c_{S})_*} \Hom_{\T(S)}(\un_S,\E_S(1)[2])=\E^{2,1}(S),
\end{align*}
--- $\mathscr H(S)$ is Morel and Voevodsky's pointed unstable
 homotopy category.

One then deduces from \cite[2.1.13]{Deg12} that the cohomology theory
 $\E^{**}$ satisfies the classical projective bundle formula,
 which will be freely
 used in the rest of the text. Further, one gets higher Chern classes
 (see \cite[Def. 2.1.16]{Deg12}) satisfying the following properties:
\end{num}
\begin{prop}\label{prop:chern_classes}
 Considering the above notations and a given base scheme $X$ in $\base$,
 the following assertions hold:
\begin{enumerate}
\item For any vector bundle $E$ over $X$, there exist
 \emph{Chern classes} $c_i(E) \in \E^{2i,i}(X)$ uniquely defined
 by the formula:
\begin{equation} \label{eq:Chern}
\sum_{i=0}^n p^*(c_i(E)).\big(-c_1(\lambda)\big)^{n-i}=0,
\end{equation}
$c_0(E)=1$ and $c_i(E)=0$ for $i \notin [0,n]$. As usual,
we define the \emph{total Chern class} in the polynomial ring $\E^{**}(X)[t]$:
$$
c_t(E)=\sum_i c_i(E).t^i.
$$
\item Chern classes are nilpotent, compatible with pullbacks in $X$,
 invariant under isomorphisms of vector bundles
 and satisfy the Whitney sum formula: for any vector bundles $E$, $F$ over $X$,
$$
c_t(E \oplus F)=c_t(E).c_t(F).
$$
\item There exists a (commutative) formal group law $F_X(x,y)$
 with coefficients in the ring $\E^{**}(X)$
 such that for any line bundles $L_1$, $L_2$ over $X$, the following
 relation holds:
$$
c_1(L_1 \otimes L_2)=F_X\big(c_1(L_1),c_1(L_2)\big) \in \E^{2,1}(X),
$$
--- which is well defined as the cohomology class $c_1(L_i)$ is nilpotent.
 Moreover, for any morphism $f:Y \rightarrow X$, one gets the relation:
 $f^*(F_X(x,y))=F_Y(x,y)$, in other words,
 the morphism of rings  $f^*:\E^{**}(X) \rightarrow \E^{**}(Y)$ 
  induces a morphism of formal group laws.
\end{enumerate}
\end{prop}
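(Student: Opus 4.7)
The plan is to follow the standard Grothendieck-style development of Chern classes for an oriented cohomology theory, in the motivic-homotopical guise already worked out in \cite{Deg12}, the essential input being the projective bundle formula for $\E^{**}$ recalled just before the statement. For point (1), given a rank $n$ vector bundle $E$ on $X$ with projectivization $p:\PP(E) \to X$ and tautological line bundle $\lambda = \cO_{\PP(E)}(-1)$, the projective bundle formula asserts that $\E^{**}(\PP(E))$ is a free $\E^{**}(X)$-module (via $p^*$) with basis $\{1, c_1(\lambda), \ldots, c_1(\lambda)^{n-1}\}$. Hence $-c_1(\lambda)^n$ admits a unique expansion $\sum_{i=1}^n p^*(\alpha_i)(-c_1(\lambda))^{n-i}$ with $\alpha_i \in \E^{2i,i}(X)$; setting $c_i(E) := \alpha_i$ and $c_0(E) := 1$ produces classes satisfying \eqref{eq:Chern}, and the freeness yields uniqueness.

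For point (2), functoriality under pullbacks in $X$ and invariance under isomorphisms of vector bundles are immediate from the naturality of $p$, $\lambda$ and $c_1$ in the construction. For the Whitney sum formula I would invoke the splitting principle: the associated flag variety $f:\mathrm{Fl}(E) \to X$ is smooth proper, $f^*(E)$ is a direct sum of line bundles, and iterated application of the projective bundle formula shows that $f^*:\E^{**}(X) \to \E^{**}(\mathrm{Fl}(E))$ is a split monomorphism. Reducing the desired equality to the split case $E = \bigoplus L_i$, an induction on $\rk(E)$ via the closed immersion $\PP(E/L_1) \hookrightarrow \PP(E)$ gives $c_t(E) = \prod_i (1 + c_1(L_i) t)$, from which $c_t(E \oplus F) = c_t(E) \cdot c_t(F)$ follows by splitting both summands. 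Nilpotency of $c_i(E)$ then reduces via the splitting principle to nilpotency of $c_1$ of a line bundle, itself a standard consequence of the projective bundle formula (already recorded in \cite{Deg12}).

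For point (3), I would produce the formal group law by analysing $\E^{**}(\PP^\infty_X \times_X \PP^\infty_X)$. Passing to the limit in the projective bundle formula over $\PP^n_X \times_X \PP^m_X$ identifies this ring with the completed polynomial ring $\E^{**}(X)[[x,y]]$, where $x = c_1(\lambda_1)$ and $y = c_1(\lambda_2)$ are the first Chern classes of the two tautological line bundles pulled back along the projections. The class $c_1(\lambda_1 \otimes \lambda_2)$ therefore has a unique representation as a power series $F_X(x,y) \in \E^{**}(X)[[x,y]]$; commutativity comes from $L_1 \otimes L_2 \simeq L_2 \otimes L_1$, while the unit and associativity axioms follow respectively by functoriality along $\lambda \otimes \cO \simeq \lambda$ and by iterating the construction on $(\PP^\infty_X)^3$. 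For arbitrary line bundles $L_1, L_2$ on $X$ classified by maps $X \to \PP^\infty_X$, pullback along the product map $X \to \PP^\infty_X \times_X \PP^\infty_X$ produces the identity $c_1(L_1 \otimes L_2) = F_X(c_1(L_1), c_1(L_2))$, the nilpotence of $c_1(L_i)$ making the right-hand side well-defined. The compatibility $f^*(F_X) = F_Y$ under a morphism $f:Y \to X$ then follows from the cartesian-section axiom $f^*(c_X) = c_Y$ of Definition~\ref{df:orientation}, which makes the whole construction natural in $X$.

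The main technical obstacle is the splitting principle, which requires combining the projective bundle formula with sufficient control of $\E^{**}$ under smooth base change to ensure injectivity of $f^*$ on flag varieties; in the motivic-homotopical setting this has been carried out in \cite{Deg12}, so I would cite those results rather than reprove them from scratch. Once the splitting principle is available, every remaining verification reduces to a formal manipulation in the free $\E^{**}(X)$-module $\E^{**}(\PP(E))$ produced by the projective bundle formula.
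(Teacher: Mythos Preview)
Your proposal is correct and matches the paper's approach: the paper simply cites \cite[2.1.17, 2.1.22]{Deg12} without further argument, and your sketch is precisely the standard Grothendieck-style development carried out there (projective bundle formula for the definition, splitting principle for Whitney sum and nilpotence, analysis of $\E^{**}(\PP^\infty_X \times_X \PP^\infty_X)$ for the formal group law). You have in fact supplied more detail than the paper does.
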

This is the content of \cite[2.1.17, 2.1.22]{Deg12}.

\begin{rem}\label{rem:morph_oriented&Chern}
Consider a morphism of oriented ring spectra:
$$
(\varphi,\psi):(\T,\E,c) \rightarrow (\T',\F,d)
$$
as in Definition \ref{df:orientation},
 and $\psi_*:\E^{**}(X) \rightarrow \F^{**}(X)$ the map induced in cohomology.
Let us denote by $c_n(E)$
 (resp. $d_n(E)$) the $n$-th Chern class in $\E^{**}(X)$ (resp. $\F^{**}(X)$)
 associated with a vector bundle $E/X$ using the previous proposition.
 It follows from the construction of Chern classes and the fact $\psi_*$
 respects the orientation that we get the relation:
$$
\psi_*(c_n(E))=d_n(E).
$$
\end{rem}

Before going down the path of characteristic classes,
 let us recall that, according to Morel, the existence of an orientation
 on an absolute ring spectrum implies the associated cohomology
 is graded commutative. Actually, this property holds for the associated
 bivariant theory in the following terms.
\begin{prop}\label{prop:orient_comm}
Let $(\E,c)$ be an absolute oriented ring $\T$-spectrum.
 Then for any cartesian square of s-morphisms
$$
\xymatrix@=10pt{
Y\ar[r]\ar[d] & X\ar^p[d] \\
T\ar_f[r] & S,
}
$$
and any pair $(x,t) \in \biv \E {n,i} (X/S) \times \biv \E {m,j} (T/S)$,
 the following relation holds in $\biv \E {n+m,i+j} (Y/S)$:
$$
p^*(t).x=(-1)^{nm}f^*(x).t.
$$
\end{prop}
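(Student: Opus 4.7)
The assertion is a bivariant form of Morel's graded commutativity theorem for an oriented ring spectrum. Factor the cartesian square as $f': Y \to X$ and $p': Y \to T$ with $g := pf' = fp'$, and unwind the definition of the bivariant product from Paragraph \ref{num:product}: writing $x: \un_X(i)[n] \to p^!\E_S$ and $t: \un_T(j)[m] \to f^!\E_S$, the products $p^*(t).x$ and $f^*(x).t$ appear as two morphisms $\un_Y(i+j)[n+m] \to g^!\E_S$ obtained by applying the multiplication $\mu$ after two different sequences of shriek-functor and exchange manipulations --- the two orderings coming from the fact that in $p^*(t).x$ we first transport $t$ to $\T(X)$ and then multiply with $\tilde x$, while in $f^*(x).t$ we first transport $x$ to $\T(T)$ and then multiply with $\tilde t$.

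I would then introduce an auxiliary ``external product''
$$
x \boxtimes t \;:\; \un_Y(i+j)[n+m] \to g^!(\E_S \otimes \E_S),
$$
built by combining $x$, $t$, the base-change natural transformations $f^*p^! \to p'^!f^*$ and $p^*f^! \to f'^!p^*$ (obtained by adjunction from the base-change isomorphisms for $(f^*,p_!)$ and $(p^*,f_!)$), and two applications of the pairing $Ex^{*!}_\otimes$ of \eqref{eq:pairing!*}. A diagram chase exploiting the symmetric-monoidal compatibility of all these exchange transformations in the six functors formalism shows that
$$
p^*(t).x \;=\; g^!(\mu) \circ (x \boxtimes t), \qquad f^*(x).t \;=\; g^!(\mu) \circ (t \boxtimes x),
$$
and moreover that $t \boxtimes x$ is obtained from $x \boxtimes t$ by post-composition with the commutativity constraint $\sigma: \E_S \otimes \E_S \to \E_S \otimes \E_S$ of $\T(S)$ and pre-composition with the canonical twist $\un_Y(i+j)[n+m] \simeq \un_Y(j+i)[m+n]$ on the source.

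The conclusion then follows from Morel's graded commutativity theorem applied to the oriented ring spectrum $\can_*(\E_S) \in \SH(S)$ (see Paragraph \ref{num:orientation}): the orientation forces $\mu \circ \sigma = \mu$ on the Tate-twist factor (the residual $\epsilon$ being killed by the orientation) while on each simplicial suspension factor $\mu \circ \sigma = -\mu$. Combined with the twist on bidegree $(n,i)\otimes(m,j)$, this yields $g^!(\mu) \circ g^!(\sigma) \circ \mathrm{twist} = (-1)^{nm} \cdot g^!(\mu)$, whence the desired identity. The main obstacle is the diagram chase of the second paragraph: ensuring that the various exchange natural transformations and the pairing $Ex^{*!}_\otimes$ fit together to produce exactly the symmetric-monoidal twist on $\E_S \otimes \E_S$, with no hidden signs or shifts. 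Once this compatibility is verified at the level of the six functors formalism, the reduction to Morel's theorem is a purely formal step.
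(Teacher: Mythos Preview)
Your proposal is correct and follows essentially the same route as the paper: first obtain a formal commutation relation governed by the symmetry constraint of the monoidal structure (which introduces Morel's element $\epsilon$), then use the orientation to force $\epsilon=-1$. The only difference is packaging: you invoke Morel's graded commutativity theorem as a black box, whereas the paper spells out the formal relation explicitly as
\[
f^*(t).x=(-1)^{nm-ij}\epsilon^{ij}\,.\,p^*(x).t
\]
and then checks $\epsilon=-1$ directly inside $\E$-cohomology by applying this very relation to the square of the orientation class $c$ in $\E^{4,2}(\PP^2_S)$ and invoking the projective bundle theorem. This is of course exactly Morel's argument, so the two proofs coincide once unpacked.
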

\begin{proof}
For cohomology with support, this was proved in \cite[2.1.15]{Deg12}.
 The proof here is essentially the same.
 Recall that $M(\GGx S)=\un_S \oplus \un_S(1)[1]$.
 The map permuting the factors of $\GG \times \GG$
 therefore induces an endomorphism of $\un_S(1)[1]$ which after
 desuspension and untwisting gives an element\footnote{In fact,
 this element is the image of Morel's element
 $\epsilon \in \pi_0(S^0_S)$
 by the functor $\tau^*:\SH(S) \rightarrow \T(S)$, justifying our notation.}
$$
\epsilon \in \End_{\T(S)}(\un_S)=H_{0,0}(S,\T).
$$
 Formally, we get (under the assumptions of the proposition)
 the following relation:
\begin{equation}\label{eq:basic_comm}
f^*(t).x=(-1)^{nm-ij}\epsilon^{ij}.p^*(x).t
\end{equation}
where the multiplication by $(-1)^{nm-ij}\epsilon^{ij}$ is seen
 via the action of $\E_{0,0}(S)$ on $\biv \E {**}(X,S)$
 --- see Remark \ref{rem:cup_biv&act_coh&supp_prod}(2).
 Therefore, we are done as $\epsilon=-1$ in $\E_{0,0}(S)$.
 Indeed, according to relation \eqref{eq:basic_comm} applied
 with $Y=X=T=S$, $(n,i)=(m,j)=(2,1)$, we have
 $c^2=-\epsilon.c^2 \in \E^{4,2}(\PP^2)$ 
 and the projective bundle theorem for $\E^{**}$ thus concludes.
\end{proof} 

\begin{num}\label{num:thom}
Suppose given a motivic triangulated category $\T$.
Recall that given a vector bundle $p:E \rightarrow X$
 with zero section $s$, one defines the Thom space attached with $E/X$
 as:
$$
\MTh(E/X):=p_\sharp s_*(\un_X).
$$
This is also the image under the right adjoint $\can^*:\SH(X) \rightarrow \T(X)$
 of the classical Thom space $E/E-X$.
 Given an absolute $\T$-spectrum $\E$,
 we define the $\E$-cohomology of the Thom space of $E$ as:
$$
\E^{n,m}(\Th(E))=\Hom_{\T(X)}(\MTh(E),\E_X(m)[n]).
$$
Note that by adjunction, one immediately gets an isomorphism:
\begin{equation}\label{eq:compute_Thom_coh}
\E^{n,m}(\Th(E)) \xrightarrow{\alpha_s^*} \E^{n,m}_X(E)
 =\biv \E {-n,-m}(X \xrightarrow s E)
\end{equation}
where the map $\alpha_s:s_! \rightarrow s_*$ is the canonical 
 isomorphism obtained from the six functors formalism, as $s$ is proper.
Finally, one gets the classical short exact sequence:
\begin{equation}\label{eq:thom&proj}
0 \rightarrow \E^{n,m}(\Th(E))
 \xrightarrow{\partial} \E^{n,m}(\PP(E \oplus 1))
 \xrightarrow{\nu^*} \E^{n,m}(\PP(E)) \rightarrow 0
\end{equation}
where $\nu:\PP(E) \rightarrow \PP(E \oplus 1)$ is the canonical
 immersion of the projective bundle associated with $E/X$
 into its projective completion --- cf. the construction
 of \cite[2.2.1]{Deg12}. 
\end{num}
\begin{prop}\label{prop:thom_class}
Let $(\E,c)$ be an absolute oriented ring $\T$-spectrum
 and $E/X$ be a vector bundle of rank $r$.

One defines the \emph{Thom class} of $E$ in $\E^{2r,r}(\PP(E\oplus 1))$ as:
\begin{equation}\label{eq:thom_class}
\thom(E)=\sum_{i=0}^r p^*(c_i(E)).\big(-c_1(\lambda)\big)^{r-i}.
\end{equation}
Then $\thom(E)$ induces a unique class $\rthom(E) \in \E^{2r,r}(\Th(E))$,
 called the \emph{refined Thom class},
 such that $\partial(\rthom(E))=\thom(E)$.
 
Moreover, $\E^{**}(\Th(E))$ is
 a free graded $\E^{**}(X)$-module of rank $1$ with base $\rthom(E)$.
 In other words, the sequence \eqref{eq:thom&proj} is split and we
 get a canonical isomorphism:
\begin{equation}\label{eq:thom_iso}
\tau_E:\E^{**}(X) \rightarrow \E^{**}(Th(E)), x \mapsto x.\rthom(E).
\end{equation}
\end{prop}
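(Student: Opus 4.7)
The plan is twofold: first, produce the refined Thom class by lifting $\thom(E)$ across $\partial$ using the exactness of \eqref{eq:thom&proj}; then identify $\E^{**}(\Th(E))$ with the kernel of $\nu^*$ by applying the projective bundle formula to both $E$ and $E \oplus 1$, and reading off that this kernel is free of rank one over $\E^{**}(X)$ generated by $\thom(E)$.

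For Step~1, I only need to check that $\nu^*(\thom(E)) = 0$. The immersion $\nu : \PP(E) \to \PP(E \oplus 1)$ pulls back the tautological line bundle to the tautological line bundle, and the classes $p^*(c_i(E))$ are pulled back from $X$, so $\nu^*(\thom(E))$ is literally the left-hand side of the defining relation \eqref{eq:Chern} for the rank-$r$ bundle $E$; hence it vanishes. The injectivity of $\partial$ in \eqref{eq:thom&proj} then yields a unique $\rthom(E) \in \E^{2r,r}(\Th(E))$ with $\partial(\rthom(E)) = \thom(E)$.

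For Step~2, the projective bundle formula (cf.\ Proposition~\ref{prop:chern_classes} and \cite[2.1.13]{Deg12}) presents $\E^{**}(\PP(E \oplus 1))$ as a free $\E^{**}(X)$-module of rank $r+1$ on the powers $(-c_1(\lambda))^i$ for $0 \le i \le r$, and similarly $\E^{**}(\PP(E))$ as free of rank $r$. By Whitney's formula, $c_i(E \oplus 1) = c_i(E)$, so the relation on $\PP(E \oplus 1)$ factors as $(-c_1(\lambda)) \cdot \thom(E) = 0$. Writing $\nu^*$ in the chosen bases and using the defining relation on $\PP(E)$ to re-express $(-c_1(\lambda))^r$ in terms of strictly lower powers, a direct linear-algebra calculation shows that $\ker(\nu^*)$ is precisely the free rank-one $\E^{**}(X)$-submodule generated by $\thom(E)$. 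Combined with Step~1 and with the $\E^{**}(X)$-linearity of $\partial$ (a consequence of the naturality of the localization triangle underlying \eqref{eq:thom&proj}), this identifies $\E^{**}(\Th(E))$ as free of rank one on $\rthom(E)$, and the resulting multiplication by $\rthom(E)$ is the Thom isomorphism $\tau_E$.

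The main bookkeeping effort lies in the kernel computation of Step~2; it is pure linear algebra once the projective bundle formula is in hand, but requires careful tracking of signs and indices. The conceptually cleanest observation, and the one that makes the verification transparent, is that the identity $(-c_1(\lambda)) \cdot \thom(E) = 0$ in $\E^{**}(\PP(E \oplus 1))$ is nothing other than the defining relation \eqref{eq:Chern} applied to $E \oplus 1$, with its leading monic term providing the rank-one generator of the kernel.
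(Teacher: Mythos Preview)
Your argument is correct and is precisely the standard one. Note that the paper itself does not spell out a proof here: it simply refers to \cite[2.2.1, 2.2.2]{Deg12}. What you have written is essentially the argument found in that reference --- the projective bundle formula for $E$ and $E\oplus 1$, the observation that $\nu^*(\thom(E))$ is exactly the defining relation \eqref{eq:Chern}, and the kernel computation showing $\ker(\nu^*)$ is free of rank one on $\thom(E)$. So there is nothing to compare: your proof fills in exactly what the paper outsources.
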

For the proof, see \cite[2.2.1, 2.2.2]{Deg12}.
 The preceding isomorphism is traditionally called the \emph{Thom isomorphism}
 associated with the vector bundle $E/X$.
 It follows from Remark \ref{rem:morph_oriented&Chern} that morphisms
 of oriented absolute ring spectra respect Thom classes as well as refined Thom classes.

\begin{rem}
Note that the proposition makes sense even when the rank of $E/X$
 is not constant. Indeed, in any case, the rank is locally constant on $X$, \emph{i.e.}
 constant over each connected component $X_i$ of $X$ and we just take
 direct sums of the Thom classes restricted to each connected component,
 in the canonical decomposition:
$$
\E^{**}(\Th(E))=\bigoplus_i \E^{**}\left(\Th\big(E|_{X_i}\big)\right).
$$
\end{rem}

\begin{num}\label{num:univ_MGL}
The natural functor $\can^*:\SH(S) \rightarrow \T(S)$ is monoidal.
 In particular, we get a canonical
 absolute ring $\T$-spectrum $\can^*(\MGL)$, the avatar of algebraic
 cobordism in $\T$. Note that by definition, it satisfies the following
 formula for any base scheme $S$:
$$
\can^*(\MGL_S)=\mathrm{hocolim}_{n \geq 0} \MTh_S(\gamma_n)(-n)[-2n]
$$
where $\gamma_n$ is the tautological vector bundle on the infinite Grassmannian
 of $n$-planes over $S$.
 Note that, by adjunction,
 a structure of a $\can^*(\MGL_S)$-module (resp. $\can^*(\MGL_S)$-algebra)
 over a $\T$-spectrum $\E_S$
 is the same thing as a structure of $\MGL_S$-module (resp. $\MGL_S$-algebra)
 over $\can_*(\E_S)$.
 Thus, according to \cite[4.3]{Vez} (see also \cite[2.2.6]{Deg12}),
 there is a bijection between the following sets:
\begin{enumerate}
\item the orientations $c$ on $\E$ as defined in \ref{df:orientation};
\item the structures of an $\MGL$-algebra on $\E$ as defined in
 \ref{df:morph_abs_sp}.
\end{enumerate}
In particular, the class $c$ induces a unique morphism of
 absolute ring spectra:
$$
(\can,\phi^c):\MGL \rightarrow \E.
$$
This morphism induces a natural transformation of cohomology theories,
 compatible with cup-products (see \ref{num:functoriality2}):
$$
\phi^c_*:\MGL^{n,m}(X) \rightarrow \E^{n,m}(X)
$$
 which by definition satisfies the property that
  $\tau^c_*(c^\MGL)=c$ in $\tilde \E^{2,1}(\PP^\infty_S)$.
 In other words, $(\can,\phi^c)$ is a morphism of oriented ring spectra.
 
This fact suggest the following definition.
\end{num}
\begin{df}\label{df:weak_orient}
A \emph{weak orientation} of an absolute $\T$-spectrum $\E$
 is a structure of a $\can$-module over the absolute ring spectrum $\MGL$
 --- in short, an $\MGL$-module structure.
\end{df}

\begin{rem}
\begin{enumerate}
\item Typically, an $\MGL$-module $\E$ will not possess Chern classes or
 Thom classes but will possess a structural action of the ones
 which naturally exists for $\MGL$.
 As we will see below, this is enough to obtain Gysin morphisms and duality
 results for the cohomology represented by $\E$.
\item When $\E$ is an absolute ring spectrum,
 the difference between a weak orientation and an orientation is the one
 between an $\MGL$-module structure and an $\MGL$-algebra structure.
\end{enumerate}
\end{rem}

\begin{rem} In this example, we consider one of the following
 assumptions on a given ring of coefficients $\Lambda$:
\begin{itemize}
\item the category of schemes $\base$ can be arbitrary
 and $\Lambda=\QQ$;
\item the category $\base$ is a subcategory of the category
 of $k$-schemes for a field $k$ of characteristic exponent $p$
 and $\Lambda=\ZZ[1/p]$.
\end{itemize}
We denote by $\HH\Lambda$ the motivic (Eilenberg-MacLane) absolute spectrum
 with coefficients in $\Lambda$ (Example \ref{ex:spectra}).
 Then given an absolute oriented ring $\T$-spectrum $(\E,c)$
 which is $\Lambda$-linear and  whose associated formal group law is additive,
 there exists a unique morphism of absolute ring spectra
$$(\can,\tilde \phi^c):\HH\Lambda \rightarrow \E
$$
such that $\tilde \phi^c(c^{\HH\Lambda})=x$.

Indeed, in the first case, this follows from \cite[Th. 14.2.6]{Deg12},
 and in the second case from Hoyois-Hopkins-Morel Theorem
 (see \cite{Hoy}).
 See \cite[5.3.1, 5.3.9]{Deg12} for more details.
\end{rem}

\begin{num}\label{num:Thom_virtual}
Consider again the setting of Paragraph \ref{num:thom}.
 Recall from \cite[2.4.18]{Deg12} that the association $E \mapsto \MTh(E)$
 can be uniquely extended to a monoidal functor:
$$
\MTh:\underline K(X) \rightarrow \Pic(\T,\otimes)
$$
where $\underline K(X)$ is the Picard groupoid of \emph{virtual vector bundles} over $X$
 (\cite[4.12]{Del}) and $\Pic(\T,\otimes)$ that of $\otimes$-invertible objects of $\T(X)$,
 morphisms being isomorphisms. 
 Actually, this extension follows from the fact that for any short exact sequence
 of vector bundles over $X$:
\begin{equation}\tag{$\sigma$}
0 \rightarrow E' \rightarrow E \rightarrow E'' \rightarrow 0
\end{equation}
there exists a canonical isomorphism:
\begin{equation}\label{eq:Thom_virtual}
\epsilon_\sigma:\MTh(E') \otimes \MTh(E'')  \rightarrow \MTh(E)
\end{equation}
and the isomorphisms of this form satisfy the coherence conditions
 of \cite[4.3]{Del}.
 
 Then the definition of the refined Thom class can be extended
 to Thom spaces of virtual vector bundles using the following lemma
 (see \cite[2.4.7]{Deg12}).
\end{num}
\begin{lm}\label{lm:product&thom}
Consider as above an exact sequence $(\sigma)$ of vector bundles over a scheme $X$.
Then the following relation holds in $\E^{**}(\Th(E))$:
$$
\rthom(E/X)=\rthom(E'/X).\rthom(E/E')
$$
using the product 
 $\E^{**}_X(E') \otimes \E^{**}_{E'}(E) \rightarrow \E^{**}_X(E)$
 of the corresponding bivariant theories.
\end{lm}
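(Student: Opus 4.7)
The strategy is to reduce to the split case via the splitting principle, and then deduce the identity from the Whitney sum formula for Chern classes.

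First, I would translate both sides into the six-functor language. Writing $s_1 \colon X \hookrightarrow E'$ and $s_2 \colon E' \hookrightarrow E$ for the zero section and the inclusion induced by $(\sigma)$, and $s = s_2 \circ s_1$, both sides live in the bivariant group $\biv{\E}{-2r,-r}(X \xrightarrow{s} E) \simeq \E^{2r,r}(\Th(E))$, where $r = \rk(E)$. Unwinding the bivariant product of Paragraph \ref{num:product}, the right-hand side $\rthom(E'/X).\rthom(E/E')$ is the composite map $\un_X(-r)[-2r] \to s_1^!\,\E_{E'} \to s_1^! s_2^! \E_E \simeq s^! \E_E$, built from the two refined Thom classes and the multiplication on $\E$.

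Next, I would apply the splitting principle. By iterated use of the projective bundle formula from Proposition \ref{prop:chern_classes} together with homotopy invariance (Proposition \ref{prop:basic_bivariant}(1)), there exists a tower of projective bundle projections $q \colon Y \to X$ along which $q^*$ is injective on the relevant bivariant groups and along which both $q^*E'$ and $q^*E''$ become direct sums of line bundles. The pulled-back sequence $q^*(\sigma)$ then splits; by naturality of Thom classes and of the bivariant product under $q^*$, it suffices to treat the case of a split short exact sequence.

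In the split case, the canonical isomorphism $\epsilon_\sigma \colon \MTh(E') \otimes \MTh(E'') \xrightarrow{\sim} \MTh(E)$ of \eqref{eq:Thom_virtual} exhibits $\Th(E)$ as an external smash product of Thom spaces over the base, and under this identification the bivariant product of the two refined Thom classes corresponds to their external product. Plugging into the explicit formula \eqref{eq:thom_class} and invoking the Whitney sum formula $c_t(E) = c_t(E') c_t(E'')$ from Proposition \ref{prop:chern_classes}(2), the ordinary Thom class $\thom(E/X)$ in $\E^{**}(\PP(E \oplus 1))$ factors as the corresponding product of ordinary Thom classes. By injectivity of $\partial$ in the sequence \eqref{eq:thom&proj}, this identity passes to the refined Thom classes, giving the desired equation.

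The main obstacle is the verification, in the split case, that the bivariant product --- built from the pairing \eqref{eq:pairing!*} and the compatibility $s_1^! s_2^! \simeq s^!$ for composable closed immersions --- actually realizes the external product of Thom classes under $\epsilon_\sigma$. This is a coherence-style diagram chase combining the exchange transformation $Ex^{*!}_\otimes$ with the functoriality of exceptional functors, and it is the technical heart of the proof; once it is settled, multiplicativity of refined Thom classes reduces formally to multiplicativity of total Chern classes.
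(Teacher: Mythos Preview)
The paper itself does not prove this lemma; it cites \cite[2.4.7]{Deg12}. So there is no in-paper argument to compare against directly, and your sketch must be judged on its own.

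There is a genuine gap in your reduction step. You claim that after passing to a tower of projective bundles making $q^*E'$ and $q^*E''$ into direct sums of line bundles, the pulled-back sequence $q^*(\sigma)$ splits. This is false in general: decomposing the outer terms as sums of line bundles says nothing about the extension class in $\mathrm{Ext}^1(E'',E')$. A concrete counterexample is the sequence $0 \to \cO(-1) \to \cO^{\oplus 2} \to \cO(1) \to 0$ on $\PP^1$, where both outer terms are already line bundles but the sequence does not split. Projective bundle towers are the right tool for reducing Chern-class identities to the line-bundle case, but they do not kill extension classes.

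The fix is to use homotopy invariance rather than the projective splitting principle. The scheme of splittings of $(\sigma)$ is a torsor under the vector bundle $\underline{\Hom}(E'',E')$, hence an affine bundle over $X$; pulling back along it is an isomorphism on all the bivariant groups in play by Proposition~\ref{prop:basic_bivariant}(1), and over this base the sequence tautologically splits. (Equivalently, one can argue via the deformation space, which linearly interpolates between $E$ and $E' \oplus E''$.) Once you are in the split case, your remaining outline --- identifying the bivariant product with the external product of Thom classes under $\epsilon_\sigma$ and invoking the Whitney sum formula through \eqref{eq:thom_class} and \eqref{eq:thom&proj} --- is the right shape, and the coherence verification you flag is indeed the substance of the argument.
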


\begin{num}\label{num:Thom_class_vb}
Consider an arbitrary vector bundle $E/X$.
 Then we obviously get a perfect pairing of $\E^{**}(X)$-modules:
$$
\E^{**}(\Th(E)) \otimes \E^{**}(\Th(-E)) \rightarrow \E^{**}(X),
 (a,b) \mapsto a \otimes_X b.
$$
We let $\rthom(-E)$ be the unique element of
 $\E^{**}(\Th(-E))$ such that $\rthom(E) \otimes_X \rthom(-E)=1$
 so that $\rthom(-E)$ is a basis of the $E^{**}(X)$-module
 $\E^{**}(\Th(-E))$.

Let now $v$ be a virtual vector bundle over $X$. 
Then we deduce from the preceding lemma that for
any $X$-vector bundles $E$ and $E'$ such that $v=[E]-[E']$, the class
$$
\rthom(v)=\rthom(E) \otimes_X \rthom(-E')
$$
is independent of the choice of $E$ and $E$'.
\end{num}
\begin{df}\label{df:Thom_class_vb}
Consider the notations above. We define the \emph{Thom class}
 of the virtual vector bundle $v$ over $X$ as the element
 $\rthom(v) \in \E^{**}(\Th(v))$ defined by the preceding
 relation.
\end{df}
Recall $\rthom(v)$ is a basis of $\E^{**}(\Th(v))$ as an
 $\E^{**}(X)$-module. In other words, the map:
\begin{equation}\label{eq:thom_iso_virt}
\tau_v:\E^{**}(X) \rightarrow \E^{**}(Th(v)), x \mapsto x.\rthom(v).
\end{equation}
is an isomorphism, again called the \emph{Thom isomorphism} associated
 with the virtual vector bundle $v$.
 Besides, the preceding lemma shows we have the relation:
\begin{equation}\label{eq:rthom_add}
\rthom(v+v')=\rthom(v) \otimes_X \rthom(v')
\end{equation}
in $\E^{**}(\Th(v+v'))$.

\begin{num}
Consider again the setting of Paragraph \ref{num:thom}.
 According to the projection formulas of the motivic triangulated
 category $\T$, 
 one gets an isomorphism of functors in $M$, object of $\T(X)$,
\begin{equation}\label{eq:functorial_thom_iso1}
p_\sharp s_*(M)=p_\sharp s_*(\un_S \otimes s^*p^*M)
 \simeq p_\sharp s_*(\un_S) \otimes M=\MTh_S(E) \otimes M.
\end{equation}
Therefore, $p_\sharp s_*$ is an equivalence of categories
 with quasi-inverse:
\begin{equation}\label{eq:functorial_thom_iso2}
M \mapsto s^!p^*(M)=\MTh_S(-E) \otimes M.
\end{equation}
The following proposition is a reinforcement of Proposition
 \ref{prop:thom_class}.
\end{num}
\begin{prop}\label{prop:fdl_thom}
Let $(\E,c)$ be an absolute oriented $\T$-spectrum.

Then for a vector bundle $E/X$ with zero section $s$,
 the refined Thom class $\rthom(E)$, seen as an element $\fdl_s$ of
 $\biv \E {**}(X \xrightarrow s E)$ through the identification
 \eqref{eq:compute_Thom_coh}, is a universally strong orientation of $s$,
 with degree equal to the rank $r$ of $E/X$.

Moreover, these orientations form a system of fundamental classes
 with respect to the class of morphisms made by the zero sections of vector bundles
 (over schemes in $\base$).
\end{prop}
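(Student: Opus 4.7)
The proof naturally splits into showing (a) that $\fdl_s$ is universally strong, and (b) the associativity of the resulting assignment, which will follow from Lemma \ref{lm:product&thom}.

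For (a), first unwind the identification \eqref{eq:compute_Thom_coh}. Using the adjunctions $p_\sharp \dashv p^*$, $s_! \dashv s^!$, the equality $s_! = s_*$ (since $s$ is a closed immersion, hence proper), and the structural base change $\tau_p:p^*(\E_X)\xrightarrow{\sim}\E_E$, the class $\rthom(E)\in\E^{2r,r}(\Th(E))$ corresponds to a morphism $\fdl_s:\un_X(-r)[-2r]\to s^!(\E_E)$ in $\T(X)$ — placing it in the bivariant degree governed by the rank $r$ of the normal bundle $E$. The associated map from \eqref{eq:associated_pur_iso} is then $\tfdl_s:\E_X(-r)[-2r]\to s^!(\E_E)$, and I must show it is an isomorphism in $\T(X)$.

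To this end, combine $\tau_p$ with the canonical equivalence \eqref{eq:functorial_thom_iso2}, $s^!p^*(-)\simeq\MTh(-E)\otimes(-)$, to obtain $s^!(\E_E)\simeq\MTh(-E)\otimes\E_X$. Chasing the construction of $\tfdl_s$ (using compatibility of $Ex^{*!}_\otimes$ with these equivalences), the map becomes, up to this identification, the map induced by the refined Thom class $\rthom(-E)$ of Paragraph \ref{num:Thom_class_vb} together with the multiplication on $\E_X$. To check this is an isomorphism in $\T(X)$, I would invoke Nisnevich descent for $\T$, which is part of the axioms of a motivic triangulated category. Every step in the construction of $\tfdl_s$ is compatible with base change (by naturality of $f^*$, $f^!$, $Ex^{*!}_\otimes$ and of Thom classes), so the question is local for the Nisnevich topology on $X$. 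Since vector bundles are Zariski-locally, hence Nisnevich-locally, trivial, one reduces to $E=\AA^r_X$. In the trivial case, $\PP^1$-stability yields $\MTh(\AA^r_X)\simeq\un_X(r)[2r]$, the refined Thom class is multiplicatively built from the stability class via \eqref{eq:rthom_add} and the rank-one case, and the desired map is manifestly an isomorphism.

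For (b), I must verify that whenever two zero sections compose inside the prescribed class, the bivariant product of their fundamental classes recovers the fundamental class of the composite. The archetypal situation is that of a short exact sequence of vector bundles $0\to E'\to E\to E''\to 0$ on $X$: here $s_{E'}:X\to E'$ is the zero section of $E'$, the inclusion $E'\hookrightarrow E$ is the zero section of the pullback bundle $p^*E''$ over $E'$ (so it too lies in our class), and their composition is the zero section $s_E:X\to E$. Lemma \ref{lm:product&thom} states precisely
$$
\rthom(E/X)=\rthom(E'/X)\cdot\rthom(E/E'),
$$
which, via \eqref{eq:compute_Thom_coh} together with its compatibility with bivariant products recorded in Remark \ref{rem:cup_biv&act_coh&supp_prod}(3), translates into the associativity formula $\fdl_{s_E}=\fdl_{s_{E'}}\cdot\fdl_{\mathrm{incl}}$. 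Any composable pair in the class of zero sections is of this form (after pullback), so associativity is established.

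The main obstacle is step (a): upgrading the classical Thom isomorphism of Proposition \ref{prop:thom_class}, which is a statement about cohomology groups, to an isomorphism of objects in $\T(X)$. This forces one to argue at the level of the six functors (through \eqref{eq:functorial_thom_iso2}) and to exploit Nisnevich descent to reduce to the trivial bundle; by contrast, the cohomology-level Thom isomorphism alone is strictly weaker than the universally strong condition demanded here.
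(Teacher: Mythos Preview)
Your argument is correct, and part (b) matches the paper exactly: associativity is precisely Lemma \ref{lm:product&thom}, translated through \eqref{eq:compute_Thom_coh}.

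For part (a), however, you take a genuinely different route from the paper. You argue by Nisnevich (in fact Zariski) descent: check that the formation of $\tfdl_s$ is compatible with restriction to opens, then reduce to the trivial bundle where $\PP^1$-stability makes the claim transparent. The paper instead runs a \emph{generator argument}. It defines the map
$$
\pur'_E:\MTh(E)\otimes\E_X \xrightarrow{\rthom(E)\otimes 1} \E_X\otimes\E_X(r)[2r] \xrightarrow{\mu} \E_X(r)[2r]
$$
and observes that applying $\Hom_{\T(X)}(M_X(Y),-(m)[n])$ for any smooth $f:Y\to X$ produces the inverse of the cohomological Thom isomorphism \eqref{eq:thom_iso_virt} for $-f^{-1}(E)$ over $Y$ (because Thom classes are stable under pullback). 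Since the objects $M_X(Y)(-m)$ generate $\T(X)$, this forces $\pur'_E$ to be an isomorphism in $\T(X)$; one then checks that $s^!p^*(\pur'_E)$ agrees with $\tfdl_s(r)[2r]$. The paper's approach has the virtue of directly leveraging the already-established cohomological Thom isomorphism over every smooth base, bypassing any explicit local computation; your descent argument is more hands-on but equally valid, and makes the role of $\PP^1$-stability more visible. One small imprecision in your write-up: the map ``induced by $\rthom(-E)$ together with multiplication'' naturally goes from $\MTh(-E)\otimes\E_X$ to $\E_X(-r)[-2r]$, i.e.\ in the direction opposite to $\tfdl_s$; it is the \emph{inverse} of $\tfdl_s$ under your identification, not $\tfdl_s$ itself. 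This does not affect the conclusion.
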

\begin{proof}
Let us consider the following map:
$$
\pur'_E:\MTh(E) \otimes \E_X \xrightarrow{\rthom(E) \otimes 1} \E_X \otimes \E_X(r)[2r]
 \xrightarrow{\mu} \E_X(r)[2r]
$$
where $\mu$ is the multiplication map of the ring spectrum $\E_X$.
It follows formally from this construction that the map
\begin{align*}
\E^{n,m}(\Th(-E))=&\Hom_{\T(X)}\left(\MTh(-E),\E(m)[n]\right)
 \simeq \Hom_{\T(X)}\left(\un_X,\MTh(E) \otimes \E(m)[n]\right)
\\
& \xrightarrow{\quad (\pur'_E)_*\quad}
 \Hom_{\T(X)}(\un_X,\E(m+r)[n+2r])
  =\E^{n+2r,m+r}(X)
\end{align*}
induced
 by $\pur'_E$ after applying the functor
 $\Hom_{\T(X)}(\un_X,-(*)[*])$ is equal to the inverse of
 the Thom isomorphism \eqref{eq:thom_iso_virt}. 
 Because Thom classes are stable under pullbacks,
 we further deduce that for any smooth morphism $f:Y \rightarrow X$,
 the map induced after applying the functor $\Hom_{\T(X)}(M_X(Y),-(m)[n])$
 is equal to the inverse of the Thom isomorphism associated with
 the virtual vector bundle $(-f^{-1}(E))$ over $Y$.
 As the objects $M_X(Y)(-m)$ for $Y/X$ smooth and $m \in \ZZ$
 form a family of generators for the triangulated category $\T(X)$
 (according to our conventions on motivic triangulated categories),
 we deduce that $\pur'_E$ is an isomorphism.

Now, one can check going back to definitions that the following isomorphism:
$$
\E_X=s^!p^*(\MTh(E) \otimes \E_X) \xrightarrow{s^!p^*(\pur'_E)}
 s^!p^*(\E_X(r)[2r]) \simeq s^!(\E_X)(r)[2r]
$$
is equal to the map $\tfdl_s(r)[2r]$ associated to $\fdl_s$
 as in \eqref{eq:associated_pur_iso}. This implies the first claim.

Then the second claim is exactly Lemma \ref{lm:product&thom}.
\end{proof}

\begin{rem}
We will remember from the above proof that, in the condition of the proposition,
 given any virtual vector bundle $v$ over $X$ with virtual rank $r$, the following
 map:
\begin{equation}\label{eq:thom_iso_virtual}
\pur'_v:\MTh(v) \otimes \E_X \xrightarrow{\rthom(v) \otimes 1} \E_X \otimes \E_X(r)[2r]
 \xrightarrow{\mu} \E_X(r)[2r]
\end{equation}
is an isomorphism --- this follows from the case where $v=[E]$ explicitly treated in the
 proof, relation \eqref{eq:rthom_add} and the fact $\rthom(0)=1$.
 This map obviously represents the Thom isomorphism \eqref{eq:rthom_add}
 so we will also call it the \emph{Thom isomorphism} when no confusion
 can arise.
\end{rem}

\subsection{The smooth case}

\begin{num}\label{num:rel_Tspectrum}
Let $\T$ be a triangulated motivic category
 with, according to our conventions, a premotivic adjunction $\tau^*:\SH \rightarrow \T$.

As usual we call closed $S$-pair any pair of $S$-schemes $(X,Z)$ such that
 $X/S$ is smooth and $Z$ is a closed immersion. One defines
 the motive of $X$ with support in $Z$ as:
$$
M_S(X/X-Z):=p_\sharp i_*(\un_Z)
$$
where $p$ is the structural morphism of $X/S$, $i$ the immersion of $Z$ in $X$.
 Alternatively, one can equivalently put:
$$
M_S(X/X-Z):=\tau^*(\Sus X/X-Z)
$$
 where $X/X-Z$ is the quotient computed in the category pointed
 Nisnevich sheaves of sets over the category of smooth $S$-schemes,
 seen as an object of the pointed $\AA^1$-homotopy category over $S$.

A particular example that we have already seen is given for a vector bundle $E/X$,
 with $X/S$ smooth. We then put:
$$
\MTh_S(E):=M_S(E/E-X)
$$
extending the definition of Paragraph \ref{num:thom} --- for which we
 had $X=S$.

These objects satisfy a classical formalism which has been summarized
 in \cite[2.1]{Deg8}. In particular,
 they are covariant in the closed $S$-pair $(X,Z)$ --- recall morphisms
 of closed pairs are given by commutative squares
 which are topologically cartesian\footnote{\emph{i.e.} cartesian on
 the underlying topological spaces.}; one says such a morphism is
 \emph{cartesian} if the corresponding square is cartesian.

Given a closed $S$-pair $(X,Z)$, we define the associated deformation
 space as:
$$
D_ZX:=B_{Z \times \{0\}}(\AA^1_X)-B_ZX.
$$
Let us put $D=D_ZX$ for the rest of the discussion.
 This deformation space contains
 as a closed subscheme the scheme $\AA^1_Z \simeq B_Z(\AA^1_Z)$.
 It is flat over $\AA^1$ and the fiber of the closed pair
 $(D,\AA^1_Z)$ over $1$ (resp. $0$) is  $(X,Z)$ (resp. $(N_ZX,Z)$). 
 Therefore one gets cartesian morphisms of closed $S$-pairs:
\begin{equation}\label{eq:deformation}
(X,Z) \xrightarrow{d_1} (D,\AA^1_Z) \xleftarrow{d_0} (N_ZX,Z).
\end{equation}
\end{num}
\begin{thm}[Morel-Voevodsky]\label{thm:purity_MV}
Consider a closed $S$-pair $(X,Z)$ such that $Z/S$ is smooth.

Then the induced maps
$$
M_S(X/X-Z) \xrightarrow{d_{1*}}
 M_S(D/D-\AA^1_Z) \xleftarrow{d_{0*}}
 M\Th_S(N_ZX)
$$
are isomorphisms in $\T(S)$.
\end{thm}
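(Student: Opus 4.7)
The proof follows the classical Morel--Voevodsky deformation strategy, carried out inside the abstract motivic triangulated category $\T$ using only its $\AA^1$-homotopy invariance and Nisnevich descent axioms. Both morphisms $d_{1*}$ and $d_{0*}$ are natural with respect to cartesian morphisms of closed $S$-pairs, since the formation of the deformation space $D_ZX$ and of the normal bundle $N_ZX$ is functorial for such morphisms. Hence it suffices to establish the claim for a conveniently small class of pairs.

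The key tool is \emph{étale excision}: for any cartesian morphism $(X', Z) \to (X,Z)$ of closed $S$-pairs with $X' \to X$ étale and restricting to an isomorphism over $Z$, the induced map $M_S(X'/X'-Z) \to M_S(X/X-Z)$ is an isomorphism, which follows from the Nisnevich descent axiom applied to the distinguished square with corners $X'\setminus Z$, $X'$, $X\setminus Z$, $X$. The same excision holds for the motive of the deformation space (whose construction commutes with étale base change along $X'\to X$) and is trivially compatible with $\MTh_S(N_ZX)$, since the normal bundle depends only on an étale neighborhood of $Z$.

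Combined with Nisnevich descent, this reduces the claim to a Nisnevich-local situation on $X$. Since both $X/S$ and $Z/S$ are smooth with $Z$ of locally constant codimension $c$ in $X$, the classical tubular neighborhood result for the Nisnevich topology says that $(X,Z)$ is Nisnevich-locally isomorphic to the trivial pair $(\AA^c_Z, Z)$, with $Z$ embedded as the zero section. Thus the whole question is reduced to this base case.

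In the base case one computes directly: $N_ZX \simeq \AA^c_Z$, $D_ZX \simeq \AA^c_Z \times \AA^1$, and $\AA^1_Z \hookrightarrow D_ZX$ is the zero section crossed with $\AA^1$; the morphisms $d_0$ and $d_1$ become the two end slices of a trivial $\AA^1$-family. The $\AA^1$-homotopy invariance axiom for $\T$ then immediately yields that both $d_{0*}$ and $d_{1*}$ are isomorphisms. The main obstacle in this plan is the Nisnevich-local reduction step: one must verify simultaneously the étale excision property for all three functors involved and ensure the tubular neighborhood construction provides compatible trivializations of $(X,Z)$, of its deformation space $(D_ZX, \AA^1_Z)$, and of its normal cone pair $(N_ZX, Z)$.
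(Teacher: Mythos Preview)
The paper does not give its own proof of this theorem: it simply records that the result is well known and refers to \cite[Th.~2.4.35]{CD3}. Your sketch is precisely the classical Morel--Voevodsky argument one finds there (Zariski/Nisnevich reduction via excision to a linear model, then $\AA^1$-invariance in the base case), so there is nothing substantive to compare. One small point of phrasing: rather than saying $(X,Z)$ is ``Nisnevich-locally isomorphic'' to $(\AA^c_Z,Z)$, it is more accurate --- and is what the argument actually uses --- to say that the two pairs share a common \'etale neighborhood of $Z$; concretely, after Zariski-localizing on $X$ so that an \'etale chart $X\to\AA^n_S$ exists with $Z$ the preimage of a coordinate subspace, one takes the fibre product $X\times_{\AA^n_S}(Z\times_S\AA^c)$, which is \'etale over both $X$ and $Z\times_S\AA^c$ and restricts to the identity on $Z$, and then \'etale excision completes the reduction.
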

The proof is well known --- see for example \cite[Th. 2.4.35]{CD3}.

\begin{df}\label{df:purity_closed_pairs}
Under the assumptions of the previous theorem,
 we define the \emph{purity isomorphism} associated
 with $(X,Z)$ as the composite isomorphism:
$$
\pur_{(X,Z)}:M_S(X/X-Z)
 \xrightarrow{d_{1*}} M_S(D/D-\AA^1_Z)
 \xrightarrow{d_{0*}^{-1}} M\Th_S(N_ZX).
$$
\end{df}

\begin{num}\label{num:fdl_retraction}
We can derive a functorial version of the preceding purity
 isomorphism. Consider for simplicity the case of a closed immersion $i:S \rightarrow X$
 which admits a smooth retraction $p:X \rightarrow S$.
 We deduce from the preceding construction the following isomorphism functorial
 in a given object $\E$ of $\T(S)$:
$$
\pur_{p,i}:p_\sharp i_*(\E)=p_\sharp i_*(\un \otimes s^*p^*\E)
 \xrightarrow{\quad \sim\quad}  M_S(X/X-S) \otimes \E \\
 \xrightarrow{\pur_{(X,Z)}} \MTh_S(N_SX) \otimes \E,
$$
where $p$ is the structural map of $X/S$,
 and the first isomorphism is given by the projection formulas
 associated with $i_*$ and $p_\sharp$. As the $\T$-spectrum
 $\MTh_S(N_SX)$ is $\otimes$-invertible,
 we deduce that the functor $p_\sharp s_*$ is an equivalence of categories. 
 Then by adjunction, we get a dual isomorphism:
\begin{equation}\label{eq:functorial_thom_iso}
\pur'_{p,i}:i^! p^*(\E) \rightarrow \MTh_S(-N_SX) \otimes \E
\end{equation}
using the notation of Paragraph \ref{num:Thom_virtual}. Note by the way
 this map can be written as the following composite of isomorphisms:
\begin{equation}
\begin{split}\label{eq:functorial_thom_iso_bis}
\pur'_{p,i}:i^! p^*(\E)
 \xrightarrow{\sim} \uHom(M_S(X/X-S),\E)
 \xrightarrow{\big(\pur_{(X,Z)}^{-1}\big)^*} &\uHom(\MTh_S(N_SX),\E) \\
 &\simeq \MTh_S(-N_SX) \otimes \E
\end{split}
\end{equation}

Consider now an absolute oriented spectrum $(\T,\E,c)$.
 Let $n$ be the function on $S$ which measures the local codimension of $S$ in $X$.
 It is locally constant as $i$ is a regular closed immersion --- as it admits
 a smooth retraction.
 Then we deduce from the previous purity isomorphism and from the Thom
 isomorphism \eqref{eq:thom_iso_virtual} the following one:
$$
\tfdl_i:\E_S(n)[2n] \xrightarrow{(\pur'_{-N_SX})^{-1}} \MTh(-N_SX) \otimes \E_S
 \xrightarrow{(\pur'_{p,i})^{-1}} i^! p^*(\E_S) \simeq i^!(\E_X). 
$$
As explained in Remark \ref{rem:strong_orientation}(2),
 we can associate to this isomorphism the following orientation:
$$
\fdl_i:\un_S(n)[2n] \rightarrow \E_S(n)[2n] \xrightarrow{\tfdl_i} i^!(\E_X)
$$
which is therefore universally strong.
\end{num}
\begin{prop}\label{prop:fdl_retraction}
Consider the above notations and assumptions.
 The universally strong orientations $\fdl_i$ constructed above
 form a system of fundamental classes (Definition \ref{df:fdl_classes_abstract})
 with coefficients in $\E$ for closed immersions
 which admit a smooth retraction.
\end{prop}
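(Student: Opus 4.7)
Consider composable closed immersions $Y \xrightarrow{j} S \xrightarrow{i} X$ admitting smooth retractions $q:S \to Y$ and $p:X \to S$; the composite $ij:Y \to X$ then admits the smooth retraction $qp$. Unwinding the product on Borel-Moore homology from Paragraph \ref{num:product} and applying Remark \ref{rem:strong_orientation}(2), the identity $\fdl_{ij} = \fdl_j.\fdl_i$ in $\biv \E {**}(Y/X)$ translates into the identity of morphisms of $\T(Y)$:
$$\tfdl_{ij} = j^!(\tfdl_i) \circ \tfdl_j,$$
where all maps are suitably shifted and twisted by the codimensions $n_i$, $n_j$, and $n_{ij} = n_i+n_j$. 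By Paragraph \ref{num:fdl_retraction}, each $\tfdl_{-}$ (or its inverse) decomposes canonically as a composite of the Morel-Voevodsky purity isomorphism $\pur'_{-,-}$ of \eqref{eq:functorial_thom_iso} and the Thom isomorphism $\pur'_{-N_{-}}$ of \eqref{eq:thom_iso_virtual} attached to the relevant virtual normal bundle. The desired identity thus splits into two independent compatibilities.

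The first is the multiplicativity of Thom classes. Since $i$ and $j$ admit smooth retractions, both are regular closed immersions and one has the short exact sequence of vector bundles over $Y$:
$$0 \longrightarrow N_{Y/S} \longrightarrow N_{Y/X} \longrightarrow j^*(N_{S/X}) \longrightarrow 0,$$
giving in the Picard groupoid $\underline K(Y)$ of Paragraph \ref{num:Thom_virtual} the relation $[-N_{Y/X}] = [-N_{Y/S}] + j^*[-N_{S/X}]$. Relation \eqref{eq:rthom_add} (Lemma \ref{lm:product&thom} extended to virtual vector bundles, as in Paragraph \ref{num:Thom_class_vb}) then yields
$$\rthom(-N_{Y/X}) = \rthom(-N_{Y/S}) \otimes_Y \rthom(j^*(-N_{S/X})),$$
which translates directly into the required factorization of $\pur'_{-N_{Y/X}}$ through $\pur'_{-N_{Y/S}}$ and the pullback $j^*(\pur'_{-N_{S/X}})$.

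The second is the compatibility of the Morel-Voevodsky purity isomorphism with composition of regular closed immersions. For the nested triple $(X,S,Y)$ with smooth retractions, this is obtained by an iterated deformation to the normal cone: one constructs a double deformation space bifibered over $\AA^1 \times \AA^1$ whose four relevant fibers realize, in a compatible way, the closed pairs $(X,Y)$, $(N_{S/X}, Y)$, $(X,S)$ and $(N_{Y/X}, Y)$ together with the single deformations $D_S X$ and $D_Y S$ already used in Definition \ref{df:purity_closed_pairs}. Applying Theorem \ref{thm:purity_MV} along each axis produces a commutative diagram of isomorphisms of Thom spaces which, after passing to $!$-adjoints via \eqref{eq:functorial_thom_iso_bis} and combining with the Thom identification of the previous paragraph, yields the sought-after relation between $\pur'_{qp,ij}$, $\pur'_{p,i}$ and $\pur'_{q,j}$. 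Together with the Thom-class multiplicativity, this gives the equality $\tfdl_{ij} = j^!(\tfdl_i) \circ \tfdl_j$ and hence the associativity formula. The main obstacle is precisely the rigorous execution of this second compatibility: while the iterated deformation argument is geometrically transparent, converting it into a formal diagram within the six functors formalism requires some careful exchange-transformation bookkeeping. All remaining steps are direct unwindings of definitions.
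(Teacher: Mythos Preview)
Your proposal is correct and follows essentially the same approach as the paper: reduce to the associativity formula, then split it into the multiplicativity of Thom classes (Lemma \ref{lm:product&thom}, equivalently the second assertion of Proposition \ref{prop:fdl_thom}) and the compatibility of the Morel--Voevodsky purity isomorphism with composition via the double deformation space. The paper's own proof is a two-line sketch invoking exactly these two ingredients and referring to \cite[2.4.9]{Deg12} for the details you have outlined.
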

Indeed, all what remains to be proved is the associativity formula.
 This follows from the use of the double deformation space
 and the associativity formula for Thom classes (Proposition \ref{prop:fdl_thom}).
 The reader can consult \cite[2.4.9]{Deg12} for details.
 
\begin{rem}\label{rem:fdl_retraction&thom}
\begin{enumerate}
\item Note the same result could have been derived replacing closed immersions
 which admits a smooth retraction by closed immersions between smooth
 schemes over some fixed base. We will derive this case later from our
 more general results.
\item We will need the following normalisation property of the fundamental
 classes constructed above. In the assumptions of the proposition,
 we consider the deformation diagram \eqref{eq:deformation}:
$$
\xymatrix@=10pt@C=24pt{
S\ar[r]\ar_i[d] & \AA^1_S\ar^\nu[d] & S\ar^s[d]\ar[l] \\
X\ar^-{d_1}[r] & D & N_SX.\ar_/2pt/{d_0}[l]
}
$$
It induces pullback morphisms on Borel-Moore homology:
$$
\biv \E {**}(i:S \rightarrow X) 
 \xleftarrow{\ d_1^*\ } \biv \E {**}(\nu:\AA^1_S \rightarrow D)
 \xrightarrow{\ d_0^*\ } \biv \E {**}(s:S \rightarrow N_SX) \simeq \E^{**}(\Th(N_SX))
$$
which are isomorphisms according to Theorem \ref{thm:purity_MV}.

It rightly follows from the above construction
 that one has the relation:
$$
d_0^*(d_1^*)^{-1}(\fdl_i)=\rthom(N_SX).
$$
This relation implies that the system of fundamental classes of 
 the preceding proposition,
 restricted to zero sections of vector bundles,
 coincides with that of Proposition
 \ref{prop:fdl_thom}: indeed, when $X/S$ is a vector bundle,
 one obtains that $D$ is isomorphic to $\AA^1_X$, 
 $N_SX$ is isomorphic to $X$ and the maps $d_0$ and $d_1$ corresponds
 respectively to the zero and unit sections of $\AA^1_X$ through
 these isomorphisms.
\end{enumerate}
\end{rem}

\begin{ex}\label{ex:orientations&orientations}
Fixing a base scheme $S$,
 one can interpret the global orientation
 $c=c_S \in \E^{2,1}(\PP^\infty_S)$
 as a sequence of classes $c_n \in \E^{2,1}(\PP^n_S)$, $n>0$.
 Let us consider the immersion:
 $\PP^{n-1}_S \xrightarrow{\nu^{n-1}_S} \PP^n_S$
 of the hyperplane at infinity (say $\PP^{n-1}_S \times \{\infty\}$).
 Then the canonical exact sequence:
$$
0 \rightarrow \E^{2,1}_{\PP^{n-1}_S}(\PP^n_S)
 \rightarrow \E^{2,1}(\PP^n_S)
 \xrightarrow{j^*} \E^{2,1}(\PP^n_S-\PP^{n-1}_S)\simeq \E^{2,1}(S)
 \rightarrow 0
$$
is split exact and the class $c_n$ uniquely lifts to a class
 $\bar c_n$ in 
$$
\E^{2,1}_{\PP^{n-1}_S}(\PP^n_S) \simeq
 \biv \E {-2,-1}(\PP^{n-1}_S \xrightarrow{\nu^{n-1}_S} \PP^n_S);
$$
So the family $(c_n)_{n>0}$ corresponds to a family of orientations
 for the closed immersions $\nu^{n-1}_S$.

According to the previous remark, we get the equality:
$$
\fdl_{\nu^{n-1}_S}=c_1^{\PP^{n -1}}(\cO_{\PP^n}(-1))=\bar c_n,
$$
using the notations of Definition \ref{df:Chern_support} in the
 the middle term.
 In fact one can interpret an orientation of the ring spectrum
 $\E_S$ as a family of orientations of the immersions $\nu^\infty_n$
 satisfying suitable conditions.
 %such that:
%\begin{itemize}
%\item $(\nu_S^{1})^*(\fdl_{\nu^\infty_1})=\sigma^\E_S$,
 %where $\sigma^\E_S$ is the stability class;
%\item for any $n>1$, given the inclusion
 %$\nu'_n:\{\infty\} \times \PP^{n-1}_S \xrightarrow{\nu^{n-1}_S} \PP^n_S$,
 %one has the relation:
 %$(\nu_n^{\prime})^*(\fdl_{\nu^{n-1}_S})=\fdl_{\nu^{n-2}_S}$.
%\end{itemize}
\end{ex}

\begin{num}\label{num:basic_fdl_smooth}
Let us recall how the homotopy purity theorem of Morel and Voevodsky
 (stated above as Theorem \ref{thm:purity_MV})
 is used according to the method of Ayoub to define the relative purity
 isomorphism of the six functors formalism.

Consider now an arbitrary smooth s-morphism $f:X \rightarrow S$.
 We look at the following diagram:
$$
\xymatrix@=28pt@R=32pt{
X\ar@{=}[rd]\ar^-\delta[r]
& X\times_S X\ar^-{f_2}[r]\ar_/-3pt/{f_1}[d]\ar@{}|\Delta[rd] & X\ar^f[d] \\
 & X\ar_f[r] & S \\ 
}
$$
where the square $\Delta$ is cartesian and $\delta$ is the diagonal immersion.
Let $T_f$ be the tangent bundle of $X/S$, that is the normal bundle
 of the immersion $\delta$.
 Interpreting the construction of Ayoub (see also \cite[2.4.39]{CD3}),
 we then introduce the following natural transformation:
\begin{equation}\label{eq:f_purity_iso_smooth1}
\pur_f:f^* \simeq \delta^!f_2^!f^* \xrightarrow{\ Ex^{!*}(\Delta)\ }
 \delta^!f_1^*f^!
 \xrightarrow{\ \pur'_{f_1,\delta}\ } \MTh_X(-T_f) \otimes f^!.
\end{equation}
By adjunction, one gets a natural transformation:
\begin{equation}\label{eq:f_purity_iso_smooth2}
\pur'_f:f_\sharp \rightarrow f_!\big(\MTh_X(T_f) \otimes -\big).
\end{equation}
Then one deduces from the axioms of motivic triangulated
 categories that $\pur_f$ and $\pur'_f$ are isomorphisms,
 simply called the \emph{purity isomorphisms associated with
 $f$}.\footnote{This
 is one of the main results of \cite{Ayo1}, though it was proved there
 only in the quasi-projective case. The extension to the general case was
 first made in \cite[2.4.26]{CD3}.} Besides, we will use the following
 functoriality result satisfied by these purity isomorphisms.
\end{num}
\begin{prop}[Ayoub] \label{thm:ayoub}
Consider smooth s-morphisms
 $Y \xrightarrow g X \xrightarrow f S$ with respective
 tangent bundles $T_g$ and $T_f$. Then
 the following diagram of natural transformations is commutative:
$$
\xymatrix@R=12pt@C=90pt{
\MTh_Y(T_g) \otimes g^*\big(\MTh_X(T_f) \otimes f^*\big)\ar^-{\pur_g.\pur_f}[r]\ar_\sim[d]
 & g^! \circ f^!\ar@{=}[ddd] \\
\MTh_Y(T_g) \otimes \MTh_X(g^{-1}T_f) \otimes g^*f^*\ar_{\epsilon_\sigma}[d] & \\
\MTh_Y(T_{fg}) \otimes g^*f^*\ar@{=}[d] & \\
\MTh_Y(T_{fg}) \otimes (fg)^*\ar^-{\pur_{fg}}[r] & (fg)^!
}
$$
where the first isomorphism comes from the fact that $g^*$ is monoidal and
 Thom spaces are compatible with base change while
 the isomorphism $\epsilon_\sigma$ stands for \eqref{eq:Thom_virtual}
 associated with the exact sequence of vector bundles:
\begin{equation}\tag{$\sigma$}
0 \rightarrow T_g \rightarrow T_{fg} \rightarrow g^{-1}(T_f) \rightarrow 0.
\end{equation}
\end{prop}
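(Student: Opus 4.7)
The plan is to reduce the associativity of the relative purity isomorphism $\pur_f$ to the associativity of fundamental classes of closed immersions equipped with smooth retractions, which was already established in Proposition \ref{prop:fdl_retraction}. The key is that $\pur_f$ is constructed from $\pur'_{f_1,\delta_f}$ via the cartesian exchange $Ex^{!*}(\Delta)$, so once we understand how the construction of Paragraph \ref{num:fdl_retraction} behaves with respect to composition of closed immersions with smooth retractions, we will essentially be done.

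First, I would factor the diagonal of $fg$. Writing $\delta_{fg}: Y \to Y\times_S Y$, $\delta_g: Y \to Y\times_X Y$, and $j: Y\times_X Y \to Y\times_S Y$ for the natural immersion (which is the base change of $\delta_f$ along $g\times_S g$), one has $\delta_{fg}=j\circ\delta_g$. Both $\delta_g$ and $j$ are closed immersions admitting smooth retractions (the first via a projection, the second via the appropriate projection composed with $g\times_S g$), with normal bundles $T_g$ and $g^{-1}(T_f)$ respectively, fitting together in the short exact sequence $(\sigma)$. Thus $\delta_{fg}$ has normal bundle $T_{fg}$, and the compatibility of the normal bundle with composition of regular immersions is precisely encoded by $(\sigma)$.

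Next, I would apply Proposition \ref{prop:fdl_retraction} to the composable pair $(\delta_g, j)$: the associativity formula $\fdl_{\delta_g}.\fdl_j=\fdl_{\delta_{fg}}$ translates, via the dictionary between fundamental classes and purity isomorphisms of Remark \ref{rem:strong_orientation} and formula \eqref{eq:functorial_thom_iso_bis}, into a commutative diagram comparing the composite purity isomorphism of \ref{num:fdl_retraction} for $(\delta_g, j)$ with the one for $\delta_{fg}$; the isomorphism $\epsilon_\sigma$ enters precisely because $\MTh_Y(T_{fg})$ is identified with $\MTh_Y(T_g)\otimes\MTh_Y(g^{-1}T_f)$ via $\epsilon_\sigma$. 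More concretely, I would verify this commutativity at the level of motives of closed pairs using the double deformation space $D_{Y}(Y\times_S Y)$ obtained by iterating the deformation to the normal cone, together with Theorem \ref{thm:purity_MV} applied to each stage.

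The final step is to carry this back through the exchange isomorphisms $Ex^{!*}$ used to pass from the purity isomorphisms of \ref{num:fdl_retraction} to the relative purity isomorphisms $\pur_f$, $\pur_g$, $\pur_{fg}$ of \eqref{eq:f_purity_iso_smooth1}. This amounts to showing that the three cartesian squares $\Delta_f$, $\Delta_g$, and $\Delta_{fg}$ fit into a commutative cube, and that the associated $Ex^{!*}$'s compose in the expected way with respect to the factorization $\delta_{fg}=j\circ\delta_g$. I expect this bookkeeping of exchange isomorphisms in the six functors formalism to be the main technical obstacle: one must check a coherence statement for $Ex^{!*}$ applied to a pasting of cartesian squares, and match it against the associativity produced in the previous step. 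Once this coherence is verified, the commutativity of the diagram in the proposition falls out by diagram chase.
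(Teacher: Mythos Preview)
The paper does not actually prove this proposition: it simply refers to \cite[1.7.3]{Ayo1} for the quasi-projective case (noting that Ayoub states it for the right adjoints) and then reduces the general case to the quasi-projective one via the localization property of motivic triangulated categories. So your proposal is not comparable to the paper's ``proof'' in the usual sense; you are attempting to supply what the paper outsources.

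Your strategy---factoring $\delta_{fg}=j\circ\delta_g$, using the double deformation space to get compatibility of the Morel--Voevodsky purity isomorphisms for this composition, and then checking coherence of the exchange transformations $Ex^{!*}$ across the relevant pasting of cartesian squares---is essentially the shape of Ayoub's argument, and is the right outline. One caution: you invoke Proposition~\ref{prop:fdl_retraction}, but that statement concerns fundamental classes $\fdl_i$ with coefficients in a fixed oriented ring spectrum $(\E,c)$, whereas the present proposition is a statement about natural transformations of functors on $\T(X)$, prior to any choice of orientation. Knowing $\fdl_{\delta_g}.\fdl_j=\fdl_{\delta_{fg}}$ in $\biv\E{**}$ does not by itself give you equality of the underlying functorial isomorphisms $\pur'_{p,i}$. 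What you actually need is the functorial associativity of the isomorphisms of Definition~\ref{df:purity_closed_pairs} (equivalently of \eqref{eq:functorial_thom_iso}) under composition of closed immersions with smooth retractions; this is precisely what the double deformation space provides and is what underlies the proof of Proposition~\ref{prop:fdl_retraction}, but you should appeal to it directly rather than through the fundamental-class reformulation. With that adjustment, the remaining work is exactly the coherence of exchange isomorphisms that you flag as the main technical obstacle, and that is indeed where the labor lies.
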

For the proof, we refer the reader to \cite[1.7.3]{Ayo1} in the quasi-projective case
 -- actually Ayoub proves the assertion for the right adjoints but this is obviously 
 equivalent to our statement. Then the general case is reduced to the quasi-projective
 one using the localization property of motivic triangulated categories.
 
\begin{num}\label{num:fdl_smooth}
It is now easy to deduce  from the preceding results
 canonical new orientations for our bivariant theories using the method
 of Paragraph \ref{num:fdl_retraction}.
 
Let us fix again an absolute oriented ring $\T$-spectrum $(\E,c)$.
 Given a smooth morphism $f:X \rightarrow S$ of relative dimension $d$
 (seen as a locally constant function on $X$), we define the following
 isomorphism:
$$
\tfdl_f:\E_X(d)[2d] \xrightarrow{(\pur'_{T_f})^{-1}} \MTh(T_f) \otimes \E_X
 \simeq \MTh(T_f) \otimes f^*(\E_S)
 \xrightarrow{(\pur_f)^{-1}} f^!(\E_S)
$$
where $\pur'_{T_f}$ is the Thom isomorphism \eqref{eq:thom_iso_virtual}
 associated
 with the tangent bundle $T_f$ of $f$ and $\pur_f$ is Ayoub's purity isomorphism
 \eqref{eq:f_purity_iso_smooth1}. 

Following Remark \ref{rem:strong_orientation}(2), we then define the following orientation
 of $f$:
$$
\fdl_f:\un_S(d)[2d] \rightarrow \E_S(d)[2d] \xrightarrow{\tfdl_f} f^!(\E_X).
$$
Combining this construction together with the preceding proposition, we have obtained:
\end{num}
\begin{prop}\label{prop:fdl_smooth}
The universally strong orientations $\fdl_f$ constructed above
 form a system of fundamental classes (Definition \ref{df:fdl_classes_abstract})
 with coefficients in $\E$ for smooth s-morphisms.
\end{prop}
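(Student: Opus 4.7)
The plan is to verify the two conditions defining a system of fundamental classes: each orientation $\fdl_f$ is universally strong, and the associativity formula $\fdl_g.\fdl_f=\fdl_{f\circ g}$ holds for composable smooth s-morphisms $Y \xrightarrow{g} X \xrightarrow{f} S$.

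The first point is immediate from the construction. In Paragraph \ref{num:fdl_smooth}, $\tfdl_f$ is built as a composite of three isomorphisms: the inverse of the Thom isomorphism \eqref{eq:thom_iso_virtual} associated with the tangent bundle $T_f$, the base change isomorphism of the absolute spectrum $\E$, and the inverse of Ayoub's purity isomorphism \eqref{eq:f_purity_iso_smooth1}. Hence $\tfdl_f$ is itself an isomorphism in $\T(X)$, and by Remark \ref{rem:strong_orientation}(2) this is exactly the universally strong condition for $\fdl_f$.

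For associativity, I would translate the bivariant product of Paragraph \ref{num:product} into the language of the $\tfdl$-maps using Remark \ref{rem:strong_orientation}(2) together with Paragraph \ref{num:tilde_fdl}. Unwinding the definitions shows that the equality $\fdl_g.\fdl_f=\fdl_{fg}$ is equivalent, modulo canonical identifications on twists and shifts, to the identity of isomorphisms
$$g^!(\tfdl_f) \circ \tfdl_g = \tfdl_{fg}$$
in $\T(Y)$. Substituting the definition of each $\tfdl$ reduces this in turn to two simultaneous compatibilities, one for Ayoub's purity and one for the Thom isomorphism, both keyed on the short exact sequence $\sigma: 0 \to T_g \to T_{fg} \to g^{-1}(T_f) \to 0$ of tangent bundles.

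Both compatibilities are already supplied by the paper. Proposition \ref{thm:ayoub} gives the functoriality of Ayoub's purity isomorphism, stating that $\pur_g \circ g^*(\pur_f)$ agrees with $\pur_{fg}$ via the Thom-space isomorphism $\epsilon_\sigma$ of \eqref{eq:Thom_virtual}. On the Thom side, Lemma \ref{lm:product&thom} together with \eqref{eq:rthom_add} shows that under $\epsilon_\sigma$ the product $\rthom(T_g).\rthom(g^{-1}T_f)$ equals $\rthom(T_{fg})$; at the level of isomorphisms, this identifies $\pur'_{T_g} \otimes g^*(\pur'_{T_f})$ with $\pur'_{T_{fg}}$. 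Concatenating the two corresponding commutative diagrams, the occurrences of $\epsilon_\sigma$ cancel and deliver the desired associativity.

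The main obstacle is the bookkeeping in the resulting diagram chase: one has to verify that the exchange pairing $Ex^{*!}_\otimes$ entering the bivariant product of Paragraph \ref{num:product} is compatible with the projection formulas used in the constructions of $\pur_f$ and $\pur'_f$ in Paragraph \ref{num:basic_fdl_smooth}, and that base change of the refined Thom class $\rthom(T_f)$ along $g$ yields $\rthom(g^{-1}T_f)$. These are standard compatibilities of the six functors formalism for monoidal fibred categories, and the resulting diagram chase is analogous to the one sketched for closed immersions admitting a smooth retraction after Proposition \ref{prop:fdl_retraction}, whose detailed version is spelled out in \cite[2.4.9]{Deg12}.
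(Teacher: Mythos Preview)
Your proposal is correct and follows the same approach as the paper: the paper's entire argument is the one-line remark ``Combining this construction together with the preceding proposition, we have obtained'' immediately preceding the statement, where ``the preceding proposition'' is Ayoub's functoriality result (Proposition \ref{thm:ayoub}). Your write-up is a faithful unpacking of this, correctly identifying that universal strength is built into the definition of $\tfdl_f$ and that associativity reduces to Ayoub's compatibility of $\pur_f$ with composition together with the multiplicativity of refined Thom classes under the exact sequence $\sigma$ of tangent bundles.
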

 
\begin{rem}\label{rem:fdl_smooth}
\begin{enumerate}
\item As the relative dimension $d$ of a smooth s-morphism $f:Y \rightarrow X$ 
 is equal to the rank of its tangent bundle, the fundamental class
 $\fdl_f$ has degree $d$ in the sense of Definition \ref{df:fdl_classes_abstract}.
\item It obviously follows from the constructions of Paragraphs
 \ref{num:basic_fdl_etale} and \ref{num:basic_fdl_smooth}
 that the orientations constructed here for arbitrary smooth
 s-morphisms extend the definition given in Example \ref{ex:fdl_etale}
 for \'etale s-morphisms.
\item For future reference, we will recall the following characterisation
 of the orientation $\fdl_f \in \biv \E {**}(X/S)$ constructed above.
 We have:
$$
\eta_f=\pur_f^{\prime*}(\rthom(-T_f))
$$
where the map $\pur_f^{\prime*}$ is induced by the isomorphism
 \eqref{eq:f_purity_iso_smooth2} as follows:
\begin{equation}\label{eq:f_purity_iso_smooth1_induced}
\E^{**}(\Th(-T_f))\simeq\Hom(f_\sharp(\MTh(-T_f)),\E_S)
 \xrightarrow{\pur_f^{\prime*}} \Hom(f_!(\un_X),\E_S)=\biv \E {**}(X/S)
\end{equation}
--- here Hom are understood with their natural $\ZZ^2$-graduation.
\end{enumerate}
\end{rem}

Let us finally note the following lemma for later use.
\begin{lm}\label{lm:fdl_sm_pullback}
Consider the notations of the previous proposition
 together with a cartesian square:
$$
\xymatrix@=10pt{
Y\ar^q[r]\ar_g[d]\ar@{}|\Theta[rd] & X\ar^f[d] \\
T\ar_p[r] & S
}
$$
such that $f$ is a smooth s-morphism.
Then the following relation holds in $\biv \E {**}(Y/T)$:
 $p^*(\fdl_f)=\fdl_g$.
\end{lm}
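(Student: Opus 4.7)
The plan is to unwind the definition of $\fdl_f$ in Paragraph \ref{num:fdl_smooth} and check that each of the two building blocks --- the Thom isomorphism and Ayoub's purity isomorphism --- is compatible with the base change along $p$. Since $f$ is smooth and $\Theta$ is cartesian, $g$ is smooth of the same relative dimension $d$, and its tangent bundle satisfies $T_g \simeq q^{-1}(T_f)$ canonically.

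First, I would make explicit the map $p^*$ on Borel-Moore homology. A class $\alpha \in \biv \E {**}(X/S)$ is a morphism $\alpha : \un_X(d)[2d] \to f^!(\E_S)$; applying $q^*$ and composing with the exchange isomorphism $Ex(\Theta^{*!}) : q^* f^! \xrightarrow{\sim} g^! p^*$ together with the cartesian section structure $\tau_p : p^*(\E_S) \xrightarrow{\sim} \E_T$ produces $p^*(\alpha) \in \biv \E {**}(Y/T)$. By Remark \ref{rem:strong_orientation}(2), it suffices to compare the associated maps $\widetilde{p^*(\fdl_f)}$ and $\tfdl_g$ in $\T(Y)$.

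Next I would check the two compatibilities separately. The Thom isomorphism $\pur'_{T_f}$ is induced by the refined Thom class $\rthom(T_f)$, which by Proposition \ref{prop:chern_classes}(2) and formula \eqref{eq:thom_class} is stable under pullback; applying $q^*$ to $\pur'_{T_f}$ therefore yields $\pur'_{q^{-1}T_f} = \pur'_{T_g}$ under the natural identification $q^*\MTh(T_f) \simeq \MTh(T_g)$. For Ayoub's purity isomorphism, one must verify the commutativity of the square
$$
\xymatrix@R=14pt@C=40pt{
q^*\bigl(\MTh(T_f) \otimes f^*(\E_S)\bigr) \ar^-{q^*(\pur_f)}[r] \ar_\sim[d]
 & q^* f^!(\E_S) \ar^{Ex(\Theta^{*!})}[d] \\
\MTh(T_g) \otimes g^*p^*(\E_S) \ar^-{\pur_g}[r]
 & g^! p^*(\E_S).
}
$$
Going back to the construction \eqref{eq:f_purity_iso_smooth1}, $\pur_f$ is built out of the exchange isomorphism for the diagonal square of $f$ and the local purity map $\pur'_{f_1,\delta_f}$ of the regular immersion $\delta_f$, which in turn comes from Morel--Voevodsky's purity theorem applied to the deformation space $D_{\delta_f}(X\times_S X)$ (Definition \ref{df:purity_closed_pairs}). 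Both the diagonal square and the deformation space are stable under base change: we have $q \times q : Y \times_T Y \to X \times_S X$, $\delta_g = $ pullback of $\delta_f$, and $D_{\delta_g}(Y\times_T Y) \simeq D_{\delta_f}(X\times_S X) \times_X Y$. Combining this stability with the standard compatibilities between pullback functors and the exchange isomorphisms $Ex(\cdot,!)$ (Beck--Chevalley) gives the required commutativity.

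The main obstacle is the second compatibility, and more concretely the coherent tracking of the various exchange isomorphisms of the six functors formalism that enter the construction of $\pur_f$. Once one isolates the fact that Morel--Voevodsky's purity isomorphism $\pur_{(X,Z)}$ is natural under cartesian morphisms of closed pairs (which follows directly from Theorem \ref{thm:purity_MV} applied to the morphism of deformation spaces $D_{Z'}X' \to D_Z X$ induced by a cartesian square), the computation reduces to standard Beck--Chevalley juggling and the identification $T_g \simeq q^*T_f$. Assembling the two compatibilities with the fact that the unit $\eta_Y$ of $\E_Y$ is the pullback of $\eta_X$ (the section $\E$ being cartesian), one concludes $p^*(\fdl_f) = \fdl_g$.
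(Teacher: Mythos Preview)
Your proposal is correct and follows essentially the same approach as the paper's own proof. The paper also reduces to checking that the map $\tfdl_f$ of Paragraph \ref{num:fdl_smooth} is compatible with base change, and then verifies this by splitting into three commutative sub-diagrams: one governed by exchange isomorphisms (your Beck--Chevalley juggling), one by the functoriality of the deformation diagram \eqref{eq:deformation} with respect to cartesian morphisms of closed pairs (your naturality of $\pur_{(X,Z)}$), and one by the stability of the Thom class \eqref{eq:thom_class} under pullbacks (your first compatibility).
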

\begin{proof}
We reduce to prove the analogous fact for the isomorphism
 constructed in \ref{num:fdl_smooth}. Coming back to definitions
 (see in particular \eqref{eq:f_purity_iso_smooth1}
 and \eqref{eq:functorial_thom_iso_bis}),
 one reduces to prove the following diagram of natural transformations
 is commutative:
$$
\xymatrix@R=16pt@C=30pt{
q^*f^*\ar@{=}[dd]\ar^-\sim[r]\ar@{}_/-28pt/{(1)}[rdd]
 & q^*\uHom(M(X^2/X^2-\Delta_X),f^!)\ar[r]\ar^\sim[d]\ar@{}^/12pt/{(2)}[rdd]
 & q^*(\MTh(-T_f) \otimes f^!)\ar[r]\ar^\sim[d]\ar@{}^/12pt/{(3)}[rdd]
 & q^*f^!((d))\ar@{=}[d] \\
 & \uHom(q^*M(X^2/X^2-\Delta_X),q^*f^!)\ar^{\uHom(\phi,\psi)}[d]
 & q^*\MTh(-T_f) \otimes q^*f^!\ar^{\phi' \otimes \psi}[d]
 & q^*f^!((d))\ar^{\psi}[d] \\
g^*q^*\ar^-\sim[r] & \uHom(M(Y^2/Y^2-\Delta_Y),g^!p^*)\ar[r]
 & \MTh(-T_g) \otimes g^!p^*\ar[r]
 & g^!p^*((d))
}
$$
where:
\begin{itemize}
\item $d$ means the relative dimension of $f$ which we can assume
 to be constant, and which is equal to the relative dimension of $g$,
 and we have denoted $-((d))$ the twist $-(d)[2d]$.
\item $\psi$ stands for the reciprocal isomorphism
 of the exchange transformation: $Ex^{!*}:g^!p^* \rightarrow q^*f!$
 associated with the square $\Theta$
 --- which is an isomorphism as $f$ is smooth;
\item we have put $X^2=X \times_S X$ and
 $Y^2=Y \times_T Y=X \times_S X \times_S T$; $\phi$ is the
 isomorphism associated with the cartesian squares:
$$
\xymatrix@R=16pt@C=26pt{
Y\ar^{\delta_Y}[r]\ar_q[d]\ar@{}[rd]
 & Y^2\ar^{q \times q}[d]\ar^{g_1}[r] & Y\ar^q[d]\\
X\ar_{\delta_X}[r] & X^2\ar_{f_1}[r] & X
}
$$
where $f_1$ (resp. $g_1$) stands for the projection on the first factor.
\item $\phi'$ is the isomorphism induced by the identification
 $q^{-1}(T_f) \simeq T_g$ --- which also expresses that the square $\Theta$
 is transversal.
\end{itemize}
Then, diagram (1) is commutative as it is made of exchange transformations,
 diagram (2) is commutative has the deformation diagram \eqref{eq:deformation}
 is functorial with respect to cartesian morphisms of closed pairs
 --- applied to the cartesian square (*) --- and diagram (3) is commutative
 as the Thom class \eqref{eq:thom_class} is stable under pullbacks.
\end{proof}

\subsection{The regular closed immersion case according to Navarro}

\begin{num}
Let us now recall the construction of Navarro of a system of
 fundamental classes for regular closed immersions which extends
 the one constructed in Proposition \ref{prop:fdl_retraction}.
 The construction can be safely transported to our generalized
 context as we assume the existence of a premotivic
 adjunction $\tau ^*:\SH \rightarrow \T$.

Given a closed pair $(X,Z)$, $U=X-Z$ the open complement,
 we recall that the relative Picard group $\Pic(X,Z)$
 is the group of isomorphisms classes of
 pairs $(L,u)$ where $L$ is a line bundle on $X$
 and $u:L|_U \rightarrow \AA^1_U$ a trivialization of $L$ over $U$.

As remarked by Navarro (\cite[Rem. 3.8]{Nav}), one deduces from
 a classical result of Morel and Voevodsky that 
 there is a natural bijection:
$$
\epsilon_{X,Z}:\Pic(X,Z) \xrightarrow{\sim} [X/U,\PP^\infty_X]
$$
where the right hand-side stands for the unstable $\AA^1$-homotopy classes
 of pointed maps over $X$. Thus, one obtains Chern classes with support
 as in \cite[1.4]{Nav}.
\end{num}
\begin{df}\label{df:Chern_support}
Let $(\E,c)$ be an absolute oriented ring $\T$-spectrum.

Then, given any closed pair $(X,Z)$ with open complement $U$,
 one defines the first Chern class map with support and coefficients
 in $\E$ as the following composite:
\begin{align*}
c_1^Z:\Pic(X,Z) \xrightarrow{\epsilon_{X,Z}} [X/U,\PP^\infty_X]
 & \rightarrow \Hom_{\SH(X)}(\Sus X/U,\Sus \PP^\infty_S) \\
 & \xrightarrow{\tau^*}\Hom_{\T(X)}\big(M(X/X-Z),M(\PP^\infty_X)\big) \\
 & \xrightarrow{(c_X)_*}\Hom_{\T(X)}(M(X/X-Z),\E_X(1)[2])=\E^{2,1}_Z(X).
\end{align*}
\end{df}
These Chern classes satisfy the following properties
 (see \cite[1.39, 1.40]{Nav}):
\begin{prop}\label{prop:chern_supp}
Consider the notations of the above definition.
\begin{enumerate}
\item Given any cartesian morphism $(Y,T) \rightarrow (X,Z)$ of
 closed pairs and any element $(L,u) \in \Pic(X,Z)$, one has:
 $f^*c_1^Z(L,u)=c_1^T\big(f^{-1}(L),f^{-1}(u)\big)$.
\item For any $(L,u) \in \Pic(X,Z)$, the cohomology class
 $c_1^Z(L)$ is nilpotent in the ring $\E^{**}_Z(X)$.
\item Let $F_X$ be the formal group law associated with
 the orientation $c_X$ of $\E_X$. Then, given any
 classes $(L_1,u_1)$, $(L_2,u_2)$ in $\Pic(X,Z)$, one has the
 following relation in $\E^{2,1}_Z(X)$:
$$
c_1^Z(L_1 \otimes L_2,u_1 \otimes u_2)
 =F_X\big(c_1^Z(L_1,u_1),c_1^Z(L_2,u_2)\big),
$$
using the $\E^{**}(X)$-module structure on $\E^{**}_Z(X)$
 and point (2).
\end{enumerate}
\end{prop}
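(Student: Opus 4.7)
The plan is to deduce each of (1), (2), (3) from the corresponding property of the absolute first Chern class in Proposition~\ref{prop:chern_classes}, exploiting the naturality built into Definition~\ref{df:Chern_support} together with the compatibility between cohomology with and without support provided by the localization triangle.

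For~(1), I would chase naturality through the four arrows defining $c_1^Z$. The Morel--Voevodsky bijection $\epsilon_{X,Z}$ is natural in cartesian morphisms of closed pairs because both sides are: the relative Picard group tautologically, and pointed $\AA^1$-homotopy classes of maps $X/U \to \PP^\infty_X$ by the functoriality of the unstable homotopy category. The stabilization $\Sus$ and the premotivic functor $\tau^*$ commute with pullbacks by definition, and the last arrow invokes the base-change compatibility $f^*(c_X)=c_T$ built into the notion of a global orientation (Definition~\ref{df:orientation}). Composing these four naturalities yields~(1).

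For~(3), once~(1) is in hand the statement is essentially formal. The pair $(L_1,u_1),(L_2,u_2)$ is classified by a morphism $\varphi:X/U \to \PP^\infty_X \times_X \PP^\infty_X$, and the tensor product $(L_1\otimes L_2,u_1\otimes u_2)$ corresponds to the composite $\mu \circ \varphi$, where $\mu$ is the multiplication map representing the tensor product of line bundles. On $\PP^\infty_X \times_X \PP^\infty_X$ the identity $\mu^*(c_X)=F_X(\mathrm{pr}_1^*c_X,\mathrm{pr}_2^*c_X)$ holds as a consequence of Proposition~\ref{prop:chern_classes}(3); pulling this identity back along $\varphi$ via the naturality established in~(1) produces the required formula in $\E^{2,1}_Z(X)$, provided that the power series $F_X$ evaluates to a well-defined element, which is ensured by~(2).

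For~(2), my approach is as follows. Letting $j:U \to X$ denote the complementary open immersion, the localization long exact sequence supplies a forgetful map $\E^{**}_Z(X) \to \E^{**}(X)$ sending $c_1^Z(L,u)$ to $c_1(L)$; moreover $j^*(c_1(L))=c_1(L|_U)=0$ since $u$ trivialises $L|_U$. By Proposition~\ref{prop:chern_classes}(2) some power $c_1(L)^N$ vanishes in $\E^{**}(X)$, so $c_1^Z(L,u)^N$ lies in the kernel of the forgetful map, which equals the image of the boundary $\partial:\E^{*-1,*}(U) \to \E^{**}_Z(X)$. One further multiplication by $c_1^Z(L,u)$, using the compatibility of the bivariant product of Paragraph~\ref{num:product} with the localization sequence and the vanishing of $j^*c_1(L)$, should then force $c_1^Z(L,u)^{N+1}=0$. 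The main obstacle I anticipate lies precisely here: nilpotency in the support setting does not transfer automatically from the absolute case because the forgetful map is generally not injective, and the step above relies on a Leibniz-type compatibility between $\partial$ and the bivariant product that must be extracted carefully from the six-functor formalism.
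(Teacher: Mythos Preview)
The paper does not supply its own proof here; it simply refers to \cite[1.39, 1.40]{Nav}. Your arguments for (1) and (3) follow the standard pattern and are fine.

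For (2) you are making life harder than necessary, and the worry you flag is misplaced. The relevant structural fact is that the cup-product on $\E^{**}_Z(X)$ factors through the $\E^{**}(X)$-module action via the forget-support map on one factor: for $a,b \in \E^{**}_Z(X)$ with image $\bar a \in \E^{**}(X)$, one has $a \cdot b = \bar a \cdot b$. (This is the standard compatibility of refined cup-products $\E^{**}_{Z_1}(X) \otimes \E^{**}_{Z_2}(X) \to \E^{**}_{Z_1 \cap Z_2}(X)$ with enlargement of supports, applied to $Z_1=Z \subset X$; see \cite[1.2.8, 1.2.10]{Deg12}.) Granting this,
\[
c_1^Z(L,u)^{N+1} \;=\; c_1(L)^{N} \cdot c_1^Z(L,u) \;=\; 0
\]
as soon as $c_1(L)^{N}=0$ in $\E^{**}(X)$, and no detour through $\partial$ is needed. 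Indeed, your boundary-map route would itself require this same identity $a\cdot b=\bar a\cdot b$ in order to replace $c_1^Z(L,u)\cdot\partial(\alpha)$ by $c_1(L)\cdot\partial(\alpha)$ before any Leibniz rule could be applied, so it is strictly more work for the same conclusion.
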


The second tool needed in the method of Navarro is the following
 version of the blow-up formula for oriented theories
 (see \cite[2.6]{Nav}).
\begin{prop}
Consider an absolute oriented ring $\T$-spectrum $(\E,c)$.
 Let $(X,Z)$ be a closed pair of codimension $n$,
 $B$ be the blow up of $X$ in $Z$
 and consider the following cartesian square:
$$
\xymatrix@=14pt{
P\ar^k[r]\ar_q[d] & B\ar^p[d] \\
Z\ar^i[r] & X.
}
$$
Then the following sequence is split exact:
$$
0 \rightarrow \E^{**}_Z(X)
 \xrightarrow{p^*} \E^{**}_P(B)
 \xrightarrow{\overline{k^*}} \E^{**}(P)/\E^{**}(X) \rightarrow 0.
$$
Letting $\cO_P(-1)$ (resp. $\cO_B(-1)$) be the canonical
 line bundle over $P$ (resp. $B$), and putting $c=c_1(\cO(-1))$
 (resp. $b=c_1^P(\cO_B(-1))$), a section is given by
 the following $\E^{**}(X)$-linear morphism:
$$
s:\E^{**}(P)/\E^{**}(X) \simeq \big(\oplus_{i=1}^{n-1}
 \E^{**}(X).c^i\big) \rightarrow \E^{**}_P(B), c^i \mapsto b^i.
$$
\end{prop}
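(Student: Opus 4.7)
The plan is to combine Navarro's deformation-to-the-normal-cone method with the projective bundle and Thom isomorphism formulas established above. First I would reduce to the case where $X=N_ZX$ is the normal bundle of $Z$ in $X$ and $i$ is its zero section. Using the deformation space $D=D_ZX$ of \ref{num:rel_Tspectrum} together with the regularly immersed closed subscheme $\AA^1_Z\subset D$ and its blow-up $D'=B_{\AA^1_Z}(D)$, the specializations over $0$ and $1$ of $\AA^1$ recover the blow-up squares associated with $(N_ZX,Z)$ and $(X,Z)$ respectively. The Morel--Voevodsky homotopy purity theorem (Theorem \ref{thm:purity_MV}), combined with the naturality of the four terms of the proposed sequence under cartesian morphisms of closed pairs, then reduces the statement to the case $X=N_ZX$.

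In this reduced case, $B$ is the total space of the tautological line bundle $\cO_P(-1)\to P=\PP(N_ZX)$, with $P$ embedded as its zero section, and $p:B\to X=N_ZX$ contracts $P$ to $Z$. The identification \eqref{eq:compute_Thom_coh} followed by the Thom isomorphism of Proposition \ref{prop:thom_class} applied to the line bundle $\cO_P(-1)$ gives
$$
\E^{**}_P(B)\xrightarrow{\ \sim\ }\E^{**}(\Th(\cO_P(-1)))\xleftarrow{\ \sim\ }\E^{**}(P),
$$
where the right-hand map sends $1\mapsto\rthom(\cO_P(-1))$. By formula \eqref{eq:thom_class} for line bundles and the naturality of Chern classes with support (Proposition \ref{prop:chern_supp}), this refined Thom class is identified with $b=c_1^P(\cO_B(-1))$, so $\E^{**}_P(B)$ is a free $\E^{**}(P)$-module of rank one on $b$. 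Since $k^*(b)=c$ and $k^*$ is compatible with the action of $\E^{**}(P)$ on $\E^{**}_P(B)$, one gets the identity $c^{i-1}\cdot b=b^{i}$ in $\E^{**}_P(B)$ for $i\geq 1$, so that the map $c^i\mapsto b^i$ is a well-defined $\E^{**}(X)$-linear section of $\overline{k^*}$.

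It remains to show that $p^*$ is injective with image equal to the kernel of $\overline{k^*}$. Using the Thom isomorphism $\E^{**}_Z(N_ZX)\simeq\E^{**}(Z)\cdot\rthom(N_ZX)$ together with the explicit formula \eqref{eq:thom_class} for the Thom class of the rank-$n$ bundle $N_ZX$ and the identification $\cO_B(-1)|_P\simeq\cO_P(-1)$, one expresses $p^*\rthom(N_ZX)$ as an $\E^{**}(Z)$-linear combination of the basis elements $b\cdot c^i$ of $\E^{**}_P(B)$; combined with the projective bundle formula on $P$, homotopy invariance $\E^{**}(X)\simeq\E^{**}(Z)$, and a rank-counting argument, this shows $p^*$ is injective and identifies its image with a direct complement of the image of $s$, which is the kernel of $\overline{k^*}$.

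The main technical obstacle lies in the first paragraph: constructing the coherent deformation of the entire blow-up square $(B,P)\to(X,Z)$ to $(B_Z(N_ZX),\PP(N_ZX))\to(N_ZX,Z)$, and verifying that the specialization isomorphisms are compatible with all four maps of the sequence as well as with the proposed section. This is essentially a geometric input (blow-ups are well-behaved under deformation to the normal cone of a regularly immersed subscheme) combined with Morel--Voevodsky purity, and is the substance of Navarro's argument in \cite[2.6]{Nav}. Once this reduction is granted, the remaining verifications reduce to formal consequences of the projective bundle formula, the Thom isomorphism, and the naturality of Chern classes with support.
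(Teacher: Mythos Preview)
Your reduction step has a genuine gap. You invoke Morel--Voevodsky purity (Theorem~\ref{thm:purity_MV}) to conclude that the specialization maps from the deformation diagram induce isomorphisms on each term of the sequence, but that theorem applies only to closed $S$-pairs $(X,Z)$ with both $X$ and $Z$ smooth over a common base $S$. The proposition here is stated for an arbitrary regular closed pair---indeed, its role in the paper (Paragraph~\ref{num:fdl_regular} and Proposition~\ref{prop:fdl_regular}) is precisely to construct fundamental classes for regular closed immersions \emph{without} any smoothness hypothesis over a base. For such general pairs, the assertion that the specialization maps $d_0^*, d_1^*$ are isomorphisms on $\E^{**}_Z(X)$ is exactly the $\E$-purity condition of Definition~\ref{df:abs_pur_closed}, which is not assumed and fails in general. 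The same obstruction arises for the blown-up pair $(B,P)$. Your reduction is therefore circular: it presupposes the very purity statement that Navarro's construction is designed to circumvent. (Note also that your claim that this deformation reduction ``is the substance of Navarro's argument'' is not accurate.)

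The paper itself gives no proof but cites \cite[2.6]{Nav}, whose argument is direct rather than via deformation. The ingredients are: (a) the localization long exact sequences for $(X,Z)$ and $(B,P)$, together with the isomorphism $B-P\simeq X-Z$, identify the cofiber of $p^*:\E^{**}_Z(X)\to\E^{**}_P(B)$ with the cofiber of $p^*:\E^{**}(X)\to\E^{**}(B)$; (b) cdh descent for the blow-up square (cf.\ Proposition~\ref{prop:basic_bivariant}(4) and its proof) identifies the latter with the cofiber of $q^*:\E^{**}(Z)\to\E^{**}(P)$; (c) the projective bundle formula for $P=\PP(N_ZX)\to Z$ computes this cofiber as $\bigoplus_{i=1}^{n-1}\E^{**}(Z)\cdot c^i$. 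The splitting then follows from $k^*(b^i)=c^i$, since $\cO_B(-1)|_P\simeq\cO_P(-1)$. Your normal-bundle computation in the second paragraph is essentially a correct instance of this direct argument (in that case $\E^{**}(X)\simeq\E^{**}(Z)$ by homotopy invariance, which is why the distinction did not surface), so once you replace the deformation reduction by the descent argument above, the remainder of your proposal can be adapted.
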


\begin{num}\label{num:fdl_regular}
Consider again the assumption and notations
 of the preceding proposition.
 We can now explain the construction of Navarro
 \cite[2.7]{Nav}.

One defines a canonical class in $\E^{2n,n}_P(B)$
 (where $n$ is seen as a locally constant function on $P$)
 as follows:
$$
\fdl'_i:=-\left(\sum_{i=0}^{n-1} q^*(c_i(N_ZX)).(-c)^{n-i}\right).b
$$
where we have used the $\E^{**}(P)$-module structure
 on $\E^{**}_P(B)$. Then, according to the projective bundle theorem
 for $P=\PP(N_ZX)$, one deduces that $k^*(\fdl'_i)=c_n(N_ZX)$
 so that this class is zero in the quotient $\E^{**}(P)/\E^{**}(X)$.
 Therefore, according to the preceding proposition, there exists
 a unique class $\fdl_i \in \E^{2n,n}_Z(X)$ such that:
$$
p^*\fdl_i=\fdl'_i.
$$
This will be called the \emph{orientation} of $i$
 associated with the orientation $c$ of the ring spectrum $\E$.

According to Navarro's work, we get the following result.
\end{num}
\begin{prop}\label{prop:fdl_regular}
The orientations $\fdl_i$ constructed above
 form a system of fundamental classes
 with coefficients in $\E$ for regular closed immersions.

Besides, this system coincides with that of
 Proposition \ref{prop:fdl_retraction} when restricted
 to the closed immersions which admit a smooth retraction. 
\end{prop}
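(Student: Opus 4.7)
The plan is to follow Navarro's approach \cite[\textsection 2]{Nav} adapted to the bivariant language, and to reduce both assertions --- the associativity formula and the coincidence with Proposition \ref{prop:fdl_retraction} --- to the case of zero sections of vector bundles, where they become respectively the multiplicativity of Thom classes under direct sums (Lemma \ref{lm:product&thom}) and the identification of $\fdl_s$ with $\rthom(E)$ from Proposition \ref{prop:fdl_thom}. A key technical handle throughout is the characterization of $\fdl_i$ via the split exact sequence of the blow-up proposition, which lets one check equalities in $\E^{**}_Z(X)$ after pullback to $\E^{**}_P(B)$ where one has an explicit formula in terms of Chern classes (with supports) of the normal bundle.

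For the associativity formula, consider composable regular closed immersions $Z \xrightarrow{j} Y \xrightarrow{i} X$ of respective codimensions $m$ and $n$; by Remarks \ref{rem:biv&support} and \ref{rem:cup_biv&act_coh&supp_prod}(3) the identity $\fdl_j \cdot \fdl_i = \fdl_{ij}$ is to be checked in $\E^{2(n+m),n+m}_Z(X)$. I would introduce a double deformation space connecting the triple $(X,Y,Z)$ first to $(N_YX, N_ZY, Z)$ via the deformation of Paragraph \ref{num:rel_Tspectrum} applied to $(X,Y)$, then to the split situation where both immersions are zero sections of vector bundles with total normal bundle fitting in the exact sequence $0 \to N_ZY \to N_ZX \to j^{-1}(N_YX) \to 0$. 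One then checks that both sides of the associativity formula are compatible with these deformations: this reduces to a functoriality check on blow-ups with respect to cartesian morphisms of closed pairs, using Proposition \ref{prop:chern_supp}(1) for the Chern classes with support appearing in the definition of $\fdl'_i$. In the reduced split case, associativity becomes exactly the factorization of the refined Thom class $\rthom(N_ZY \oplus j^{-1}(N_YX))$ given by Lemma \ref{lm:product&thom}, applied to the above short exact sequence via the coherence isomorphism \eqref{eq:Thom_virtual}.

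For the second assertion, suppose $i:Z \to X$ admits a smooth retraction. The class $\fdl_i^{\mathrm{ret}}$ of Paragraph \ref{num:fdl_retraction} is compatible with the deformation to the normal cone by Remark \ref{rem:fdl_retraction&thom}(2), and the class $\fdl_i^{\mathrm{Nav}}$ of Paragraph \ref{num:fdl_regular} is also compatible, because the blow-up construction is functorial with respect to the cartesian morphisms of closed pairs $(X,Z) \xleftarrow{d_1} (D,\AA^1_Z) \xrightarrow{d_0} (N_ZX,Z)$ appearing in \eqref{eq:deformation} and Chern classes with support commute with pullback by Proposition \ref{prop:chern_supp}(1). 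One is thus reduced to the case of the zero section $s:Z \to N_ZX$ of a vector bundle. In that situation, Navarro's formula defines $\fdl'_s$ on $B_Z(N_ZX)$, and a direct computation using the projective bundle theorem for $P=\PP(N_ZX)$ and the defining formula \eqref{eq:thom_class} of the Thom class identifies the unique lift $\fdl_s \in \E^{**}_Z(N_ZX)$ with the image of $\rthom(N_ZX)$ under \eqref{eq:compute_Thom_coh}. This matches Proposition \ref{prop:fdl_thom} and hence concludes via Remark \ref{rem:fdl_retraction&thom}(2).

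The hardest step is the associativity formula: the double deformation argument demands careful bookkeeping of the identifications \eqref{eq:Thom_virtual} for the normal bundle extension, and the non-additivity of Chern classes (acknowledged in the introduction when discussing the method of Gabber) prevents a direct term-by-term computation, forcing one to work with the full formal group law from Proposition \ref{prop:chern_supp}(3).
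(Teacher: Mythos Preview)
Your treatment of the second assertion is essentially the paper's own: the paper reduces to transversal base change for Navarro's class (\cite[2.12]{Nav}) together with the identification of $\fdl_s$ with $\rthom(N_ZX)$ for the zero section of a vector bundle (\cite[2.19]{Nav}); combined with Remark~\ref{rem:fdl_retraction&thom}(2), this is exactly your deformation argument, and it works here because the closed pair $(X,Z)$ with smooth retraction is a closed $Z$-pair of smooth $Z$-schemes, so Theorem~\ref{thm:purity_MV} applies and $d_0^*,d_1^*$ are isomorphisms.

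For the associativity formula, however, your double deformation argument has a genuine gap. For a general regular closed immersion the maps
\[
\E^{**}_Z(X) \xleftarrow{\ d_1^*\ } \E^{**}_{\AA^1_Z}(D_ZX) \xrightarrow{\ d_0^*\ } \E^{**}(\Th(N_ZX))
\]
are \emph{not} known to be isomorphisms: that is precisely the extra hypothesis of $\E$-purity introduced later in Definition~\ref{df:abs_pur_closed}, and it fails in general. Compatibility of $\fdl_i$ with transversal base change only tells you that $d_1^*(\fdl_\nu)=\fdl_i$ and $d_0^*(\fdl_\nu)=\fdl_s$; without injectivity of $d_1^*$ you cannot transport the relation $\fdl_j\cdot\fdl_i=\fdl_{ij}$ from the split situation back to $(X,Y,Z)$. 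The double deformation space is invoked in the paper only for Proposition~\ref{prop:fdl_retraction}, where Morel--Voevodsky purity is available; for arbitrary regular closed immersions the paper simply cites Navarro's argument \cite[Th.~2.14]{Nav}, which proceeds differently: it exploits the split injection $p^*:\E^{**}_Z(X)\hookrightarrow\E^{**}_P(B)$ of the blow-up proposition and compares the explicit formulas for $\fdl'_i$, $\fdl'_j$, $\fdl'_{ij}$ via direct Chern-class computations on iterated blow-ups, rather than via a reduction to the zero-section case through deformation.
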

To prove the first assertion, we need to prove the
 associativity formula; this is \cite[Th. 2.14]{Nav}.
 The second assertion follows from the compatibility
 of $\fdl_i$ with respect to base change along
 transversal squares (\cite[2.12]{Nav})
 and from the fact the orientation defined above
 coincides with the refined Thom class when $i$
 is the zero section of a vector bundle (\cite[2.19]{Nav}).

\begin{ex}\label{ex:Navarro}
Consider the notations of the previous proposition.
\begin{enumerate}
\item Let $i:D \rightarrow X$ be the immersion of
 a regular divisor in a scheme $X$.
 We let $\mathcal O(-D)$ be the line bundle on $X$ corresponding to
 the inverse of the ideal sheaf of $Z$ in $X$
 (following the convention of \cite[21.2.8.1]{EGA4}).
 The sheaf $\mathcal O(-D)$ has support in $D$ and admits a canonical
 trivialization $s$ over $X-D$. As blowing-up a divisor
 does not do anything, it follows from the previous construction
 that we have the relation:
$$
\fdl_i=c_1^D\big(\mathcal O(-D),s\big).
$$
\item Let $i:Z \rightarrow X$ be a regular closed immersion
 of codimension $n$. 
 Recall we have a base change map:
$$
i^*:\E^{**}_Z(X)=\biv \E {**} (Z/X)
 \rightarrow \biv \E {**} (Z/Z)=\E^{**}(Z),
$$
--- equivalently, the map forgetting the support.

Then, from the construction of Paragraph \ref{num:fdl_regular},
 we deduce the relation:
$$
i^*(\fdl_i)=c_n(N_ZX).
$$
\end{enumerate}
\end{ex}

\subsection{The global complete intersection case}

We are finally ready for the main theorem of this work.
 We have so far constructed several systems of fundamental
 classes and we now  show how to glue them. The main result
 in order to do so is the following lemma.
\begin{lm}\label{lm:key}
Consider an absolute oriented ring spectrum $(\E,c)$.
 Let $f:X \rightarrow S$ be a smooth s-morphism and $s:S \rightarrow X$
 a section of $f$.

Then using the notations of Propositions \ref{prop:fdl_retraction}
 and \ref{prop:fdl_smooth}, the following relation holds in $\E^{00}(X)$:
$$
\fdl_s.\fdl_f=1. 
$$
\end{lm}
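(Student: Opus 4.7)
The plan is to deform the pair $(s, f)$ to the model case of the zero section and projection of the normal bundle $N := N_S X \simeq s^* T_f$, where the desired equality reduces to the defining property of the inverse Thom class. I first construct the deformation space $M := D_S X = B_{S \times 0}(\AA^1_X) \setminus B_S X$ as in Paragraph \ref{num:rel_Tspectrum} together with the morphism $\tilde f : M \to \AA^1_S$ induced by $f \times \mathrm{id} : \AA^1_X \to \AA^1_S$; this $\tilde f$ is smooth (a local computation reducing to the model $X = S \times \AA^d$ with $s$ the zero section verifies smoothness in the standard blow-up charts once the strict transform $B_S X$ has been removed). The deformation canonically provides a section $\tilde s : \AA^1_S \to M$ of $\tilde f$ extending $s$ at fiber $1$ and the zero section $s_0 : S \to N$ at fiber $0$, and the relevant fiber squares are cartesian.

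I next apply Lemma \ref{lm:fdl_sm_pullback} to the smooth morphism $\tilde f$ to obtain that $\fdl_{\tilde f}$ pulls back to $\fdl_f$ at fiber $1$ and to $\fdl_\pi$ at fiber $0$, where $\pi : N \to S$ is the projection. In parallel, Remark \ref{rem:fdl_retraction&thom}(2) (the normalization property encoded in the deformation diagram for the closed pair $(M, \AA^1_S)$ equipped with smooth retraction $\tilde f$) shows that $\fdl_{\tilde s}$ pulls back to $\fdl_s$ at fiber $1$ and to $\rthom(N) = \fdl_{s_0}$ (the latter equality being Proposition \ref{prop:fdl_thom}) at fiber $0$. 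Since the bivariant product is compatible with pullback along cartesian squares (Paragraph \ref{num:product}), the class $\fdl_{\tilde s} . \fdl_{\tilde f} \in \E^{00}(\AA^1_S)$ pulls back to $\fdl_s . \fdl_f$ and to $\rthom(N) . \fdl_\pi$ along the fiber inclusions. By $\AA^1$-homotopy invariance (Proposition \ref{prop:basic_bivariant}(1) applied to cohomology), the pullback $\E^{00}(S) \to \E^{00}(\AA^1_S)$ is an isomorphism and the two fiber inclusions are both sections of it, hence induce the same map $\E^{00}(\AA^1_S) \to \E^{00}(S)$. This yields $\fdl_s . \fdl_f = \rthom(N) . \fdl_\pi$ in $\E^{00}(S)$, reducing the lemma to the vector bundle case.

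It remains to show $\rthom(N) . \fdl_\pi = 1$ for the vector bundle projection $\pi : N \to S$ with zero section $s_0$. Here the diagonal $\delta : N \to N \times_S N$ appearing in the construction of $\pur_\pi$ is the zero section of a relative vector bundle over $N$ (the kernel of the second projection), which allows one to identify Ayoub's purity isomorphism with the Thom isomorphism. Tracing through the definition of $\tfdl_\pi$ from Paragraph \ref{num:fdl_smooth} and using the canonical identifications $s_0^! \pi^* = \mathrm{id}$ and $s_0^! \pi^! = \mathrm{id}$, the composite $s_0^!(\tfdl_\pi) \circ \fdl_{s_0}(d)[2d]$ reduces to the canonical pairing $\rthom(N) \otimes_S \rthom(-N)$, which equals $1 \in \E^{00}(S)$ by the very definition of $\rthom(-N)$ recalled in Paragraph \ref{num:Thom_class_vb}. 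The hard part will be this last identification, as it requires careful bookkeeping of the four purity isomorphisms involved (Ayoub's $\pur_\pi$ and the Thom isomorphism for $T_\pi$ on the $\fdl_\pi$ side; the closed-pair purity $\pur'_{\pi, s_0}$ and the Thom isomorphism for $-N$ on the $\fdl_{s_0}$ side) and the verification that they combine to yield the Thom pairing—this is however a direct consequence of the compatibilities already encoded in Propositions \ref{prop:fdl_retraction} and \ref{prop:fdl_thom} in the vector bundle case.
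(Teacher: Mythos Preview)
Your reduction to the vector bundle case is correct and is exactly the argument the paper gives: form the product $\fdl_{\tilde s}.\fdl_{\tilde f}\in\E^{00}(\AA^1_S)$ over the deformation space, use Lemma~\ref{lm:fdl_sm_pullback} for the smooth factor and the transversal base change of Proposition~\ref{prop:fdl_regular} (equivalently Remark~\ref{rem:fdl_retraction&thom}(2)) for the closed-immersion factor, and invoke $\AA^1$-invariance to conclude that the fibers at $0$ and $1$ agree. So far this is the paper's proof verbatim.

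The gap is in the vector bundle step. You correctly isolate the target identity $\rthom(N).\fdl_\pi=1$ and observe that the diagonal $\delta:N\to N\times_S N$ entering Ayoub's $\pur_\pi$ is itself a zero section, but the sentence ``this is however a direct consequence of the compatibilities already encoded in Propositions~\ref{prop:fdl_retraction} and~\ref{prop:fdl_thom}'' does not hold up. Those two propositions concern only the closed-immersion side (zero sections and immersions with smooth retraction); they say nothing about how the bivariant product interacts with the construction of $\fdl_\pi$ through $\pur_\pi$. Concretely, the product $\fdl_{s_0}.\fdl_\pi$ is the composite $s_0^!(\tfdl_\pi)\circ\fdl_{s_0}(d)[2d]$, and $\tfdl_\pi$ is built from \emph{two} isomorphisms: the Thom isomorphism $\pur'_{T_\pi}$ and Ayoub's $\pur_\pi$, the latter in turn built from an exchange transformation $Ex^{!*}$ and the closed-pair purity $\pur'_{f_1,\delta}$. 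Showing that after applying $s_0^!$ this collapses to the Thom pairing $\rthom(N)\otimes_S\rthom(-N)$ requires tracking all of these through the identification $s_0^!\pi^!\simeq\mathrm{id}$, and this is precisely where the paper spends its effort.

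What the paper actually does for this step: it reformulates the claim as the commutativity of a diagram comparing the bivariant product $\bar\mu$ with the Thom-space product $\mu$, then successively reduces through diagrams labelled $(1)$, $(1')$, $(1'')$, $(1''')$, peeling off one exchange isomorphism at a time ($Ex_!^\otimes$, $Ex_\sharp^\otimes$, $Ex_{\sharp!}$, and the $2$-functoriality isomorphisms $\epsilon$). The final and only genuinely geometric input is that $\sigma^*(\pur_\delta)$ is the identity: pulling back the deformation diagram for $\delta:N\to N\times_S N$ along the zero section $\sigma:S\to N$ yields the \emph{trivial} deformation diagram (both fibers are $(V,S)$ with the maps being the $0$- and $1$-sections of $\AA^1$), so homotopy invariance forces $\sigma^*(\pur_\delta)=\mathrm{id}$. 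Your sketch does not supply this argument, and the propositions you cite do not contain it. To complete your proof you must either reproduce this diagram chase or give an alternative computation of $s_0^!(\pur_\pi)$; the observation that $\delta$ is a zero section is the right starting point, but it is not the end.
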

\begin{proof}
 Let $V=N_S(X)$ be the normal bundle of $S$ in $X$.
 By construction of the deformation space,
 we get a commutative diagram made of cartesian squares:
$$
\xymatrix@=14pt{
S\ar_{s_1}[d]\ar^s[r] & X\ar^{d_1}[d]\ar^f[r] & S\ar^{s_1}[d] \\
\AA^1_S\ar[r] & D_S(X)\ar^-{\tilde f}[r] & \AA^1_S \\
S\ar^-{s_0}[u]\ar^-\sigma[r] & V\ar_-{d_0}[u]\ar^-p[r] & S\ar_-{s_0}[u]
}
$$
where the two left columns are made by the deformation diagram
 \eqref{eq:deformation} associated with the closed pair $(X,Z)$,
 the morphism $p$ is the canonical projection of $V/S$
 and $\tilde f$ is given by the composite map
 $D_S(X) \rightarrow D_S(S)\simeq \AA^1_S$.
 An easy check shows that $\tilde f$ is smooth.

According to $\AA^1$-homotopy invariance, the pullbacks
 $s_0^*,s_1^*:\E^{**}(\AA^1_S) \rightarrow \E^{**}(S)$ are
 the same isomorphism.
 The orientations of the form $\eta_f$ for $f$ smooth are stable
 under pullbacks
 (Lemma \ref{lm:fdl_sm_pullback}) so that applying Remark
 \ref{rem:fdl_retraction&thom}, we are reduced to prove the relation
$$
\fdl_\sigma.\fdl_p=1.
$$
In other words, we can assume $X=V$ is a vector bundle over $S$,
 $s=\sigma$ is its zero section and $f=p$ its canonical projection.
 We have seen in Remark \ref{rem:fdl_retraction&thom} that $\eta_\sigma$
 coincides with
 the refined Thom class of $V/S$, via the canonical isomorphism
 \eqref{eq:compute_Thom_coh}.
 Similarly, from Remark \ref{rem:fdl_smooth}(3),
 the orientation $\fdl_f$ is induced by
 the Thom class of the $S$-vector bundle $T_p=p^{-1}E$ via
 the isomorphism \eqref{eq:f_purity_iso_smooth1_induced}.
 As we obviously have: $\rthom(V) \refp \rthom(-V)=1$,
 we are reduced to prove the following lemma:
\begin{lm}
Given a vector bundle $V/S$ with zero section $\sigma$
 and canonical projection $p$, the following diagram is commutative:
$$
\xymatrix@R=4pt@C=60pt{
\biv \E {**}(S/V) \otimes \biv \E {**}(V/S)\ar^-{\bar \mu}[rd]
\ar_{\alpha^*_\sigma \otimes \pur_p^{\prime*}}[dd] &  \\
& \E^{**}(S) \\
\E^{**}(\MTh(V)) \otimes \E^{**}(\MTh(-V))\ar_-\mu[ru] & 
}
$$
where $\mu$ is the product on cohomology, $\bar \mu$ is the
 product of bivariant theory,
 while $\alpha_\sigma^*$ (resp. $\pur_p^{\prime*}$) is induced
 by the isomorphism $\alpha_\sigma:\sigma_! \rightarrow \sigma_*$
 as $\sigma$ is proper (resp. the purity isomorphism
 \eqref{eq:f_purity_iso_smooth1}).
\end{lm}
In the proof of this lemma, we will restrict ourselves to classes of degree
 $(0,0)$. It will healthily simplify notations and
 the proof for other degrees is the same. Let us thus consider the following
 maps, corresponding to Borel-Moore homology classes:
$$
y:\sigma_!(\un_S) \rightarrow \E_V, 
v:p_!(\un_V) \rightarrow \E_S.
$$
The commutativity of the preceding diagram then amounts to prove
 the following relation:
$$
\bar \mu(y \otimes v)
 =\mu\big(\alpha_\sigma^*(y) \otimes \pur_p^{\prime*}(v)\big),
$$
so we put: $\tilde y=\alpha_\sigma^*(y)$ and $\tilde v=\pur_p^{\prime*}(v)$.
 Then the preceding relation can be translated into the commutativity
 of the following diagram of isomorphisms:
$$
\xymatrix@R=10pt@C=12pt{
&&& p_!\sigma_!(\un_S)\ar^{p_!(y)}[r]\ar@{}|{(2)}[rrrdddd]
 & p_!(\E_V)\ar^-\sim[r] & \E_S \otimes p_!(\un_S)\ar^-{1 \otimes v}[r]
 & \E_S \otimes \E_S\ar@{=}[dddd]\ar^{\mu_\E}[rdd] & \\
 &&& p_\sharp\big(\sigma_!(\un_S) \otimes \MTh(-T_p)\big)
      \ar^{Ex_\sharp^\otimes}[d]\ar_{\pur'_p}[u] \\
\un_S\ar@/^13pt/^-{\epsilon_{p,\sigma}}[rrruu]\ar@/_10pt/_-{can}[rrrdd]\ar@{}|/-13pt/{(1)}[rrr]
&&& p_\sharp \sigma_!(\un_S) \otimes \MTh(-V)\ar^{\alpha_\sigma}[d]
 &&&& E_S \\
 &&& p_\sharp \sigma_*(\un_S) \otimes \MTh(-V)\ar@{=}[d] \\
 &&& \MTh(V) \otimes \MTh(-V)\ar^{\tilde y \otimes \tilde v}[rrr]
 &&& \E_S \otimes \E_S,\ar_{\mu_\E}[ruu]
}
$$
where:
\begin{itemize}
\item the morphism labelled $\epsilon_{p,\sigma}$ stands for the inverse of the
 canonical isomorphism $Id=(p \circ \sigma)_! \rightarrow p_!\sigma_!$
 coming from the fact $f \mapsto f_!$ corresponds to a $2$-functor;
\item the map labelled $Ex_\sharp^\otimes$ is the exchange
 isomorphism corresponding to the projection formula for $f_\sharp$
 (cf. \cite[\textsection 1.1.24]{CD3}) ---
 using the identification $\MTh(-T_p)=p^*\MTh(-V)$;
\item the map labelled $can$ follows from the definition of the Thom space
 of the virtual bundle $(-V)$.
\end{itemize}
 The commutativity of part (2) follows directly from the definition
 of $\tilde y$ and $\tilde v$.
 Thus we only need to show the commutativity of part (1) of the above
 diagram.

After taking tensor product with $\MTh(V)$, the diagram (1) can be simplified as follows:
$$
\xymatrix@R=16pt@C=44pt{
& p_!\sigma_!(\un_S) \otimes \MTh(V)\ar^{Ex_!^\otimes}[r]
 & p_!(\sigma_!(\un) \otimes p^*\MTh(V)) \\
\MTh(V)\ar^-{\epsilon_{p,\sigma}}[ru]\ar@{=}[rrd]\ar@{}|/38pt/{(1')}[rr]
 & & p_\sharp \sigma_!(\un_S)
  \ar_{\pur'_p}[u]\ar^{\alpha_\sigma}[d] \\
& & p_\sharp \sigma_*(\un_S),
}
$$
where the arrow $Ex_!^\otimes$ stands for the exchange isomorphism
 of the projection formula for $p_!$ (see \cite[2.2.12]{CD3}).

Let us first summarize the geometric situation
 in the following commutative diagram of schemes:
$$
\xymatrix@R=26pt@C=34pt{
S\ar^\sigma[r]
 & V\ar@/^10pt/^-\delta[r]\ar_p[d]\ar|{\sigma'}[r]\ar@{}|\Theta[rd]
 & W\ar^-{p'}[r]\ar^{p''}[d]\ar@{}|\Delta[rd]
 & V\ar^p[d] \\
& S\ar_\sigma[r] & V\ar_p[r] & S,
}
$$
where each square is cartesian and $\delta$ denotes the obvious
 diagonal embedding. The map $p'':W=V \times_S V \rightarrow V$
 is the projection on the second factor and in particular,
 we get: $\sigma'(v)=(v,p(v))$.
 Note finally that $\sigma$ is an equalizer of $(\delta,\sigma')$. \\
Then coming back to the definition of 
 the purity isomorphism $\pur'_p$
 (cf. Paragraph \ref{num:basic_fdl_smooth}),
 diagram $(1')$ can be divided as follows:
$$
\xymatrix@R=8pt@C=4pt{
&& p_!\sigma_!(\un) \otimes \MTh(V)\ar^-{Ex_!^\otimes}[rr]
 && p_!(\sigma_!(\un) \otimes p^*\MTh(V)) \\
&&& p_!p'_\sharp\sigma'_!\sigma_!(\un)\ar[ru] & \\
\MTh(V)\ar^-{\epsilon_{p,\sigma}}[rruu]\ar@{=}[dddd] &&&& \\
& p_\sharp p''_!\sigma'_!\sigma_!(\un)\ar^{Ex_{\sharp!}}[rruu]
  \ar@{}|/14pt/{(1'')}[rr]
 && p_!p'_\sharp\delta_!\sigma_!(\un)\ar_{\pur_\delta}[uu] & \\
 &&&& \\
&&& p_\sharp p''_!\delta_!\sigma_!(\un)
      \ar_{Ex_{\sharp!}}[uu]\ar^{\epsilon}[lluu] & \\
p_\sharp \sigma_*(\un) &&&& p_\sharp \sigma_!(\un_S),
  \ar_{\pur'_p}[uuuuuu]\ar^{\alpha_\sigma}[llll]
	\ar_{\epsilon_{\delta,\sigma}}[lu]
}
$$
where $\pur_\delta$ is (induced by) the purity isomorphism
 associated with the closed immersion $\delta$
 (Definition \ref{df:purity_closed_pairs}), the
 map labelled $Ex_{\sharp!}$ stands for the obvious exchange isomorphism
 associated with the cartesian square $\Delta$, 
 and $\epsilon$ is the isomorphism coming from the fact
 $f \mapsto f_!$ corresponds to a $2$-functor and the relation
 $\delta \circ \sigma=\sigma' \circ \sigma$.

In this diagram, the commutativity of the right hand side
 follows by definition of $\pur'_p$ and the commutativity of
 the left hand side follows from the definition of the
 involved exchange isomorphisms and the fact $\alpha_f:f_! \rightarrow f_*$
 corresponds to a morphism of $2$-functors.
 The geometry is hidden in the commutativity
 of the diagram labelled $(1'')$.

We can divide again $(1'')$ as follows:
$$
\xymatrix@R=26pt@C=34pt{
p_\sharp p''_!\sigma'_!\sigma_!(\un)\ar^-{Ex_{\sharp!}}[r]
 & p_!p'_\sharp\sigma'_!\sigma_!(\un)\ar@{=}[r]
  \ar@{}|{(1''')}[rd]
 & p_!p'_\sharp\sigma'_!\sigma_!(\un) \\
p_\sharp p''_!\delta_!\sigma_!(\un)\ar^-{Ex_{\sharp!}}[r]\ar^{\epsilon}[u]
 & p_!p'_\sharp\delta_!\sigma_!(\un)\ar_{\epsilon}[u]\ar@{=}[r]
 & p_!p'_\sharp\delta_!\sigma_!(\un)\ar_{\pur_\delta}[u]
}
$$
The commutativity of the left hand square follows from the
 naturality of the exchange isomorphism,
 it remains part $(1''')$. We can certainly erase the functor $p_!$
 in each edge of this diagram. Then the right-most vertical map
 can be expressed as follows:
$$
M(W/W-\delta(V)) \otimes \sigma_!(\un)
 \xrightarrow{\pur_\delta} \MTh(N_\delta/V) \otimes \sigma_!(\un)
$$
and the commutativity of diagram $(1''')$ means that this map
 is the identity. Using once again the projection
 formula for $\sigma$, this map
 can be expressed as follows:
$$
\sigma_!\sigma^*\big(M(W/W-\delta(V))\big)
 \xrightarrow{\ \sigma_!\sigma^*(\pur_\delta)\ }
 \sigma_!\sigma^*\MTh(N_\delta/V).
$$
So we are reduced to show that $\sigma^*(\pur_\delta)$
 is the identity map through the obvious identifications:
$$
\sigma^*M(W/W-\delta(V))=M(V/V-S)=\sigma^*\MTh(N_\delta/V).
$$
This is an easy geometric fact:
 let us consider the deformation
 diagram for the closed immersion $\delta:V \rightarrow W$:
$$
\xymatrix@=10pt{
V\ar[r]\ar[d] & \AA^1_V\ar[d] & V\ar[d]\ar[l] \\
W\ar[r] & D_\delta & N_\delta.\ar[l]
}
$$
Note that the closed immersion $\delta$ and $\sigma'$ are transversal
 --- \emph{i.e.} the square $\Theta$ is transversal. In other words,
 $\sigma^*(N_\delta)=N_\sigma=V$ as a vector bundle over $S$.
 In particular, the pullback of the preceding diagram of $V$-schemes
 along the immersion $\sigma:S \rightarrow V$ is the following one:
$$
\xymatrix@=10pt{
S\ar[r]\ar[d] & \AA^1_S\ar[d] & S\ar[d]\ar[l] \\
V\ar[r] & \AA^1_V & V.\ar[l]
}
$$ 
where we have used the identifications 
 $\sigma^*(D_\delta)=D_\sigma=\AA^1_V$; the last identification
 is justified by the fact $\sigma$ is the zero section of a vector
 bundle.
 In this diagram, the vertical maps are the unit and zero sections
 of the affine lines involved. Therefore, by homotopy invariance,
 we get that $\sigma^*(\pur_\delta)$ is identified to the identity map
 as required.
\end{proof}

We are now ready to state our main theorem.
\begin{thm} \label{thm:BM_fundamental_classes}
Let $(\E,c)$ be an absolute oriented ring $\T$-spectrum.

There exists a unique system of fundamental classes
 $\fdl_f \in \biv \E {**}(X/S)$ for gci morphisms $f$
 with coefficients in $\E$ such that,
 in addition to the associativity property, one has:
\begin{enumerate}
\item If $f$ is a smooth morphism,
 $\fdl_f$ coincides with the fundamental class
 defined in Proposition \ref{prop:fdl_smooth}.
\item If $i:Z \rightarrow X$ is a regular closed immersion,
 $\fdl_i$ coincides with the fundamental class 
 defined in \ref{prop:fdl_regular}.
\end{enumerate}
If $d$ is the relative dimension of $f$, seen as a Zariski
 local function on $X$, the class $\eta_f$ has dimension $d$
 (Definition \ref{df:fdl_orientation}).
\end{thm}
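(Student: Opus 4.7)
For a gci morphism $f:X \to S$ with a chosen factorization $f = p \circ i$, where $i:X \to P$ is a regular closed immersion and $p:P \to S$ is smooth separated, I set
$$\fdl_f := \fdl_i \cdot \fdl_p \in \biv{\E}{**}(X/S),$$
using the bivariant product of Paragraph \ref{num:product} and the fundamental classes of Propositions \ref{prop:fdl_regular} and \ref{prop:fdl_smooth}. Uniqueness is then formal: any system satisfying associativity together with properties (1) and (2) must coincide with this formula on every such factorization. Once independence of the factorization is established, properties (1) and (2) follow from the trivial factorizations $f = f \circ \mathrm{id}_X$ and $i = \mathrm{id}_P \circ i$, using that the fundamental class of an identity morphism is the unit $1$ of the corresponding cohomology ring. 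The degree of $\fdl_f$ is the codimension of $i$ plus the relative dimension of $p$, which is the relative dimension of $f$.

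\textbf{Independence of the factorization.} This is the heart of the proof. Given a second factorization $f = p' \circ i'$, form $Q = P \times_S P'$ with projections $\pi: Q \to P$, $\pi': Q \to P'$, and common structure map $\bar p = p \pi = p'\pi'$, which is smooth. Set $\kappa = (i, i'): X \to Q$. Then $\kappa$ factors as
$$X \xrightarrow{\sigma'} X \times_S P' \xrightarrow{j'} Q,$$
where $\sigma' = (\mathrm{id}_X, i')$ is a section of the smooth morphism $q': X \times_S P' \to X$ and $j' = i \times_S \mathrm{id}_{P'}$ is the base change of $i$ along $p'$. In particular $\kappa$ is a regular closed immersion, so associativity of the two systems of Propositions \ref{prop:fdl_smooth} and \ref{prop:fdl_regular} yields
$$\fdl_\kappa \cdot \fdl_{\bar p} \;=\; \fdl_{\sigma'} \cdot \fdl_{j'} \cdot \fdl_\pi \cdot \fdl_p.$$
Now the transversal cartesian square with arrows $j':X \times_S P' \to Q$, $\pi:Q \to P$, $q':X \times_S P' \to X$, $i:X \to P$, combined with the graded commutativity formula (Proposition \ref{prop:orient_comm}), Lemma \ref{lm:fdl_sm_pullback} (giving $i^*(\fdl_\pi) = \fdl_{q'}$) and the analogous base change compatibility for Navarro's fundamental classes recalled in the proof of Proposition \ref{prop:fdl_regular} (giving $\pi^*(\fdl_i) = \fdl_{j'}$), yields $\fdl_{j'} \cdot \fdl_\pi = \fdl_{q'} \cdot \fdl_i$, the sign being trivial as both bidegrees have even first component. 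Therefore
$$\fdl_\kappa \cdot \fdl_{\bar p} \;=\; (\fdl_{\sigma'} \cdot \fdl_{q'}) \cdot \fdl_i \cdot \fdl_p \;=\; \fdl_i \cdot \fdl_p$$
by the key Lemma \ref{lm:key}. The symmetric argument with respect to $(i', p')$ gives $\fdl_\kappa \cdot \fdl_{\bar p} = \fdl_{i'} \cdot \fdl_{p'}$, establishing independence.

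\textbf{Associativity and main obstacle.} For composable gci morphisms $Y \xrightarrow{g} X \xrightarrow{f} S$, the gci composability axiom in the conventions furnishes a common refinement of chosen factorizations of $f$, $g$, $fg$ through a commutative diagram of regular closed immersions and smooth morphisms. Associativity of the bivariant product combined with associativity of each of the two building-block systems then identifies $\fdl_g \cdot \fdl_f$ with $\fdl_{fg}$ once well-definedness is available. The entire construction rests on a single essentially nontrivial geometric input, namely the key Lemma \ref{lm:key}: after refining by a fiber product of smooth ambient schemes, the ambient scheme above $X$ is cancelled precisely by the pairing of a section with its smooth retraction, and the remaining compatibilities reduce to formal manipulations of the six functors formalism and the two already-established systems of Propositions \ref{prop:fdl_smooth} and \ref{prop:fdl_regular}.
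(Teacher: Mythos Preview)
Your proof is correct and takes essentially the same route as the paper's. The paper reduces independence of factorization to the three building-block associativity cases (smooth--smooth, immersion--immersion, and section of a smooth morphism) and defers the reduction to \cite[proof of 5.11]{Deg8}; you carry out that reduction explicitly via the fiber product $P \times_S P'$, the two factorizations of the diagonal $\kappa$, and the commutation identity $\fdl_{j'}\cdot\fdl_\pi=\fdl_{q'}\cdot\fdl_i$ obtained from Proposition~\ref{prop:orient_comm} together with the transversal base change properties recorded in Lemma~\ref{lm:fdl_sm_pullback} and in the proof of Proposition~\ref{prop:fdl_regular}. Both versions isolate Lemma~\ref{lm:key} as the one genuinely geometric input, and both leave general associativity as a formal unwinding from the composability diagram in the conventions (the paper cites \cite[proof of 5.14]{Deg8}; you invoke the axiom directly). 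One small imprecision: the composability axiom produces \emph{some} compatible factorizations rather than refining pre-chosen ones, but since you have already established independence this does not affect the argument.
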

\begin{proof}
Because any gci morphism $p:X \rightarrow S$ admits
 a factorization
$$
X \xrightarrow i P \xrightarrow f S
$$
where $f$ is smooth and $i$
 is a regular closed immersion, we have to prove
 that the class $\fdl_i \refp \fdl_f$ is independent of the factorization.

To prove this, we are reduced by usual arguments 
 (see for example \cite[proof of 5.11]{Deg8})
 and the help of Lemma \ref{lm:fdl_sm_pullback} to show
 the associativity property: 
$$
\fdl_g \refp \fdl_f=\fdl_{fg}
$$
in the following three cases:
\begin{enumerate}
\item[(a)] $f$ and $g$ are smooth morphisms;
\item[(b)] $f$ and $g$ are regular closed immersions;
\item[(c)] $g$ is a smooth morphism
 and $f$ is a section of $g$.
\end{enumerate}
Case (a) follows from Proposition \ref{prop:fdl_smooth},
 case (b) from Proposition \ref{prop:fdl_regular}
 and case (c) from the preceding lemma.
 Then the associativity formula in the general case follows
 using standard arguments (see for example \cite[proof of 5.14]{Deg8})
 from (a), (b) and (c).

The last assertion follows as the degree of $\fdl_i$
 (resp. $\fdl_p$) is the opposite of the codimension $n$ of $i$
 (resp. the dimension $r$ of $p$) and we have: $d=r-n$.
\end{proof}

\begin{rem}\label{rem:formal_duality}
When $f$ is a smooth morphism, it follows from
 Proposition \ref{prop:fdl_smooth} that $\fdl_f$ is universally
 strong (Definition \ref{df:strong_orientation}).
When $f=s$ is the section of a smooth s-morphism,
 property (2) in the above theorem and the last assertion of Proposition
 \ref{prop:fdl_regular} shows that the class $\fdl_s$ constructed
 in the above theorem coincides with that of Proposition
 \ref{prop:fdl_retraction}. Therefore $\fdl_s$ is universally strong
  according to \emph{loc. cit.}
This remark will be amplified in Section \ref{sec:duality}.
\end{rem}

\begin{df}\label{df:fdl_classes}
Given the assumptions of the previous proposition,
 for any gci morphism $f:X \rightarrow S$,
 we call $\fdl_f$ the \emph{fundamental class} of $f$ 
 associated with the orientation $c$ of the absolute ring spectrum
 $\E$.

In case $f=i:Z \rightarrow X$ is a regular closed immersion,
 we will also use the notation:
$$
\fdl_X(Z):=\fdl_i
$$
seen as an element of $\E^{2c,c}_Z(X)$ where $c$ is 
 the codimension of $i$ --- as a locally constant function
 on $Z$.
\end{df}
This fundamental class only depends upon the choice of the (global)
 orientation $c$ of $\E$. If there is a possible confusion about
 the chosen orientation, we write $\fdl_f^c$ instead of $\fdl_f$.

We often find another notion of fundamental class in the literature
 that we introduce now for completeness.
\begin{df}\label{df:fdl_coh}
Let $(\E,c)$ be an absolute oriented ring spectrum
 and $f:Y \rightarrow X$ be a proper gci morphism.
 We define the fundamental class of $f$ in $\E$-cohomology,
 denoted by $\cfdl_f$,
 as the image of $\fdl_f$ by the map:
$$
\biv \E {**}(Y/X) \xrightarrow{\ f_!\ } \biv \E {**}(X/X)=\E^{**}(X).
$$
\end{df}
We will give more details on these classes in section \ref{sec:Gysin}.

\subsection{The quasi-projective lci case}

We end this section by presenting an alternative method
 to build fundamental classes, when restricting to quasi-projective
 lci morphism. This is based on the following uniqueness result.
\begin{thm}\label{thm:uniqueness}
Let $(\E,c)$ be an absolute oriented $\T$-spectrum.

Then a family of orientations
 $\fdl_f \in \biv \E {**}(X/S)$ attached to quasi-projective
 lci morphisms $f:X \rightarrow S$ is uniquely characterized by
 the following properties:
\begin{enumerate}
\item If $j:U \rightarrow X$ is an open immersion,
 $\fdl_j$ is equal to the following composite:
$$
\un_X \xrightarrow \eta \E_X \simeq j^*(\E_U)=j^!(\E_U)
$$
where $\eta$ is the unit of the ring spectrum $\E_X$.
\item If $s$ is the zero section of a line bundle $L/S$,
 one has: $\fdl_s=c_1^Z(L)$
 (notation of Definition \ref{df:Chern_support}).
\item If $p$ is the projection map of $\PP^n_S/S$,
 $\fdl_p$ is given by the construction \eqref{eq:f_purity_iso_smooth1}.
\item Given any composable pair of morphism $(f,g)$,
 one has the relation $\fdl_f.\fdl_g=\fdl_{g \circ f}$
 whenever $f$ and $g$ are immersions, or $f$ is the projection
 of $\PP^n_S/S$ and $g$ is an immersion.
\item Let $i:Z \rightarrow X$ be a regular closed immersion
 and $f:X' \rightarrow X$ a morphism transversal to
 $i$. Put $k=f^{-1}(i)$. Then the following relation holds:
 $f^*\fdl_i=\fdl_k$.
\item Consider the blow-up square of a closed immersion $i$
 of codimension $n$,
$$
\xymatrix@=10pt{
E\ar^k[r]\ar[d] & B\ar^p[d] \\
Z\ar^i[r] & X
}
$$
the following relation holds:
$p^*\fdl_i=c_{n-1}(N_ZX).\fdl_k$.
\end{enumerate}
\end{thm}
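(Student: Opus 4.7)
The plan is to use axioms (1), (3), (4) to reduce uniqueness to the case of regular closed immersions, and then to induct on the codimension $n$ via the blow-up axiom (6), with the base case $n=1$ being handled by a deformation-to-the-normal-cone argument combining axioms (2), (5) with the $\AA^1$-homotopy invariance of Borel--Moore homology (Proposition \ref{prop:basic_bivariant}(1)). In detail: any quasi-projective lci morphism $f:X\to S$ admits a factorisation $f=p\circ j\circ i$ with $i:X\to U$ a regular closed immersion into an open subscheme $U\subseteq\PP^n_S$, $j:U\to\PP^n_S$ the corresponding open immersion, and $p:\PP^n_S\to S$ the structural projection; axiom (1) determines $\fdl_j$, axiom (3) determines $\fdl_p$, and iterated applications of axiom (4) yield $\fdl_f=\fdl_i.\fdl_j.\fdl_p$. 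Independence of the factorisation is automatic since we argue uniqueness, so the problem is reduced to determining $\fdl_i$ for an arbitrary regular closed immersion $i:Z\to X$.

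I then proceed by induction on $n=\codim(Z,X)$. For $n\geq 2$, the blow-up square
$$\xymatrix@=10pt{E\ar^k[r]\ar[d]&B\ar^p[d]\\Z\ar^i[r]&X}$$
has as exceptional divisor immersion $k:E\to B$ a regular divisor. Axiom (6) gives the relation $p^*\fdl_i=c_{n-1}(N_ZX).\fdl_k$ in $\E^{**}_E(B)$, and the split exact sequence recalled just before Paragraph \ref{num:fdl_regular} (Navarro's blow-up formula) shows that $p^*:\E^{**}_Z(X)\to\E^{**}_E(B)$ is injective. Hence $\fdl_i$ is determined by $\fdl_k$, and the induction reduces everything to the case of regular divisors.

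For the base case of a regular divisor $i:D\to X$, I would use the deformation space $D_DX$ from Paragraph \ref{num:rel_Tspectrum}. The tautological closed immersion $\tilde\imath:\AA^1_D\to D_DX$ is a regular divisor; its restrictions to the fibres over $0,1\in\AA^1$ recover, respectively, the zero section $s:D\to N_DX$ of the normal line bundle and the original immersion $i:D\to X$, and both fibre inclusions $\iota_0,\iota_1$ are transversal to $\tilde\imath$. Axiom (5) therefore yields $\iota_0^*\fdl_{\tilde\imath}=\fdl_s$ and $\iota_1^*\fdl_{\tilde\imath}=\fdl_i$. Combining the $\AA^1$-homotopy invariance of Borel--Moore homology with the localisation sequence for the closed immersion $N_DX\hookrightarrow D_DX$ (whose open complement is the trivial family $\GGx{X}\subset D_DX$), one shows that $\fdl_{\tilde\imath}$ is controlled by either of its fibre restrictions; applied to the $\{0\}$-fibre, this gives that $\fdl_i$ is determined by $\fdl_s$, while axiom (2) fixes $\fdl_s=c_1^D(N_DX)$. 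This concludes the uniqueness argument.

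The main obstacle I expect is this last step: the two classes $\iota_0^*\fdl_{\tilde\imath}$ and $\iota_1^*\fdl_{\tilde\imath}$ a priori live in the distinct bivariant groups $\biv\E{**}(D/N_DX)$ and $\biv\E{**}(D/X)$, so making the $\AA^1$-homotopy identification between them fully precise requires careful bookkeeping with the localisation triangle and with the trivialisation of $D_DX$ over $\AA^1\setminus\{0\}$. A cleaner alternative is to take as candidate the explicit formula $\fdl_i:=c_1^D(\cO_X(-D))$ from Example \ref{ex:Navarro}(1), to observe that it is functorial under transversal pullbacks (by Proposition \ref{prop:chern_supp}) and that it reduces to axiom (2) on zero sections of line bundles, and then to deduce by transversal base change along the deformation that any other family of fundamental classes satisfying (1)--(6) must agree with it. Once the divisor case is settled, the rest of the proof is a formal consequence of the listed axioms and of the six functors formalism.
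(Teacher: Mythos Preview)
Your reduction via the factorisation $f=p\circ j\circ i$ and the blow-up axiom (6) matches the paper exactly. The divergence is in the base case of a regular divisor $i:D\to X$, and here the paper's argument is both simpler and avoids precisely the obstacle you flag.

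Rather than deforming to the normal cone, the paper observes that an effective Cartier divisor $D\subset X$ is the zero locus of a section $s:X\to L$ of a line bundle $L/X$, giving a cartesian square
\[
\xymatrix@=10pt{
D\ar^i[r]\ar[d] & X\ar^s[d] \\
X\ar^{s_0}[r] & L
}
\]
in which $s$ is transversal to the zero section $s_0$. Axiom (5) then gives $\fdl_i=s^*\fdl_{s_0}$, and axiom (2) prescribes $\fdl_{s_0}$. This is a one-line reduction with no need for $\AA^1$-homotopy invariance or localisation sequences.

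Your deformation argument is not wrong in principle, but the difficulty you identify is real: the restriction maps $\iota_0^*$ and $\iota_1^*$ land in genuinely different bivariant groups, and relating them requires knowing that both $d_0^*$ and $d_1^*$ in the deformation diagram are isomorphisms, which is essentially the content of Morel--Voevodsky purity rather than bare homotopy invariance. Your proposed alternative (showing the explicit candidate $c_1^D(\cO_X(-D))$ satisfies the axioms and then arguing uniqueness) is circular unless you already have a comparison mechanism. The Cartier-divisor trick sidesteps all of this.
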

\begin{proof}
Let us prove the uniqueness of $\fdl_f$ for a quasi-projective
 lci morphism $f$.
 The map $f$ admits a factorization $f=p j i$
where $p$ is the projection of $\PP^n_S$ for a suitable integer
 $n\geq 0$, $j$ is an open immersion and $i$ a closed immersion.
 According to property (4), we reduce the case of $f$ to
 that of $p$, $j$ or $i$.
The case of $j$ follows from (1), and that of $p$ follows from (3).

So we are reduced to the case of a regular closed immersion
 $i:Z \rightarrow X$.
 According to property (6) and its notations, we are reduced to the case of
 the regular closed immersion $k$.
 In other words, we can assume $i$ has codimension $1$.

Then $Z$ corresponds to an effective Cartier divisor in $X$,
 and therefore to a line bundle $L/X$ with a canonical section
 $s:X \rightarrow L$ such that the following diagram is
 cartesian:
$$
\xymatrix@=10pt{
Z\ar^i[r]\ar[d] & X\ar^s[d] \\
X\ar^{s_0}[r] & L
}
$$
where $s_0$ is the zero-section of the line bundle $L/X$.
 According to relation
 (5), we get: $s^*\fdl_{s_0}=\fdl_i$. This uniquely characterize
 $\fdl_i$ because $\fdl_{s_0}$ is prescribed by relation (2).
\end{proof}

\begin{num}
In fact, it is possible to show the existence of fundamental
 classes in the quasi-projective lci case by using the constructions
 in the preceding proof and the techniques of \cite{Deg8}
 (see more specifically \cite[Sec. 5]{Deg8}).
 This gives an alternate method where the construction of Ayoub
 \ref{num:basic_fdl_smooth} is avoided.

The interest of this method is that, instead of using the axiomatic
 of triangulated motivic categories, one can directly work
 with a given bivariant theory satisfying suitable axioms:
 in fact, the properties stated in Proposition
 \ref{prop:basic_bivariant}. Then one can recover the construction
 of a system of fundamental classes for quasi-projective lci
 morphisms and proves the properties that we will see in
 the forthcoming section.
\end{num}

\section{Intersection and generalized Riemann-Roch formulas}

\subsection{Base change formula}

In all this section, we will fix once
 and for all an absolute oriented ring $\T$-spectrum $(\E,c)$.
 We first state the following extension of the classical excess
 intersection formula.
\begin{prop}[Excess of intersection formula]\label{prop:excess_generalized}
Consider a cartesian square
$$
\xymatrix@=14pt{
Y\ar^{g}[r]\ar_q[d]\ar@{}|\Delta[rd] & T\ar^p[d] \\
X\ar_f[r] & S
}
$$
of schemes such that $f$, $g$ are gci.
 Let $\tau_f \in K_0(X)$ (resp. $\tau_{g} \in K_0(Y)$)
 be the virtual tangent bundle of $f$ (resp. $g$).
 We put $\xi=p^*(\tau_f)-\tau_g$ as an element of
 $K_0(Y)$,\footnote{The element $\xi$ is called
 the \emph{excess intersection bundle} associated with the square $\Delta$;}
 and let $e(\xi)$ be the top Chern class of $\xi$ in $\E^{**}(Y)$.
 Then the following formula holds in $\biv \E {**} (Y/T)$:
$$
p^*(\fdl_f)=e(\xi) \refp \fdl_g.
$$
\end{prop}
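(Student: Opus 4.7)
The plan is to reduce the excess formula to two atomic cases---$f$ smooth and $f$ a regular closed immersion---via the gci factorization provided in the Notations section.

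First, I would fix a factorization $f = p_0 \circ i$ with $p_0 : P \to S$ smooth separated and $i : X \to P$ a regular closed immersion. Pulling back $\Delta$ along this factorization produces a smooth base change $q_0 : P \times_S T \to T$ of $p_0$ and a closed base change $i' : Y \to P \times_S T$ of $i$. The transitivity triangle for virtual tangent bundles of a composite gci morphism, combined with the stability under base change of the tangent bundle of a smooth morphism, shows that the excess $\xi$ attached to $\Delta$ coincides with the excess attached to the $i$-subsquare, the smooth piece contributing trivially. Using the associativity of fundamental classes (Theorem~\ref{thm:BM_fundamental_classes}), the base-change formula for the bivariant product (Paragraph~\ref{num:product}), and the smooth case treated just below, the identity $p^*(\fdl_f) = e(\xi) \refp \fdl_g$ then reduces to the analogous identity for the regular closed immersion $i$ and its pullback $i'$.

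When $f$ is smooth, its base change $g$ is also smooth of the same relative dimension and the natural map $q^*(T_f) \to T_g$ is an isomorphism; hence $\xi = 0$, $e(\xi) = 1$, and the formula collapses to $p^*(\fdl_f) = \fdl_g$, which is exactly Lemma~\ref{lm:fdl_sm_pullback}.

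The remaining essential case is that of a regular closed immersion $f = i$, with $g = i'$ again a regular closed immersion by the gci assumption on $g$. Here I would reproduce the classical excess argument in our bivariant setting via deformation to the normal cone: exploiting the normalisation property of Remark~\ref{rem:fdl_retraction&thom}(2), one deforms both $\fdl_i$ and $\fdl_{i'}$ to refined Thom classes of the corresponding normal bundles along the deformation diagram \eqref{eq:deformation}. The desired identity then reduces to a formal statement about pullbacks of refined Thom classes attached to the conormal short exact sequence induced by the square, which follows from the multiplicativity of refined Thom classes on short exact sequences (Lemma~\ref{lm:product&thom}), the identification of the excess as the quotient bundle in this sequence, and the identification of the restricted Thom class of a vector bundle with its top Chern class (Example~\ref{ex:Navarro}(2)). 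The main obstacle is this non-transversal subcase: the transversal subcase is already recorded in Proposition~\ref{prop:fdl_regular} (see \cite[2.12]{Nav}), but the general subcase demands the deformation-to-normal-cone reduction and careful Thom-class bookkeeping on the excess conormal sequence; the smooth case and the reduction to the two atomic cases are then formal consequences of material already in place.
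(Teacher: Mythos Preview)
Your overall architecture matches the paper exactly: factor $f$ as a regular closed immersion followed by a smooth morphism, use associativity and compatibility of the bivariant product with base change to reduce to the two atomic cases, and handle the smooth case via Lemma~\ref{lm:fdl_sm_pullback}. The paper does precisely this.

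The divergence, and the gap, is in the regular closed immersion case. You propose to deform both $\fdl_i$ and $\fdl_{i'}$ to refined Thom classes via Remark~\ref{rem:fdl_retraction&thom}(2) and then compare on the normal bundles. But that remark is stated \emph{under the assumptions of Proposition~\ref{prop:fdl_retraction}}, i.e.\ for closed immersions admitting a smooth retraction; it relies on the maps $d_0^*$, $d_1^*$ of the deformation diagram being isomorphisms. For a general regular closed immersion this is exactly the content of $\E$-purity (Definition~\ref{df:abs_pur_closed}), which is \emph{not} available in the present generality---indeed, the whole point of Navarro's construction in Paragraph~\ref{num:fdl_regular} is to define $\fdl_i$ via the blow-up precisely because one cannot reduce to the normal bundle by deformation. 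So your normalisation step $d_0^*(d_1^*)^{-1}(\fdl_i)=\rthom(N_ZX)$ is not justified here, and the argument as written does not go through.

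The paper sidesteps this entirely by citing \cite[Cor.~2.12]{Nav} for the full excess formula in the regular closed immersion case; that result is proved using Navarro's blow-up description of $\fdl_i$ together with the projective bundle formula, not via deformation to the normal cone. (Your remark that \cite[2.12]{Nav} only covers the transversal subcase is a misreading: the paper invokes it in Proposition~\ref{prop:fdl_regular} for the transversal statement, but the same reference---as \emph{Corollary}~2.12---also contains the excess formula, which is what is being quoted here.) If you want a self-contained argument, you would need to rerun Navarro's blow-up computation rather than appeal to the deformation space.
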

In fact, we can consider a factorization of $f$ into a regular closed 
 immersion $i$ and a smooth morphism $p$. Because of property (3)
 of Theorem \ref{thm:BM_fundamental_classes}, we are reduced to the
 case $f=i$, regular closed immersion, or $f=p$, smooth
 morphism. The first case is \cite[Cor. 2.12]{Nav}
 while the second case was proved in Lemma \ref{lm:fdl_sm_pullback}.

\begin{ex}\label{ex:transversal}
Of course, an interesting case is obtained when the square  $\Delta$
 is transversal,
 \emph{i.e.} $\xi=0$: it shows, as expected,
 that fundamental classes are stable by pullback along
 transversal morphisms.\footnote{To be clear: a morphism of
 schemes $p:T \rightarrow S$ is transversal to a gci morphism
 $f:X \rightarrow S$ if $g=f \times_S T$ is gci and
 $p^*(\tau_f)=\tau_g$ as elements of $K_0(X \times_S T)$.}
\end{ex}

\begin{num}
Fundamental classes in the case of closed immersions
 give an incarnation of intersection theory. Let us consider
 the enlightening case of divisors.
 In fact one can extend slightly the notion of fundamental
 classes from effective Cartier divisors to
 that of \emph{pseudo-divisors} as defined by Fulton
 \cite[2.2.1]{Ful}
\end{num}
\begin{df}
Let $D=(\cL,Z,s)$ be a pseudo-divisor on a scheme $X$.
We define the fundamental class of $D$ in $X$ with coefficients
 in $(\E,c)$ as:
$$
\fdl_X(D):=c_1^Z(\cL,s) \in \E^{2,1}_Z(X).
$$
\end{df}
The following properties of these extended fundamental classes
 immediately follows from Proposition \ref{prop:chern_supp}.
\begin{prop} Let $X$ be a scheme.
\begin{enumerate}
\item For any pseudo-divisor $D$ on $X$ with support $Z$,
 the class $\fdl(D)$ is nilpotent in the ring $\E^{**}_Z(X)$.
\item Let $(D_1,...,D_r)$ be pseudo-divisors on $X$
 with support in a subscheme $Z \subset X$ and $(n_1,...,n_r) \in \ZZ^r$
 an $r$-uple/ One has the following relations in the ring $\E^{**}_Z(X)$:
$$
\fdl(n_1.D_1+\hdots+n_r.D_r)
=[n_1]_F.\fdl(D_1)+_F \hdots +_F [n_r]_F.\fdl(D_r)
$$
where $+_F$ (resp. $[n]_F$ for an integer $n \in \ZZ)$
 means the addition (resp. $n$-th self addition)
 for the formal group law with coefficients in $\E^{**}(X)$ associated 
 with the orientation $c$ (Proposition \ref{prop:chern_classes}).
\item Let $f:Y \rightarrow X$ be any morphism of schemes.
 Then for any pseudo-divisor $D$ with support $Z$, $T=f^{-1}(Z)$,
 on has in $\E^{**}_T(Y)$:
$$
f^*(\fdl_X(D))=\fdl_Y(f^*(D))
$$
where $f^*$ on the right hand side is the pullback of pseudo-divisors
 as defined in \cite[2.2.2]{Ful}.
\end{enumerate}
\end{prop}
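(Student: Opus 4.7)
The plan is to unwind the definition $\fdl_X(D) = c_1^Z(\cL, s)$ in each of the three items, and reduce each assertion to the corresponding assertion of Proposition \ref{prop:chern_supp} about the first Chern class with support.

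For item (1), since $\fdl_X(D) = c_1^Z(\cL,s)$ by definition, the nilpotency is simply a restatement of Proposition \ref{prop:chern_supp}(2) applied to $(\cL,s) \in \Pic(X,Z)$. No further work is required.

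For item (2), I would first observe that the sum $n_1.D_1 + \hdots + n_r.D_r$ of pseudo-divisors, all supported in $Z$, corresponds under the definition of the relative Picard group $\Pic(X,Z)$ to the tensor product $\cL_1^{\otimes n_1} \otimes \hdots \otimes \cL_r^{\otimes n_r}$ with the induced section on the open complement $X - Z$. Then I would apply Proposition \ref{prop:chern_supp}(3) inductively: the case $r=2$ with positive exponents is the direct statement of the formal group law relation $c_1^Z(\cL_1 \otimes \cL_2) = F_X(c_1^Z(\cL_1), c_1^Z(\cL_2))$, and the general case follows by induction on $r$ and by using that $[n]_F$ is defined as the $n$-fold iterate of $F$ (with the convention for negative $n$ coming from the formal inverse, which corresponds to the dual line bundle). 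The main bookkeeping step is to verify that the support conditions match at each stage of the induction, which is straightforward since all pseudo-divisors are supported in $Z$.

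For item (3), the pullback pseudo-divisor $f^*(D)$ is by definition $(f^{-1}\cL, f^{-1}(Z), f^{-1}s)$ with support contained in $T = f^{-1}(Z)$, following \cite[2.2.2]{Ful}. The compatibility $f^*(\fdl_X(D)) = \fdl_Y(f^*D)$ is then precisely the statement of Proposition \ref{prop:chern_supp}(1) applied to the cartesian morphism of closed pairs $(Y,T) \to (X,Z)$ induced by $f$. The only minor obstacle, which is more notational than mathematical, is to recognise $f$ as inducing a morphism of closed pairs; this is automatic once one sets $T = f^{-1}(Z)$. In summary, the entire proposition is a direct translation of the three parts of Proposition \ref{prop:chern_supp} through the definition of $\fdl_X(D)$, with no new input needed beyond the formalism already developed.
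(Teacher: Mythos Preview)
Your proposal is correct and matches the paper's approach exactly: the paper simply states that the proposition ``immediately follows from Proposition \ref{prop:chern_supp}'' without further elaboration, and your write-up spells out the correspondence (part (1) from \ref{prop:chern_supp}(2), part (2) from \ref{prop:chern_supp}(3), part (3) from \ref{prop:chern_supp}(1)) that the paper leaves implicit.
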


In particular, it is worth to derive the following corollary
 which describes more precisely the pullback operation on fundamental
 classes associated with divisors.
\begin{cor}\label{cor:ramification}
Let $X$ be a normal scheme.
\begin{enumerate}
\item For any Cartier divisor $D$ on $X$, one has the relation
$$
\fdl_X(D)=[n_1]_F.\fdl_X(D_1)+_F \hdots +_F [n_r]_F.\fdl_X(D_r)
$$
in $\E^{2,1}_Z(X)$,where $Z$ in the support of $D$,
 $(D_i)_i$ the family of irreducible components of
 $Z$ and $n_i$ is the multiplicity of $D$ at $D_i$.\footnote{In
 case $D$ is effective, this is the geometric
 multiplicity of $D$, seen as a regular closed subscheme of $X$,
 at the generic point of $D_i$.}
\item Let $f:Y \rightarrow X$ be a dominant morphism of normal
 schemes. Then the pullback divisor $E=f^{-1}(D)$ is defined,
 as a Cartier divisor,
 and if one denotes $(E_j)_{j=1,...,r}$ the family of irreducible
 components of the support $T$ of $E$, and $m_j$ the intersection
 multiplicity of $E_j$ in the pullback of $D$ along $f$
 (\emph{i.e.} the multiplicity of $E_j$ in the Cartier divisor 
 $E$), one has the relation in $\E^{2,1}_T(Y)$:
$$
f^*(\fdl_X(D))=[m_1]_F.\fdl_Y(E_1)+_F \hdots +_F [m_r]_F.\fdl_Y(E_r).
$$
\end{enumerate}
\end{cor}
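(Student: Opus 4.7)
The plan is to derive (2) directly from (1) via the pullback formula for pseudo-divisors, and to prove (1) by reducing to the additivity formula of the preceding proposition, using a localization argument that accommodates the fact that the prime components $D_i$ need not be globally Cartier on a merely normal scheme.

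For (2), dominance of $f$ ensures that the local equation of $D$ pulls back to a non-zero-divisor on $Y$, so $f^*(D)$ is a well-defined Cartier divisor on the normal scheme $Y$. The pullback compatibility from the preceding proposition yields $f^*(\fdl_X(D)) = \fdl_Y(f^*(D))$, and applying (1) to the Cartier divisor $f^*(D) = \sum_j m_j [E_j]$ on $Y$ gives the stated formula.

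For (1), normality of $X$ implies that each prime divisor $D_i$ is Cartier at its generic point (since $\cO_{X,\eta_i}$ is a DVR), hence Cartier on an open subscheme $U_i \subset X$ whose complement has codimension $\geq 2$. On the intersection $U = \bigcap_i U_i$ (still with complement of codimension $\geq 2$), each $D_i|_U$ acquires a canonical pseudo-divisor structure $(\cO_U(D_i|_U), D_i|_U, 1)$, and the identity of Cartier divisors $D|_U = \sum n_i (D_i|_U)$ combined with the additivity formula of the preceding proposition gives the desired relation in $\E^{2,1}_{Z \cap U}(U)$. To transfer this identity back to $X$, I would apply étale invariance (Prop.~\ref{prop:basic_bivariant}(2)) to identify $\E^{2,1}_{Z \cap U}(U)$ with $\biv{\E}{-2,-1}((Z \cap U) \to X)$, and then the localization sequence (Prop.~\ref{prop:basic_bivariant}(3)) for the closed/open decomposition inside $Z$ given by $V = X \setminus U$, which yields the segment
$$\E^{2,1}_{Z \cap V}(X) \longrightarrow \E^{2,1}_Z(X) \longrightarrow \E^{2,1}_{Z \cap U}(U).$$
This would also canonically define each class $\fdl_X(D_i) \in \E^{2,1}_{D_i}(X)$ as the lift of $\fdl_U(D_i|_U)$.

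The main obstacle is the required vanishing $\E^{2,1}_W(X) = 0$ for closed subsets $W \subset X$ of codimension $\geq 2$ — a purity-type statement asserting that classes in bidegree $(2,1)$ with support cannot detect subsets of codimension $\geq 2$. This is where both normality (which furnishes $U$ with codimension-$\geq 2$ complement) and the orientation theory (which supplies the codimension-based vanishing via the fundamental class formalism) enter in an essential way.
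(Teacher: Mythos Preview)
The paper gives no explicit proof of this corollary; it is presented as an immediate consequence of the preceding proposition on pseudo-divisors. Your derivation of (2) from (1) via the pullback formula is exactly the intended argument.

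For (1), you have correctly spotted a subtlety that the paper glosses over: on a merely normal scheme the prime components $D_i$ are Weil divisors that need not be Cartier, so neither the pseudo-divisor $(\cO_X(D_i), D_i, 1)$ nor the class $\fdl_X(D_i) \in \E^{2,1}_{D_i}(X)$ is defined a priori. Your localization strategy is the natural attempt to repair this, but the required vanishing $\E^{2,1}_W(X)=0$ for $\codim_X W \geq 2$ is false for general oriented ring spectra. For instance, with $\E = \KGL$ and $X$ regular, Bott periodicity together with purity give $\KGL^{2,1}_W(X) \simeq K_0^W(X) \simeq G_0(W) \neq 0$. So your lift of $\fdl_U(D_i|_U)$ to $\E^{2,1}_{D_i}(X)$ need not exist, and even when it does it need not be unique.

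The honest reading is that the paper's statement tacitly assumes each $D_i$ is Cartier (automatic if $X$ is locally factorial, e.g.\ regular). Under that hypothesis part (1) is literally a special case of item (2) of the preceding proposition, with no localization argument needed, and part (2) of the corollary then follows from item (3) of that proposition combined with part (1). If you want the result for normal but not locally factorial $X$, you would need an independent construction of $\fdl_X(D_i)$ for prime Weil divisors, which the paper does not supply.
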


%We can now formulate a refinment of the unicity property
 %of the system of fundamental classes defined in \ref{df:fdl_classes}.
%\begin{cor}
%Consider the assumptions of the previous proposition.
 %Then the system of fundamental classes $(\eta_f)$
 %for lci quasi-projective morphisms of Definition \ref{df:fdl_classes}
 %is uniquely determined by the following property:
%\begin{itemize}
%\item associativity;
%\item compatibility with pullbacks along transversal morphisms;
%\item given a line bundle $p:L \rightarrow X$ with zero section
 %$s:X \rightarrow L$, 
 %one has the relation:
%$$
%\eta_s=c_1^X(p^{-1}L,u) \in \E^{**}_X(L)\simeq \biv \E {**} (s:X \rightarrow L)
%$$
%where $u$ is the tautological
 %trivialization of the $(L-X)$-line bundle $L \times_X (L-X)$.
%\end{itemize}
%\end{cor}

\subsection{Riemann-Roch formulas}

\begin{num} \label{num:todd}
We now show that we can derive from our theory many
 generalized Riemann-Roch formulas in the sense of 
 Fulton and MacPherson's bivariant theories (\cite[I.1.4]{FMP}).
 This is based on the construction of Todd classes.
 Let us fix a morphism of absolute ring spectra
$$
(\varphi,\phi):(\T,\E) \rightarrow (\T',\F)
$$
as in Definition \ref{df:morph_abs_sp}.

Suppose $c$ (resp. $d$) is an orientation of
 the ring spectrum $\E$ (resp. $\F)$.
 Given a base scheme $S$, we obtain following
 Paragraph \ref{num:functoriality2} a morphism of graded rings:
$$
\phi^{\PP^\infty_S}_*:\E^{**}(\PP^\infty_S)
 \rightarrow \F^{**}(\PP^\infty_S)
$$
--- induced by the Grothendieck transformation of \emph{loc. cit.}
 According to the projective bundle theorem satisfied
 by the oriented ring spectra $(\E_S,c_S)$ and $(\F_S,d_S)$,
 this corresponds to a morphism of rings:
$$
\E^{**}(S)[[u]] \rightarrow \F^{**}(S)[[t]]
$$
and we denote by $\Psi_\phi(t)$ the image of $u$ by this map.
In other words,
 the formal power series $\Psi_\phi(t)$ is characterized by the relation:
\begin{equation} \label{eq:ass_power_series}
\phi^{\PP^\infty_S}_*\left(c\right)=\Psi_\phi(d).
\end{equation}
Note that the restriction of $\phi^{\PP^\infty_S}_*(c)$
 to $\PP^0_S$ (resp. $\PP^1_S$) is $0$ (resp. $1$) because
 $c$ is an orientation and $\varphi$ is a morphism of
 ring spectra.
 Thus we can write $\Psi_\phi(t)$ as:
$$
\Psi_\phi(t)=t+\sum_{i>1} \alpha_i^S.t^i
$$
where $\alpha_i^S \in \F^{2-2i,1-i}(S)$.
In  particular, the power series $\Psi_\phi(t)/t$ is invertible.

We next consider the commutative monoid $\mathcal M(S)$
 generated by the isomorphism classes of vector bundles over $S$ 
 modulo the relations $[E]=[E']+[E'']$ coming from exact sequences
$$
0 \rightarrow E' \rightarrow E \rightarrow E'' \rightarrow 0.
$$
Then $\mathcal M$ is a presheaf of monoids on the category
 $\base$ whose associated presheaf of abelian groups is the functor $K_0$.

Note that $\F^{00}(S)$, equipped with the cup-product,
 is a commutative monoid. We will denote by
 $\F^{00\times}(S)$ the group made by its invertible elements.
\end{num}
\begin{prop} \label{prop:todd}
There exists a unique natural transformation of presheaves
 of monoids over the category $\base$
$$
\td_\phi:\mathcal M \rightarrow \F^{00}
$$
such that for any line bundle $L$ over a scheme $S$,
\begin{equation} \label{eq:td_line_bdl}
\td_\phi(L)=\frac t {\Psi_\phi(t)}.d_1(L).
\end{equation}
Moreover, it induces a natural transformation 
 of presheaves of abelian groups:
$$
\td_\phi:K_0 \rightarrow \F^{00\times}.
$$
\end{prop}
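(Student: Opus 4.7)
The plan is to construct $\td_\phi$ via the splitting principle for oriented cohomology. First, for a line bundle $L/S$, the class $d_1(L)\in\F^{2,1}(S)$ is nilpotent by Proposition~\ref{prop:chern_classes}(2); since $\Psi_\phi(t)=t+\sum_{i>1}\alpha_i^S t^i$ has $\alpha_i^S\in\F^{2-2i,1-i}(S)$, the rational power series
\[
\frac{t}{\Psi_\phi(t)}=\Bigl(1+\sum_{i>1}\alpha_i^S\,t^{i-1}\Bigr)^{-1}
\]
has constant term $1$, and a direct bidegree count shows that each monomial $\alpha_i^S\cdot d_1(L)^{i-1}$ contributes in degree $(0,0)$. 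Thus \eqref{eq:td_line_bdl} defines an element of $\F^{00}(S)$ of shape $1+(\text{nilpotent})$, which is automatically a unit in $\F^{**}(S)$ and hence lies in $\F^{00\times}(S)$. Naturality in $S$ follows from $f^*d_1(L)=d_1(f^*L)$.

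For uniqueness and for the extension to vector bundles of higher rank, I invoke the splitting principle. Given $E/S$ of rank $r$, let $p\colon\mathrm{Fl}(E)\to S$ be the complete flag bundle; by iterating the projective bundle formula for the oriented spectrum $(\F,d)$, $p^*\colon\F^{**}(S)\to\F^{**}(\mathrm{Fl}(E))$ is split injective, and $p^*E$ carries a filtration with line bundle subquotients $L_1,\dots,L_r$. Multiplicativity of $\td_\phi$ along the short exact sequences of this filtration forces
\[
p^*\td_\phi(E)=\prod_{i=1}^{r}\frac{d_1(L_i)}{\Psi_\phi(d_1(L_i))},
\]
which is symmetric in the Chern roots $x_i:=d_1(L_i)$ and therefore expressible as a universal (nilpotent‑convergent) power series in the elementary symmetric functions $\sigma_j(x_1,\dots,x_r)=p^*d_j(E)$; injectivity of $p^*$ makes this prescription unambiguous. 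For existence, I \emph{define} $\td_\phi(E)\in\F^{00\times}(S)$ by substituting the classes $d_j(E)$ into that universal symmetric power series. The substitution is legitimate since the $d_j(E)$ are nilpotent (Proposition~\ref{prop:chern_classes}(2)), and the invertibility and bidegree analysis done for line bundles carry over verbatim.

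Multiplicativity on an exact sequence $0\to E'\to E\to E''\to 0$ is checked after pulling back to a smooth $S$-scheme $T$ over which both $E'$ and $E''$ split completely (take the fibre product of their respective flag bundles); $p^*\colon\F^{**}(S)\to\F^{**}(T)$ remains injective, and on $T$ the Whitney sum formula shows that the multiset of Chern roots of $p^*E$ is the disjoint union of those of $p^*E'$ and $p^*E''$, whence the identity
$
\td_\phi(E)=\td_\phi(E')\cdot\td_\phi(E'')
$
becomes a tautology of products over roots. Naturality in $S$ is inherited from that of the Chern classes. Finally, since $K_0$ is the group completion of the presheaf of commutative monoids $\mathcal M$ and $\F^{00\times}$ is already a presheaf of abelian groups, the constructed morphism $\td_\phi\colon\mathcal M\to\F^{00\times}$ factors uniquely through $K_0\to\F^{00\times}$. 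The only delicate point is rigorous deployment of the splitting principle, which is however entirely standard here thanks to the projective bundle formula afforded by the orientation $d$ on $\F$; I anticipate no further obstacle.
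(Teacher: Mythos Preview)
Your proof is correct and follows precisely the approach the paper has in mind: the paper's own proof is the single sentence ``The proof is straightforward using the splitting principles (see \cite[4.1.2]{Deg12}),'' and your argument is exactly a careful unpacking of that splitting-principle computation. Your bidegree check, the use of the flag bundle to reduce to line bundles, the verification of multiplicativity via Whitney sum, and the passage to $K_0$ by group completion are all the standard steps implicit in that reference.
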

The proof is straightforward using the splitting principles
 (see \cite[4.1.2]{Deg12}).

\begin{rem}
According to the construction of the first Chern classes
 for the oriented ring spectra $(\E,c)$ and $(\F,d)$
 together with Relations \eqref{eq:ass_power_series} and \eqref{eq:td_line_bdl},
 we get for any line bundle $L/S$ the following identity in $\F^{2,1}(S)$:
\begin{equation} \label{eq:todd&chern}
\varphi_S\big(c_1(L)\big)
 =\td_\varphi(-L) \cupp d_1(L).
\end{equation}
\end{rem}

\begin{df} \label{df:todd}
Consider the context and notations of the previous
 proposition.

Given any virtual vector bundle $v$ over a scheme $S$,
 the element $\td_\phi(v) \in \F^{00}(S)$
 is called the \emph{Todd class} of $v$ over $S$ associated
 with the morphism of ring spectra $(\varphi,\phi)$.
\end{df}

The main property of Todd classes is the following formula.
\begin{lm}
Consider the above notations and assumptions.

Then for any smooth $S$-scheme $X$ and any virtual vector bundle $v$
 over $X$, the following relation holds in $\F^{**}(\Th(v))$:
$$
\phi_*\left(\rthom^\E(v)\right)=\td_\phi(-v).\rthom^\F(v)
$$
where $\rthom(v)$ denotes the Thom class associated
 with $v$ (see Definition \ref{df:Thom_class_vb}).
\end{lm}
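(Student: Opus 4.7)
The plan is a standard three-step reduction: from virtual vector bundles to actual vector bundles, from vector bundles to line bundles via the splitting principle, and then a direct verification on a projective bundle in the line bundle case.

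First I would reduce to the case $v=[E]$ for an actual vector bundle $E/X$. Both sides of the identity are multiplicative in $v$: the refined Thom classes satisfy $\rthom(v+v')=\rthom(v)\otimes_X\rthom(v')$ by equation~\eqref{eq:rthom_add}, the Todd class is a homomorphism $K_0(X)\to\F^{00\times}(X)$ by Proposition~\ref{prop:todd}, and $\phi_*$ is a Grothendieck transformation compatible with cup-products by Paragraph~\ref{num:functoriality2}. Writing any virtual bundle as $[E]-[F]$ and using this multiplicativity, the identity for general $v$ reduces to the case of actual vector bundles.

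Next I would apply the splitting principle to reduce to line bundles. For an actual $E/X$ there exists an iterated flag bundle $\pi:Y\to X$ along which $[\pi^*E]=\sum_{i=1}^{r}[L_i]$ in $K_0(Y)$ for some line bundles $L_i$. The pullback $\pi^*:\F^{**}(X)\to\F^{**}(Y)$ is split injective (by iterating the projective bundle formula underlying Proposition~\ref{prop:chern_classes}), and combining this with the Thom isomorphism \eqref{eq:thom_iso_virt} gives injectivity on Thom cohomology. Multiplicativity of both sides under addition in $K_0$ (as in step one) reduces the identity for $E$ pulled back to $Y$ to the corresponding identity for each line bundle $L_i$.

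Finally, for a line bundle $L/X$, I would compare both sides under the injective boundary map $\partial:\E^{**}(\Th(L))\hookrightarrow\E^{**}(\PP(L\oplus 1))$ from the split short exact sequence~\eqref{eq:thom&proj}. As $\partial$ is natural in the spectrum (hence compatible with $\phi_*$) and a morphism of $\E^{**}(X)$-modules, it suffices to prove
$$
\phi_*\bigl(\thom^\E(L)\bigr)=p^*\bigl(\td_\phi(-L)\bigr)\cdot\thom^\F(L)
$$
in $\F^{**}(\PP(L\oplus 1))$. Using the explicit formula \eqref{eq:thom_class}, namely $\thom(L)=-c_1(\lambda)+p^*c_1(L)$ for both $\E$ and $\F$, together with the identity $\phi_*(c_1^\E(M))=\td_\phi(-M)\cdot d_1^\F(M)$ of \eqref{eq:todd&chern}, the difference of the two sides reduces to
$$
\bigl(p^*\td_\phi(-L)-\td_\phi(-\lambda)\bigr)\cdot d_1^\F(\lambda).
$$
Writing $\td_\phi(-M)=T(d_1^\F(M))$ for the fixed power series $T(t)=\Psi_\phi(t)/t$ (which satisfies $T(0)=1$) and using the algebraic factorization $T(x)-T(y)=(x-y)\,G(x,y)$, this expression becomes a scalar multiple of $d_1^\F(\lambda)\bigl(d_1^\F(\lambda)-p^*d_1^\F(L)\bigr)$, which vanishes by the Chern relation~\eqref{eq:Chern} applied to the rank-two bundle $L\oplus 1$ on $\PP(L\oplus 1)$.

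The main obstacle is step three: one must carefully verify the compatibility of $\partial$ with $\phi_*$ and with the module structures, and then organize the formal power series identity into the precise shape killed by the Chern relation. The first two reduction steps are essentially formal multiplicativity arguments.
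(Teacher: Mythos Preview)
Your proof is correct and follows essentially the same three-step strategy as the paper: reduce to actual bundles by multiplicativity, then to line bundles by the splitting principle, then compute directly on the projective completion. The only cosmetic difference is in the line bundle step: the paper observes that $\thom(L)=c_1(\xi_L)$ for the universal quotient bundle $\xi_L$ on $\PP(L\oplus 1)$ and invokes \eqref{eq:todd&chern} once, whereas you work with the tautological bundle $\lambda$ and unwind the same identity via the Chern relation --- these are equivalent since $c_1(\xi_L)=-c_1(\lambda)+p^*c_1(L)$.
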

\begin{proof}
Recall that for any virtual bundles $v$ and $v'$ over $X$
 (see Paragraph \ref{num:Thom_class_vb}),
 the tensor product in $\T(S)$ gives a pairing
$$
\otimes_X:\E^{**}(\Th(v)) \otimes_X \E^{**}(\Th(v'))
 \rightarrow \E^{**}(\Th(v+v'))
$$
and similarly for $\F^{**}$. It follows from definitions
 that the 
 natural transformation of cohomology theories
 $\phi_*:\E^{**} \rightarrow \F^{**}$
 is compatible with this product.

Moreover, we have the relations:
\begin{align*}
\rthom(v+v')&=\rthom(v) \otimes_X \rthom(v'), \\
\td_\phi(v+v')&=\td_\phi(v)+\td_\phi(v').
\end{align*}
Therefore, by definition of the Thom
 class of a virtual bundle (see \ref{df:Thom_class_vb}),
 it is sufficient to check the relation
 of the proposition when $v=[E]$ is the class of a vector
 bundle over $X$.
Besides, using again the preceding relations
 and the splitting principle, one reduces to the case of
 a line bundle $L$. But then, in the cohomology of the 
 projective completion $\bar L$ of $L/X$,
 we have the following relation
$$
\thom(L)=c_1(\xi_L)
$$
 where $\xi_L$ is the universal quotient bundle. Thus the desired
 relation follows from relation \eqref{eq:todd&chern} and the
 fact $\td_\phi(\xi_L)=\td_\phi(L)$.
\end{proof}

We can now derive the generalized Riemann-Roch formula.
\begin{thm}\label{thm:RR}
Let $(\T,\E,c)$ and $(\T',\F,d)$ be absolute oriented ring spectra
 together with a morphism of ring spectra:
$$
(\varphi,\phi):(\T,\E) \rightarrow (\T',\F).
$$
Using the notations of the Definitions \ref{df:fdl_classes}
 and \ref{df:todd}, for any gci morphism $f:X \rightarrow S$
 with virtual tangent bundle $\tau_f$, one has the following relation:
$$
\phi_*(\fdl_f^\E)=\td_\phi(\tau_f).\fdl_f^\F.
$$
\end{thm}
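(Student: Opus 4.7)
Following the pattern of Grothendieck's original argument, the strategy is to reduce the identity to the two basic building blocks of a gci morphism, then apply the preceding lemma on Thom classes in each case.

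\textbf{Step 1 (Reduction by associativity).} For a composition $Y \xrightarrow{g} X \xrightarrow{p} S$ of gci morphisms one has $\tau_{pg} = g^*\tau_p + \tau_g$ in $K_0(Y)$, hence by Proposition \ref{prop:todd}
\[
\td_\phi(\tau_{pg}) = g^*(\td_\phi(\tau_p)) \cupp \td_\phi(\tau_g).
\]
Combined with the associativity of fundamental classes (Theorem \ref{thm:BM_fundamental_classes}), the fact that $\phi_*$ is a Grothendieck transformation (Paragraph \ref{num:functoriality2}), and the compatibility of the cohomology module action with the bivariant product (Remark \ref{rem:cup_biv&act_coh&supp_prod}), one readily checks that the validity of the formula is preserved under composition. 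It therefore suffices to treat the two cases where $f$ is smooth or a regular closed immersion.

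\textbf{Step 2 (The two basic cases).} When $f$ is smooth with tangent bundle $T_f$, the class $\fdl_f^\E$ is defined (Paragraph \ref{num:fdl_smooth}, Remark \ref{rem:fdl_smooth}(3)) by transporting the Thom class $\rthom^\E(-T_f)$ along Ayoub's purity isomorphism; since $\varphi^*$ is premotivic it commutes with Ayoub's purity, and the identity reduces to $\phi_*(\rthom^\E(-T_f)) = \td_\phi(T_f).\rthom^\F(-T_f)$, which is the preceding lemma applied to $v = -T_f$. When $f = i:Z \to X$ is a regular closed immersion with normal bundle $N$ (so $\tau_i = -[N]$), Navarro's construction (Paragraph \ref{num:fdl_regular}) presents the pullback of $\fdl_i^\E$ to the blow-up $B_ZX$ as a polynomial expression in the higher Chern classes $c_j^\E(N)$ and the first Chern class with support $c_1^{P,\E}(\cO_B(-1))$. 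Higher Chern classes are preserved by $\phi_*$ (Remark \ref{rem:morph_oriented&Chern}), while first Chern classes with support transform by the power series $\Psi_\phi$ in the supported analogue of \eqref{eq:todd&chern}; a splitting-principle computation identifies the resulting correction factor as $\td_\phi(-N) = \td_\phi(\tau_i)$, and the result follows by injectivity of the blow-up pullback.

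\textbf{Main obstacle.} The conceptual content is entirely packaged in the preceding lemma; the only step requiring genuine calculation is the regular immersion case, where one must propagate $\Psi_\phi$ through Navarro's polynomial expression and recognise, via the splitting principle, the product $\prod_j \bigl(\Psi_\phi(\alpha_j)/\alpha_j\bigr)$ as $\td_\phi(-N)$. This is precisely the formal power-series computation underlying Grothendieck's original Riemann-Roch argument, now carried out in the universal framework provided by bivariant theories.
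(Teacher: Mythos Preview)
Your Step 1 and the smooth half of Step 2 match the paper's proof. For the regular closed immersion case, however, the paper takes a cleaner route than the one you sketch: rather than pushing $\phi_*$ through Navarro's blow-up expression, it uses the deformation diagram \eqref{eq:deformation_diag} and the stability of fundamental classes under transversal base change (Example \ref{ex:transversal}) to reduce directly to the zero section $s$ of the normal bundle $N_ZX$. There $\fdl_s^\E = \rthom^\E(N_ZX)$ (Remark \ref{rem:fdl_retraction&thom}(2)), and the preceding lemma applies immediately with $v = N_ZX$, giving $\phi_*(\fdl_s^\E) = \td_\phi(-N_ZX).\fdl_s^\F = \td_\phi(\tau_i).\fdl_s^\F$. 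No explicit Chern-class computation is needed.

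Your proposed route through Navarro's formula also contains an error as written. You invoke Remark \ref{rem:morph_oriented&Chern} to assert that higher Chern classes are preserved by $\phi_*$, but that remark applies only to morphisms of \emph{oriented} ring spectra, i.e.\ those satisfying $\phi_*(c_S)=d_S$. Theorem \ref{thm:RR} makes no such assumption: indeed you yourself note in the next clause that first Chern classes transform by $\Psi_\phi$, which immediately forces all higher Chern classes to transform nontrivially via the splitting principle. So the sentence is internally inconsistent, and the ``splitting-principle computation'' you defer to would have to absorb the transformation of \emph{all} the $c_j(N)$ appearing in Navarro's expression, not just of $b=c_1^P(\cO_B(-1))$. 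This can presumably be carried out, but it is exactly the computation that deformation to the normal cone lets you bypass.
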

\begin{proof}
As $f$ is gci and because of the associativity property of
 our system of fundamental classes, we are reduced to the cases
 where $f=i$ is a regular closed immersion and $f$ is a smooth morphism.

In the first case, we can use the deformation diagram \eqref{eq:deformation}
 and the fact fundamental classes are stable by transversal base change
 to reduce to the case of the zero section $s=f$ of a vector bundle $E/X$.
 Then we recall that the fundamental class $\eta_s$ coincides with
 the Thom class associated with $E$ so that the preceding lemma concludes.

In the second case, we come back to the construction
 of Paragraph \ref{num:fdl_smooth}, and more precisely Remark
 \ref{rem:fdl_smooth}(3). It is clear that the map $\phi_*$
 is compatible with the isomorphisms \eqref{eq:f_purity_iso_smooth1_induced},
 compute either in $\T$ or in $\T'$, as they are all build using exchange
 transformations. So we are reduced again to the case where the
 fundamental class is $\rthom(-T_f)$, which follows from the preceding
 lemma. 
\end{proof}

\begin{ex}\label{ex:generalizedRR}
Let us fix a gci morphism $f:X \rightarrow S$
\begin{enumerate}
\item Given an absolute ring spectrum $\E$,
 we have seen in Paragraph \ref{num:univ_MGL} that the data of an
 orientation $c$ on $\E$ is equivalent to that of a morphism of
 ring spectra
$$
\phi:\MGL \rightarrow \E
$$
such that $\phi_*(c^\MGL)=c$, where $c^\MGL$ is the canonical orientation
 of $\MGL$. In that case, the previous theorem gives us the relation:
$$
\phi_*(\fdl_f^\MGL)=\fdl_f^\E.
$$
In other words, the fundamental classes $\fdl_f$ are all induced by
 the one defined in algebraic cobordism.
\item Next we can apply the previous formula to the morphisms of
 absolute ring spectra of Example \ref{ex:morph_sp}(2).
 Let us fix a prime $\ell$ and consider the two following cases:
\begin{itemize}
\item $\base$ is the category of all schemes, $\Lambda=\QQ$,
 $\Lambda_\ell=\QQ_\ell$;
\item $\base$ is the category of $k$-schemes for a field $k$
 if characteristic $p\neq \ell$, $\Lambda=\ZZ$,
 $\Lambda_\ell=\ZZ_\ell$;
\end{itemize}
Then according to \emph{loc. cit.}, we get a morphism of
 absolute ring spectra:
$$
\rho_\ell:\HH \Lambda \rightarrow \HH_\et \Lambda_\ell,
$$
corresponding to the higher \'etale cycle class.
As the formal group laws associated with the canonical orientations
 on each spectra are additive, the morphism of formal group law associated
 with the induced morphism
 $\rho_\ell:\HH^{**}(-,\Lambda) \rightarrow \HH^{**}_{\et}(-,\Lambda)$
 is the identity. Therefore, the Todd class associated with 
 $\phi$ is constant equal to $1$ and we get:
$$
\rho_\ell(\fdl_f)=\fdl_f^\et.
$$
\item Consider the Chern character
$$
\ch:\KGL \longrightarrow \bigoplus_{i \in \ZZ} \HH \QQ(i)[2i], 
$$
of Example \ref{ex:morph_sp}(3). As explained in \cite[5.3.3]{Deg12},
 the formal group law associated with the canonical orientation of
 $\KGL$ is multiplicative: $F_\KGL(x,y)=x+y-\beta.xy$ where
 $\beta$ is the Bott element in algebraic K-theory
 and the formal group law on rational motivic cohomology is the
 additive formal group law. As $\beta$ is sent to $1$ by the Chern
 character, the morphism of formal group law associated with $\phi$
 is necessarily the exponential one, $t \mapsto 1-\exp(-t)$.
 Therefore, we get the Todd class associated with $\ch_t$ defined on a 
 line bundle $L/X$ as:
$$
\td(L)=\frac {c_1(L)} {1-\exp(-c_1(L))}.
$$
Recall this formula makes sense as $c_1(L)$ is nilpotent in 
 the motivic cohomology ring $\HB^{**}(X)$ (recall the notation
 of Example \ref{ex:spectra}(2)).
So in fact the transformation $\td$ is the usual Todd class in
 motivic cohomology,
 and we have obtained the following generalized Riemann-Roch formula:
$$
\ch(\fdl_f^\KGL)=\td(\tau_f).\fdl_f^{\HH\QQ}.
$$
This generalizes the original formula of Fulton and MacPherson
 (cf. \cite[II, 1.4]{FMP}).
\end{enumerate}
For more examples, we refer the reader to \cite[\textsection 5]{Deg12}.
\end{ex}

\subsection{Gysin morphisms}\label{sec:Gysin}

\begin{num}
Recall that a weakly oriented (Definition \ref{df:weak_orient})
absolute $\T$-spectrum $\E$
 is a $\tau$-module over the ring spectrum $\MGL$
 where $\tau^*:\SH \rightarrow \T$ is the
 premotivic adjunction fixed according to our convention.

Recall from Remark \ref{rem:modules} that we get in particular
 an action:
$$
\biv \MGL {n,m}(Y/X) \otimes \biv \E {s,t}(X/S)
 \rightarrow \biv \E {n+s,m+t}(Y/S).
$$
Then Theorem \ref{thm:BM_fundamental_classes} induces the following constructions.
\end{num}
\begin{df}\label{df:Gysin}
Consider the above notation.
 Then for any gci morphism $f:Y \rightarrow X$,
 with relative dimension $d$
 and fundamental class $\fdl_f \in \biv{\MGL}{2d,d}(Y/X)$,
 we define the following \emph{Gysin morphisms}:
\begin{itemize}
\item if $f$ is a morphism of s-schemes over a base $S$,
 one gets a pullback
$$
f^*:\biv \E {**}(X/S) \rightarrow \biv \E {**}(Y/S), x \mapsto \fdl_f.x
$$
homogeneous of degree $(2d,d)$;
\item if $f$ is proper, one gets a pushforward
$$
f_*:\E^{**}(Y)=\biv \E {**}(Y/Y) \xrightarrow{.\fdl_f} \biv \E {**}(Y/X)
 \xrightarrow{f_!} \biv \E {**}(X/X)=\E^{**}(X)
$$
homogeneous of degree $(-2d,-d)$.
\end{itemize}
\end{df}
According to this definition, and the fact we have in fact constructed
 in Theorem \ref{thm:BM_fundamental_classes}
 a system of fundamental classes, which includes in particular
 the compatibility with composition
 (Definition \ref{df:fdl_classes_abstract}),
 we immediately get that these Gysin morphisms are compatible with
 composition.

\begin{rem}
\begin{enumerate}
\item When we consider the stronger case of an absolute oriented ring spectrum
 $\E$ in the sense of Definition \ref{df:orientation},
 in the preceding definition,
 one can consider $\fdl_f$ as the fundamental class associated
 with $f$ in $\biv \E {2d,d}(X/S)$ and only use the product
 of the bivariant theory $\biv \E {**}$. In fact, the two definitions
 coincide because of the Grothendieck-Riemann-Roch formula below
 (Proposition \ref{prop:GRR}) and
 the fact the morphism $\phi:\MGL \rightarrow \E$ of ring spectra
 corresponding to the chosen orientation $c$ sends the 
 canonical orientation of $\MGL$ to $c$.
\item Gysin morphisms, in the case of the Borel-Moore homology
 associated with an absolute oriented ring $\T$-spectrum $(\E,c)$,
 extends the one already obtained with respect to
 \'etale morphism in Paragraph \ref{num:duality_etale}.
 This rightly follows from the construction of the fundamental
 class in the case of \'etale morphisms.
\end{enumerate}
\end{rem}

\begin{ex}\label{ex:cov_Gysin_etale&motivic}
We can use the construction of the preceding definition
 in the case of all the ring spectra of Example \ref{ex:spectra}
 (according to Example \ref{ex:orient_spectra}).
 This gives back the Gysin morphisms on representable cohomologies
 as constructed in \cite{Nav}, and notably
 covariant functoriality of Spitzweck integral motivic cohomology
 (cf. Ex. \ref{ex:spectra}(4)) with respect to any gci proper morphism
 of schemes.

The important new case we get out of our theory is given
 when $\Lambda$ is any ring (resp. $\Lambda=\ZZ_\ell, \QQ_\ell$)
 and $\HH_\et\Lambda$ is the \'etale motivic absolute $\Lambda$-spectrum
 (resp. $\ell$-completed \'etale motivic absolute spectrum,
 integral or rational) as in Example \ref{ex:spectra}(3).
 The Gysin morphisms obtained here, for the corresponding cohomology
 and any gci proper morphism of schemes,
 cannot be deduced from Navarro's result (as explained in the end
 of Remark \ref{rem:SH_initial&ring_sp}).

When $\Lambda$ is a torsion ring, this gives covariant 
 functoriality for the classical \'etale cohomology with
 $\Lambda$-coefficients, for any gci proper morphism.
 This was known for flat proper morphisms by \cite[XVII, 2.13]{SGA4}
 and for proper morphisms between regular schemes by \cite[6.2.1]{Deg12}.
\end{ex}

\begin{rem}
The only class of morphisms containing both flat morphisms
 and local complete morphisms is the class of morphisms
 of finite Tor dimension.\footnote{Recall: a morphism of schemes
 $f:Y \rightarrow X$ is of finite Tor dimension if
 $\mathcal O_Y$ is a module of finite Tor dimension over
 $f^{-1}(\mathcal O_X)$.}
 This seems to be the largest class of morphisms for which
 fundamental classes an exceptional functoriality can exist. Indeed,
  we have the example of Quillen's higher algebraic G-theory:
 it is contravariant with respect to such morphisms
 as follows from \cite[\textsection 7, 2.5]{Qui}.
 Unfortunately, our method is powerless to treat this generality.
\end{rem} 

\begin{ex}\label{ex:Gysin_BM_hlg}
Another set of examples is obtained in the case of Borel-Moore
 homology. So applying Example \ref{ex:biv_theories},
 we get contravariant functoriality with respect
 to gci morphisms of $S$-schemes, of the following theories:
\begin{itemize}
\item Bloch's higher Chow groups, when $S$ is the spectrum of a field.
 This was previously known only for morphisms of smooth schemes according
 to constructions of Bloch and Levine.
\item for Borel-Moore \'etale homology, both in the case $S$ is
 the spectrum of a field (classically considered) and in the
 case $S$ is an arbitrary scheme.
\item Note also that according to Example \ref{ex:biv_theories}(3),
 we get contravariance of Thomason's G-theory,
 or equivalently, Quillen K'-theory with respect to any
 gci morphism of s-schemes over a regular base. This contravariance
 coincides with the classical one but we will not check that
 here.\footnote{In the quasi projective,
 one can directly use the analogue of Theorem \ref{thm:uniqueness} for
 Gysin morphisms. The general case requires to identify Borel-Moore
 homology with coefficients in $\KGL$ with a suitable bivariant version
 of $G$-theory.}
\end{itemize}
\end{ex}

The properties of fundamental classes obtained in the
 beginning of this section immediately translate to properties
 of Gysin morphisms.
\begin{prop}\label{prop:excess_Gysin}
Let $\E$ be a  weakly oriented absolute $\T$-spectrum
 (Definition \ref{df:weak_orient}).
 Consider a cartesian square of $S$-schemes:
$$
\xymatrix@=14pt{
Y'\ar^{g}[r]\ar_q[d]\ar@{}|\Delta[rd] & X'\ar^p[d] \\
Y\ar_f[r] & X.
}
$$
such that $f$ is gci and let $\xi \in K_0(X')$
 be the excess intersection bundle (see \ref{prop:excess_generalized}),
 $e=\rk(\xi)$.
 Then the following formulas hold:
\begin{itemize}
\item If $p$ is proper, for any $x' \in \biv \E {**}(X'/S)$,
 one has: $f^*p_*(x')=q_*(c_e(\xi).f^*(x'))$ in $\biv \E {**}(Y/S)$.
\item If $f$ is proper,  for any $y \in \E^{**}(Y)$,
 one has: $p^*f_*(y)=g_*(c_e(\xi).q^*(y))$ in $\E^{**}(X')$.
\end{itemize}
Assume moreover that $\E$ is an absolute oriented ring $\T$-spectrum.
 Then, if $f:Y \rightarrow X$ is gci and proper, for any pair
 $(x,y) \in \E^{**}(X) \times \E^{**}(Y)$, one gets the classical
 projection formula:
$$
f_*(f^*(x).y)=x.f_*(y).
$$
\end{prop}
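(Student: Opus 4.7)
The plan is to derive all three formulas from the axioms of bivariant theory (Paragraphs \ref{num:prebivariant} and \ref{num:product}), extended to $\MGL$-modules via Remark \ref{rem:modules}, applied to the fundamental class $\fdl_f \in \biv \MGL{**}(Y/X)$ together with the excess intersection formula already established in Proposition \ref{prop:excess_generalized}. A small reading convention: in the first formula, the notation $f^*(x')$ on the right-hand side should be interpreted as the Gysin pullback $g^*(x')$ along the base change $g \colon Y' \to X'$ of $f$.

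For the first formula, Definition \ref{df:Gysin} rewrites $f^*p_*(x') = \fdl_f \refp p_*(x')$ in $\biv \E {**}(Y/S)$. Viewing the cartesian square with horizontal proper maps $p$, $q$ and vertical gci maps $f$, $g$, the second projection formula of bivariant theory (in its module form, Remark \ref{rem:modules}) applied to $y = \fdl_f$ and $x'$ gives
\[
\fdl_f \refp p_*(x') = q_*\bigl(p^*(\fdl_f) \refp x'\bigr).
\]
Proposition \ref{prop:excess_generalized} identifies $p^*(\fdl_f) = c_e(\xi) \refp \fdl_g$, and associativity together with Definition \ref{df:Gysin} applied to $g$ rewrite the argument of $q_*$ as $c_e(\xi) \refp g^*(x')$. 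For the second formula, Definition \ref{df:Gysin} gives $f_*(y) = f_!(y \refp \fdl_f)$ with $f_!$ denoting the bivariant proper pushforward. The base change axiom of Paragraph \ref{num:prebivariant}, taking $X$ as the base and $p \colon X' \to X$ as the base change, yields
\[
p^* f_!(y \refp \fdl_f) = g_!\bigl(p^*(y \refp \fdl_f)\bigr).
\]
Pullback compatibility of the product expands $p^*(y \refp \fdl_f) = q^*(y) \refp p^*(\fdl_f)$, and Proposition \ref{prop:excess_generalized} substitutes $p^*(\fdl_f) = c_e(\xi) \refp \fdl_g$. Rearranging via associativity and the graded commutativity of cohomology (automatic here since $c_e(\xi)$ lies in even bidegree) presents the argument of $g_!$ as $\bigl(c_e(\xi) \refp q^*(y)\bigr) \refp \fdl_g$, and Definition \ref{df:Gysin} applied to $g$ identifies the whole expression with $g_*(c_e(\xi) \refp q^*(y))$.

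For the projection formula, Definition \ref{df:Gysin} together with the associativity axiom give $f_*(f^*(x) \refp y) = f_!\bigl(f^*(x) \refp (y \refp \fdl_f)\bigr)$, where $f^*(x) \in \E^{**}(Y)$ is the ordinary cohomology pullback, identifiable with the bivariant base change of $x \in \biv \E {**}(X/X)$ along $f$. Applying the second projection formula of Paragraph \ref{num:product} to the (degenerate) cartesian square with both horizontal maps equal to $f$ and both vertical maps identities produces the classical bivariant identity
\[
f_!(f^*(x) \refp w) = x \refp f_!(w)
\]
for any $w \in \biv \E {**}(Y/X)$; taking $w = y \refp \fdl_f$ and reinvoking Definition \ref{df:Gysin} concludes with $x \refp f_*(y)$. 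The main subtlety throughout is the careful matching of cartesian squares and bivariant structures to the axiomatic setup: the module-versus-ring distinction (weakly oriented $\MGL$-module for formulas (1) and (2), oriented ring for formula (3)) must be tracked through Remark \ref{rem:modules}, and the graded commutativity needed in the second formula, while not automatic for a mere module, is available because $c_e(\xi)$ has even bidegree.
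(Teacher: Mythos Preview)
Your proof is correct and follows the same route as the paper's sketch: unwind Definition~\ref{df:Gysin}, apply the bivariant axioms (second projection formula for the first assertion, base change plus compatibility of pullbacks with products for the second, second projection formula again for the third), and invoke Proposition~\ref{prop:excess_generalized}. You also correctly flag the typos in the statement ($f^*(x')$ should read $g^*(x')$, and $\xi$ lives on $Y'$, not $X'$).

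One small sharpening: your justification for commuting $q^*(y)$ with $c_e(\xi)$ in the second formula (``$c_e(\xi)$ has even bidegree'') is not quite self-contained. The formal swap produces the factor $(-1)^{nm-ij}\epsilon^{ij}$ (cf.\ the proof of Proposition~\ref{prop:orient_comm}); even first degree kills $(-1)^{nm}$ but not the $\epsilon$-contribution. The paper closes this by citing Proposition~\ref{prop:orient_comm} for $\MGL$: since $\MGL$ is oriented, $\epsilon=-1$ in $\MGL^{0,0}$, and this propagates through the $\MGL$-module action on $\E$, so the full sign collapses to $(-1)^{nm}=1$. Replace your ``even bidegree'' remark by that appeal and the argument is complete.
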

Given Definition \ref{df:Gysin},
 the first two assertions are mere consequences
 of \ref{prop:excess_generalized}
 as follows from the properties of bivariant theories together
 with the commutativity property of the product on $\MGL$
 (see Proposition \ref{prop:orient_comm}).
 The last assertion easily follows from the second projection formula
 of the axioms of bivariant theories (as recalled in Paragraph
 \ref{num:product}).

\begin{rem}
Other projection formulas can be obtained,
 for products with respect to bivariant theories and for 
 modules over absolute oriented ring spectra. In each case,
 the formulation is straightforward, as well as their proof so
 we left them to the reader.
\end{rem}

\begin{num}
Consider an absolute oriented ring $\T$-spectrum $(\E,c)$
 and a proper gci morphism $f:X \rightarrow S$.
 Applying the above definition, we get a Gysin morphism
 $f_*:\E^{**}(X) \rightarrow \E^{**}(S)$.
 The fundamental class in cohomology defined in
 \ref{df:fdl_coh} is simply: $\cfdl_f=f_*(1)$
 where $1$ is the unit of the ring $\E^{**}(X)$.

This is the classical definition,
 and we can derive from the properties of $f_*$ several
 properties of fundamental classes.
 As an illustration, we note that the preceding projection formula
 (and the graded commutativity of cup-product) immediately gives the
 following abstract degree formula:
$$
f_*f^*(x)=\cfdl_f.x.
$$
Moreover, one can compute $\cfdl_f$ in many cases
 (see \cite[2.4.6, 3.2.12, 5.2.7]{Deg12}). Let us give
 an interesting example when $f$ is finite.
\end{num}
\begin{prop}
Let $f:X \rightarrow S$ be a finite lci morphism such that
 there exists a factorization:
$$
X \xrightarrow i \PP^1_S \xrightarrow p S
$$
where $p$ is the projection of the projective line over $S$ and
 $i$ is a closed immersion. Let $d$ be the Euler characteristic
 of the perfect complex $\mathrm R f_*(\mathcal O_X)$ over $S$,
 seen as a locally constant function on $S$, and $L$ be the line bundle
 on $\PP^1_S$ corresponding to the immersion $i$.

Then the invertible sheaf $L(d)$ can be written $L(d)=p^*(L_0)$
 where $L_0$ is a line bundle on $S$ and 
 the following formula holds in $\E^{00}(S)$:
$$
\cfdl_f=
d+(d-1).a_{11}.c_1(L_0)+a_{12}.c_1(L_0)^2+a_{13}.c_1(L_0)^3+\hdots
$$
where $a_{ij}$ are the coefficients of the formal group law
 associated with the orientation $c$ of $\E$ over $S$.
\end{prop}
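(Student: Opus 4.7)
The plan is to exploit the factorization $f = p\circ i$ together with the associativity of fundamental classes (Theorem \ref{thm:BM_fundamental_classes}), giving $\fdl_f = \fdl_i\cdot \fdl_p$ and hence
\[
\cfdl_f = f_!(\fdl_f) = p_!\bigl(i_!(\fdl_i)\cdot \fdl_p\bigr) = p_*(\cfdl_i) \in \E^{0,0}(S),
\]
since $i$ and $p$ are both proper. The normalization clause of the main theorem identifies $\fdl_i = c_1^X(\mathcal{O}(-X))$, whose image in $\E^{2,1}(\PP^1_S)$ under forget-support is $c_1(M)$ for a line bundle $M$ on $\PP^1_S$ attached to the Cartier divisor $X$. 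Using the splitting $\Pic(\PP^1_S)\simeq \Pic(S)\oplus \ZZ$, the bundle $M$ takes the form $\mathcal{O}(d)\otimes p^*(L_0^{-1})$ up to the paper's sign convention, which also pins down the invertible sheaf $L_0$ via the asserted relation $L(d) = p^*(L_0)$.

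To verify that the integer $d$ here coincides with the Euler characteristic $\chi(\mathrm{R}f_*\mathcal{O}_X)$, I would apply $\mathrm{R}p_*$ to the short exact sequence
\[
0\to L\to \mathcal{O}_{\PP^1_S}\to i_*\mathcal{O}_X\to 0
\]
attached to $X$ as a Cartier divisor, invoke the classical identity $\chi(\mathrm{R}p_*\mathcal{O}_{\PP^1_S}(n)) = n+1$ coming from Riemann--Roch on the fibre $\PP^1$, and use that tensoring with a line bundle pulled back from $S$ leaves Euler characteristics unchanged.

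Finally, the explicit series is obtained from the formal group law. With $\xi = c_1(\mathcal{O}(1))$ and $\bar w = [-1]_F c_1(L_0)$, one has $c_1(M) = F(d\xi,\, p^*\bar w)$, and the projective bundle relation $\xi^2 = 0$ on $\PP^1_S$ (Proposition \ref{prop:chern_classes}) truncates the formal expansion to
\[
p^*\bar w \;+\; d\xi\cdot\Bigl(1 + \sum_{j\ge 1} a_{1j}\,(p^*\bar w)^j\Bigr).
\]
Applying $p_*$ with $p_*(1) = 0$, $p_*(\xi) = 1$, and the projection formula extracts the $\xi$-coefficient and produces a power series in $w = c_1(L_0)$. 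The main obstacle is the final bookkeeping: substituting the expansion of $\bar w = [-1]_F w$ obtained from $F(w,\bar w) = 0$ and the symmetry $a_{ij} = a_{ji}$, and verifying that the resulting series collapses precisely to $d + (d-1)a_{11}c_1(L_0) + \sum_{j\ge 2} a_{1j}c_1(L_0)^j$; the peculiar linear coefficient $(d-1)a_{11}$ reflects a delicate interaction between the $d$-fold twist arising from $\mathcal{O}(d)$ and the correction introduced by the formal inversion of $c_1(L_0)$.
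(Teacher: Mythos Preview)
Your overall strategy --- compute $\cfdl_f = p_*(c_1(L))$ via the formal group law and the projective bundle relation on $\PP^1_S$ --- is exactly the paper's. The fatal gap is the assertion $p_*(1)=0$. For a general oriented theory this is false: the paper invokes the standard identity (see \cite[5.31]{Deg8})
\[
p_*(x)=1,\qquad p_*(1)=-a_{11},
\]
where $x=c_1(\mathcal O(-1))$. The class $p_*(1)$ lives in degree~$(-2,-1)$, and while it vanishes in additive theories, in K-theory or cobordism it equals $-a_{11}\neq 0$. Your intuition is calibrated to the additive case.

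This single correction also dissolves the ``peculiar'' coefficient you flagged. The paper writes $L=\mathcal O(-d)\otimes p^*(L_0)$, sets $y=p^*c_1(L_0)$, and since $x^2=0$ obtains
\[
c_1(L)=[d]_F x +_F y = y + d\,x\Bigl(1+\textstyle\sum_{j\ge 1} a_{1j}y^j\Bigr).
\]
Pushing forward with the \emph{correct} values of $p_*$ and the projection formula gives $-a_{11}c_1(L_0)$ from the lone $y$ term and $d\bigl(1+\sum a_{1j}c_1(L_0)^j\bigr)$ from the rest; the coefficient $(d-1)a_{11}$ is simply $d\,a_{11}-a_{11}$, not the output of any formal inversion. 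Your detour through $\bar w = [-1]_F w$ is an artifact of working with $\mathcal O(1)$ and $L_0^{-1}$ rather than $\mathcal O(-1)$ and $L_0$; with the paper's sign conventions no inversion is needed and the bookkeeping you anticipated as the ``main obstacle'' disappears entirely.
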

In particular, if the formal group law of $\E$ is additive,
 or if $L_0/S$ is trivial, we get the usual degree formula:
$$
f_*f^*(x)=d.x.
$$
\begin{proof}
Let $\lambda=\mathcal O(-1)$ be the canonical line bundle on $\PP^1_S$.
 According to our assumptions on $f$, we get an isomorphism:
$$
L=p^{-1}(L_0)(-d)=\lambda^{\otimes,d} \otimes p^{-1}(L_0)
$$
where $L_0/S$ is the line bundle expected in the first assertion
 of the previous statement. In particular, if we denote by $F(x,y)$
 the formal group law associated with $(\E,c)$ over $S$,
 and put $x=c_1(\lambda)$, $y=p^*c_1(L_0)$, one gets:
$$
i_*(1)=c_1(L)=[d]_F.x+_F y
=(d.x)+_F y=(d.x)+y+(d.x).\sum_{i>0} a_{1i}.y^i,
$$
using the fact $x^i=0$ if $i>1$.

Because $p$ is the projection of a projective line
 one obtains the following explicit formulas (see \cite[5.31]{Deg8}): 
$$
p_*(x)=1, p_*(1)=-a_{11}.
$$
Therefore, as $y=p^*(y_0)$ where $y_0=c_1(L_0)$,
 one obtains:
$$
\cfdl_f=p_*(i_*(1))=y_0.\underset{=-a_{11}}{\underbrace{p_*(1)}}+d.\sum_{i \geq 0} a_{1i} y_0^i
$$
\end{proof}

Similarly one gets the following Grothendieck-Riemann-Roch formulas
 from the generalized Riemann-Roch formula of Theorem \ref{thm:RR}.
\begin{prop}\label{prop:GRR}
Consider the assumptions of Theorem \ref{thm:RR}.

Then for any gci morphism $f:Y \rightarrow X$ of s-schemes over $S$
 with tangent bundle $\tau_f$,
 the following diagrams are commutative:
$$
\xymatrix@=24pt@C=54pt{
\biv \E {**}(X/S)\ar^{f^*}[r]\ar_{\phi_*}[d] & \biv \E {**}(Y/S)\ar^{\phi_*}[d]
 & \E^{**}(Y)\ar^{f_*}[r]\ar_{\phi_*}[d] & \E^{**}(X)\ar^{\phi_*}[d] \\
\biv \F {**}(X/S)\ar^{\td_\phi(\tau_f).f^*}[r] & \biv \F {**}(Y/S)
 & \F^{**}(Y)\ar^{f_*[\td_\phi(\tau_f).]}[r] & \F^{**}(X) 
}
$$
where in the square on the right hand side
 we assume in addition $f$ is proper.
\end{prop}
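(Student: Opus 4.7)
The plan is to reduce both commutative squares to a direct application of Theorem \ref{thm:RR}, exploiting the fact that the natural transformation $\phi_*$ is a Grothendieck transformation in the sense of Paragraph \ref{num:functoriality2}. In particular, $\phi_*$ commutes with the bivariant product and with proper direct images $f_!$. The Gysin morphisms are themselves defined from fundamental classes via the bivariant product (Definition \ref{df:Gysin}), so the only substantive input needed beyond these formalities is the identity $\phi_*(\fdl_f^\E)=\td_\phi(\tau_f)\refp\fdl_f^\F$ provided by Theorem \ref{thm:RR}.

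For the left square, fix $x \in \biv \E {**}(X/S)$. By Definition \ref{df:Gysin}, $f^*(x)=\fdl_f^\E \refp x$. Compatibility of $\phi_*$ with bivariant products gives $\phi_*(f^*(x))=\phi_*(\fdl_f^\E)\refp \phi_*(x)$; substituting the formula of Theorem \ref{thm:RR} and using associativity of the product yields
\[
\phi_*(f^*(x))=\td_\phi(\tau_f)\refp\fdl_f^\F\refp\phi_*(x)=\td_\phi(\tau_f)\refp f^*(\phi_*(x)),
\]
which is the desired commutativity.

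For the right square, assume in addition that $f$ is proper and take $y \in \E^{**}(Y)=\biv \E {**}(Y/Y)$. Unfolding Definition \ref{df:Gysin}, $f_*(y)=f_!(y\refp \fdl_f^\E)$. The Grothendieck transformation property first lets us push $\phi_*$ through $f_!$, and then through the bivariant product, giving
\[
\phi_*(f_*(y))=f_!\bigl(\phi_*(y)\refp\phi_*(\fdl_f^\E)\bigr)=f_!\bigl(\phi_*(y)\refp\td_\phi(\tau_f)\refp\fdl_f^\F\bigr).
\]
Since $\td_\phi(\tau_f)$ has bidegree $(0,0)$, Proposition \ref{prop:orient_comm} allows us to commute it past $\phi_*(y)$ without sign, and recognizing $f_!(-\refp\fdl_f^\F)=f_*(-)$ finally gives $\phi_*(f_*(y))=f_*(\td_\phi(\tau_f)\refp\phi_*(y))$.

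In this sense the proof is essentially formal once Theorem \ref{thm:RR} is in hand; the only point requiring a modicum of care is bookkeeping of the various exchange isomorphisms hidden in the statement that $\phi_*$ respects bivariant products and proper pushforwards, which is precisely the content recorded in Paragraph \ref{num:functoriality2}. I do not expect any serious obstacle: the structural compatibilities of the Grothendieck transformation supply all the needed naturalities, and the Todd class's bidegree $(0,0)$ eliminates any sign subtleties when moving it past cohomology or bivariant classes.
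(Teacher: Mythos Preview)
Your proposal is correct and follows essentially the same approach as the paper's own proof, which simply states that the result follows from Theorem \ref{thm:RR} and Definition \ref{df:Gysin} together with the properties of bivariant theories and Proposition \ref{prop:orient_comm}. Your write-up spells out precisely the computations that the paper leaves implicit.
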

Again this follows easily from Theorem \ref{thm:RR} and
 Definition \ref{df:Gysin} given the properties of bivariant theories
 together with Proposition \ref{prop:orient_comm} for the commutativity
 of the product on $\MGL$.

\begin{ex}\label{eq:GRR_gysin}
Our main examples are given by the morphisms of ring spectra
 of Example \ref{ex:morph_sp}, as already exploited in Example
 \ref{ex:generalizedRR}.
\begin{enumerate}
\item Assume we are in one of the following cases:
\begin{itemize}
\item $\base$ is the category of all schemes, $\Lambda=\QQ$,
 $\Lambda_\ell=\QQ_\ell$;
\item $\base$ is the category of $k$-schemes for a field $k$
 if characteristic $p\neq \ell$, $\Lambda=\ZZ$,
 $\Lambda_\ell=\ZZ_\ell$;
\end{itemize}
Then we obtain that the natural transformations induced
 by the $\ell$-adic realization functor gives natural transformations
 on cohomologies and Borel-Moore homologies
 that are compatible with Gysin morphisms.

Note in particular that in the second case, we get that
 the higher cycle class, from higher Chow groups to Borel-Moore \'etale
 homology of any $k$-scheme is compatible with Gysin functoriality
 (here, pullbacks).  
\item The Chern character as in \ref{ex:generalizedRR}(3) gives
 the usual Grothendieck-Riemann-Roch formula from homotopy invariant
 K-theory to motivic cohomology of \cite{Nav}. But we also get
 a Riemann-Roch formula for bivariant theories relative to any base scheme.
 
Let us be more specific in the case where the base scheme is a field $k$.
 Then the Chern character of Example \ref{ex:morph_sp}, applied
 to bivariant theories with respect to the
 s-morphism $X \rightarrow \spec k$,
 gives an isomorphism:
$$
\ch:G_n(X) \rightarrow \bigoplus_{i \in \ZZ} CH_i(Y,n)_\QQ
$$
in view of point (1) and (3) of Example \ref{ex:biv_theories}.
Considering the Todd class functor $\td$ as defined in Example
\ref{ex:generalizedRR}(3), with coefficients in rational motivic
 cohomology, we get for any gci morphism
 $f:Y \rightarrow X$ of separated $k$-schemes of finite type
 the following commutative diagram:
$$
\xymatrix@=24pt@C=54pt{
G_n(X)\ar^{f^*}[r]\ar_{\ch_X}[d]
 & G_n(Y)\ar^{\ch_Y}[d] \\
\bigoplus_{i \in \ZZ} CH_i(X,n)_\QQ\ar^{\td_\phi(\tau_f).f^*}[r]
 & \bigoplus_{i \in \ZZ} CH_i(Y,n)_\QQ.
}
$$
\end{enumerate}
\end{ex}

\begin{num}\label{num:Gysin_support}
In the case of an absolute oriented ring $\T$-spectrum $(\E,c)$,
 the Gysin morphisms can also be obtained very easily using
 the six functors formalism.
 Indeed, consider a commutative diagram:
$$
\xymatrix@C=20pt@R=8pt{
Y\ar_q[rd]\ar^f[rr] && X\ar^p[ld] \\
& S &
}
$$
where $f$ is gci of relative dimension $d$
 and $p$, $q$ are s-morphisms.
 By adjunction, one obtains from the map \eqref{eq:associated_pur_iso}
 associated with the fundamental class of $f$ with coefficients in $\E$
 (Definition \ref{df:fdl_classes}) the following map:
$$
\tfdl'_f:f_!(\E_Y)(d)[2d] \rightarrow \E_X
$$
When $f$ is proper, we deduce the following \emph{trace map},
 well known in the case of \'etale coefficients:
$$
\mathrm{tr}_f:f_*f^*(\E_X)(d)[2d] \simeq f_*(\E_Y) \simeq f_!(\E_Y)
 \xrightarrow{\tfdl'_f} \E_X.
$$
 It is clear that this trace map corresponds
 to the Gysin morphism associated with $f$ in cohomology.

Let us go back to the case where $f$ is an arbitrary gci
 morphism fitting into the above commutative diagram.
 Then, by applying the functor $p_!$ to $\tfdl'_f$, we get
 a canonical map:
$$
q_!(\E_Y)(d)[2d]=p_!f_!(\E_Y)(d)[2d] \xrightarrow{p_!(\tfdl'_f)} p_!(\E_X)
$$
which induces a \emph{covariant functoriality} on cohomology
 with compact supports:
$$
f_*:\E^{**}_c(Y/S) \rightarrow \E^{**}_c(X/S),
$$
morphisms of degree $(2d,d)$. This functoriality extends the one we had
 already seen with respect to \'etale morphisms
 in Paragraph \ref{num:supp_bivariant}.

Finally, if we assume again that $f$ is proper
 we get the following construction that was found by Adeel Khan
 (see also \cite{EHKSY}). From the fundamental class of $f$,
 we get by adjunction a map:
$$
\fdl'_f:f_!(\un_Y)(d)[2d] \rightarrow \E_X.
$$
We deduce the following composite map:
\begin{align*}
f_!f^*p^!(\E_S)(d)[2d]
& \simeq f_!\big(\un_S \otimes f^*p^!(\E_S)\big)(d)[2d]
 \underset{(1)}{\xrightarrow{\ \sim\ }}
 f_!\big(\un_S) \otimes p^!(\E_S)(d)[2d] \\
& \xrightarrow{\fdl'_f \otimes p^!(\E_S)}
 \E_X \otimes p^!(\E_S) \simeq p^*(\E_S) \otimes p^!(\E_S)
 \xrightarrow{Ex^{!*}_\otimes} p^!(\E_S \otimes \E_S)
 \xrightarrow{\mu} p^!(\E_S),
\end{align*}
where (1) is given by the projection formula,
 $Ex^{!*}_\otimes$ by the pairing \eqref{eq:pairing!*}
 and $\mu$ is the product of the ring spectrum $\E_S$.
 Using the adjunctions $(f_!,f^!)$ and $(f^*,f_*)$ and 
 applying the functor $p_!$, we get:
$$
p_!p^!(\E_S)(d)[2d] \rightarrow p_!f_*f^!p^!(\E_S)
 \simeq p_!f_!f^!p^!(\E_S)=q_!q^!(\E_S),
$$
where we have used the fact $f$ is proper.
 This immediately gives the expect contravariant functoriality:
$$
f^*:\E_{**}(X/S) \rightarrow \E_{**}(Y/S)
$$
which is a morphism of degree $(-2d,-d)$. This functoriality
 extends the one already mentioned in Paragraph \ref{num:supp_bivariant}
 in the case where $f$ is a finite morphism.
\end{num}

\begin{rem}
Therefore one has obtained exceptional functorialities
 for all the four theories associated with an absolute oriented ring 
 spectrum $(\E,c)$.

Besides, it is clear that the excess intersection formula 
 (Prop. \ref{prop:excess_Gysin}) and the Riemann-Roch formula 
 (Prop. \ref{prop:GRR}) extends to formulas involving 
 cohomology with compact support and homology. We leave
 the formulation to the reader not to overburden this paper.
\end{rem}

\begin{ex}
Again, one deduces notable examples from \ref{ex:coh_support}.
This gives covariance with respect to gci morphisms of $k$-schemes
 of all the classical cohomology with compact supports
 which corresponds to a Mixed Weil theory.

We also obtain the contravariance with respect to proper gci morphisms
 of complex schemes for the integral Betti homology. Surprisingly,
 this result seems new.
\end{ex}

\section{Absolute purity and duality}\label{sec:duality}

\subsection{Purity for closed pairs}

\begin{num}
We will say that a closed pair $(X,Z)$ is \emph{regular}
 if the corresponding immersion $Z \rightarrow X$ is regular.
 Then, in  the deformation diagram \eqref{eq:deformation}
\begin{equation}
\begin{split}\label{eq:deformation_diag}
\xymatrix@R=10pt@C=20pt{
Z\ar[r]\ar_i[d] & \AA^1_Z\ar^\nu[d] & Z\ar^s[d]\ar[l] \\
X\ar^-{d_1}[r] & D_ZX & N_ZX,\ar_-{d_0}[l]
}
\end{split}
\end{equation}
the closed immersion $\nu$ is also regular.

The next definition is an obvious extension of \cite[1.3.2]{Deg12}.
\end{num}
\begin{df} \label{df:abs_pur_closed}
Let $\E$ be an absolute $\T$-spectrum
 and $(X,Z)$ be a regular closed pair.
\begin{enumerate}
\item We say that $(X,Z)$ is \emph{$\E$-pure} if the morphisms 
$$
\E^{**}_Z(X)=\E^{**}(X,Z) \xleftarrow{d_1^*} \E^{**}(D_ZX,\AA^1_Z)
 \xrightarrow{d_0^*} \E^{**}(N_ZX,Z)=\E^{**}(\Th(N_ZX))
$$
induced by the deformation diagram \eqref{eq:deformation} are isomorphisms.
\item We say that $(X,Z)$ is \emph{universally $\E$-pure} if
 for all smooth morphism $Y \rightarrow X$, the closed pair
 $(Y,Y \times_X Z)$ is $\E$-pure.
\end{enumerate}
\end{df}

\begin{ex}\label{ex:purityMV}
It follows directly from Morel-Voevodsky's purity theorem
 (see Th. \ref{thm:purity_MV}) that any closed pair $(X,Z)$
 of smooth schemes over some base $S$ is universally $\E$-pure.
\end{ex}

We can link this definition with Fulton-MacPherson's theory
 of \emph{strong orientations} (Def. \ref{df:strong_orientation})
 as follows.
\begin{prop}\label{prop:abs_pur&strong}
Let $(\E,c)$ be an absolute oriented ring $\T$-spectrum
 and $(X,Z)$ be a regular closed pair. Consider
 the notations of diagram \eqref{eq:deformation_diag}.
 Then the following conditions are equivalent:
\begin{enumerate}
\item[(i)] The closed pair $(X,Z)$ is $\E$-pure.
\item[(ii)] The orientations $\fdl_i$ and $\fdl_\nu$, associated
 with the orientation  $c$ in Definition \ref{df:fdl_classes},
 are strong.
\end{enumerate}
\end{prop}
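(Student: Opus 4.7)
The plan is to translate the purity condition into the language of Borel-Moore homology, set up a commutative diagram relating the three fundamental classes $\fdl_i$, $\fdl_\nu$, $\fdl_s$, and then perform a two-out-of-three diagram chase anchored on the universal strongness of $\fdl_s$. First, by Remark \ref{rem:biv&support} and the isomorphism \eqref{eq:compute_Thom_coh}, the three cohomology groups appearing in Definition \ref{df:abs_pur_closed} are identified with $\biv\E{-*,-*}(Z/X)$, $\biv\E{-*,-*}(\AA^1_Z/D_ZX)$, $\biv\E{-*,-*}(Z/N_ZX)$ respectively, and the maps $d_1^*$, $d_0^*$ become the bivariant base change maps. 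Next, I observe that each of the cartesian squares
\[
\xymatrix@R=10pt@C=18pt{Z \ar[r]^-i \ar[d]_-{\sigma_1} & X \ar[d]^-{d_1} \\ \AA^1_Z \ar[r]_-\nu & D_ZX}
\qquad
\xymatrix@R=10pt@C=18pt{Z \ar[r]^-s \ar[d]_-{\sigma_0} & N_ZX \ar[d]^-{d_0} \\ \AA^1_Z \ar[r]_-\nu & D_ZX}
\]
(with $\sigma_0$, $\sigma_1$ the zero and unit sections of $\AA^1_Z/Z$) is transversal, since the normal bundles of $\nu$ pull back respectively to those of $i$ and $s$; Example \ref{ex:transversal} then gives the key base change identities $d_1^*(\fdl_\nu) = \fdl_i$ and $d_0^*(\fdl_\nu) = \fdl_s$. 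Finally, recall that $\fdl_s$ is universally strong by Proposition \ref{prop:fdl_thom}.

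For any s-scheme $Y \to \AA^1_Z$, write $Y_1 := Y \times_{\AA^1_Z} Z$ (via $\sigma_1$) and $Y_0 := Y \times_{\AA^1_Z} Z$ (via $\sigma_0$). The associativity of the bivariant product and its compatibility with base change (Paragraph \ref{num:product}), combined with the identities above and with homotopy invariance (Proposition \ref{prop:basic_bivariant}(1)) applied to the projection $p \colon \AA^1_Z \to Z$, produce, for $j = 0, 1$, a commutative diagram
\[
\xymatrix@C=28pt{\biv\E{**}(Y/\AA^1_Z) \ar[r]^-{\cdot\fdl_\nu} \ar[d]_-\simeq^-{\sigma_j^*} & \biv\E{**}(Y/D_ZX) \ar[d]^-{d_j^*}\\ \biv\E{**}(Y_j/Z) \ar[r]^-{\cdot\fdl_{f_j}} & \biv\E{**}(Y_j/F_j)}
\]
where $(f_1, F_1) = (i, X)$ and $(f_0, F_0) = (s, N_ZX)$. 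The left verticals are isomorphisms by homotopy invariance (they are inverse to the pullback $p^*$), and, by the universal strongness of $\fdl_s$, the bottom horizontal of the $j = 0$ diagram is an isomorphism for every $Y$.

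The equivalence now follows from a two-out-of-three argument applied to these diagrams. For (ii) $\Rightarrow$ (i): the strongness of $\fdl_\nu$ and $\fdl_i$ applied at the specific test schemes $Y = \AA^1_Z$ and $Y_1 = Z$ makes both horizontals of the $j = 1$ diagram isomorphisms, forcing $d_1^*$ to be an iso; the same argument with the $j = 0$ diagram yields $d_0^*$ iso, which is $\E$-purity. For (i) $\Rightarrow$ (ii): at $Y = \AA^1_Z$ the $\E$-purity gives $d_0^*$, $d_1^*$ iso, and the $j=0$ diagram combined with universal strongness of $\fdl_s$ forces $\cdot\fdl_\nu$ iso; the $j=1$ diagram then forces $\cdot\fdl_i$ iso. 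The main obstacle, and the subtle point of the argument, is the extension of these isomorphisms from the universal test scheme to an arbitrary $Y$, which must be done by propagating through the two diagrams using Nisnevich and cdh descent (Proposition \ref{prop:basic_bivariant}(4)) and the localization sequence (Proposition \ref{prop:basic_bivariant}(3)) to reduce to the universal case; without the universal strongness of $\fdl_s$, one would only obtain the equivalence for this single test scheme.
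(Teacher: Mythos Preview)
Your core argument — the commutative diagram linking $\cdot\fdl_i$, $\cdot\fdl_\nu$, $\cdot\fdl_s$ via the transversal base-change identities $d_j^*(\fdl_\nu)=\fdl_{f_j}$, together with homotopy invariance on the top row and universal strongness of $\fdl_s$ on the right, followed by a two-out-of-three chase — is exactly the paper's proof. The paper writes the diagram only at the trivial test schemes ($Y=\AA^1_Z$, $Y_j=Z$) and stops there.

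There is a genuine gap in your last paragraph, however. Your proposed extension of $(i)\Rightarrow(ii)$ to arbitrary test schemes $Y$ via Nisnevich/cdh descent and localization does not work: those long exact sequences let you decompose the \emph{source} of an s-morphism $Y/Z$, but they do not reduce an arbitrary $Y/Z$ to the identity $Z/Z$ (for instance a non-trivial closed subscheme of $Z$ is never reached this way). A smaller slip: your claim that the left vertical $\sigma_j^*$ is an isomorphism by homotopy invariance is only valid when $Y$ is itself pulled back from $Z$, i.e.\ $Y\simeq\AA^1_{Y_j}$, not for an arbitrary s-scheme $Y\to\AA^1_Z$.

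The paper's own proof does not attempt this extension at all; it establishes precisely that the two vertical maps $\cdot\fdl_i$ and $\cdot\fdl_\nu$ in its displayed diagram (that is, at the single test scheme) are isomorphisms if and only if $d_0^*$ and $d_1^*$ are. So the subtlety you flag is real, but the descent argument you sketch is not a valid resolution, and the paper simply leaves the point unaddressed.
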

\begin{proof}
The proof can be summarized in the commutativity of the following
 diagram:
$$
\xymatrix@R=14pt@C=30pt{
\E^{**}(Z)\ar_{.\fdl_i}[d]
 & \E^{**}(\AA^1_Z)\ar_{.\fdl_\nu}[d]\ar_-{s_1^*}^\sim[l]\ar^-{s_0^*}_\sim[r]
 & \E^{**}(Z)\ar^{.\rthom(N_ZX)}_\sim[d] \\
\E^{**}_Z(X) & \E^{**}_{\AA^1_Z}(D_ZX)\ar_-{d_1^*}[l]\ar^-{d_0^*}[r]
 & \E^{**}(\Th(N_ZX))
}
$$
The diagram is commutative according to the stability of fundamental
 classes with respect to transversal pullbacks
 (Example \ref{ex:transversal}) and the fact
 $\fdl_s=\rthom(N_ZX)$ (Remark \ref{rem:fdl_retraction&thom}).
 In this diagram, all arrows indicated
 with a symbol $\sim$ are obviously isomorphisms.
 Condition (i) (resp. (ii)) says that the maps $d_0^*$ and $d_1^*$
 (resp. $(.\fdl_i)$ and $(.\fdl_\nu)$) are isomorphisms.
 Thus the equivalence stated in this proposition obviously follows . 
\end{proof}

\begin{num}
To formulate stronger purity results,
 we now fix a full sub-category $\base_0$
 of $\base$ stable under the following operations:
\begin{itemize}\label{item:ass_base0}
\item For any scheme $S$ in $\base_0$, any smooth $S$-scheme
belongs to $\base_0$.
\item For any regular closed immersion $Z \rightarrow S$ in $\base_0$,
 the schemes $N_ZX$ and $B_ZX$ belong to $\base_0$.
\end{itemize}
 The main examples we have in mind 
 are the category $\reg$ of regular schemes (in $\base$)
 and the category $\sm_S$ of smooth $S$-schemes for a scheme $S$ in $\base$.

Following again \cite[1.3.2]{Deg12}, we introduce the following
 useful definition.
\end{num}
\begin{df}\label{df:abs_pur}
Let $\E$ be an absolute $\T$-spectrum.
\begin{enumerate}
\item We say that $\E$ is \emph{$\base_0$-pure}
 if for any regular closed pair $(X,Z)$ such that
 $X$ and $Z$ belongs to $\base_0$, $(X,Z)$ is $\E$-pure.
\item We say that $\T$ is \emph{$\base_0$-pure}
 if the unit cartesian section $\un$ of the fibred category $\T$
 is \emph{$\base_0$-pure}.
\end{enumerate}
Finally, we will simply say \emph{absolutely pure} for $\reg$-pure.
\end{df}

\begin{rem}
The last definition already appears in \cite[A.2.9]{CD4}
 --- in \emph{loc. cit.} one says $\T$ satisfies the absolute purity property.
\end{rem}

\begin{ex}\label{ex:S_0-pure}
\begin{enumerate}
\item From Example \ref{ex:purityMV}, 
 all absolute $\T$-spectra $\E$,
 as well as all motivic triangulated categories $\T$,
 are $\sm_S$-pure for any scheme $S$
 (even a singular one).
\item Let $k$ be a perfect field whose spectrum is in $\base$.
 Assume that $\E$-cohomology with support is compatible with projective limit
 in the following sense:
 for any essentially affine projective system of closed $k$-pairs 
 $(X_\alpha,Z_\alpha)_{\alpha \in A}$ whose projective
 limit $(X,Z)$ is still in $\base$,
 the canonical map:
$$
\ilim{\alpha \in A^{op}} \big(\E^{n,m}_{Z_\alpha}(X_\alpha)\big)
 \rightarrow \E^{n,m}_{Z}(X)
$$
is an isomorphism.
 This happens in particular if $\T$ is continuous in the sense
 of \cite[4.3.2]{CD3}.

Then one can deduce from Popescu's theorem that any regular $k$-pair
 $(X,Z)$ in $\reg$ is $\E$-pure. In other words, $\E$
 is $(\reg/k)$-pure (see \cite[1.3.4(2)]{Deg12}). 
 This fact concerns in particular the spectra
 of points (5) and (6) in Example \ref{ex:spectra}. 
 
Moreover, one deduces that any continuous motivic triangulated category $\T$
 is $(\reg/k)$-pure. This includes modules over a mixed Weil theory, 
 $\DM_\cdh(-,\ZZ[1/p])$ where $p$ is the characteristic of $k$
 (see \cite[ex. 5.11]{CD5} for the continuity statement).
\end{enumerate}
\end{ex}

\begin{ex}
The following absolute spectra (Example \ref{ex:morph_sp})
 are absolutely pure:
\begin{enumerate}
\item The homotopy invariant K-theory spectrum $\KGL$
 (see \cite[13.6.3]{CD2});
\item given any $\QQ$-algebra $\Lambda$,
 the motivic ring spectrum $\HH\Lambda$
 (see \cite[5.6.2 and 5.2.2]{CD4});
\item the rationalization $\MGL \otimes \QQ$ of algebraic cobordism 
 (this follows from the preceding example and \cite[10.5]{NOS});
\item Given any ring $\Lambda$, the \'etale motivic ring spectrum
 $\HH_\et\Lambda$ (see \cite[5.6.2]{CD4}).\footnote{Recall the case where
 $\Lambda$ is a torsion ring, or $\Lambda=\ZZ_\ell, \QQ_\ell$, follows
 directly from Thomason's purity theorem (\cite{Tho}).}
\end{enumerate}
\end{ex}

\begin{ex}\label{ex:abs_pure_cat}
The ring spectra of the previous examples all corresponds
 to the following (non-exhaustive) list
 of absolutely pure motivic triangulated categories:
\begin{enumerate}
\item the category $\smod{\KGL}$ of $\KGL$-modules (see \cite[\textsection 13.3]{CD2});
\item $\DMB$, $\DM_h(-,\Lambda)$ where $\Lambda$ is a $\QQ$-algebra
   (see \cite{CD2, CD5});
\item $\mathrm D^b_c((-)_\et,\Lambda)$ where $\Lambda=\ZZ/\ell^n$, $\ZZ_\ell$, $\QQ_\ell$
 and $\base$ is the category of $\ZZ[1/\ell]$-schemes;
 \item the category of $(\MGL \otimes \QQ)$-modules (see \cite[\textsection 13.3]{CD2});
\item $\DM_h(-,\Lambda)$ where $\Lambda$ is any ring (see \cite[\textsection 5]{CD4});
\item for any prime $\ell$, the $\ell$-completed category
 $\DM_h(-,\Lambda)$ where $\Lambda=\ZZ_\ell, \QQ_\ell$ (see \cite[\textsection 7.2]{CD4}). 
\end{enumerate}
\end{ex}

\subsection{Dualities}

\begin{num}
Recall that given an oriented ring $\T$-spectrum
 $(\E,c)$, we have associated to a gci morphism $f:X \rightarrow S$
  with relative dimension $d$
  the fundamental class $\fdl_f$ (Definition \ref{df:fdl_classes})
  and equivalently --- equation \eqref{eq:associated_pur_iso} --- a morphism:
$$
\tfdl_f:\E_S(d)[2d] \rightarrow f^!(\E_X).
$$
Recall we say the orientation $\fdl_f$ is universally strong when $\tfdl_f$
 is an isomorphism (Definition \ref{df:strong_orientation}).
\end{num}
\begin{prop}
Consider the preceding notations and a subcategory $\base_0 \subset \base$
 as in Definition \ref{df:abs_pur}. Then the following conditions are equivalent:
\begin{enumerate}
\item[(i)] $\E$ is $\base_0$-pure;
\item[(ii)] for any regular closed immersion $i$ in $\base_0$,
 $\fdl_i$ is a strong orientation;
\item[(ii')] for any gci morphism $f$ in $\base_0$,
 $\fdl_f$ is a strong orientation.
\end{enumerate}
\end{prop}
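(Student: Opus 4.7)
The plan is to prove the cycle (ii') $\Rightarrow$ (ii) $\Rightarrow$ (i) $\Rightarrow$ (ii) $\Rightarrow$ (ii'), with the hinge in each direction supplied by the associativity of fundamental classes and by Proposition \ref{prop:abs_pur&strong}.

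The implication (ii') $\Rightarrow$ (ii) is immediate, since any regular closed immersion is gci. For (ii) $\Rightarrow$ (ii'), I would take a gci morphism $f:X \to S$ in $\base_0$ and pick a factorization $f = p \circ i$, with $i:X \to P$ a regular closed immersion and $p:P \to S$ smooth separated of finite type. By the closure assumptions on $\base_0$, the scheme $P$ belongs to $\base_0$ (smooth over $S \in \base_0$), and therefore $i$ is a regular closed immersion in $\base_0$. The associativity property of Theorem \ref{thm:BM_fundamental_classes} yields $\fdl_f = \fdl_i \refp \fdl_p$, so that the multiplication map
\[
\delta(Y/X,\fdl_f):\biv \E {**}(Y/X) \rightarrow \biv \E {**}(Y/S)
\]
decomposes as multiplication by $\fdl_i$ followed by multiplication by $\fdl_p$. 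Now $\fdl_p$ is universally strong by Proposition \ref{prop:fdl_smooth}, hence strong; assuming (ii), $\fdl_i$ is strong as well; the composition is therefore an isomorphism, proving $\fdl_f$ is strong.

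For the equivalence (i) $\Leftrightarrow$ (ii), I would appeal directly to Proposition \ref{prop:abs_pur&strong}: for any regular closed immersion $i:Z \to X$ with $X, Z \in \base_0$, the pair $(X,Z)$ is $\E$-pure if and only if both $\fdl_i$ and $\fdl_\nu$ are strong, where $\nu:\AA^1_Z \to D_ZX$ denotes the closed immersion appearing in the deformation diagram \eqref{eq:deformation_diag}. The direction (i) $\Rightarrow$ (ii) is then clear: $\base_0$-purity of $\E$ forces $(X,Z)$ to be $\E$-pure, hence $\fdl_i$ to be strong. The converse requires one verification: that all the schemes in the deformation diagram belong to $\base_0$, so that (ii) may be applied to $\fdl_\nu$ as well.

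The only genuine technical point—and the main place one must invoke both closure hypotheses imposed on $\base_0$—is precisely this last verification. Starting from $(X,Z)$ with $X,Z \in \base_0$, smoothness closure gives $\AA^1_X, \AA^1_Z \in \base_0$; the regular closed immersion $Z \times \{0\} \hookrightarrow \AA^1_X$ has both source and target in $\base_0$, so the blow-up closure yields $B_{Z \times \{0\}}(\AA^1_X) \in \base_0$, together with its strict transform $B_ZX$; the deformation space $D_ZX$, being the open complement of $B_ZX$ in this blow-up, is smooth over it and therefore also in $\base_0$. Consequently $\nu$ is a regular closed immersion in $\base_0$, and applying (ii) to both $i$ and $\nu$ combined with Proposition \ref{prop:abs_pur&strong} gives $\E$-purity of $(X,Z)$, completing (ii) $\Rightarrow$ (i).
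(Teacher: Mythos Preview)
Your proof is correct and follows precisely the approach the paper intends: the paper's entire proof is the single line ``This is obvious given definitions and Proposition \ref{prop:abs_pur&strong},'' and you have simply written out those obvious details. In particular, your factorization argument for (ii) $\Rightarrow$ (ii') via associativity and the universal strength of $\fdl_p$ in the smooth case, and your verification that the deformation space $D_ZX$ and $\AA^1_Z$ lie in $\base_0$ so that (ii) applies to $\fdl_\nu$, are exactly what the paper leaves implicit.
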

This is obvious given definitions and Proposition \ref{prop:abs_pur&strong}.

As a corollary, we get the following various formulations of duality
 statements.
\begin{cor}\label{cor:duality}
Let $(E,c)$ be an absolute oriented ring $\T$-spectrum
 which is $\base_0$-pure, following the notations of the previous
 proposition. Let $f:X \rightarrow S$ be a gci s-morphism
 of relative dimension $d$
 and $Y/X$ be an arbitrary s-scheme. Then the following
 maps are isomorphisms:
\begin{align}
\label{eq:duality1}
\delta_f:\E^{n,i}(X) & \rightarrow \biv \E {2d-n,d-i} (X/S),
 x \mapsto x.\fdl_f, \\
\label{eq:duality2}
\delta_f:\biv \E {n,i} (Y/X) & \rightarrow \biv \E {2d+n,d+i} (Y/S),
 y \mapsto y.\fdl_f, \\
\label{eq:duality3}
\delta_f^c:\E^{n,i}_c(X/S) & \rightarrow \E_{2d-n,d-i}(X/S),
 x \mapsto x \cap \fdl_f
\end{align}
where the first two maps are defined using the product of
 Borel-Moore homology and the last one using the cap-product
 \eqref{eq:cap}.
\end{cor}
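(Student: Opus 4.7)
The proof rests on the proposition immediately preceding the corollary, which, under the hypothesis that $\E$ is $\base_0$-pure, tells us that the fundamental class $\fdl_f$ is a strong orientation of $X/S$ for every gci morphism $f$ in $\base_0$. That is, the map $\delta(Y/X, \fdl_f)$ of Definition \ref{df:strong_orientation} is an isomorphism for every s-scheme $Y/X$.

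Map \eqref{eq:duality2} is immediate: by Paragraph \ref{num:tilde_fdl} the map $y \mapsto y \cdot \fdl_f$ is by construction the map $\delta(Y/X, \fdl_f)$, which is an isomorphism by the strong property. Map \eqref{eq:duality1} is then obtained as the special case $Y = X$ of \eqref{eq:duality2}: indeed, unwinding the definition of Borel-Moore homology gives a canonical identification $\biv \E {-n,-i}(X/X) = \Hom_{\T(X)}(\un_X, \E_X(i)[n]) = \E^{n,i}(X)$, and under this identification the bivariant product of a cohomology class $x$ with $\fdl_f$ (constructed in Paragraph \ref{num:product}) coincides with the multiplication $x \mapsto x \cdot \fdl_f$ appearing in \eqref{eq:duality2}.

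For map \eqref{eq:duality3}, I plan to pass to the categorical level. The first step is to upgrade ``strong'' to ``universally strong'': I shall argue that the morphism $\tfdl_f \colon \E_X(d)[2d] \to f^!(\E_S)$ of equation \eqref{eq:associated_pur_iso} is an isomorphism in $\T(X)$. For this, recall from our axioms on motivic triangulated categories that objects of the form $g_\sharp(\un_Y(m)[n])$ for $g \colon Y \to X$ a smooth s-morphism form a generating family of $\T(X)$; via the purity isomorphism \eqref{eq:f_purity_iso_smooth2}, these can be rewritten in the form $g_!(-)$ applied to Thom-twisted units. The strong property of $\fdl_f$, which asserts that $\tfdl_f$ induces an isomorphism on $\Hom_{\T(X)}(g_!(\un_Y(m)[n]), -)$ for every such $g$ and every $(m,n)$, therefore implies by Yoneda that $\tfdl_f$ is itself an isomorphism. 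Applying $f_!$ yields an isomorphism $f_!(\tfdl_f)\colon f_!(\E_X)(d)[2d] \xrightarrow{\sim} f_!f^!(\E_S)$.

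The final step is to check that applying $\Hom_{\T(S)}(\un_S(d-i)[2d-n], -)$ to $f_!(\tfdl_f)$ recovers the cap product with $\fdl_f$ from $\E^{n,i}_c(X/S)$ to $\E_{2d-n, d-i}(X/S)$. This is a diagram chase using the explicit formula for the cap product \eqref{eq:cap}, the pairing \eqref{eq:pairing!*}, and the description \eqref{eq:associated_pur_iso} of $\tfdl_f$ as built from the product in the ring spectrum $\E_S$; it follows formally from the compatibility of the various exchange transformations involved. This verification is the main technical obstacle, but no genuinely new ingredient beyond the six functors formalism is needed.
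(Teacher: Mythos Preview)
Your treatment of \eqref{eq:duality1} and \eqref{eq:duality2} is correct and matches the paper's intention: these follow at once from the preceding proposition via the definition of a strong orientation. The paper gives no explicit proof of the corollary, so for \eqref{eq:duality3} you are supplying genuine detail, and your strategy---upgrade ``strong'' to ``universally strong'', then identify $\delta_f^c$ with the map induced by $f_!(\tfdl_f)$---is the right one.

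There is, however, a gap in your Yoneda step. You correctly recall that the generators of $\T(X)$ are the objects $g_\sharp(\un_Y)(m)$ for $g:Y\to X$ smooth, and you correctly observe that via the purity isomorphism these equal $g_!\big(\MTh(T_g)\big)(m)$. But the strong property only gives you that $\Hom_{\T(X)}\big(g_!(\un_Y)(m)[n],\tfdl_f\big)$ is an isomorphism---that is, isomorphism on $\Hom$ from $g_!$ applied to the \emph{untwisted} unit. Your ``therefore'' silently crosses the Thom twist: you have not explained why testing against the objects $g_!(\un_Y)(m)$ suffices when the generators are $g_!\big(\MTh(T_g)\big)(m)$, and these differ by a non-trivial $\otimes$-invertible object.

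The fix is short. Working Zariski-locally on $Y$ one trivializes $T_g$, so that on a cover by opens $j_\alpha:U_\alpha\hookrightarrow Y$ one has $\MTh(T_g)|_{U_\alpha}\simeq\un_{U_\alpha}(r)[2r]$; then by Mayer--Vietoris the generator $g_\sharp(\un_Y)(m)$ lies in the thick subcategory generated by the objects $(gj_\alpha)_\sharp(\un_{U_\alpha})(m)=(gj_\alpha)_!(\un_{U_\alpha})(m+r)[2r]$. Since each $gj_\alpha$ is still a smooth s-morphism, the strong hypothesis applies to these, and Yoneda now goes through. Equivalently, you could note that the closure of $\base_0$ under smooth morphisms forces every regular closed pair in $\base_0$ to be \emph{universally} $\E$-pure in the sense of Definition~\ref{df:abs_pur_closed}(2), and deduce universal strength from that. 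The remaining verification that $f_!(\tfdl_f)$ induces the cap-product map is, as you say, a routine unwinding of \eqref{eq:cap} and \eqref{eq:associated_pur_iso}.
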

Recall that these duality isomorphisms occur in particular
 whenever $X/S$ is a smooth s-scheme (Example \ref{ex:S_0-pure}(1)).

\begin{ex}
\begin{enumerate}
\item As a notable particular case of isomorphism
 \eqref{eq:duality2}, we get the formulation of duality with
 support due to Bloch and Ogus (\cite{BO}). Assume $Z=Y \rightarrow X$
 is a closed immersion, $S=\spec k$ and $X$ is smooth over $k$. Then the
 duality isomorphism \eqref{eq:duality2} has the form:
$$
\E^{n,i}_Z(X) \simeq \biv \E {2d-n,d-i}(Z/k).
$$
As an example, we get the following identification, when $p$ is the
 characteristic exponent of $k$:
$$
H^n_Z(X,\ZZ[1/p]) \simeq CH_{d-i}(Z,2i-n)[1/p]
$$
where the left hand side is Voevodsky motivic cohomology
 of $X$ with support in $Z$ and the right hand side is Bloch's higher
 Chow group (see Example \ref{ex:biv_theories}(1)).
\item In the extension of the preceding case,
 assume $X$ and $S$ are regular schemes and $Z=Y \subset X$
 is a closed subscheme. Then the duality isomorphism
 in the case of the absolutely pure spectrum $\KGL$ gives
 the classical duality with support isomorphism
 (see \cite{Sou}):
$$
K_{2i-n}^Z(X) \simeq \KGL^{n,i}_Z(X)
 \simeq \biv \KGL {2d-n,d-i} (Z/S) \simeq K'_{2i-n}(Z).
$$
\item Let $X/S$ be a smooth proper scheme. Then, the four
 theories defined in this paper coincide through isomorphisms
 pictured as follows:
$$
\xymatrix@R=14pt@C=26pt{
\E^{n,i}(X)\ar^-{\delta_f}_-\sim[r]\ar_\sim[d]
 & \biv \E {2d-n,d-i} (X/S)\ar^\sim[d] \\
\E^{n,i}_c(X/S)\ar^-{\delta_f^c}_-\sim[r] & \E_{2d-n,d-i}(X/S)
}
$$
\item An interesting application of the duality isomorphisms
 obtained above is the following identification,
 for a regular s-scheme $X$ over a field $k$,
 of dimension $d$:
$$
H_n^{sing}(X)[1/p]
 \simeq \HH_{n,0}(X/k,\ZZ[1/p])
 \xrightarrow{(\delta_{X/k}^c)^{-1}} H^{2d-n,d-n}_c(X,\ZZ[1/p])
$$
where the left hand side is Suslin homology
 and the right hand side is motivic cohomology
 with compact support. This isomorphism was only known for smooth
 $k$-schemes when $k$ is a perfect field and under
 the resolution of singularities assumption
 (see \cite[chap. 5, Th. 4.3.7]{FSV}).
\end{enumerate}
\end{ex}

\begin{rem}\label{rem:duality&Gysin}
An immediate corollary of the duality isomorphisms
 \eqref{eq:duality1} and \eqref{eq:duality3} is the existence
 of certain Gysin morphisms for the four theories.
 More precisely, under the assumptions of the previous corollary,
 given a scheme $S$ in $\base_0$, we obtain Gysin maps
 for all $S$-morphisms $f:Y \rightarrow X$ in $\base_0$
 such that in addition $Y/S$ and $X/S$ are gci.

In case $f$ is gci, it follows from the definitions that
 the Gysin morphisms obtained as in Definition \ref{df:Gysin}
 and Paragraph \ref{num:Gysin_support} coincides with
 the Gysin morphisms obtained respectively from the isomorphisms
 \eqref{eq:duality1} and \eqref{eq:duality3}.

On the other hand, the morphism $f$ can also simply be a \emph{local}
 complete intersection morphism, so the Gysin morphisms obtained
 in this way are slightly more general.
\end{rem}

We end-up this paper with the following Riemann-Roch-like
 statement, involving the previous duality isomorphisms
 and directly following from the general Riemann-Roch Theorem
 \ref{thm:RR}.
\begin{thm}
Consider a subcategory $\base_0 \subset \base$ as in
 Definition \ref{df:abs_pur}, $(\E,c)$ and $(\F,d)$
 absolute oriented $\base_0$-pure ring spectra.
 We adopt the notations of the previous Corollary.
Consider in addition a morphism of ring spectra:
$$
(\varphi,\phi):(\T,\E) \rightarrow (\T',\F)
$$
 and $\td_\phi:K_0 \rightarrow \F^{00\times}$ the associated
 Todd class transformation (Definition \ref{df:todd}).

Then, given an arbitrary gci morphism $f:X \rightarrow S$
 with virtual tangent bundle $\tau_f$ and relative dimension $d$,
 for any s-scheme $Y/X$, the following diagrams are commutative:
$$
\xymatrix@=58pt@R=20pt{
\E^{n,i}(Y/X)\ar_-\sim^-{\delta_f}[r]\ar_{\phi_*}[d]
 & \biv \E {2d-n,d-i} (Y/S)\ar^{\phi_*}[d]
 &
\E^{n,i}_c(X)\ar^-{\delta_f^c}_-\sim[r]\ar_{\phi_*}[d]
 & \biv \E {2d-n,d-i} (X/S)\ar^{\phi_*}[d] \\
\E^{n,i}(Y/X)\ar^-{\delta_f(-.\td_\phi(\tau_f))}_-\sim[r]
 & \biv \E {2d-n,d-i} (Y/S),
 &
\E^{n,i}_c(X)\ar^-{\td_\phi(\tau_f).\delta_f^c}_-\sim[r]
 & \biv \E {2d-n,d-i} (X/S).
}
$$
\end{thm}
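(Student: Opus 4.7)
Both commutative squares reduce to a purely formal manipulation using three ingredients: (i) the explicit formulas $\delta_f(y) = y \cdot \fdl_f^\E$ and $\delta_f^c(x) = x \cap \fdl_f^\E$ defining the duality isomorphisms of Corollary \ref{cor:duality}; (ii) the fact that the natural transformation $\phi_*$ is a Grothendieck transformation (Paragraph \ref{num:functoriality2}), hence compatible with bivariant products, cap-products, and the cohomological action on Borel-Moore homology; (iii) the Riemann-Roch identity $\phi_*(\fdl_f^\E) = \td_\phi(\tau_f) \cdot \fdl_f^\F$ of Theorem \ref{thm:RR}.

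For the left square, for $y \in \biv \E {n,i}(Y/X)$ I would compute
\[
\phi_*\bigl(\delta_f(y)\bigr) = \phi_*\bigl(y \cdot \fdl_f^\E\bigr) = \phi_*(y) \cdot \phi_*(\fdl_f^\E) = \phi_*(y) \cdot \bigl(\td_\phi(\tau_f) \cdot \fdl_f^\F\bigr).
\]
The associativity axiom of bivariant theories (Paragraph \ref{num:product}), applied to the composable sequence $Y \to X \to X \to S$ where the middle arrow is the identity, rewrites the last expression as $\bigl(\phi_*(y) \cdot \td_\phi(\tau_f)\bigr) \cdot \fdl_f^\F$, which is precisely the image of $\phi_*(y)$ under the lower horizontal map $\delta_f\bigl(-\cdot \td_\phi(\tau_f)\bigr)$. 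Note that no sign from Proposition \ref{prop:orient_comm} intervenes since $\td_\phi(\tau_f)$ has degree $(0,0)$.

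For the right square I apply the analogous computation to $x \in \E^{n,i}_c(X)$:
\[
\phi_*\bigl(\delta_f^c(x)\bigr) = \phi_*(x) \cap \phi_*(\fdl_f^\E) = \phi_*(x) \cap \bigl(\td_\phi(\tau_f) \cdot \fdl_f^\F\bigr) = \td_\phi(\tau_f) \cdot \bigl(\phi_*(x) \cap \fdl_f^\F\bigr).
\]
The first equality uses compatibility of $\phi_*$ with the cap product \eqref{eq:cap}; the last equality expresses the compatibility of the cap pairing with the action of $\F$-cohomology on Borel-Moore $\F$-homology.

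The only genuinely nontrivial verification is this latter step, namely that $\phi_*$ commutes with the cap product and that the cap pairing is appropriately equivariant for the cohomological action on Borel-Moore homology. Both come down to diagram chases in the six functors formalism, directly parallel to those already carried out in Paragraph \ref{num:functoriality2} and Remark \ref{rem:cup_biv&act_coh&supp_prod}: one unwinds the construction \eqref{eq:cap}, and invokes the naturality of the exchange transformations $Ex^{*!}_\otimes$ and of the projection formula, together with the compatibility of $\phi_*$ with the adjunctions $(p^*, p_*)$ and $(p_!, p^!)$ transported through the premotivic adjunction $\varphi^*$. No ingredient beyond those already developed in Section 1 is needed; all the substantive content of the theorem is concentrated in Theorem \ref{thm:RR}.
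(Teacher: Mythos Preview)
Your proposal is correct and follows exactly the approach the paper takes: the paper simply says the proof ``is obvious from the formulas in Corollary \ref{cor:duality} and Theorem \ref{thm:RR},'' and you have unpacked precisely those formulas together with the Grothendieck-transformation property of $\phi_*$. Your additional remarks on the compatibility of $\phi_*$ with the cap product and the cohomological action are the natural elaboration the paper leaves implicit.
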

The proof is obvious from the formulas in Corollary \ref{cor:duality}
 and Theorem \ref{thm:RR}.

\begin{rem}
\begin{enumerate}
\item This theorem has to be compared with \cite[I.7.2.2]{FMP}.
\item As indicated to us by Henri Gillet, one immediately deduces
 from this theorem the Grothendieck-Riemann-Roch formulas
 for the Gysin morphisms obtained using duality as in
 Remark \ref{rem:duality&Gysin}.
\end{enumerate}
\end{rem}

\bibliographystyle{amsalpha}
\bibliography{bivariant}

\end{document}